\DeclareMathOperator{\Hom}{Hom}
\DeclareMathOperator{\Id}{Id}
\DeclareMathOperator{\Stab}{Stab}
\DeclareMathOperator{\vol}{vol}
\DeclareMathOperator{\supp}{supp}
\DeclareMathOperator{\SL}{SL}
\DeclareMathOperator{\SO}{SO}
\DeclareMathOperator{\SU}{SU}
\DeclareMathOperator{\Sp}{Sp}
\DeclareMathOperator{\Lip}{Lip}
\DeclareMathOperator{\Leb}{Leb}
\DeclareMathOperator{\ad}{ad}
\DeclareMathOperator{\Ad}{Ad}
\DeclareMathOperator{\inj}{inj}
\DeclareMathOperator{\Span}{span}
\DeclareMathOperator{\Fix}{Fix}
\DeclareMathOperator{\Gr}{Gr}
\newcommand{\N}{\mathbb{N}}
\newcommand{\Z}{\mathbb{Z}}
\newcommand{\Q}{\mathbb{Q}}
\newcommand{\R}{\mathbb{R}}
\newcommand{\C}{\mathbb{C}}
\newcommand{\LieG}{\mathfrak{g}}
\newcommand{\LieA}{\mathfrak{a}}
\newcommand{\LieU}{\mathfrak{u}}
\newcommand{\LieK}{\mathfrak{k}}
\newcommand{\LieM}{\mathfrak{m}}
\newcommand{\LieP}{\mathfrak{p}}
\newcommand{\LieH}{\mathfrak{h}}
\newcommand{\height}{\mathrm{ht}}
\newcommand{\rad}{\mathrm{rad}}
\newcounter{consta}
\renewcommand{\theconsta}{{A_{\arabic{consta}}}}
\newcommand{\consta}{\refstepcounter{consta}{\color{red}\theconsta}}
\newcounter{constm}
\renewcommand{\theconstm}{{M_{\arabic{constm}}}}
\newcommand{\constm}{\refstepcounter{constm}{\color{red}\theconstm}}
\newcounter{constc}
\renewcommand{\theconstc}{{C_{\arabic{constc}}}}
\newcommand{\constc}{\refstepcounter{constc}{\color{red}\theconstc}}
\newcounter{constE}
\renewcommand{\theconstE}{{{E}_{\arabic{constE}}}}
\newcommand{\constE}{\refstepcounter{constE}{\color{red}\theconstE}}
\newcounter{constd}
\renewcommand{\theconstd}{{D_{\arabic{constd}}}}
\newcommand{\constd}{\refstepcounter{constd}{\color{red}\theconstd}}
\newcommand{\mytag}[2]{%
\text{#1}%
\@bsphack
\begingroup
\@onelevel@sanitize\@currentlabelname
\edef\@currentlabelname{%
\expandafter\strip@period\@currentlabelname\relax.\relax\@@@%
}%
\protected@write\@auxout{}{%
\string\newlabel{#2}{%
{#1}%
{\thepage}%
{\@currentlabelname}%
{\@currentHref}{}%
}%
}%
\endgroup
\@esphack
}
\DeclareFontFamily{U}{mathb}{\hyphenchar\font45}
\DeclareFontShape{U}{mathb}{m}{n}{
<5> <6> <7> <8> <9> <10> gen * mathb
<10.95> mathb10 <12> <14.4> <17.28> <20.74> <24.88> mathb12
}{}
\DeclareSymbolFont{mathb}{U}{mathb}{m}{n}
\DeclareMathSymbol{\bigast}{2}{mathb}{"06}
\DeclareMathAlphabet{\mathpzc}{OT1}{pzc}{m}{it}
\def\XXint#1#2#3{{\setbox0=\hbox{$#1{#2#3}{\int}$}
\vcenter{\hbox{$#2#3$}}\kern-.5\wd0}}
\theoremstyle{plain}
\newtheorem{theorem}{Theorem}[section]
\newtheorem{proposition}[theorem]{Proposition}
\newtheorem{lemma}[theorem]{Lemma}
\newtheorem*{theorem*}{Theorem}
\newtheorem*{proposition*}{Proposition}
\newtheorem*{lemma*}{Lemma}
\newtheorem{corollary}[theorem]{Corollary}
\newtheorem*{claim*}{Claim}
\theoremstyle{definition}
\newtheorem{definition}[theorem]{Definition}
\newtheorem{example}{Example}[section]
\newtheorem{construction}[theorem]{Construction}
\theoremstyle{remark}
\newtheorem{remark}[theorem]{Remark}
\Crefname{enumi}{Property}{Properties}
\Crefname{alternativei}{Alternative}{Alternatives}
\Crefname{subsection}{Subsection}{Subsections}
\begin{document}
\selectlanguage{english}


\title[Effective equidistribution for certain unipotent subgroups]{Polynomially effective equidistribution for certain unipotent subgroups in quotients of perfect Lie groups}

\author{Zuo Lin}
\address{Department of Mathematics, University of California, Berkeley, CA 94720-3840}
\email{zuo\_lin@berkeley.edu}

\date{\today}

\begin{abstract}
We prove an effective equidistribution theorem for orbits of certain unipotent subgroups in arithmetic quotients of perfect Lie groups with a polynomial error term. Even for semisimple quotients, our result provides the first infinite family of examples where effective equidistribution with polynomial error rate is obtained for non-horospherical unipotent subgroups. 

The proof is based on the spectral gap of the ambient space, an effective closing lemma, Bourgain's discretized projection theorem, and a sub-modularity inequality in irreducible representation. The sub-modularity inequality is crucial to our proof and is of independent interest. 

As applications, we obtain effective estimates on distribution of lattice orbits on homogeneous spaces, as well as an effective version of the Oppenheim conjecture for indefinite quadratic forms with a polynomial error rate in all dimension $d \geq 3$. 
\end{abstract}
\maketitle

\setcounter{tocdepth}{1}
\tableofcontents
\setcounter{part}{-1}

\part{Introduction}
\section{Introduction}
An important theme in homogeneous dynamics is the behavior of orbits of $\Ad$-unipotent subgroups from \emph{any} initial point. More precisely, let $G$ be a Lie group, $\Gamma < G$ be a lattice and $U \leq G$ be an $\Ad$-unipotent subgroup. Raghunathan conjectured that for \emph{any} initial point $x \in X = G/\Gamma$, the orbit closure $\overline{U.x}$ is a periodic orbit $L.x$ of some subgroup $U \leq L \leq G$. We say $L.x$ is periodic if $\Stab(x) \cap L$ is a lattice in $L$. In the literature the conjecture was first stated in the paper \cite{Dan81} and in a more general form in \cite{Mar90} where the subgroup $U$ is not necessarily $\Ad$-unipotent but generated by $\Ad$-unipotent elements. 

Raghunathan's conjecture was proved in full generality by Ratner \cite{Rat90a,Rat90b,Rat91a,Rat91b}. In her landmark work, Ratner also classified all ergodic invariant probability measures under the action of $U$ and proved an equidistribution theorem for orbits of $U$. These remarkable theorems have been highly influential and have led to a lot of important applications. 

Prior to Ratner's proof, the conjecture was known in certain important cases. In his seminal work \cite{Mar89}, Margulis proved Oppenheim conjecture by showing every $\SO(2, 1)$-orbit in $\SL_3(\R)/\SL_3(\Z)$ is either periodic or unbounded. Later, Dani and Margulis \cite{DM89,DM90} showed that any $\SO(2, 1)$-orbit is either periodic or dense. They also classified possible orbit closures of a one-parameter unipotent subgroup of $\SO(2, 1)$ in $\SL_3(\R)/\SL_3(\Z)$. We refer to the book by Morris \cite{Mor05} for a detailed historical background. 

Based on equidistribution results for unipotent subgroups, information on asymptotics of distribution of values of indefinite quadratic forms with signature $(p, q)$ on integer points when $p \geq 3$ or $(p, q) = (2, 2)$ is provided by Eskin, Margulis and Mozes in \cite{EMM98,EMM05}. Recently, Kim established asymptotics of distribution of values of indefinite quadratic forms with signature $(2, 1)$ in \cite{Kim24b}. 

Because of its intrinsic interest and in view of the applications, \emph{effective} results on distribution of the orbits of unipotent groups have been sought after for some time. 

An effective proof of the Oppenheim conjecture with a poly-logarithmic rate was obtained by Lindenstrauss and Margulis in \cite{LM14}. 

In the landmark works by Lindenstrauss and Mohammadi \cite{LM23} and later with Wang and Yang and by Yang \cite{LMW22,Yan24,LMWY25}, effective density and equidistribution theorems with polynomial rate for orbits of unipotent flows are established in quotients
of quasi-split, semisimple linear algebraic groups of absolute rank $2$. In \cite{LMWY25}, they established a proof of effective Oppenheim conjecture with a polynomial rate when the dimension $d = 3$ building on their effective equidistribution theorem. For nonsemisimple quotients, an effective equidistribution of expanding translate of some non-horospherical unipotent subgroups in the space of affine lattices is obtained by Str{\"o}mbergsson \cite{Str15} in dimension $2$, Prinyasart \cite{Pri18} in dimension $3$ and Kim \cite{Kim24a} in all dimensions. 

In a closely related setting, effective results for random walk on homogeneous spaces have also been sought after. Most relevant to us are works on random walks on torus by Bourgain--Furman--Lindenstrauss--Mozes and He--de Saxc{\'e} \cite{BFLM11,HdS22}, on simple homogeneous spaces by B{\'e}nard--He \cite{BH24,BH25b}, and related results on Diophantine approximation by B{\'e}nard--He--Zhang \cite{BHZ25}.

We refer to \cite{LMW22} and \cite{Moh23} for a throughout survey on both historical background and recent progress. 

The main motivation for this paper is to cover a wide class of non-horospherical unipotent subgroups in a maximal semisimple subgroup $H$ of a perfect group $G$. Before we state the main theorem, let us introduce the following notions. 

Let $\mathbf{G} \leq \SL_N$ be a connected perfect $\Q$-group, let $G = \mathbf{G}(\R)^\circ$ be the identity component of its group of $\R$-points (in Hausdorff topology), and let $\LieG = \mathrm{Lie}(G)$ be its Lie algebra. Let $\Gamma = G \cap 
\SL_{N}(\Z)$. It is a lattice of $G$ by the theorem of Borel--Harish-Chandra. Let $X = G/\Gamma$ and let $\mu_X$ be the unique probability Haar measure on $X$. 

We fixed an inner product on $\LieG$ induced by the standard Euclidean metric on $\mathfrak{sl}_N(\R)$ and we use $\|\cdot\|_2$ to denote the induced norm. It induces a right $G$-invariant Riemannian metric on $G$ and hence a Riemannian metric $d_X$ on $X = G/\Gamma$. 

Let $\mathbf{H}$ be a connected semisimple $\R$-subgroup of $\mathbf{G}$ and let $H = \mathbf{H}(\R)$ be its group of $\R$-points. Suppose $H$ has no compact simple factor. Let $\LieH$ be its Lie algebra and let $\LieH = \bigoplus_{i = 1}^k \LieH_i$ be the unique decomposition of $\LieH$ into its simple ideals. Since $\mathbf{H}$ is semisimple, there exists a decomposition of $\LieG$ into $H$-invariant complement as $\LieG = \LieH \oplus \mathfrak{r}$. 

Let $\mathbf{a} \in \LieH$ be a semisimple element with $\|\mathbf{a}\|_2 = 1$ whose projection to each simple factor is non-zero. Suppose $\ad\mathbf{a}$ is diagonalizable over $\R$. Let $a_t = \exp(t\mathbf{a})$. 

Let $U$ be the expanding horospherical subgroup of $H$ as the following:
\begin{align*}
    U = \{u \in H: a_{-t} u a_{t} \to e \text{ as } t \to +\infty\}.
\end{align*}
Let $\LieU$ be the Lie algebra of $U$. Let $\mathcal{B}_r^{\LieU}$ be the ball of radius $r$ under the norm $\|\cdot\|_2$ centered at the origin and let $\mathcal{B}_r^U = \exp(\mathcal{B}_r^{\LieU})$. We fix a Haar measure $\mathrm{d}\mu_U(u)$ on $U$ so that the measure of $\mathcal{B}_1^U$ is of measure $1$. 

Throughout this paper, we fixed the data $(G, \Gamma, H, \mathfrak{r}, \mathbf{a}, U)$ as above. By \emph{absolute constant}, we mean that the constant depending explicitly on the data $(G, \Gamma, H, \mathfrak{r}, \mathbf{a}, U)$. We will discuss more details on this dependence in Section~\ref{sec:Global prelim}. 

The following are standing assumption throughout this paper. 

\newtheoremstyle{named}{}{}{\itshape}{}{\bfseries}{.}{ }{#1 \thmnote{#3}}
\theoremstyle{named}
\newtheorem{namedhypothesis}{}
\newcommand{\Irrep}{\nameref{hyp:Irrep}\xspace}
\begin{namedhypothesis}[Irreducible]\label{hyp:Irrep}
The subspace $\mathfrak{r}$ is an irreducible representation of $\mathbf{H}$. 
\end{namedhypothesis}

\newcommand{\Schu}{\nameref{hyp:Schu}\xspace}
\begin{namedhypothesis}[Schubert]\label{hyp:Schu}
Let $\Fix(U)$ be the fixed point of $U$ on $\mathfrak{r}$. The orbit $H.\Fix(U)$ in $\Gr_{\dim \Fix(U)}(\mathfrak{r})$ is not contained in any proper Schubert variety. 
\end{namedhypothesis}

\begin{theorem}\label{thm:main equidistribution}
Suppose the data $(G, \Gamma, H, \mathfrak{r}, \mathbf{a}, U)$ satisfy \Irrep{} and \Schu, then there exist absolute constants $\consta\label{a:main equidistribution1} > \consta\label{a:main equidistribution2} \geq 1$ and $\kappa > 0$ so that the following holds. For all $x_0 \in X$ and large enough $R$ depending explicitly on $x_0$, for any $T \geq R^{\ref{a:main equidistribution1}}$, at least one of the following is true. 
\begin{enumerate}
\item For all $\phi \in \mathrm{C}_c^\infty(X)$, 
\begin{align*}
\Biggl|\int_{\mathcal{B}_1^U} \phi(a_{\log T} u.x_0) \,\mathrm{d}\mu_U(u) - \int_X \phi \,\mathrm{d}\mu_X\Biggr| \leq \mathcal{S}(\phi) R^{-\kappa}
\end{align*}
where $\mathcal{S}(\phi)$ is a certain Sobolev norm. 
\item There exists $x \in X$ so that $H.x$ is periodic with $\vol(H.x) \leq R$ and 
\begin{align*}
    d_X(x_0, x) \leq T^{-\frac{1}{\ref{a:main equidistribution2}}}.
\end{align*}
\end{enumerate}
\end{theorem}

\begin{remark}
We remark that the dependence of $R$ on $x_0$ is of the form $R \gg \inj(x_0)^{-\star}$. See Section~\ref{sec:Global prelim} for the precise definition of $\inj(x_0)$ and the convention on $\star$-notations. 
\end{remark}

\subsection{Examples}
We provide examples so that the data $(G, \Gamma, H, \mathfrak{r}, \mathbf{a}, U)$ satisfy {}\Irrep{} and \Schu{} hence our theorem applies. We also indicate the relation with prior works. Note that under the condition $\LieG$ being perfect, {}\Irrep{} implies that $\LieG$ is either semisimple or with semi-direct product structure, see Lemma~\ref{lem:structure under irrep condition}. 

Let us also introduce the following property. 
\newcommand{\Prox}{\nameref{hyp:Prox}\xspace}
\begin{namedhypothesis}[Proximal]\label{hyp:Prox}
The action of $\Ad(a_t)|_{\mathfrak{r}}$ on $\mathfrak{r}$ is proximal, that is,  the largest eigenvalue of $\Ad(a_1)|_{\mathfrak{r}}$ on $\mathfrak{r}$ has multiplicity $1$. 
\end{namedhypothesis}
Note that \Prox{} implies \Schu{}, see Lemma~\ref{lem:proximal implies schubert}. Note also that if $\mathbf{H}$ is $\R$-split, then {}\Prox{} follows from {}\Irrep{} for all regular semisimple element $\mathbf{a} \in \LieH$. Since those conditions depend only on $(G, H, \mathbf{a})$, we will only describe them in this subsection. 

We first give examples when $\mathbf{G}$ is semisimple. 
\begin{example}
    Let $\mathbf{G}$ be a quasi-split semisimple group with absolute rank $2$ and let $\mathbf{H}$ be a principal $\SL_2$ in $\mathbf{G}$. Under this condition, $\mathbf{G}$ is locally isomorphic to one of the following:
    \begin{align*}
        \SL_2(\C), && \SL_2(\R) \times \SL_2(\R), && \SL_3(\R), && \SU(2, 1), && \Sp_4(\R), && \mathrm{G}_2(\R).
    \end{align*}
    In this case, the complement $\mathfrak{r}$ is an irreducible representation of $\SL_2$ and {}\Irrep{} and \Prox{} are automatically satisfied. These are the cases proved by Lindenstrauss--Mohammadi--Wang--Yang in \cite{LMWY25}. The first two cases here was studied by Lindenstrauss, Mohammadi and Wang in \cite{LM23,LMW22}. 
\end{example}

\begin{example}
    Let $\mathbf{G}_0$ be a $\Q$-group that is $\R$-simple and $\R$-split, e.g., $\SL_d$, $\SO(n,n + 1)$, $\Sp_{2m}$, $\SO(n,n)$. Let $\mathbf{G} = \mathbf{G}_0 \times \mathbf{G}_0$ and let 
    \begin{align*}
        \mathbf{H} = \{(h, h): h \in \mathbf{G}_0\} < \mathbf{G}.
    \end{align*}
    Let $\mathbf{a}$ be a \emph{regular} semisimple element in $\LieH$, that is, it lies in the interior of a Weyl chamber. In this case, {}\Prox{} follows from the fact that $\mathbf{G}_0$ is $\R$-simple and {}\Prox{} follows from the fact that $\mathbf{G}_0$ is $\R$-split and $\mathbf{a}$ is regular. 
\end{example}

\begin{example}
    Let $\mathbf{G} = \SL_{2n}$ and let $\mathbf{H} = \Sp_{2n}$. Let $\mathbf{a}$ be a \emph{regular} element in $\LieH$. A direct computation shows that the fix point of $U$ in $\mathfrak{r}$ is $1$-dimensional, which is equivalent to {}\Irrep{} and \Prox{} (see Lemma~\ref{lem:equi Fix U}). 
\end{example}

\begin{example}
We present a generalization of the previous example. A Lie subgroup $H < G$ is said to be symmetric if $H^\circ \leq H \leq H^\Sigma$ where the latter is the closed subgroup of fixed points of an involutive automorphism $\Sigma: G \to G$. We note that in this case {}\Irrep{} holds if and only if $\LieH$ is \emph{maximal}. Therefore our theorem applies to all triple $(G, H, \mathbf{a})$ where $H$ is a $\R$-split maximal symmetric semisimple subgroup of $G$ and $\mathbf{a}$ is a regular semisimple element in $\LieH$. 
\end{example}

\begin{example}
    Let $n \geq 3$. Let $Q_n$ be the quadratic form of signature $(n, 1)$ defined as the following:
    \begin{align*}
        Q_n(x_1, \ldots, x_{n+1}) = 2x_1 x_{n+1} - x_{2}^2 - \cdots - x_{n}^2.
    \end{align*}
    Let $\mathbf{G} = \SO(Q_n)$ and let $\mathbf{H} = \Stab_\mathbf{G}(e_n)$. In this case both $\mathbf{G}$ and $\mathbf{H}$ is of $\R$-rank $1$. Let $a_t$ be the diagonal flow in $\mathbf{H}(\R)$ as the following
    \begin{align*}
        a_t e_1 = e^t e_1,&& a_t e_{n+1} = e^{-t}e_{n + 1},&& a_te_i = e_i \text{ for } i = 2, \ldots, n.
    \end{align*}
    These data satisfy \Irrep{} and \Prox{}. We remark that $\mathbf{G}(\R)^\circ$ is the identity component of the isometry group of $n$-dimensional hyperbolic space and $\mathbf{H}(\R)^\circ$ is the identity component of the isometry group of a totally geodesic $(n - 1)$-dimensional hyperbolic hyperplane. One can show {}\Prox{} via the horospheres in this setting. 
\end{example}

\begin{example}
    Let $d = p + q$ and let $Q_0$ be the quadratic form of signature $(p, q)$ defined as the following:
    \begin{align*}
        Q_0(x_1, \ldots, x_{p + q}) = 2x_1 x_{p + q} + x_2^2 + \cdots + x_p^2 - x_{p + 1}^2 - \cdots - x_{p + q - 1}^2.
    \end{align*}
    Let $\mathbf{G} = \SL_d$ and let $\mathbf{H} = \SO(Q_0)$. Let $e_1, \ldots, e_{p + q}$ be the standard basis of $\R^d$. Let $a_t$ be the following diagonal flow in $H$ defined as the following:
    \begin{align*}
        a_t e_1 = e^t e_1,&& a_t e_{p+q} = e^{-t}e_{p+q},&& a_te_i = e_i \text{ for } i = 2, \ldots, p+q-1.
    \end{align*}
    A direct computation shows that the fix point of $U$ in $\mathfrak{r}$ is $1$-dimensional, which is equivalent to {}\Irrep{} and \Prox{} (see Lemma~\ref{lem:equi Fix U}). This example is a generalization of the case in \cite{LMWY23}. 

    We remark that these data have been used by Eskin--Margulis--Mozes in \cite{EMM98,EMM05} to prove an asymptotic formula for values of distribution of irrational quadratic form on integer points. 

    We remark that a) the group $H$ is \emph{not} $\R$-split if $p - q \geq 2$, b) if $p = q \geq 3$, this choice of $\{a_t\}_{t \in \R}$ is \emph{not} regular. 
\end{example}

\begin{example}
    We also provide a family of examples where $\mathbf{H}$ is semisimple but not simple. We identify $\R^{nm} \cong \R^n \otimes \R^m$. Let $\mathbf{G} = \SL_{mn}$ and let $\mathbf{H} = \SL_{n} \times \SL_m$. The group $\mathbf{H} = \SL_{n} \times \SL_m$ is embedded into $\mathbf{G}$ via the tensor product of irreducible representations $\R^n$ and $\R^m$ of each of its factor. Let $\mathbf{a}$ be a regular element in $\mathfrak{sl}_n \oplus \mathfrak{sl}_m$. In this case, the complement $\mathfrak{r}$ is isomorphic to $\mathfrak{sl}_n \otimes \mathfrak{sl}_m$ as a representation of $\SL_n \times \SL_m$. 

    We remark that in the previous example when $Q_0$ has signature $(2, 2)$, $\mathbf{H}$ is locally isomorphic to $\SL_2 \times \SL_2$ and $\mathfrak{r} \cong \mathfrak{sl}_2 \otimes \mathfrak{sl}_2$ as in this example. 
\end{example}

\begin{example}
    Let $G = \SL_d(\C) \times \SL_d(\C)$ and 
    \begin{align*}
        H = \{(h, h): h \in \SL_d(\C)\} < G.
    \end{align*}
    Let $\mathbf{a}$ be an element in $\LieH$ in the interior of a Weyl chamber. In this case the data satisfy the \Irrep{} and \Schu{} but not \Prox{}. 
\end{example}

We now give examples when $\mathrm{R}_u(\mathbf{G}) \neq \{1\}$. 

\begin{example}
    Let $G = H \ltimes V$ where $H$ is a semisimle real algebraic group and $V$ is a non-trivial irreducible representation of $H$. Suppose we pick $\mathbf{a} \in \LieH$ so that the corresponding $U$ satisfies the following. Let $\Fix(U)$ be the fixed point of $U$ in $V$. We assume $\dim \Fix(U) = 1$. In this case the data satisfy \Irrep{} and \Prox{}, see Lemma~\ref{lem:equi Fix U}. 

    One explicit family of examples is of the following. Let $G = \SL_d(\R) \ltimes V$, $H = \SL_d(\R)$ and $V$ be any irreducible representation of $\SL_d(\R)$. Let $\mathbf{a}$ be a regular semisimple element in $H$. When $V \cong \R^d$ is the standard representation, this is the case proved by Str{\"o}mbergsson \cite{Str15} for $d = 2$, by Prinyasart \cite{Pri18} for $d = 3$ and by Kim \cite{Kim24a} for general $d$. (See also the next example. )

    Another family of examples where $\mathbf{a}$ is possibly not regular is of the following. Let $G = \SO(p, q) \ltimes \R^{p+q}$ and $H = \SO(p, q)$ acting on $\R^{p + q}$ via standard representation. Let $a_t$ be the following diagonal flow in $H$ defined as the following:
    \begin{align*}
        a_t e_1 = e^t e_1,&& a_t e_{p+q} = e^{-t}e_{p+q},&& a_te_i = e_i \text{ for } i = 2, \ldots, p+q-1.
    \end{align*}
    The case where $(p, q) = (2, 1)$ is studied by Seong in his PhD thesis \cite{Seo26} under the context of effective Oppenheim conjecture for irrational inhomogeneous quadratic form with rational homogeneous part. 
\end{example}

\begin{example}
We now give a family of examples where $\dim \Fix(U) > 1$. Let $G = \SL_d(\R) \ltimes \R^d$, $H = \SL_d(\R)$ acting on $\R^d$ via the standard representation. In this case, we let $d = m + n$ and
\begin{align*}
    a_t = \begin{pmatrix}
        e^{nt} \Id_n & \\
         & e^{-mt}\Id_m
    \end{pmatrix}.
\end{align*}
This family satisfies \Irrep{} and \Schu{} but not \Prox{} if $n \geq 2$. {}\Schu{} can be proved using the irreducibility of $\wedge^n \R^d$ as $\SL_d(\R)$-representation. This is the case proved by Kim \cite{Kim24a}. 
\end{example}

\subsection{Applications}
As mentioned before, homogeneous dynamics has had several applications in number theory and geometry. In this brief section, we mention two applications of Theorem~\ref{thm:main equidistribution}.  


\subsubsection{Distribution of lattice orbit in homogeneous spaces}
Motivated by central problems in arithmetic geometry, homogeneous dynamics has been successful in providing sharp  estimates in orbital counting problems on homogeneous varieties. In the above notation, this concerns $\Gamma$-orbits in $H\backslash G$. 

There has been extensive research on this problem especially on the whole group $G$ and the symmetric space $K \backslash G$ where all $\Gamma$-orbits are discrete (see \cite{BO12} and references therein). There has also been research, albeit to a lesser extent, on non-discrete $\Gamma$-orbits, where one expects a limiting distribution of $\Gamma$-orbits on $H \backslash G$ according to some natural limiting process. 

A major work in this direction was done by Gorodnik--Weiss \cite{GW07}. For any closed subgroup $S \leq G$, we denote by $B_r^S(x) \subset S$ the open ball of radius $r > 0$ centered at $x \in X$ with respect to the metric induced by the Riemannian metric on $G$. We write 
\begin{align*}
    H_T := H \cap KB_T^G(e)K \quad\text{ and }\quad\Gamma_T := \Gamma \cap KB_T^G(e)K
\end{align*}
for all $T > 0$. For all $y_0 \in H \backslash G$, there is a canonical measure $\nu_{y_0} \ll \mu_{H \backslash G}$ as defined in \cite[Eq. (12) and Proposition 5.1]{GW07} which we normalize as $\hat{\nu}_{y_0} := \vol(X)^{-1}\nu_{y_0}$. In \cite{GW07}, they proved that $\hat{\nu}_{y_0}$ is the limiting density of the orbit $\Gamma. y_0 \subset H \backslash G$ whenever it is dense in the following sense.

\begin{theorem}[{\cite[Theorem 1.1]{GW07}}]
\label{thm:GWTheorem}
Let $y_0 \in H \backslash G$ such that $\Gamma.y_0 \subset H \backslash G$ is dense. Then, for all $\psi \in C_{\mathrm{c}}(H \backslash G)$, we have
\begin{align*}
\lim_{T \to +\infty}\frac{1}{\mu_H(H_T)} \sum_{\gamma \in \Gamma_T} \psi(\gamma.y_0) = \int_{H \backslash G} \psi \, d\hat{\nu}_{y_0}.
\end{align*}
\end{theorem}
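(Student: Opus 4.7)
The approach I would take is the classical duality / ``unfolding'' method of Eskin--McMullen, which transfers an orbital counting problem on $H \backslash G$ into an equidistribution problem for $H$-orbits on $X = G/\Gamma$. Choose $g_0 \in G$ with $y_0 = H g_0$ and set $x_0 = g_0 \Gamma \in X$; the density hypothesis $\overline{\Gamma . y_0} = H \backslash G$ is then equivalent to $\overline{H . x_0} = X$.

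First, I would set up the unfolding. For a nonnegative $\varphi \in C_c(G)$, define
\[
F_\varphi(g) = \sum_{\gamma \in \Gamma} \varphi(g\gamma)\,\psi(Hg\gamma), \qquad g \in G,
\]
which descends to a function on $X$. After averaging $F_\varphi$ over a small neighborhood $V \subset H$ of the identity and making a change of variables that separates the $H$-direction from the transverse direction, one obtains the identity
\[
\int_V F_\varphi(hg_0)\,dh = \int_G \varphi(g)\,\psi(Hg)\,\mathbf{1}_{V g_0 \Gamma}(g)\,dg + (\text{negligible errors}).
\]
Specializing $\varphi$ to a smooth approximation of the characteristic function of $KB_T^G(e)K$ and using the polar decomposition on $H$ to pass from a single slab to $H_T$, this reduces the counting problem to an asymptotic of the form
\[
\frac{1}{\mu_H(H_T)}\sum_{\gamma \in \Gamma_T}\psi(\gamma.y_0) \sim \frac{1}{\mu_H(H_T)}\int_{H_T}\tilde\psi(h.x_0)\,dh
\]
for a natural lift $\tilde\psi$ of $\psi$ from $H \backslash G$ to $X$.

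The third step is to apply equidistribution of the expanding $H$-orbit. Since $\overline{H.x_0} = X$, the Mozes--Shah refinement of Ratner's equidistribution theorem yields
\[
\frac{1}{\mu_H(H_T)}\int_{H_T}\tilde\psi(h.x_0)\,dh \longrightarrow \int_X \tilde\psi\,d\mu_X
\]
as $T \to \infty$. The normalization $\hat\nu_{y_0} = \mathrm{vol}(X)^{-1}\nu_{y_0}$ in \cite{GW07} is precisely the one designed so that $\int_X \tilde\psi\,d\mu_X = \int_{H \backslash G}\psi\,d\hat\nu_{y_0}$, which identifies the limiting density and closes the loop.

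The main obstacle is controlling escape of mass, since neither $H\backslash G$ nor $X$ is compact and $\psi$ is only assumed in $C_c(H\backslash G)$ (so $\tilde\psi$ need not be compactly supported on $X$). This requires Dani--Margulis uniform non-divergence for $H$-orbits on $X$ to rule out mass loss to the cusp in the averaged orbit integrals, together with an admissibility argument for $\nu_{y_0}$ to guarantee that $\hat\nu_{y_0}$ indeed carries full mass of $\psi$. A secondary technical hurdle is to verify that the sequence $\{H_T\}_{T > 0}$ is \emph{well-rounded} in the Eskin--McMullen sense, i.e., that the boundary shell $H_{(1+\epsilon)T}\setminus H_{(1-\epsilon)T}$ has measure $o(\mu_H(H_T))$ as $\epsilon \to 0$; for bi-$K$-invariant balls in a semisimple Lie group this follows from Harish-Chandra's volume asymptotics.
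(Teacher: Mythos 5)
This is a cited theorem: the paper quotes it from \cite[Theorem 1.1]{GW07} and does not reprove it, so there is no in-paper proof to compare against. Your blind reconstruction captures the correct starting framework (duality between lattice orbits on $H\backslash G$ and $H$-orbits on $X = G/\Gamma$, unfolding via a function $F_\varphi$ that descends to $X$), and the unfolding identity in your first step is essentially correct. However, the second step contains a genuine gap, and the final step is asserted rather than derived.

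The problematic step is the reduction to
\[
\frac{1}{\mu_H(H_T)}\sum_{\gamma \in \Gamma_T}\psi(\gamma.y_0) \;\sim\; \frac{1}{\mu_H(H_T)}\int_{H_T}\tilde\psi(h.x_0)\,dh
\]
for a ``natural lift $\tilde\psi$ of $\psi$.'' No such $T$-independent lift exists: a function on $H\backslash G$ is a left-$H$-invariant function on $G$, which does not descend to $G/\Gamma$. Worse, the quantity $\int_{H_T}\tilde\psi(h.x_0)\,dh$ cannot recover the $\psi$-dependence of the original sum. To see the issue concretely: when $h.x_0 = hg_0\Gamma$ is near the basepoint $e\Gamma$, the relevant $\gamma\in\Gamma$ satisfies $hg_0\gamma \approx e$, i.e.\ $\gamma \approx g_0^{-1}h^{-1}$; then $\gamma.y_0 = Hg_0\gamma \approx Hg_0g_0^{-1}h^{-1} = Hh^{-1} = H$ since $h\in H$. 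So the returns of the $H$-orbit to a small box near $x_0$ always correspond to $\gamma.y_0$ being near the \emph{single} coset $He \in H\backslash G$, not spread over $\operatorname{supp}(\psi)$. An $H_T$-ball average of a fixed compactly supported function on $X$ therefore only sees $\psi$ near one point and cannot produce the integral $\int_{H\backslash G}\psi\,d\hat\nu_{y_0}$. What the actual argument in \cite{GW07} does instead is pass to the Cartan decomposition of the \emph{ambient} group $G = KA^+K$, unfold the integral into an $A^+$-integral of $K\times K$-averages, and apply translate equidistribution (the wavefront lemma together with Ratner--Shah equidistribution for expanding translates of $K$-orbits) in the $A^+\to\infty$ direction; the density of $\nu_{y_0}$ with respect to $\mu_{H\backslash G}$ emerges precisely from the Jacobian and cut-off structure of this Cartan-coordinate unfolding. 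Your final step (``the normalization $\hat\nu_{y_0}$ is precisely the one designed so that $\int_X\tilde\psi\,d\mu_X = \int_{H\backslash G}\psi\,d\hat\nu_{y_0}$'') is exactly the place where that derivation is needed, and it is not supplied. Separately, the citation to Mozes--Shah for the $H_T$-ball average equidistribution is not quite the right tool (one would use Shah's equidistribution theorem for $H$-orbit averages via linearization), but since that average does not appear in the correct argument this is a secondary issue.
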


In a previous joint paper by Sarkar and the author, we establish an estimate of error term  under an effective equidistribution hypothesis, see \cite[Theorem 1.12]{LS24}. Combining it with Theorem~\ref{thm:main equidistribution}, we have the following unconditional result. 

Before we state it, let us introduce the following notation. For all Lipschitz function $\psi$ on $H \backslash G$ with compact support, we define a corresponding constant
\begin{align*}
D_\psi := \inf\bigl\{r > 0: \supp(\psi) \subset B_r^G(e) \cdot H \subset H \backslash G\bigr\} > 0.
\end{align*}

\begin{theorem}
Suppose there exists $\mathbf{a}$ lying in interior of a Weyl chamber of $\LieH$ so that the data $(G, \Gamma, H, \mathfrak{r}, \mathbf{a}, U)$ satisfy \Irrep{} and \Schu. 
There exists $\varsigma_H > 0$ depending only on $H$ and $\kappa > 0$ such that the following holds. Let $\psi$ be a Lipschitz function on $H \backslash G$ with compact support, $g_0 \in G$, $x_0 = g_0\Gamma \in X$, and $y_0 = Hg_0 \in H \backslash G$. There exists $M_{\psi, g_0} > 0$ such that for all $R \gg_{G, H, \Gamma, g_0, \chi} 1$ and $T \gg_{G, H, \Gamma} \log(R) + M_{\psi, g_0}$, at least one of the following holds.
\begin{enumerate}
\item We have:
\begin{enumerate}
\item if $\mathrm{rank}(G) = 1$, then
\begin{align*}
\left|\frac{1}{\mu_H(H_T)} \sum_{\gamma \in \Gamma_T} \psi(\gamma.y_0) - \int_{H \backslash G} \psi \, d\hat{\nu}_{y_0} \right| \ll_{D_\psi} \|\psi\|_{\Lip} R^{-\kappa};
\end{align*}
\item if $\mathrm{rank}(G) \geq 2$, then
\begin{align*}
\left|\frac{1}{\mu_H(H_T)} \sum_{\gamma \in \Gamma_T} \psi(\gamma.y_0) - \int_{H \backslash G} \psi \, d\hat{\nu}_{y_0} \right| \ll_{D_\psi} \|\psi\|_{\Lip} \bigl(T^{-\frac{1}{2\dim(G) + 5}} + R^{-\kappa}\bigr).
\end{align*}
\end{enumerate}
\item There exists $x \in X$ with
\begin{align*}
d(x_0, x) \leq e^{-\frac{\varsigma_H}{2}T}
\end{align*}
such that $Hx$ is periodic with $\vol(Hx) \leq R$.
\end{enumerate}
\end{theorem}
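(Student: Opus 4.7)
The strategy is to deduce the theorem by combining Theorem~\ref{thm:main equidistribution} with the conditional result \cite[Theorem 1.12]{LS24}. That latter result was set up exactly for this purpose: it takes as input a polynomially effective equidistribution statement of the form provided by Theorem~\ref{thm:main equidistribution}, coupled with the standard ``nearby periodic $H$-orbit'' alternative, and outputs a polynomial error estimate for $\Gamma$-orbit counts on $H \backslash G$ together with the analogous alternative. Since \Irrep{} and \Prox{} are already assumed, no further dynamical input is required; the task is essentially to match constants.

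Concretely, I would first recall the mechanism used in \cite{LS24}. Beginning with a thickening/duality argument of Eskin--McMullen wavefront-lemma type, the normalized lattice sum $\mu_H(H_T)^{-1}\sum_{\gamma \in \Gamma_T}\psi(\gamma.y_0)$ is rewritten as an integral of a smoothed bump function on $X$ against the push-forward of $H_T.x_0$. A $KAK$-type Cartan decomposition of $H_T$ then reduces matters to a weighted average of integrals $\int_{B_1^U}\phi(a_s u.x_0)\,du$ for $s$ ranging up to a value proportional to $T$, where each $\phi$ is a compactly supported smooth function built from $\psi$ whose Sobolev norm $\mathcal{S}(\phi)$ is controlled polynomially by $\|\psi\|_{\Lip}$, $D_\psi$, and $g_0$. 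The threshold $M_{\psi, g_0}$ is the value of $T$ at which this reduction first becomes valid.

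Next, I would feed Theorem~\ref{thm:main equidistribution} into each of the resulting integrals; the dichotomy transfers directly. In the equidistribution branch, summing the Sobolev-bounded errors produces case~(1). The distinction between $\rank(G) = 1$ and $\rank(G) \geq 2$ is internal to \cite{LS24} and originates from the Weyl-chamber averaging in the Cartan decomposition: rank one involves essentially a single Cartan direction, while rank $\geq 2$ absorbs the additional $T^{-1/(2\dim(G)+5)}$ loss coming from effective mixing transverse to $\{a_t\}$. In the periodic-orbit branch, Theorem~\ref{thm:main equidistribution} supplies $x \in X$ with $H.x$ periodic, $\vol(H.x) \leq R$, and $d_X(x_0, x) \leq e^{-s/\ref{a:main equidistribution2}}$ for some $s$ proportional to $T$ (with proportionality depending only on the volume-growth rate of $H_T$); setting $\varsigma_H$ to this proportionality constant divided by $2\ref{a:main equidistribution2}$ yields case~(2), given the threshold $T \gg \log R + M_{\psi, g_0}$.

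The main obstacle, such as it is, lies in the bookkeeping. One must verify compatibility of the threshold conditions (Theorem~\ref{thm:main equidistribution} asks for $R$ large in terms of $\inj(x_0)$, whereas \cite[Theorem 1.12]{LS24} asks for $R$ large in terms of $g_0$), track the polynomial loss in the Sobolev norm through the reduction, and choose $\kappa$ uniformly in $\psi$ and $g_0$. Since $\inj(x_0)$ admits a lower bound in terms of $g_0$, the two threshold conditions are compatible, and the Sobolev losses are absorbed into $\kappa$ after possibly shrinking it. No genuinely new ideas beyond Theorem~\ref{thm:main equidistribution} and the framework of \cite{LS24} are required.
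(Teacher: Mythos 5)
Your proposal matches the paper's approach exactly: the paper offers no proof beyond the single sentence preceding the theorem statement, which says to combine Theorem~\ref{thm:main equidistribution} with the conditional result \cite[Theorem 1.12]{LS24}, and you identify precisely this combination as the argument. The additional exposition you give on the internal mechanism of \cite{LS24} (wavefront-lemma thickening, Cartan decomposition, rank dichotomy, threshold bookkeeping) is supplementary detail the paper leaves implicit, but it does not change the route.
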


\subsubsection{Effective version of the Oppenheim conjecture}
In a forthcoming paper, we will investigate the applications of Theorem~\ref{thm:main equidistribution} to distribution of value of indefinite quadratic forms on integer points in all dimension $d \geq 3$ and further counting results. In particular, we will obtain an effective version of the Oppenheim conjecture for quadratic forms with at least three variables. 

\begin{theorem}\label{thm:main quadratic form}
For all integer $d \geq 3$, there exist absolute constants $A_1 > A_2 \geq 1$ and $\kappa > 0$ so that the following holds. Let $Q$ be a non-degenerate indefinite quadratic form with $d$ variables and $\det Q = 1$. For all $R$ large enough depending on $\|Q\|$ and all $T \geq R^{A_1}$, at least one of the following is true. 
\begin{enumerate}
    \item For every $s \in [-R^{\kappa}, R^{\kappa}]$, there exists a primitive vector $v \in \Z^d$ with $0 < \|v\| \leq T$ so that 
    \begin{align*}
        |Q(v) - s| \leq R^{-\kappa}.
    \end{align*}
    \item There exists an integral quadratic form $Q'$ with $\|Q'\| \leq R$ so that 
    \begin{align*}
        \|Q - \lambda Q'\| \leq T^{-\frac{1}{A_2}} \text{ where } \lambda = \det(Q')^{-\frac{1}{d}}.
    \end{align*}
\end{enumerate}
\end{theorem}

For $d = 3$, the theorem is proved by Lindenstrauss, Mohammadi, Wang and Yang \cite{LMWY25} utilizing homogeneous dynamics. For $d \geq 5$, the theorem is proved by Buterus, G\"{o}tze, Hille and Margulis \cite{BGHM22} utilizing analytic number theory and a quantitative version of Meyer's theorem. 

In contrast, the proof here will be purely based on homogeneous dynamics and we deal with all dimension $d \geq 3$ at once. 

We have the following corollary for algebraic irrational indefinite quadratic forms. 
\begin{corollary}\label{thm:main quadratic form alg}
For all integer $d \geq 3$, and all algebraic irrational indefinite quadratic form $Q$, there exists $\kappa = \kappa_Q > 0$ so that the following holds. For all $T$ large enough depending explicitly on $Q$ and all
    \begin{align*}
        s \in [-T^{\kappa}, T^{\kappa}],
    \end{align*}
    there exists a primitive vector $v \in \Z^d$ with $0 < \|v\| \leq T$ so that
    \begin{align*}
        |Q(v) - s| \leq T^{-\kappa}.
    \end{align*}
\end{corollary}

\subsection{On the proof of Theorem~\ref{thm:main equidistribution}}
We now discuss the proof of Theorem~\ref{thm:main equidistribution}. 

The strategy of the proof of Theorem~\ref{thm:main equidistribution} is similar to the general strategy in \cite{LMW22,LMWY25}. However, due to the complication from $H$ and the $\Ad(H)$-invariant complement $\mathfrak{r}$, the achievement to higher dimension is more challenging and requires novel ingredients. Before we point out the difficulties and the solutions, let us recall the general strategy developed in \cite{LMW22,LMWY25}. 

In \cite{LMW22,LMWY25}, the proof can be roughly divided into three phases: 
\begin{enumerate}
    \item Initial dimension from effective closing lemma;
    \item Improving dimension using ingredients from projection theorems;
    \item From large dimension to equidistribution. 
\end{enumerate}

\subsubsection{Difficulties}
The major difficulties in our setting come from phase~(1) and (2). Due to the complexity of $H$, especially the case where $H$ is semisimple but not simple, phase~(1) cannot be proved directly as in \cite[Proposition 6.1]{LM23}. However, thanks to the effective closing lemma for long unipotent orbits proved by Lindenstrauss, Margulis, Mohammadi, Shah and Wieser in \cite{LMMSW24}, we obtain an initial dimension. One of the crucial ingredients is an effective version of {\L}ojasiewiecz's inequality. This phase is carried out in Part~\ref{part:closinglemma}. 

The difficulty from phase~(2) is more severe. In \cite{LMWY25}, phase~(2) can be very roughly speaking further divided into three steps. First, one establishes a dimension improving result for the linear $\Ad(H)$-action on $\mathfrak{r}$. Building on this, one establishes a Margulis function estimate which provides dimension improvement in the transverse direction in $X$. With the Margulis function estimate, one runs a bootstrap process to get a high dimension (close to $\dim(\mathfrak{r})$) in the transverse direction to $H$. The major difficulty comes from the first step. 

\subsubsection{Failure of optimal projection theorems}
The $\Ad(H)$-invariant complement $\mathfrak{r}$ can be decomposed into eigenspaces $\mathfrak{r}_\lambda$ for $\mathbf{a}$. Let $\mathfrak{r}^{(\mu)} = \bigoplus_{\lambda \geq \mu} \mathfrak{r}_\lambda$ and let $\pi^{(\mu)}$ be the orthogonal projection to $\mathfrak{r}^{(\mu)}$. Let $\pi^{(\mu)}_u = \pi^{(\mu)} \circ \Ad(u)$. Roughly speaking, we say the family of maps $\pi^{(\mu)}_u:\mathfrak{r}\to \mathfrak{r}^{(\mu)}$ is optimal if for all set $A$ and almost all parameter $u \in \mathcal{B}_1^U$ one has $\dim \pi^{(\mu)}_u(A) = \min\{\dim A, \dim \mathfrak{r}^{(\mu)}\}$. The $\dim$ here stands for a suitable dimension notion for fractal-like set. 

In \cite{LMW22,LMWY25}, the first step is obtained by optimal projection theorems to \emph{all} the space $\mathfrak{r}^{(\mu)}$. Those optimal projection theorems in turns, heavily rely on the feature that $\mathfrak{r}$ is an irreducible representation of $\SL_2(\R)$ and a deep projection theorem for moment curve proved by Gan--Guo--Wang using the decoupling inequality \cite{GGW24}. 

The case of general irreducible representation however, cannot be proved in this way. In fact, for some $\mathfrak{r}^{(\mu)}$ in our setting, the family of projections $\{\pi^{(\mu)} \circ \Ad(u)\}_{u \in U}$ is \emph{never} optimal due to \emph{algebraic} obstructions, see the following example. 

\begin{example}
Let $(G, H) = (\SL_4(\R), \SO(2, 2))$. As a representation of $\mathfrak{so}(2, 2) \cong \mathfrak{sl}_2(\R) \oplus \mathfrak{sl}_2(\R)$, $\mathfrak{r}$ is isomorphic to $\mathfrak{sl}_2(\R) \otimes \mathfrak{sl}_2(\R)$. Let $e$ be the fixed vector in $\mathfrak{sl}_2(\R)$ by the adjoint action of strictly upper triangular matrices. We set $E$ to be $\mathfrak{sl}_2(\R) \otimes \R e$. We have
\begin{align*}
    2 = \dim \pi_{u}^{(2)}(E) < \min\{\dim E, 3\} = 3, \quad \forall u \in U.
\end{align*}
See Figure~\ref{fig:Non degeneracy fails}. We remark that one can construct a family of infinite similar examples via $(G, H) = (\SL_{mn}(\R), \SL_m(\R) \times \SL_n(\R))$. 
\end{example}

\begin{figure}
\centering
\includegraphics{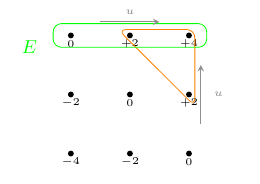}
\caption{Failure of an ideal optimal projection theorem. }
\label{fig:Non degeneracy fails}
\end{figure}

\subsubsection{Entropy distribution}
To overcome the failure of ideal optimal projection theorems, we provide the following different strategy. 

Our solution can be roughly conclude as the following and is inspired by \cite{BFLM11}. If $\mu$ corresponds to the \emph{fastest} expanding direction in $\mathfrak{r}$, we show that one can gain dimension from it. For all the other $\mu$'s, we show that one does not lose dimension from it. 

More precisely, for all $\mu$'s, we prove the following (sub)-critical estimates in Theorem~\ref{thm:subcritical entropy general irrep}. We say the family of maps $\pi^{(\mu)}_u:\mathfrak{r}\to \mathfrak{r}^{(\mu)}$ is (sub)-critical if for all set $A$ and almost all parameter $u \in \mathcal{B}_1^U$ one has 
\begin{align*}
    \dim \pi^{(\mu)}_u(A) \geq \frac{\dim \mathfrak{r}^{(\mu)}}{\dim \mathfrak{r}}\dim A.
\end{align*}

For the largest $\mu$, we establish an $\epsilon$-improvement for the above inequality using representation theory and recent developments on Bourgain's discretized projection theorem \cite{Bou10,He20}. The main theorem is Theorem~\ref{thm:supcritical}. This is the only place in this paper which uses {}\Schu{} and it will be interesting to prove a stronger projection theorem without this condition. It will also be interesting to utilize the optimal projection theorem for moment curve by Gan--Guo--Wang \cite{GGW24} to prove an optimal projection theorem for general family of projections (possibly even not from representations) under a condition similar to {}\Schu{}\footnote{In a previous draft which dealt with $(G, H) = (\SL_4(\R), \SO(2, 2))$ and $(G, H) = (\SL_4(\R), \SO(3, 1))$, we established an optimal projection theorem for the largest $\mu$ using the projection theorem proved by Gan--Guo--Wang, see \cite[Section 13]{Lin25}. }.

Combining these two pieces together via submodularity inequality, we obtain a dimension improvement in $\mathfrak{r}$. 

\subsubsection{A submodularity inequality}
We now discuss the (sub)-critical bound for $\{\pi^{(\mu)}_u\}$ where $\mu$ might not be the largest eigenvalue. Usually, the subcritical estimate can be easily obtained if the family of spaces $\{h.\mathfrak{r}^{(\mu)}\}_{h \in H}$ satisfies non-degenerate condition, see e.g. \cite[Proposition 29]{He20}. However, for general irreducible representation, the family of subspaces $\{h.\mathfrak{r}^{(\mu)}\}_{h \in H}$ can be very degenerate, see the following example. 

\begin{example}
Let $(G, H) = (\SL_4(\R), \SO(2, 2))$ be as in the previous example. The representation $\mathfrak{r}$ has dimension $9$ while the subspace $\mathfrak{r}^{(2)}$ has dimension $3$. Ideally, one expects that a generic triple of subspaces in $\{h.\mathfrak{r}^{(2)}\}_{h \in H}$ is transverse and generates the whole space $\mathfrak{r}$. However, one can compute that \emph{for all} $h_1, h_2, h_3 \in H$, we have
\begin{align*}
    \dim h_1.\mathfrak{r}^{(2)} + h_2.\mathfrak{r}^{(2)} + h_3.\mathfrak{r}^{(2)} \leq 8.
\end{align*}
We remark that same phenomenon happens in the case where $(G, H) = (\SL_4(\R), \SO(3, 1))$. 
\end{example}

In a previous draft \cite{Lin25}, the author obtained such bound in the cases where $(G, H) = (\SL_4(\R), \SO(2, 2))$ and $(G, H) = (\SL_4(\R), \SO(3, 1))$. In this paper, we obtain a (sub)-critical bound for projections in any general irreducible representations using a combinatorial method. We refer to Section~\ref{sec:outline subcritical} for a more detailed exposition. 

A key ingredient and also an interesting bi-product of the proof of (sub)-critical estimate is the following submodularity inequality. Let $F$ be a field and let $\tilde{H}$ be a connected linear algebraic group over $F$. Let $V$ be a regular irreducible representation of $\tilde{H}$ defined over $F$. 
\begin{theorem}\label{thm:dimension bound one subspace}
    For all subspaces $W, W' \subseteq V$, there exists $\tilde{h} \in \tilde{H}$ so that 
    \begin{align}\label{eqn:linear critical}
        \dim [(\tilde{h}.W) \cap W'] \leq \frac{\dim W}{\dim V} \dim W'.
    \end{align}
    Note that the set of such $\tilde{h}$ forms a Zariski open subset of $\tilde{H}$.
\end{theorem}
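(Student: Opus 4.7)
The plan is an averaging argument over a maximal compact subgroup of $\tilde H(\mathbb C)$, with Schur's lemma providing the key identity. I would first observe that the Zariski-open conclusion is automatic from the upper semicontinuity of $\tilde h \mapsto \dim(\tilde h.W \cap W')$ (a standard fibre-dimension statement), and that by the Lefschetz principle it suffices to produce, over $\mathbb C$, a single $\tilde h$ achieving the bound. One may then reduce to the case that $\tilde H$ is reductive: the unipotent radical of $\tilde H$ has a nonzero fixed vector on $V$ by Lie--Kolchin, its fixed subspace is $\tilde H$-invariant since the radical is normal, and by irreducibility this subspace must equal $V$, so the radical acts trivially.

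The central computation is the following. Let $K \subset \tilde H(\mathbb C)$ be a maximal compact subgroup with normalised Haar measure $dg$. By Weyl's unitary trick $K$ is Zariski dense in $\tilde H(\mathbb C)$, so $V$ remains $K$-irreducible. Fix a $K$-invariant Hermitian inner product on $V$, and write $P_U$ for orthogonal projection onto a subspace $U$. Then $\int_K P_{gW}\,dg$ commutes with the $K$-action, so by Schur's lemma it is a scalar operator; computing its trace as $\dim W$ gives
\begin{align*}
\int_K P_{gW}\,dg \;=\; \frac{\dim W}{\dim V}\,I.
\end{align*}
Multiplying by $P_{W'}$ and taking traces,
\begin{align*}
\int_K \tr(P_{gW}\, P_{W'})\,dg \;=\; \frac{\dim W \cdot \dim W'}{\dim V}.
\end{align*}
For orthogonal projections one has $\tr(P_A P_B) \geq \dim(A \cap B)$: extending an orthonormal basis of $A \cap B$ to one of $B$, each of the first $\dim(A \cap B)$ diagonal entries of $P_A P_B$ equals $1$, and the remaining diagonal entries are nonnegative since $P_A P_B P_A$ is positive semidefinite. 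Hence the nonnegative integer-valued function $g \mapsto \dim(gW \cap W')$ has $K$-average at most $\dim W \dim W' / \dim V$, forcing some $g \in K$ to attain this bound.

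The main obstacle I anticipate is the case in which $V$ is irreducible over $F$ but decomposes over $\bar F$ into Galois-conjugate absolutely irreducible components: there Schur's lemma no longer collapses the averaged operator to a single scalar, and one must argue either by descending the bound from a single absolutely irreducible constituent (using that the good locus is Zariski open and hence meets each such component after base change) or by a blockwise version of the trace identity applied to the isotypic decomposition of $\End(V)$ under the conjugation action of $\tilde H$. When $V$ is absolutely irreducible the argument above is complete as stated.
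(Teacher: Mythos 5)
Your argument takes a genuinely different route from the paper's: it is essentially the unitary-trick proof that the paper itself notes (after the theorem statement) was found by John Stalker, whereas the paper proves Theorem~\ref{thm:linear subcritical irrep strong} by a purely combinatorial induction, chaining Lemma~\ref{lem:move out orbit} with the linear submodularity inequality and never leaving the ground field. When $V$ is \emph{absolutely} irreducible your computation is correct and complete --- and considerably shorter than the paper's chain of lemmas. But the theorem is stated for $V$ irreducible over an arbitrary characteristic-zero field $F$, and there the obstruction you flag at the end is a real gap, not closed by either fallback you sketch.

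The failure is concrete. After base change, write $V_\mathbb{C}=\bigoplus_i V_i$; the operator $T:=\int_K P_{gW}\,dg$ commutes with $K$ and hence with $\tilde H_\mathbb{C}$, so it lies in $\End_{\tilde H_\mathbb{C}}(V_\mathbb{C})$ --- but this algebra is now larger than the scalars, and Schur gives only $T=\bigoplus_i c_i\,\mathrm{id}_{V_i}$ with $\sum_i c_i\dim V_i=\dim W$ and no constraint forcing the $c_i$ to be equal. For example take $F=\mathbb{Q}$, $\tilde H=\mathrm{Res}_{\mathbb{Q}(\sqrt2)/\mathbb{Q}}\mathbb{G}_m$ acting by multiplication on $V=\mathbb{Q}(\sqrt2)$ (this is $F$-irreducible with $\End_{\tilde H}(V)=\mathbb{Q}(\sqrt2)$), and $W=\mathbb{Q}\cdot(a+b\sqrt2)$; with the standard $(S^1)^2$-invariant inner product on $V_\mathbb{C}=\mathbb{C}^2$ one finds
\begin{align*}
c_1:c_2=(a+b\sqrt2)^2:(a-b\sqrt2)^2,
\end{align*}
which is $\neq1:1$ for generic $a,b$. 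Then $\int_K\tr(P_{gW}P_{W'})\,dg=\sum_i c_i\tr(P_{V_i}P_{W'})$ is no longer $\tfrac{\dim W}{\dim V}\dim W'$, and the pigeonhole step collapses. One could try to repair this by rescaling the $K$-invariant form block-by-block to equalise the $c_i$; in the commutative multiplicity-free case that is plausible, but it requires a separate existence argument for the normalisation, depends on $W$, and becomes murky when $\End_{\tilde H}(V)$ is a non-commutative division algebra, where $V_{\bar F}$ appears with multiplicity and $T$ is not forced to be block-scalar. Your first fallback (``descend from an absolutely irreducible constituent'') does not parse, since the good locus sits inside $\tilde H$ rather than inside $V$ and $W,W'$ need not respect the decomposition; your second (``blockwise trace identity'') is exactly the missing normalisation. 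The paper's combinatorial route sidesteps all of this: $F$-irreducibility is used only through the elementary Lemma~\ref{lem:move out orbit}, and the bound is assembled from linear algebra alone, so the statement holds in the stated generality with no base change.
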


\begin{remark}
    This inequality is recently independently proved by B{\'e}nard--He. We refer to Remark~\ref{rem:history} for a more detailed overview on related works. 
\end{remark}

\begin{remark}
    Note that any irreducible representation of connected algebraic group factors through its reductive quotient since the set of fixed points of the unipotent radical is a sub-representation. The above theorem is essentially about connected reductive groups. 
\end{remark}

\begin{remark}
    The dimension bound in Theorem~\ref{thm:dimension bound one subspace} is optimal in general. Suppose $F$ is algebraically closed, $\tilde{H} = \tilde{H}_1 \times \tilde{H}_2$ and let $V_1$ and $V_2$ be two irreducible representation of $\tilde{H}_1$ and $\tilde{H}_2$ respectively. Let $V = V_1 \otimes V_2$ and let $v_i \in V_i$ be nonzero vectors. Since $F$ is algebraically closed, $V$ is an irreducible representation of $\tilde{H}$. Set $W = V_1 \otimes Fv_2$ and $W' = Fv_1 \otimes V_2$. For all $\tilde{h} = (\tilde{h}_1, \tilde{h}_2)$,
    \begin{align*}
        \dim [(\tilde{h}.W) \cap W'] = \dim F(v_1 \otimes (h_2.v_2)) = 1 = \frac{\dim W}{\dim V} \dim W'.
    \end{align*}
    It would be interesting to see whether the case of equality always arises in the above tensor-product-type example. 
\end{remark}

We refer to Section~\ref{sec:outline subcritical} for a more detailed exposition of the proof and the relation to the (sub)-critical estimate. 

\subsubsection{Outline}
Part~\ref{part:closinglemma} and Part~\ref{part:projection} are the main novel part of this paper and they are independent. Part~\ref{part:closinglemma} is devoted to phase~(1). Its main theorem is Theorem~\ref{thm:Closing lemma many scale}. Part~\ref{part:projection} is devoted to the linear dimension improvement result in phase~(2). Its main theorem is Theorem~\ref{thm:energy Improvement}. We refer to Part~\ref{part:phase three plus proof of main} for phase~(3) and also a sketch of an adaption for framework from \cite{LMWY25} to prove Theorem~\ref{thm:main equidistribution}. Some details for such adaption is provided in Appendix~\ref{app:decomposition initial} for readers' convenience. The estimate in Part~\ref{part:projection} utilizes estimate on the Brascamp--Lieb constant from \cite{Gre21}. For readers' convenience, we include the details for that estimate in Appendix~\ref{app:BL}. 

\subsection*{Acknowledgment}
I am extremely grateful to my advisor Amir Mohammadi for introducing the topic and for his guidance, supports, encouragements and many helpful discussions throughout. 

I would like to thank Hong Wang for suggesting using Brascamp--Lieb inequality to simplify the proof of subcritical estimate during the workshop Reading Groups in Analysis 2025 in University of Pennsylvania. I would like to thank Ruixiang Zhang for enlightening discussion on Brascamp--Lieb inequality, especially for pointing out the difference between Theorem~\ref{thm:subcritical linear general irrep intersection} and Theorem~\ref{thm:dimension bound one subspace}. I would also like to thank Lei Yang and Pengyu Yang for enlightening discussions and encouragements. I would like to thank Jiangtao Li and Jiajia Wang for helping me check a lot of computations in an early stage of this project. 

This project initiated when I was a Ph.D. student in UC San Diego. I would like to thank the department of mathematics and my friends in UC San Diego for helps, supports and encouragements. 

\section{Notations and preliminaries}\label{sec:Global prelim}
In this section, we introduce the notations and preliminaries used throughout this whole paper. New notations and preliminaries needed inside each part will be introduced in those 'preparation' sections. We remark that \emph{inside} \cref{part:projection,part:closinglemma}, we might slightly change the conventions for simplicity. We will always clarify the changes at the beginning of each section. 

\subsection{Lie groups and Lie algebras}
We use corresponding Fraktur letters for the Lie algebras of Lie groups throughout the paper. For example, $\mathfrak{s}$ is the Lie algebra of Lie group $S$. For a Lie group $S$, we use $S^\circ$ to denote its identity component under the \emph{Hausdorff topology}. For an algebraic group $\mathbf{S}$, we use $\mathbf{S}^0$ to denote its identity component under the \emph{Zariski topology}. For a group $G$ acting on a space $X$, we use $g.x$ to denote this action. Sometimes the action is clear from the context and we will use $g.x$ without introducing it explicitly. For example, for $v \in \LieG$ and $g \in G$, we write $g.v = \Ad(g)v$. 

\subsection{The data \texorpdfstring{$(G, \Gamma, H, \mathfrak{r}, \mathbf{a}, U)$}{(G, Gamma, H, r, a, U)}}
We first recall that from the introduction the definition and condition on the data $(G, \Gamma, H, \mathfrak{r}, \mathbf{a}, U)$. 

Let $\mathbf{G} \leq \SL_N$ be a connected perfect $\Q$-group, let $G = \mathbf{G}(\R)$ be its group of $\R$-points, and let $\LieG = \mathrm{Lie}(G)$ be its Lie algebra. Let $\rad(\LieG)$ be the radical of the $\LieG$, i.e., the maximal solvable ideal of $\LieG$. Let $\Gamma = G \cap 
\SL_{N}(\Z)$. It is a lattice of $G$ by the theorem of Borel--Harish-Chandra. Let $X = G/\Gamma$ and let $\mu_X$ be the unique probability Haar measure on $X$. 

We fixed an inner product on $\LieG$ induced by the standard Euclidean metric on $\mathfrak{sl}_N(\R)$ and we use $\|\cdot\|_2$ to denote the induced norm. It induces a right $G$-invariant Riemannian metric on $G$ and hence a Riemannian metric $d_X$ on $X = G/\Gamma$. 

Let $\mathbf{H}$ be a connected semisimple $\R$-subgroup of $\mathbf{G}$ and let $H = \mathbf{H}(\R)$ be its group of $\R$-points. Suppose $H$ has no compact factor. Let $\LieH$ be its Lie algebra and let $\LieH = \bigoplus_{i = 1}^k \LieH_i$ be the unique decomposition of $\LieH$ into its simple ideals. Since $\mathbf{H}$ is semisimple, there exists a decomposition of $\LieG$ into $H$-invariant complement as $\LieG = \LieH \oplus \mathfrak{r}$. We remark that $\mathfrak{r}$ is a \emph{non-trivial} representation of $\LieH$ since $\mathbf{G}$ is perfect. 

Let $\mathbf{a} \in \LieH$ be a semisimple element with $\|\mathbf{a}\|_2 = 1$ whose projection to each simple factor is non-zero. Suppose $\ad\mathbf{a}$ is diagonalizable over $\R$. Let $a_t = \exp(t\mathbf{a})$ and let $U$ be the expanding horospherical subgroup of $H$ as the following:
\begin{align*}
    U = \{u \in H: a_{-t} u a_{t} \to e \text{ as } t \to +\infty\}.
\end{align*}
Let $\LieU$ be the Lie algebra of $U$. Let $\mathcal{B}_r^{\LieU}$ be the ball of radius $r$ under the norm $\|\cdot\|_2$ centered at the origin and let $\mathcal{B}_r^U = \exp(\mathcal{B}_r^{\LieU})$. 

We fix a Cartan involution of $\LieH$ so that $\theta(\mathbf{a}) = -\mathbf{a}$. We have the Cartan decomposition $\LieH = \LieK_H \oplus \LieP_H$. Let $K_H$ be the corresponding maximal compact subgroup of $H$ so that $\LieK_H = \mathrm{Lie}(K_H)$. Let 
\begin{align*}
    \LieH = \left(\bigoplus_{\alpha \in \Phi^+} \LieH_{-\alpha}\right) \oplus \mathfrak{Z}_{\LieK}(\LieA) \oplus \LieA \oplus \left(\bigoplus_{\alpha \in \Phi^+} \LieH_{\alpha}\right)
\end{align*}
be the corresponding restricted root decomposition where $\Phi^+$ is a set of positive roots so that $\mathbf{a}$ lies in the closed positive Weyl chamber. Let $\Phi = \Phi^+ \cup -\Phi^+$ be the set of all restricted roots. 

Let $\LieH_{\lambda}$ be the eigenspaces of $\ad \mathbf{a}$ on $\LieH$, we have the following relationship between them and the restricted root decomposition:
\begin{align*}
    \LieU^+ := \LieU = \bigoplus_{\lambda > 0} \LieH_{\lambda} = \bigoplus_{\alpha \in \Phi^+, \alpha(\mathbf{a}) > 0} \LieH_{\alpha}.
\end{align*}
Note
\begin{align*}
    \LieH_0 = \mathfrak{Z}_{\LieK}(\LieA) \oplus \LieA \oplus \bigoplus_{\alpha \in \Phi, \alpha(\mathbf{a}) = 0} \LieH_{\alpha}.
\end{align*}
This is a Lie subalgebra of $\LieH$. Let 
\begin{align*}
    \LieU^- = \bigoplus_{\lambda < 0} \LieH_{\lambda} = \bigoplus_{\alpha \in \Phi^+, \alpha(\mathbf{a}) > 0} \LieH_{-\alpha}.
\end{align*}
Let $A_H = \exp(\LieA) \leq H$, and $U^- = \exp(\LieU^-)$. Let $U^+ = U$. We remark that the projection of $\mathbf{a}$ to each simple ideal factor in $\LieH$ is nontrivial, so is $\LieU$ and we can write $\LieU = \oplus_{i} \LieU_i$ where $\LieU_i < \LieH_i$ and $\LieU_i \neq \{0\}$. 

We set $\lambda_{\min}$ to be the least positive eigenvalue of $\mathbf{a}$ on $\LieU$, $\lambda_{\max}$ to be the largest positive eigenvalue of $\mathbf{a}$ on $\LieU$ and 
\begin{align*}
    \lambda_{\vol} = \log \left(\frac{\Leb(\Ad (a_t)\mathcal{B}_1^{\LieU})}{\Leb(\mathcal{B}_1^{\LieU})}\right).
\end{align*}

In Part~\ref{part:closinglemma}, we need a $\Q$-structure on $\LieH$. Let us recall the following result in \cite{Kam14}. See also \cite{Mor04,Mor15}. We remark that when $\mathbf{H}$ is $\R$-split, the existence of such basis is a classical result by Chevalley and Serre. 
\begin{proposition}[\text{\cite[Theorem 4.1]{Kam14}}]\label{pro:General Chevalley basis}
    There exists a basis $\{x_i\}_{i = 1}^h$ of $\LieH$ compatible to the restricted root space decomposition so that the structure constants $c_{i, j}^k \in \frac{1}{2}\Z$ where $[x_i, x_j] = \sum_{k = 1}^h c_{ij}^k x_k$. 
\end{proposition}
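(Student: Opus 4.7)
The plan is to descend a Chevalley basis from the complexification $\mathfrak{h}_{\mathbb{C}}$ to the real form $\mathfrak{h}$. First, I would pick a Cartan subalgebra $\mathfrak{c} \subseteq \mathfrak{h}$ containing $\mathfrak{a}$, maximally noncompact (i.e.\ maximally $\mathbb{R}$-split inside $\mathfrak{h}$); its complexification $\mathfrak{c}_{\mathbb{C}}$ is a Cartan subalgebra of $\mathfrak{h}_{\mathbb{C}}$, defined over $\mathbb{R}$. By the classical Chevalley--Serre theorem, $\mathfrak{h}_{\mathbb{C}}$ admits a \emph{Chevalley basis} $\{H_{\alpha_1},\dots,H_{\alpha_\ell}\} \cup \{X_\alpha : \alpha \in \Phi\}$ whose structure constants lie in $\mathbb{Z}$, where $\Phi$ is the root system of $(\mathfrak{h}_{\mathbb{C}},\mathfrak{c}_{\mathbb{C}})$.

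The real form $\mathfrak{h}$ is cut out by an anti-linear involution $\sigma$ of $\mathfrak{h}_{\mathbb{C}}$, which induces an involution $\bar\sigma$ of $\Phi$. The next step is to arrange the Chevalley basis to be \emph{$\sigma$-compatible} in the sense that $\sigma(X_\alpha) = \epsilon_\alpha X_{\bar\sigma(\alpha)}$ for some signs $\epsilon_\alpha \in \{\pm 1\}$. This normalization follows from the rigidity of the Chevalley basis up to sign changes and the fact that $\sigma$ must preserve the $\mathbb{Z}$-span of $\{H_{\alpha_i},X_\alpha\}$, using the classification of real forms via Araki/Satake diagrams. With this normalization in hand, I would build a real basis of $\mathfrak{h}$ by the standard recipe:
\begin{itemize}
\item For each $\bar\sigma$-orbit $\{\alpha,\bar\sigma(\alpha)\}$ with $\bar\sigma(\alpha)\neq\alpha$, take $X_\alpha + \sigma(X_\alpha)$ and $\mathrm{i}\bigl(X_\alpha - \sigma(X_\alpha)\bigr)$;
\item For $\bar\sigma$-fixed roots, take $X_\alpha$ or $\mathrm{i} X_\alpha$ according to whether $\epsilon_\alpha = +1$ (real/noncompact imaginary) or $-1$ (compact imaginary);
\item For the Cartan part, take a real basis of $\mathfrak{c}$ using $H_{\alpha_i} + \sigma(H_{\alpha_i})$ and $\mathrm{i}(H_{\alpha_i} - \sigma(H_{\alpha_i}))$.
\end{itemize}

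Because each $X_\alpha$ is recovered as $\tfrac{1}{2}$ of a $\mathbb{Z}[\mathrm{i}]$-combination of these real basis vectors, expanding $[\,\cdot\,,\,\cdot\,]$ in the new basis applies the integer structure constants of the Chevalley basis and then a single change-of-basis matrix with entries in $\tfrac{1}{2}\mathbb{Z}[\mathrm{i}]$, producing new structure constants in $\tfrac{1}{2}\mathbb{Z}$ (the imaginary parts cancel since the bracket preserves $\mathfrak{h}$). The main obstacle is the bookkeeping of the signs $\epsilon_\alpha$: one must verify that every intermediate cross term is either purely real or purely imaginary at the right moment, so that no factor of $\tfrac{1}{4}$ appears. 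This is where the case analysis---real roots, compact imaginary roots, noncompact imaginary roots, and complex $\bar\sigma$-orbits---enters, and it is carried out uniformly in \cite{Kam14} using the Araki diagram attached to the real form of $\mathbf{H}$. The $\mathbb{R}$-split case is of course an immediate special case of the classical Chevalley--Serre theorem with integer (not just half-integer) structure constants.
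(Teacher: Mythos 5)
The paper does not prove this proposition at all: it is imported verbatim as \cite[Theorem 4.1]{Kam14} and used as a black box (once to get a $\Q$-structure on $\LieH$ for the effective {\L}ojasiewicz argument, and again inside Lemma~\ref{lem:coro of Loj in linear algebra}). So there is no internal proof to compare your argument against; you are reconstructing Kammeyer's theorem, not a step of this paper.

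As a sketch of Kammeyer's proof, the overall strategy is the right one: pick a maximally split Cartan, take a classical Chevalley basis of $\LieH_{\C}$, and descend along the conjugation $\sigma$ by forming $X_\alpha + \sigma(X_\alpha)$, $\mathrm{i}(X_\alpha - \sigma(X_\alpha))$, etc., so that the inverse change of basis has entries in $\tfrac{1}{2}\Z[\mathrm{i}]$. The genuine work --- which your sketch flags but does not actually discharge --- is the $\sigma$-compatibility normalization $\sigma(X_\alpha) = \epsilon_\alpha X_{\bar\sigma(\alpha)}$. It is not correct to assert that this ``follows from the rigidity of the Chevalley basis up to sign changes and the fact that $\sigma$ must preserve the $\Z$-span'': for an arbitrary real form, $\sigma$ has no a priori reason to preserve the $\Z$-lattice spanned by a given Chevalley basis, and arranging a basis for which it does (and for which the $\epsilon_\alpha$ behave consistently across the Satake diagram so that the cross-terms stay in $\tfrac{1}{2}\Z$ rather than $\tfrac{1}{4}\Z$) is precisely the nontrivial content of Kammeyer's theorem. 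Your proposal therefore restates the problem inside the step you claim is routine. Since the paper relies on this exactly as cited, a fully worked proof is not expected here, but the gap in the normalization step should be acknowledged rather than folded into an appeal to rigidity.
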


Throughout this paper, by dependence on the data $(G, \Gamma, H, \mathfrak{r}, \mathbf{a}, U)$, we allow dependence on the size of the above basis of $\LieH$. 

\subsection{On condition {}\Irrep{}}
\begin{lemma}\label{lem:structure under irrep condition}
    Suppose $\LieG$ is perfect, $\LieH$ is semisimple and the $\LieH$-invariant complement $\mathfrak{r}$ is an irreducible $\LieH$-representation. Then one of the following holds. 
    \begin{enumerate}
        \item The Lie algebra $\LieG$ is semisimple. 
        \item The Lie algebra $\LieG$ is isomorphic to $\LieH \ltimes \rad(\LieG)$ where $\rad(\LieG)$ is a non-trivial irreducible $\LieH$-representation and it is abelian. 
    \end{enumerate}
    In particular, in both cases, the complement $\mathfrak{r}$ is a \emph{non-trivial} $\LieH$-representation. 
\end{lemma}

\begin{proof}
    If $\rad(\LieG) = \{0\}$, we are in case~1. If not, we have $\LieH \subsetneq \LieH \oplus \rad(\LieG)$. Irreducibility implies that $\LieG = \LieH \oplus \rad(\LieG)$. Note that $[\rad(\LieG), \rad(\LieG)] \subsetneq \rad(\LieG)$ is also an ideal of $\LieG$ and hence representation of $\LieH$. Irreducibility implies that $[\rad(\LieG), \rad(\LieG)] = 0$. 
\end{proof}

In the latter case, the radical is a $\Q$-subalgebra of $\LieG$ of height bounded by power of height of $\mathbf{G}$, see \cite[Proposition 3.1]{MSGT23}. In this case, we always fix $\mathfrak{r} = \rad(\LieG)$. 

\subsection{On condition {}\Schu{} and {}\Prox{}} 
We now state an equivalent description for {}\Irrep{} and {}\Prox{} which is easier to check. Let 
\begin{align*}
    \Fix(U) = \{v \in \mathfrak{r}: u.v = v,  \forall u \in U\}.
\end{align*}

\begin{lemma}\label{lem:equi Fix U}
    The $H$-invariant subspace $\mathfrak{r}$ satisfies {}\Irrep{} and {}\Prox{} if and only if $\dim \Fix(U) = 1$. 
\end{lemma}

\begin{proof}
We have the decomposition $\mathfrak{r} = \bigoplus_\mu \mathfrak{r}_{\mu}$ for $\mathfrak{r}$ as eigenspaces of $\mathbf{a}$. Let $\mu_{\max}$ be the largest eigenvalue. Note that $\mathfrak{r}_{\mu_{\max}} \subseteq \Fix(U)$. If $\dim \Fix(U) = 1$, then $\mathfrak{r}$ is irreducible representation of $H$ and $\dim \mathfrak{r}_{\mu_{\max}} = 1$. 

Conversely, suppose $\mathfrak{r}$ is irreducible and $\dim \mathfrak{r}_{\mu_{\max}} = 1$ but $\dim \Fix(U) \geq 2$. Then there exists $\mu' \neq \mu_{\max}$, $w \in \mathfrak{r}_{\mu'}$ so that $w$ is fixed by $U$. Using the restricted root decomposition, $B_1^H.w \subseteq \oplus_{\mu < \mu_{\max}} \mathfrak{r}_{\mu}$ and this contradict with the assumption that $\mathfrak{r}$ is irreducible.  
\end{proof}

We now show that \Prox{} implies \Schu{}. 
\begin{lemma}\label{lem:proximal implies schubert}
    Under the condition \Irrep{}, \Prox{} implies \Schu{}. 
\end{lemma}
\begin{proof}
    Suppose not, since $\dim \Fix(U) = 1$, there exists a proper subspace $W \subseteq \mathfrak{r}$ so that $H.\Fix(U) \subseteq W$, contradict to \Irrep{}. 
\end{proof}

\subsection{Constants and \texorpdfstring{$\star$-notations}{⋆-notations}}
For $A \ll B^{\star}$, we mean there exist constants $C > 0$ and $\kappa > 0$ depending at most on the data $(G, \Gamma, H, \mathfrak{r}, \mathbf{a}, U)$ such that $A \leq C B^{\kappa}$. For $A \asymp B$, we mean $A \ll B$ and $B \ll A$. We also use the notion of $O(\cdot)$ where $f = O(g)$ is the same as $|f| \ll g$. For $A \ll_D B$, we mean there exist constant $C_D > 0$ depending on $D$ and at most on the data $(G, \Gamma, H, \mathfrak{r}, \mathbf{a}, U)$ so that $A \leq C_D B$. 

\subsection{Norms and balls}
Let $\|\cdot\|_\infty$ be the maximum norm from $\mathfrak{sl}_N(\R)$. For any subspace $W \subseteq \LieG \subset \mathfrak{sl}_N(\R)$ and $v \in W$, we define
\begin{align*}
B_r^{W}(v) = \{w \in W:\|w - v\| \leq r\}.
\end{align*}
If $v = 0$, we often omit it and denote the ball by $B_r^{W}$. 

We take a basis according to Proposition~\ref{pro:General Chevalley basis} and let $\mathsf{B}$ be balls (or more precisely boxes) according to the max-norm on coordinates with respect to this basis. We define
\begin{align*}
    \mathsf{B}^{+}_r = \mathsf{B}^U_r = \exp(\mathsf{B}^{\LieU}_r), \mathsf{B}^0_r = \exp(\mathsf{B}^{\LieH_0}_r), \mathsf{B}^{-}_r = \mathsf{B}^{U^-}_r = \exp(\mathsf{B}^{\LieU^-}_r).
\end{align*}
We set
\begin{align*}
    \mathsf{B}_r^H = \mathsf{B}^{-}_r\mathsf{B}_r^{0} \mathsf{B}^{+}_r, \quad 
    \mathsf{B}_r^{s,H} = \mathsf{B}^{-}_r\mathsf{B}_r^{0},
\end{align*}
and $\mathsf{B}_r^G = \mathsf{B}_r^H\exp(B_r^{\mathfrak{r}})$. 

We give a summary of the notations for different balls in this paper. For balls from the Riemannian metric, we use $\mathcal{B}$. For balls from the max-norm, we use $B$. For balls in $H$ according to the stable/central/unstable direction of $a_t$, we use $\mathsf{B}$. We remark that in Part~\ref{part:projection}, on $\mathfrak{r}$ we will use an inner product from Mostow's theorem which might be different from all metrics above. The changes in metric will only produce a multiplicative absolute constant. 

\subsection{Natural measures}
Note that $U = \exp(\LieU)$. Let $\tilde{m}_U$ be the push-forward of the standard Lebesgue measure on $\LieU \subseteq \mathfrak{sl}_N(\R)$ under the exponential map. Let $m_U$ be the rescaling of $\tilde{m}_U$ so that it assign $\mathsf{B}_1^U$ with measure $1$. This is a $U$-invariant measure on $U$. For the ball $\mathsf{B}_1^U \subset U$, we use $m_{\mathsf{B}_1^U}$ to denote the restriction of $m_U$ to $\mathsf{B}_1^U$. In integration, we always write $\mathrm{d}u$ instead of $\mathrm{d}m_U(u)$ or $\mathrm{d}m_{\mathsf{B}_1^U}(u)$ for simplicity. 

Similarly, we can define $m_{U^{-}}$ via the push-forward of the standard Lebesgue measure on subspaces in $\mathfrak{sl}_N(\R)$. They are Haar measures on the corresponding groups. Let $m_H$ be the corresponding Haar measure on $H$. It is proportional to the measure defined by the volume form induced by the Riemannian metric from the Cartan involution $\theta$. 

Recall that since $\Gamma$ is a lattice in $G$, there is a unique probability $G$ invariant measure $\mu_X$ on $X = G/\Gamma$. This measure is proportional to the measure defined by the volume form induced by the Riemannian metric $d_X$. 

\subsection{Commutation relations}
We record the following consequences of Baker--Campbell--Hausdorff formula. 

\begin{lemma}\label{lem:BCH}
    There exists $\eta_0 > 0$ and $C_0 > 0$ so that the following holds for all $0 < \eta \leq \eta_0$. For all $w_1, w_2 \in B_{\eta}^{\mathfrak{r}}(0)$, there exists $h \in H$ and $\bar{w} \in \mathfrak{r}$ with
    \begin{align*}
        h \in \mathsf{B}_{C_0\eta}^H \text{, and }\quad \|\bar{w} - (w_1 - w_2)\| \leq C_0\eta\|w_1 - w_2\|
    \end{align*}
    so that 
    \begin{align*}
        \exp(w_1) \exp(-w_2) = h \exp(\bar{w}).
    \end{align*}
    In particular, 
    \begin{align*}
        \frac{1}{2}\|w_1 - w_2\| \leq \|\bar{w}\| \leq \frac{3}{2}\|w_1 - w_2\|.
    \end{align*}
\end{lemma}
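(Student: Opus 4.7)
The statement is local at the identity, so the natural tool is the Baker--Campbell--Hausdorff formula combined with the implicit function theorem for the product map $\Phi:H\times\mathfrak{r}\to G$, $(h,\bar w)\mapsto h\exp(\bar w)$. Since the differential of $\Phi$ at $(e,0)$ is the identification $\LieH\oplus\mathfrak{r}\simeq\LieG$, it is a local diffeomorphism near $(e,0)$, so there is a neighborhood $\mathcal{V}$ of $e$ in $G$ on which $h\in H$ and $\bar w\in\mathfrak{r}$ are uniquely determined by the equation $g=h\exp(\bar w)$. The task is to make the dependence of $(h,\bar w)$ on $(w_1,w_2)$ quantitative.

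First, write $\exp(w_1)\exp(-w_2)=\exp(Z)$ with $Z\in\LieG$ given by the BCH series. The linear term is $w_1-w_2\in\mathfrak{r}$, and every higher-order term is a nested bracket in $w_1$ and $-w_2$. Rewriting $w_2=w_1-(w_1-w_2)$, each such bracket of length $k\ge 2$ contains at least one occurrence of $w_1-w_2$, so its norm is bounded by $O(\eta^{k-1}\|w_1-w_2\|)$. Summing the (convergent, for $\eta$ small) BCH series yields
\begin{equation*}
Z=(w_1-w_2)+R,\qquad \|R\|\le C_1\eta\,\|w_1-w_2\|,
\end{equation*}
for an absolute constant $C_1$. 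Decomposing according to $\LieG=\LieH\oplus\mathfrak{r}$ and using that $w_1-w_2\in\mathfrak{r}$, the projections $Z_{\LieH}=\pi_{\LieH}R$ and $Z_\mathfrak{r}=(w_1-w_2)+\pi_\mathfrak{r}R$ satisfy $\|Z_{\LieH}\|,\|Z_\mathfrak{r}-(w_1-w_2)\|\le C_1\eta\|w_1-w_2\|$.

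Next, I solve $\exp(Z)=\exp(Y)\exp(\bar w)$ for $(Y,\bar w)\in\LieH\oplus\mathfrak{r}$ with $h=\exp(Y)$. Applying BCH once more, this becomes the equation
\begin{equation*}
Z=Y+\bar w+\tfrac12[Y,\bar w]+\mathrm{(higher\ brackets)}.
\end{equation*}
Projecting onto $\LieH$ and $\mathfrak{r}$ and using that the linear part of the map $(Y,\bar w)\mapsto(Y+\tfrac12[Y,\bar w]_{\LieH}+\cdots,\ \bar w+\tfrac12[Y,\bar w]_\mathfrak{r}+\cdots)$ is the identity, the quantitative inverse function theorem (or a one-step Picard iteration starting from $Y_0=Z_{\LieH}$, $\bar w_0=Z_\mathfrak{r}$) produces a unique solution with
\begin{equation*}
\|Y-Z_{\LieH}\|+\|\bar w-Z_\mathfrak{r}\|\le C_2\bigl(\|Z_{\LieH}\|\|Z_\mathfrak{r}\|+\|Z\|^3\bigr)\le C_3\eta\,\|w_1-w_2\|,
\end{equation*}
provided $\eta\le\eta_0$ with $\eta_0$ small enough that the contraction is valid. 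Combining with the estimates for $Z_{\LieH}$ and $Z_\mathfrak{r}$, this gives $\|\bar w-(w_1-w_2)\|\le C_0\eta\|w_1-w_2\|$ and $\|Y\|\le C_0\eta\|w_1-w_2\|\le 2C_0\eta^2$, hence $\|h-\mathrm{Id}\|\le C_0\eta$ after enlarging constants. The two-sided bound on $\|\bar w\|$ follows by taking $\eta_0$ small enough that $C_0\eta_0\le 1/2$.

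\textbf{Main obstacle.} The only nontrivial point is showing that the BCH remainder carries a factor $\|w_1-w_2\|$ rather than just $\|w_1\|+\|w_2\|$; this is the step that makes the error in $\bar w$ proportional to $\|w_1-w_2\|$ rather than merely to $\eta$, and it is what ultimately yields the two-sided comparison $\tfrac12\|w_1-w_2\|\le\|\bar w\|\le\tfrac32\|w_1-w_2\|$. Everything else is a routine quantitative inverse function theorem in the transverse coordinate system.
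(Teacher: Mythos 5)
Your proof is correct and follows essentially the same route the paper uses: the paper gives no argument beyond noting that this is a direct application of the Baker--Campbell--Hausdorff formula and citing \cite[Lemma 2.1]{LM23}, and your write-up fills in exactly the standard BCH-plus-quantitative-inverse-function-theorem details, correctly identifying as the crux that the BCH remainder $Z-(w_1-w_2)$ carries a factor of $\|w_1-w_2\|$ (which follows, as you note, by rewriting $-w_2=-w_1+(w_1-w_2)$ and using that the pure-$w_1$ contributions telescope to $\log(\exp(w_1)\exp(-w_1))=0$).
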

\begin{proof}
    See \cite[Lemma 2.1]{LM23}. 
\end{proof}

We take a further minimum so that for all $\eta \leq \eta_0$ the following holds. 
\begin{enumerate}
\item The exponential map restrict to $B_\eta^{\LieG}$ is a bi-analytic map. 
\item The maps
\begin{align}
\begin{aligned}
&\mathsf{B}_\eta^{\LieU^+} \times \mathsf{B}_\eta^{\LieH_0} \times 
\mathsf{B}_\eta^{\LieU^-} \to H\\
&(X_{\LieU^+}, X_{\LieH_0}, X_{\LieU^-}) \mapsto \exp(X_{\boldsymbol{\cdot}})\exp(X_{\Diamond})\exp(X_{\Box})
\end{aligned}
\end{align}
are bi-analytic map to their images where $(\boldsymbol{\cdot}, \Diamond, \Box)$ is any permutation of $(\LieU^+, \LieH_0, \LieU^+)$.
\item The map 
\begin{align*}
    &\mathsf{B}_\eta^{H} \times B_\eta^{\mathfrak{r}} \to G\\
    &(\mathsf{h}, X_{\mathfrak{r}}) \mapsto \mathsf{h}\exp(X_{\mathfrak{r}})
\end{align*}
is a bi-analytic map to its image.
\item Lemma~\ref{lem:BCH} holds. 
\end{enumerate}

\subsection{Injectivity radius}
For all $x \in X = G/\Gamma$, we set
\begin{align*}
    \inj(x) = \sup\{\eta: \mathsf{B}_{10^{10}C_0\eta}^G\to \mathsf{B}_{10^{10}C_0\eta}^G.x \text{ is a diffeomorphism}\}.
\end{align*}
The constant $C_0$ comes from Lemma~\ref{lem:BCH}. Taking a further minimum if necessary, we always assume that the injectivity
radius of $x$ defined using the Riemannian metric $d_X$ dominates $\inj(x)$. 

For all $\eta > 0$, let
\begin{align*}
    X_\eta = \{x \in X: \inj(x) \geq \eta\}.
\end{align*}

\subsection{A different formulation for Theorem~\ref{thm:main equidistribution}}\label{subsection:different form on balls}
Recall that we set $\mathsf{B}^U_1 = \exp(B^{\LieU}_1)$ and assign $m_U$ to be the Haar measure on $U$ so that $m_U(\mathsf{B}^U_1) = 1$. The following theorem is a slightly different formulation of Theorem~\ref{thm:main equidistribution}. 

\begin{theorem}\label{thm:main equidistribution different form}
Suppose the data $(G, \Gamma, H, \mathfrak{r}, \mathbf{a}, U)$ satisfy {}\Irrep{} and \Schu, then there exist absolute constants $\consta\label{a:main equidistribution1diff} > \consta\label{a:main equidistribution2diff} \geq 1$ and $\kappa > 0$ so that the following holds. For all $x_0 \in X$ and large enough $R$ depending explicitly on $x_0$, for any $T \geq R^{\ref{a:main equidistribution1diff}}$, at least one of the following is true. 
\begin{enumerate}
\item For all $\phi \in \mathrm{C}_c^\infty(X)$, 
\begin{align*}
\Biggl|\int_{\mathsf{B}^U_1} \phi(a_{\log T} u.x_0) \,\mathrm{d}m_U(u) - \int_X \phi \,\mathrm{d}\mu_X\Biggr| \leq \mathcal{S}(\phi) R^{-\kappa}
\end{align*}
where $\mathcal{S}(\phi)$ is a certain Sobolev norm. 
\item There exists $x \in X$ so that $H.x$ is periodic with $\vol(H.x) \leq R$ and 
\begin{align*}
    d_X(x_0, x) \leq T^{-\frac{1}{\ref{a:main equidistribution2diff}}}.
\end{align*}
\end{enumerate}
\end{theorem}

Theorem~\ref{thm:main equidistribution different form} is equivalent to Theorem~\ref{thm:main equidistribution}. The only different is the shape of ball. The equivalency can be easily deduced from a covering argument. Therefore we will focus on the study of the orbit of $a_{\log T}\mathsf{B}_1^U$ in this paper. 

\part{Closing lemma and initial dimension}\label{part:closinglemma}
The main result of this part is Theorem~\ref{thm:Closing lemma many scale}. All results in this part utilize only {}\Irrep{}. 

The following theorem asserts that for an initial point $x_1$ with suitable Diophantine condition, if the expanding time $t$ is long enough, then the measure $\mu_t = a_tm_{\mathsf{B}_1^U}.x_1$ has a small coarse dimension in the transverse direction. Moreover, the weaker Diophantine condition is provided, the longer the time is needed. Recall $\eta_0$ and $C_0$ are two constants defined in Section~\ref{sec:Global prelim} and Lemma~\ref{lem:BCH}. For any $t \geq 0$, we set
\begin{align*}
    \nu_t = (a_t)_\ast m_{\mathsf{B}_1^U}.
\end{align*} 

\begin{theorem}\label{thm:Closing lemma many scale}
    Suppose the data $(G, \Gamma, H, \mathfrak{r}, \mathbf{a}, U)$ satisfy {}\Irrep{}. Then there exist constants $\consta\label{a:closinglemma frostman} > 1$, $\constc\label{c:closinglemma frostman} > 1$, $\constE\label{e:closinglemma frostman} > 1$, $\constd\label{d:closinglemma frostman} > 1$, $\constm\label{m:closinglemma frostman} > 1$, $d_0 > 1$ and $\epsilon_1 > 0$ so that the following holds. For all $D \geq \ref{d:closinglemma frostman} + 1$, parameters $R \gg 1 $ and $\eta \ll \eta_0$ satisfying $R \gg \eta^{-\ref{e:closinglemma frostman}}$, and $x_1 \in X_\eta$, let $M = \ref{m:closinglemma frostman} + \ref{c:closinglemma frostman} D$, $t \geq M\log R$, $\mu_t = \nu_{t} \ast \delta_{x_1}$ and $\delta_0 = R^{-\frac{1}{\ref{a:closinglemma frostman}}}$. 

    Suppose that for all periodic orbit $H.x'$ with $\vol(H.x') \leq R$, we have
    \begin{align*}
        d(x_1, x') > R^{-D}.
    \end{align*}
    Then for all $y \in X_{3\eta}$, $r_H \leq 10^4C_0\eta$, $r \in [\delta_0, \eta^{d_0}]$, we have
    \begin{align*}
        \mu_t((\mathsf{B}_{r_H}^H)^{\pm 1}\exp(B_{r}^{\mathfrak{r}}).y) \ll r^{\epsilon_1}.
    \end{align*}
\end{theorem}

The above theorem is deduced from a single-scale estimate (Lemma~\ref{lem:Closing lemma one scale}) and an avoidance principle proved by Sanchez--Seong \cite{SS24} (Proposition~\ref{pro:avoidance}). The single scale estimate in turns, heavily relies on the effective closing lemma for orbit large balls in unipotent groups proved by Lindenstrauss--Margulis--Mohammadi--Shah--Wieser \cite{LMMSW24}.

We now sketch an outline for Part~\ref{part:closinglemma}. In Section~\ref{sec:preparation for discriminant}, we recall the relations between different measurements for complexity of a periodic orbit. We prove a linear algebra lemma in Section~\ref{sec:linear algebra closing lemma} and combine it with the effective closing lemma for large balls in unipotent orbits (Theorem~\ref{thm:closing lemma long unipotent}) to prove Lemma~\ref{lem:Closing lemma one scale}, a single scale version of Theorem~\ref{thm:Closing lemma many scale} in Section~\ref{sec:apply large ball unipotent}. The deduction of Theorem~\ref{thm:Closing lemma many scale} from Lemma~\ref{lem:Closing lemma one scale} is in Section~\ref{sec: Improve many scales}. 

\section{Preparation I: Measurement for complexity of periodic orbits}\label{sec:preparation for discriminant}
For a periodic orbit, there are various ways to measure its complexity. We briefly recall their relations in this section. We refer to \cite[Section 17]{EMV09} for a detailed exposition. For a periodic orbit $Hg\Gamma$ inside $X = G/\Gamma$, one can attach the following quantities. 

First, from the Riemannian metric on $X = G/\Gamma$, there is a natural volume form on all its embedded submanifolds. Therefore, we can define the volume of a periodic orbit $Hg \Gamma$. We use $\vol(Hg\Gamma)$ to denote this quantity. 

Second, if $Hg\Gamma$ is periodic, $g^{-1} H g \cap \Gamma$ is a lattice in $g^{-1} H g$ and therefore it is Zariski dense in $g^{-1} H g$. There exists a $\Q$-subgroup $\mathbf{M} \leq \mathbf{G}$ so that $g^{-1} H g = \mathbf{M}(\R)$. We set $\mathpzc{v}_M$ to be one of the primitive integer vector of the line $\wedge^{\dim \mathbf{M}} \LieM$ inside $\wedge^{\dim \mathbf{M}} \LieG$. The height of $\mathbf{M}$ is defined to be $\height(\mathbf{M}) = \|\mathpzc{v}_M\|$. However, the group $\mathbf{M}$ \emph{does} depends on the choice of representative $g$: if we change $g$ to $g\gamma$, then we need to change $\mathbf{M}$ to $\gamma^{-1}\mathbf{M} \gamma$. The length $\|\Ad(\gamma^{-1})\mathpzc{v}_M\|$ can be significantly different from $\|\mathpzc{v}_M\|$. 

We have the following direct proposition. 
\begin{proposition}\label{pro:Volume and Height}
    There exists an absolute constant $c > 1$ so that the following holds. For all $\Q$-subgroup $\mathbf{M}$ with $\mathbf{M}(\R) = g^{-1} H g$, we have
    \begin{align*}
        \vol(Hg\Gamma) \ll \height(\mathbf{M})^{c}.
    \end{align*}
\end{proposition}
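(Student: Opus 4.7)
The plan is to identify the orbit volume with the covolume of an arithmetic lattice in $\mathbf{M}(\R)$ and then control that covolume polynomially in the height.

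First, I would decompose the orbit map $H \to Hg\Gamma$, $h \mapsto hg\Gamma$, through the stabilizer $\Stab_H(g\Gamma) = H \cap g\Gamma g^{-1}$. Conjugation by $g$ identifies this stabilizer with $\Gamma_{\mathbf{M}} := \mathbf{M}(\R) \cap \Gamma$, and with matching Haar-measure normalizations one obtains
\begin{equation*}
\vol(Hg\Gamma) = \vol\bigl(\mathbf{M}(\R)/\Gamma_{\mathbf{M}}\bigr).
\end{equation*}
Since $\mathbf{M}(\R) \cong H$ is semisimple with no compact factor and $\mathbf{M}$ is a $\Q$-subgroup, Borel--Harish-Chandra ensures $\Gamma_{\mathbf{M}}$ is a lattice in $\mathbf{M}(\R)$, and a priori is of finite index in $\mathbf{M}(\Z)$.

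Second, I would tie the height $\height(\mathbf{M}) = \|\mathpzc{v}_{\mathbf{M}}\|$ to an arithmetic covolume on the Lie algebra side: since $\mathpzc{v}_{\mathbf{M}}$ is the primitive integer vector spanning $\wedge^{\dim \mathbf{M}} \LieM$, the quantity $\|\mathpzc{v}_{\mathbf{M}}\|$ equals, up to a uniformly bounded factor, the covolume of the integral lattice $\LieM_{\Z} := \LieM \cap \mathfrak{sl}_N(\Z)$ inside $\LieM$ with respect to the fixed Euclidean inner product on $\mathfrak{sl}_N(\R)$. This makes the height a genuine measure of the arithmetic complexity of the $\Q$-structure on $\LieM$.

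Third, and this is the substantive step, I would bound $\vol(\mathbf{M}(\R)/\Gamma_{\mathbf{M}})$ polynomially in $\height(\mathbf{M})$. The cleanest route is via reduction theory for semisimple arithmetic groups: one constructs a Siegel-type fundamental domain $\Omega \subseteq \mathbf{M}(\R)$ surjecting onto $\mathbf{M}(\R)/\Gamma_{\mathbf{M}}$ whose volume is polynomial in $\height(\mathbf{M})$, using that $\mathbf{M}$ sits inside the fixed ambient group $\mathbf{G} \leq \SL_N$ and that its $\Q$-structure is encoded by $\mathpzc{v}_{\mathbf{M}}$. Alternatively, one can invoke Prasad's volume formula to express $\vol(\mathbf{M}(\R)/\mathbf{M}(\Z))$ as a product of local factors and a power of $\disc(\mathbf{M})$, and then bound $\disc(\mathbf{M}) \ll \height(\mathbf{M})^{\star}$ by a direct inspection of the embedding $\LieM_{\Z} \hookrightarrow \mathfrak{sl}_N(\Z)$.

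The main obstacle is the third step: producing a genuinely uniform polynomial estimate that does not degenerate as $\mathbf{M}$ varies over the infinite family of conjugates $g^{-1}Hg$, $g \in G$, since neither the $\R$-isomorphism type nor the $\Q$-rank of $\mathbf{M}$ is held fixed along this family. The conceptual workaround is to appeal to the uniform arithmetic covolume bounds established in the semisimple setting by Einsiedler--Margulis--Venkatesh and Eskin--Mozes--Shah, where the required polynomial comparison between $\vol(\mathbf{M}(\R)/\Gamma_{\mathbf{M}})$ and the arithmetic complexity of $\mathbf{M}$ is carried out uniformly over all semisimple $\Q$-subgroups of a fixed ambient $\mathbf{G}$, yielding the desired absolute constant $c$.
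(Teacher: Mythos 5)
Your proposal is essentially the paper's route: the paper gives no independent argument here, citing \cite[Corollary 3.2]{Lin25} (with \cite[Section 17]{EMV09} for the detailed exposition), and that cited proof is exactly the comparison you sketch — identify $\vol(Hg\Gamma)$ with the covolume of $\Gamma_{\mathbf{M}} = \mathbf{M}(\R)\cap\Gamma$ in $\mathbf{M}(\R)$, interpret $\height(\mathbf{M}) = \|\mathpzc{v}_M\|$ as the covolume of $\LieM\cap\mathfrak{sl}_N(\Z)$ in $\LieM$, and invoke the uniform polynomial volume--discriminant bounds of Einsiedler--Margulis--Venkatesh for semisimple $\Q$-subgroups of the fixed ambient $\mathbf{G}$. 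The only point to keep in mind is that the conjugation $h \mapsto g^{-1}hg$ transports the metric Haar normalization on $H$ to one on $\mathbf{M}(\R)$ that differs from any intrinsic normalization by a $g$-dependent Jacobian, which is precisely the distortion the cited uniform comparison absorbs into the height.
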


We also need the following lemma relates the height of a $\Q$-subgroup and the height of its normalizer. 

\begin{proposition}\label{pro:Volume and Height and normalizer}
    There exists an absolute constant $c_0 > 1$ so that the following holds. For all semisimple $\Q$-subgroup $\mathbf{M}$ with $N_{\mathbf{G}}(\mathbf{M}) = g^{-1} \mathbf{H} g$, we have
    \begin{align*}
        \vol(Hg\Gamma) \ll \height(\mathbf{M})^{c_0}.
    \end{align*}
\end{proposition}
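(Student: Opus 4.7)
The plan is to reduce Proposition~\ref{pro:Volume and Height and normalizer} to Proposition~\ref{pro:Volume and Height} applied to the normalizer group itself, and then bound the height of the normalizer polynomially in the height of $\mathbf{M}$ via a linear algebra computation. Let $\mathbf{H}' := N_{\mathbf{G}}(\mathbf{M}) = g^{-1}\mathbf{H}g$. Since $g \in G$, we have $\mathbf{H}'(\R) = g^{-1}\mathbf{H}(\R)g = g^{-1}Hg$, so Proposition~\ref{pro:Volume and Height}, applied with $\mathbf{H}'$ playing the role of $\mathbf{M}$, gives
\[
\vol(Hg\Gamma) \ll \height(\mathbf{H}')^c.
\]
Therefore it suffices to prove $\height(\mathbf{H}') \ll \height(\mathbf{M})^{\star}$.

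The key step is to realize $\mathfrak{h}' := \mathrm{Lie}(\mathbf{H}')$ as the kernel of a linear map with small coefficients. Since we are in characteristic zero and $\mathbf{H}'$ is the algebraic normalizer, $\mathfrak{h}'$ coincides with $\mathfrak{n}_{\mathfrak{g}}(\mathfrak{m})$, and using that $\mathbf{M}$ is connected (which follows from semisimplicity together with standard conventions in the paper) we can characterize this Lie algebra by the vanishing condition
\[
\mathfrak{h}' \;=\; \bigl\{X \in \mathfrak{g} \,:\, \mathpzc{v}_M \wedge \ad(X)\mathpzc{v}_M \,=\, 0 \text{ in } \wedge^{2\dim\mathfrak{m}}\mathfrak{g}\bigr\},
\]
since a vector in $\wedge^{\dim\mathfrak{m}}\mathfrak{g}$ is parallel to $\mathpzc{v}_M$ exactly when its wedge with $\mathpzc{v}_M$ vanishes. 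Thus $\mathfrak{h}' = \ker\phi_M$, where $\phi_M \colon \mathfrak{g} \to \wedge^{2\dim\mathfrak{m}}\mathfrak{g}$ is the linear map $X \mapsto \mathpzc{v}_M \wedge \ad(X)\mathpzc{v}_M$.

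In a fixed integer basis of $\mathfrak{g}$ (and of $\wedge^{2\dim\mathfrak{m}}\mathfrak{g}$), the matrix entries of $\phi_M$ are polynomial of degree two in the coordinates of $\mathpzc{v}_M$, with the structure constants of $\mathfrak{g}$ as coefficients; in particular the $\ell^\infty$ norm of this matrix is $\ll \|\mathpzc{v}_M\|^2 = \height(\mathbf{M})^2$. A standard argument (choose a non-singular maximal minor and apply Cramer's rule together with Hadamard's inequality) then produces an integer basis $X_1, \dots, X_{\dim \mathfrak{h}'}$ of the integer lattice $\mathfrak{h}' \cap \mathfrak{g}_{\Z}$ with $\|X_i\| \ll \height(\mathbf{M})^{c_1}$ for some constant $c_1$ depending only on $\dim \mathbf{G}$. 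Since $\mathpzc{v}_{H'}$ is proportional to $X_1 \wedge \cdots \wedge X_{\dim \mathfrak{h}'}$, we conclude
\[
\height(\mathbf{H}') \;=\; \|\mathpzc{v}_{H'}\| \;\ll\; \prod_{i=1}^{\dim \mathfrak{h}'}\|X_i\| \;\ll\; \height(\mathbf{M})^{c_1 \dim \mathbf{H}}.
\]
Combining with the bound from Proposition~\ref{pro:Volume and Height} yields the desired inequality with $c_0 = c\cdot c_1 \dim\mathbf{H}$.

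There is no serious obstacle in this proof: the reduction to $\height(\mathbf{H}')$ is immediate, and the remaining content is elementary linear algebra together with the identification $\mathrm{Lie}(N_\mathbf{G}(\mathbf{M})) = \mathfrak{n}_{\mathfrak{g}}(\mathfrak{m})$. The only point that requires a little care is ensuring that primitivity of the integer vector $\mathpzc{v}_{H'}$ is not lost when bounding it by a wedge of basis vectors of the integer lattice in $\mathfrak{h}'$; but since passing to the primitive vector only decreases norm, this is harmless.
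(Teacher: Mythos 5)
Your reduction to Proposition~\ref{pro:Volume and Height} applied to $\mathbf{H}'=N_{\mathbf{G}}(\mathbf{M})$ is fine, and the idea of bounding $\height(\mathbf{H}')$ by cutting out $\mathfrak{n}_{\mathfrak{g}}(\mathfrak{m})$ with an integral linear system of controlled height is a legitimate route (and a different one from the paper, which simply quotes \cite[Lemma 4.8]{LMMS} together with the reductivity of normalizers of semisimple subgroups). However, the key linearization step fails as written: the map $\phi_M(X)=\mathpzc{v}_M\wedge\ad(X)\mathpzc{v}_M$, with the wedge taken in $\wedge^{2\dim\mathfrak{m}}\mathfrak{g}$, is \emph{identically zero} once $\dim\mathfrak{m}\geq 2$ (and here $\dim\mathfrak{m}\geq 3$ since $\mathbf{M}$ is semisimple). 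Writing $\mathpzc{v}_M=m_1\wedge\cdots\wedge m_d$ and expanding $\ad(X)\mathpzc{v}_M$ by the Leibniz rule, every summand $m_1\wedge\cdots\wedge[X,m_i]\wedge\cdots\wedge m_d$ still contains the factors $m_j$, $j\neq i$, so wedging it against $\mathpzc{v}_M$ creates a repeated factor and vanishes regardless of whether $X$ normalizes $\mathfrak{m}$. Hence $\ker\phi_M=\mathfrak{g}$, not $\mathfrak{n}_{\mathfrak{g}}(\mathfrak{m})$, and your justifying claim that a vector of $\wedge^{\dim\mathfrak{m}}\mathfrak{g}$ is parallel to $\mathpzc{v}_M$ exactly when its wedge with $\mathpzc{v}_M$ vanishes is false in the exterior algebra for $d\geq 2$ (e.g.\ $e_1\wedge e_2$ and $e_1\wedge e_3$). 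As stated, your Cramer--Hadamard step would produce a basis of all of $\mathfrak{g}_{\Z}$ and prove nothing about $\mathfrak{h}'$.

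The gap is repairable, but it needs a correct encoding of the normalizer condition. Either express proportionality of $\ad(X)\mathpzc{v}_M$ to $\mathpzc{v}_M$ by the vanishing of all $2\times 2$ minors, i.e.\ compute the wedge in $\wedge^{2}\bigl(\wedge^{\dim\mathfrak{m}}\mathfrak{g}\bigr)$ with $\wedge^{\dim\mathfrak{m}}\mathfrak{g}$ regarded only as a vector space (this is linear in $X$ and quadratic in the coordinates of $\mathpzc{v}_M$, as you wanted), or choose an integral basis $m_1,\ldots,m_d$ of $\mathfrak{m}\cap\mathfrak{g}_{\Z}$ with $\|m_j\|\ll\height(\mathbf{M})^{\star}$ (available by Minkowski's second theorem, since this primitive lattice has covolume $\|\mathpzc{v}_M\|$) and impose the conditions $[X,m_j]\wedge\mathpzc{v}_M=0$ in $\wedge^{\dim\mathfrak{m}+1}\mathfrak{g}$, where the degree-one wedge criterion is valid. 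With either correction the kernel is indeed $\mathfrak{n}_{\mathfrak{g}}(\mathfrak{m})=\mathrm{Lie}(N_{\mathbf{G}}(\mathbf{M}))$ (connectedness of $\mathbf{M}$ and characteristic zero enter exactly where you said), the coefficients are of size $\height(\mathbf{M})^{O(1)}$, and the rest of your argument, including the appeal to Proposition~\ref{pro:Volume and Height}, goes through; but the proof as you wrote it contains a genuine error at its central step.
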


\begin{proof}
    It follows from \cite[Lemma 4.8]{LMMS} and the fact that the normalizer of a semisimple subgroup in a semisimple group is reductive, cf. \cite[Chapter VII, \S 1, no.3, Lemma 2]{Bou05}. 
\end{proof}

\section{A linear algebra lemma}\label{sec:linear algebra closing lemma}
The main result for this section is Lemma~\ref{lem:Closing lemma Linear Algebra}. It is devoted to deduce Lemma~\ref{lem:Closing lemma one scale} from the output of \cite[Theorem 1.4]{LMMSW24}. The main ingredient in this section is an effective {\L}ojasiewiecz's inequality, see Theorem~\ref{thm:Lojasiewiecz}. 

\subsection{\texorpdfstring{Isolation of $\LieU$ from proper ideals of $\LieG$}{Isolation of Lie(U) from proper ideals of Lie(G)}}
We record the following lemma in this subsection. It follows directly from the unique ideal decomposition of semisimple Lie algebras. 
\begin{lemma}\label{lem:U isolate from proper ideal}
    There exists $c_1 > 0$ so that the following holds. Suppose $\mathfrak{I}$ is either a proper ideal of $\LieH$ or a proper ideal of $\LieG$ containing $\rad(\LieG)$. Let $v_{\mathfrak{J}}$ be a vector in the line $\wedge^{\dim \mathfrak{I}} \mathfrak{I} \subseteq \wedge^{\dim \mathfrak{I}} \LieG$. Then we have
    \begin{align*}
        \sup_{\mathpzc{z} \in \LieU, \|\mathpzc{z}\| = 1}\|\mathpzc{z} \wedge v_{\mathfrak{J}}\| \geq c_1 \|v_{\mathfrak{J}}\|.
    \end{align*}
\end{lemma}

\subsection{An equivariant projection and its consequence} 
We record an equivariant projection from \cite{EMV09}. In this section, $S$ is a semisimple subgroup of $G$ with Lie algebra $\mathfrak{s}$. Let $\bar{v}_S$ be a \textit{unit vector} in the line corresponding to $\mathfrak{s}$ in $\wedge^{\dim \mathfrak{s}} \LieG$. 
\begin{lemma}\label{lem:equivariant proj}
    There exists a neighborhood $\mathcal{N}_S$ of $\bar{v}_S$ and a projection map $\Pi: \mathcal{N}_S \to G.\bar{v}_S$ so that the following holds. 
    For all $v \in \mathcal{N}_S$ with $g.v = v$ for some $g \in B_1^G$, we have $g.\Pi(v) = \Pi(v)$. 
\end{lemma}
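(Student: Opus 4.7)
The plan is to build $\Pi$ from a slice-theorem local product decomposition near $\bar{v}_S$ in $W := \wedge^{\dim \mathfrak{s}} \LieG$, in which the ``$G$-direction'' records the projection to $\mathcal{O} := G.\bar{v}_S$ and the transverse direction is preserved by the stabilizer $L := \Stab_G(\bar{v}_S)$. The key structural input is the semisimplicity of $S$: it guarantees that $N_G(S)$ is reductive, and $L$, being a finite-index subgroup of $N_G(S)$ cut out by the character $g \mapsto \det(\Ad(g)|_{\mathfrak{s}})$, has reductive identity component. Hence every algebraic representation of $L$ decomposes completely.

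Using complete reducibility, I fix an $L$-invariant complement $\mathfrak{q} \subseteq \LieG$ to $\mathfrak{l} := \mathrm{Lie}(L)$, and an $L$-invariant complement $N \subseteq W$ to $T_{\bar{v}_S}\mathcal{O} = \LieG.\bar{v}_S$. The map
\begin{align*}
\Psi : \mathfrak{q} \times N \longrightarrow W, \qquad (\xi, n) \longmapsto \exp(\xi).(\bar{v}_S + n),
\end{align*}
has invertible differential at $(0,0)$, so it restricts to a diffeomorphism on some product $V_\mathfrak{q}^0 \times V_N^0$ onto a neighborhood of $\bar{v}_S$. I then choose a strictly smaller product $V_\mathfrak{q} \times V_N$ such that $\exp(-\xi) B_1^G \exp(\xi) \subseteq B_C^G$ for some fixed constant $C$ and all $\xi \in V_\mathfrak{q}$, and such that $\ell.V_N \subseteq V_N^0$ whenever $\ell \in L \cap B_{2C}^G$; this is possible because $L \cap B_{2C}^G$ is bounded and therefore acts with bounded operator norm on $N$. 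Set $\mathcal{N}_S := \Psi(V_\mathfrak{q} \times V_N)$ and declare $\Pi(\Psi(\xi, n)) := \exp(\xi).\bar{v}_S \in \mathcal{O}$, which is the identity on $\mathcal{N}_S \cap \mathcal{O}$.

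For the equivariance claim, take $v = \Psi(\xi, n) \in \mathcal{N}_S$ and $g \in B_1^G$ with $g.v = v$, and set $\tilde g := \exp(-\xi) g \exp(\xi) \in B_C^G$. Then $\tilde g.(\bar{v}_S + n) = \bar{v}_S + n$, i.e. $(\tilde g - I).\bar{v}_S = (I - \tilde g).n$; since $\tilde g$ is bounded and $\xi' \mapsto \xi'.\bar{v}_S$ is a linear isomorphism $\mathfrak{q} \to T_{\bar{v}_S}\mathcal{O}$, this forces $\tilde g$ to lie within distance $O(\|n\|)$ of $L$, so I may write $\tilde g = \exp(\xi')\ell$ with $\xi' \in V_\mathfrak{q}$ small and $\ell \in L \cap B_{2C}^G$. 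Plugging in, $\exp(\xi').(\bar{v}_S + \ell.n) = \bar{v}_S + n$, i.e. $\Psi(\xi', \ell.n) = \Psi(0, n)$. Both arguments lie in the injective domain $V_\mathfrak{q}^0 \times V_N^0$ by the earlier choice, so $\xi' = 0$ and $\ell.n = n$. Hence $\tilde g = \ell \in L$, and consequently $g.\Pi(v) = \exp(\xi)\ell.\bar{v}_S = \exp(\xi).\bar{v}_S = \Pi(v)$.

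The main obstacle is that $L$ is typically noncompact, so its linear action on the normal complement $N$ is not by isometries and an $L$-invariant ball inside $V_N^0$ need not exist. What rescues the argument is that the equivariance claim only involves $g \in B_1^G$, so only a bounded piece of $L$ ever appears; one shrinks the injective domain of $\Psi$ in two stages so that the relevant bounded translates stay inside the domain. Semisimplicity of $S$ is needed solely to make $L^\circ$ reductive, which furnishes the $L$-invariant complements $\mathfrak{q}$ and $N$; everything else is the standard inverse-function-theorem bookkeeping with constants depending only on the data $(G, S, \bar{v}_S)$.
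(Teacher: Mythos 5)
Your construction is correct and is essentially the standard argument behind this lemma: the paper gives no proof of its own, simply citing \cite[Lemma 13.2]{EMV09}, whose proof likewise exploits that the stabilizer $L$ of $\bar{v}_S$ is (up to finite index) the reductive normalizer of $S$, builds an $L$-invariant slice transverse to the orbit, and reads off equivariance from injectivity of the resulting local product chart, exactly as you do. The only step you state glibly is that a bounded $\tilde g$ moving $\bar{v}_S$ by $O(\|n\|)$ lies within $O(\|n\|)$ of $L$; your argument only needs the qualitative version (bounded elements moving $\bar{v}_S$ by little are close to $L$, which follows from compactness together with the constant-rank structure of the orbit map $g \mapsto g.\bar{v}_S$), so this is a presentational point rather than a gap.
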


\begin{proof}
    See \cite[Lemma 13.2]{EMV09}. 
\end{proof}

The following consequence is useful in later proofs. 

\begin{proposition}\label{pro:conjugate of ss group}
    Let $\mathbf{S}$ be a connected semisimple $\R$-subgroup of $\mathbf{G}$ with Lie algebra $\mathfrak{s}$ and let $\mathbf{S'}$ be a connected $\R$-subgroup of $\mathbf{G}$ with Lie algebra $\mathfrak{s}'$. Let $S = \mathbf{S}(\R)$ and $S' = \mathbf{S'}(\R)$. Suppose the radical of $\mathbf{S'}$ is unipotent and $\dim \mathfrak{s} = \dim \mathfrak{s}'$. Let $\bar{v}_{S'}$ be a \textit{unit vector} in the line corresponding to $\mathfrak{s}'$ in $\wedge^{\dim \mathfrak{s}} \LieG$. 
    
    If $\bar{v}_{S'} \in \mathcal{N}_S$ as in Lemma~\ref{lem:equivariant proj}, then there exists $g' \in G$ so that $\|g' - \Id\| \ll \|\bar{v}_{S'} - \bar{v}_{S}\|$ and 
    \begin{align*}
        \mathbf{S'} \subseteq g'N_{\mathbf{G}}(\mathbf{S})(g')^{-1}.
    \end{align*}
\end{proposition}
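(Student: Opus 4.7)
The plan is to leverage the equivariant projection $\Pi$ from Lemma~\ref{lem:equivariant proj}: apply it to $\bar{v}_{S'}$ to obtain a point in the $G$-orbit of $\bar{v}_S$, extract a group element $g'$ close to the identity, and then show that the equivariance transports the stabilizer of $\bar{v}_{S'}$ inside $\mathbf{S'}$ onto the stabilizer of $\bar{v}_S$, which lies in $N_{\mathbf{G}}(\mathbf{S})$. Concretely, first I would set $\Pi(\bar{v}_{S'}) =: g'.\bar{v}_S$ and use the smoothness of $\Pi$ together with a smooth local section of the orbit map $g \mapsto g.\bar{v}_S$ near $\Id$ to arrange $g'$ in a small neighborhood of $\Id$ with
\begin{align*}
\|g' - \Id\| \ll \|\Pi(\bar{v}_{S'}) - \Pi(\bar{v}_S)\| \ll \|\bar{v}_{S'} - \bar{v}_S\|,
\end{align*}
using $\Pi(\bar{v}_S) = \bar{v}_S$.

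Second, I would establish that the identity component of $\mathbf{S'}(\R)$ fixes $\bar{v}_{S'}$ pointwise under the action on $\wedge^{\dim\mathfrak{s}}\LieG$. Since $\mathbf{S'}$ normalizes its own Lie algebra, $h.\bar{v}_{S'} = \det(\Ad(h)|_{\mathfrak{s}'}) \bar{v}_{S'}$, and so it suffices to verify $\tr(\ad X|_{\mathfrak{s}'}) = 0$ for every $X \in \mathfrak{s}'$. Taking a Levi decomposition $\mathfrak{s}' = \mathfrak{l} \oplus \mathfrak{n}$ with $\mathfrak{n}$ the unipotent radical (by hypothesis), one splits the check: for $X \in \mathfrak{n}$ the operator $\ad X$ is nilpotent on $\mathfrak{s}'$ since $\mathfrak{n}$ is a nilpotent ideal, while for $X \in \mathfrak{l}$ one has $X \in [\mathfrak{l}, \mathfrak{l}]$ by semisimplicity, so the trace vanishes in both cases. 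Exponentiating yields $\det(\Ad(h)|_{\mathfrak{s}'}) = 1$ throughout the identity component.

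Third, for any $h \in \mathbf{S'}(\R)^{\circ} \cap B_1^G$ the equivariance of $\Pi$ then gives $h.\Pi(\bar{v}_{S'}) = \Pi(\bar{v}_{S'})$, i.e. $(g')^{-1}hg'.\bar{v}_S = \bar{v}_S$, so $(g')^{-1}hg'$ lies in $\Stab_G(\bar{v}_S) \subseteq N_{\mathbf{G}}(\mathbf{S})(\R)$ (preserving the line $\R\bar{v}_S$ forces $\Ad(\cdot)$ to preserve $\mathfrak{s}$). Differentiating along smooth one-parameter subgroups through $e$ in $\mathbf{S'}$ then yields the Lie-algebra inclusion $\Ad((g')^{-1})\mathfrak{s}' \subseteq N_{\LieG}(\mathfrak{s}) = \mathrm{Lie}(N_{\mathbf{G}}(\mathbf{S}))$, and Zariski-connectedness of $\mathbf{S'}$ promotes this to the algebraic-group inclusion $\mathbf{S'} \subseteq g'N_{\mathbf{G}}(\mathbf{S})(g')^{-1}$.

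I expect the main technical point to watch is the trace-vanishing computation, since this is the only place the unipotent-radical hypothesis on $\mathbf{S'}$ enters: without it, $\mathbf{S'}$ would only stabilize the line through $\bar{v}_{S'}$ and not the vector itself, and one could not directly invoke Lemma~\ref{lem:equivariant proj}. Once that step is secured, the rest is a smooth-implicit-function-style argument together with standard passage from Lie-algebra to algebraic-group inclusions.
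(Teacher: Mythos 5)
Your proof is correct and follows essentially the same approach as the paper's: apply the equivariant projection $\Pi$ from Lemma~\ref{lem:equivariant proj}, use the unipotent-radical hypothesis to see that $(S')^\circ$ fixes $\bar{v}_{S'}$ pointwise, transport the stabilizer via the equivariance of $\Pi$, and finish by Zariski connectedness. The trace computation showing $\det(\Ad(h)|_{\mathfrak{s}'})=1$ and the quantitative bound on $\|g'-\Id\|$ are details the paper asserts without comment, so your write-up usefully fills in those gaps.
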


\begin{proof}
    Note that since the radical of $\mathbf{S'}$ is unipotent, for all $s' \in S'$, $s'.v_{S'} = v_{S'}$. By Lemma~\ref{lem:equivariant proj}, for all $s' \in B_1^{S'}$ we have $s'.\Pi(v_{S'}) = \Pi(v_{S'}) = g'.v_{S}$. Since $(S')^\circ$ is generated by $B_1^{S'}$, for all $s' \in (S')^\circ$ we have
    \begin{align*}
        s' \in N_{G}(g' S (g')^{-1}) = g'N_{G}(S)(g')^{-1}.
    \end{align*}
    Therefore, $\mathbf{S'} \cap N_{\mathbf{G}}(\mathbf{S})$ is a variety of same dimension as $\mathbf{S'}$. By Zariski connectedness of $\mathbf{S'}$, $\mathbf{S'} \leq N_{\mathbf{G}}(\mathbf{S})$. 
\end{proof}

\subsection{Effective {\L}ojasiewiecz's inequality and its consequence}
The proof of Lemma~\ref{lem:Closing lemma Linear Algebra} uses an effective version of {\L}ojasiewiecz's inequality \cite{Loj59}. It asserts that the distance between a point to zero locus of an analytic function can be controlled by the value of that function. We use an effective version of this statement for polynomials proved in \cite{Sol91}, see also \cite[Theorem 3.2]{LMMSW24}. The height of a polynomial in $\Z[x_1, \ldots, x_n]$ is defined to be the maximum of its coefficients in absolute value. 
\begin{theorem}[Solern\'{o} \cite{Sol91}]\label{thm:Lojasiewiecz}
    For any $d \in \N$, there exists $C(d) > 1$ with the following property. 

    Let $h > 1$ and let $f_1, \ldots, f_r \in \Z[x_1, \ldots, x_n]$ have degree at most $d$ and height at most $h$. Let $\mathcal{V} \subseteq \R^n$ be the zero locus of $f_1, \ldots, f_r$. Then for $w \in \R^n$
    \begin{align*}
        \min\{1, d(w, \mathcal{V})\} \ll_d (1 + \|w\|)^{C(d)} h^{C(d)} \max_{1 \leq i \leq r} |f_i(w)|^{\frac{1}{C(d)}}
    \end{align*}
    where $d(w, \mathcal{V}) = \inf_{v \in \mathcal{V}} d(w, v)$. 
\end{theorem}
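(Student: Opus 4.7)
The strategy is to combine the classical (non-effective) {\L}ojasiewicz inequality---which guarantees \emph{existence} of an exponent---with effective tools from real algebraic geometry that make every implicit constant polynomial in $h$ and $(1+\|w\|)$ with exponent depending only on $d$ and $n$.

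First I would reduce to a single polynomial by setting $F(x) := \sum_{i=1}^{r} f_i(x)^2$, so that $\mathcal{V} = \{F = 0\}$ and $\max_i |f_i(w)|^2 \geq F(w)/r$. The polynomial $F$ has integer coefficients, degree at most $2d$, and height polynomially bounded in $h$ and $r$; the number $r$ of generators can be bounded in terms of $d$ and $n$ by an effective Bezout argument after discarding redundant generators, so $r$ absorbs into $C(d)$. Next I localize: since the left-hand side is capped at $1$ and the right-hand side contains a factor $(1+\|w\|)^{C(d)}$, it suffices to work inside the ball $B_R$ of radius $R := 1+\|w\|$. On $B_R$ the classical {\L}ojasiewicz inequality already supplies \emph{some} exponent $\alpha$ and constant $K$ with $d(x,\mathcal{V})^{\alpha} \leq K\, F(x)$; the entire task is to force $\alpha$ to depend only on $d,n$ and $K$ to be polynomial in $R$ and $h$.

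The analytic core is an effective curve selection argument. Given $w \in B_R \setminus \mathcal{V}$ with nearest point $w_0 \in \mathcal{V}$, I join $w$ and $w_0$ by a semialgebraic arc $\gamma$ (for instance a critical path of $F$) along which $F \circ \gamma$ admits a Puiseux expansion $F(\gamma(t)) = a_k t^{p/q} + \cdots$. The exponent $p/q$ is controlled solely by $\deg F \leq 2d$ and the ambient dimension $n$, while the leading coefficient $a_k$ is controlled by effective resultant and subresultant calculations (Mignotte-style height bounds), giving a polynomial dependence on $h$ and $R$. To make this uniform in $w \in B_R$, I would invoke the quantitative framework of Heintz, Renegar, and Basu--Pollack--Roy for cylindrical algebraic decomposition and effective Whitney stratification of $\mathcal{V}$: each yields bounds on degrees, heights, and the number of defining polynomials that are polynomial in $h$ and singly exponential in $d$ and $n$. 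Stratifying $B_R$ so that on each stratum the critical locus of $F$ behaves uniformly, one patches the local Puiseux estimates into a single global inequality of the required form, after which inverting the Puiseux expansion produces $d(w,\mathcal{V}) \ll_d (1+\|w\|)^{C(d)} h^{C(d)} F(w)^{1/C(d)}$.

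The main obstacle is controlling the Puiseux exponent $p/q$ in the worst case; this is precisely what forces $C(d)$ to depend at least singly-exponentially on $d$. Because the theorem only asserts \emph{existence} of such a constant, this is acceptable, but the bookkeeping---tracking how the exponent propagates through effective Nullstellensatz-type height bounds, and how leading Puiseux coefficients transform when an optimal arc $\gamma$ crosses between strata---is the most delicate part of the argument and is where the explicit form of $C(d)$ is ultimately pinned down.
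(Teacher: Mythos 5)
First, note what the paper actually does here: Theorem~\ref{thm:Lojasiewiecz} is not proved in the paper at all. It is imported verbatim from Solern\'o \cite{Sol91} (see also \cite[Theorem 3.2]{LMMSW24}), where the effective bounds come from effective quantifier elimination over real closed fields with explicit degree and height control, not from a curve-selection/Puiseux argument. So there is no internal proof to match; the relevant question is whether your sketch would constitute an independent proof, and as written it does not.

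The genuine gap is that the core of your argument asserts exactly what the theorem claims. The statement "the exponent $p/q$ is controlled solely by $\deg F\leq 2d$ and the ambient dimension $n$, while the leading coefficient $a_k$ is controlled by effective resultant and subresultant calculations, giving a polynomial dependence on $h$ and $R$" is, up to repackaging, the effective {\L}ojasiewicz inequality itself: a uniform-in-$w$ bound on the {\L}ojasiewicz exponent together with polynomial-in-$(h,R)$ control of the constant. Nothing in the sketch explains how a single semialgebraic arc $\gamma$ joining $w$ to its nearest point, chosen separately for each $w$, yields Puiseux data whose exponent and leading coefficient are bounded \emph{uniformly} over all $w\in B_R$; invoking effective CAD or stratification does not by itself do this, because the arcs and their parametrizations vary with $w$, and the passage from per-stratum local estimates to one global inequality with a height bound polynomial in $h$ is precisely the technical content of Solern\'o's elimination-theoretic proof (and, incidentally, CAD-based bounds are doubly exponential in $n$, which you would need to argue is harmless). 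Until those steps are carried out, the classical (non-effective) {\L}ojasiewicz inequality you start from contributes nothing quantitative, and the proposal is a plan that defers the theorem to itself. Two smaller points: the reduction bounding $r$ is fine, but the correct justification is linear-algebraic (replace the family by a subset spanning its linear span, which has the same zero locus and pointwise no larger maximum), not "effective Bezout"; and the exponent in Solern\'o's theorem genuinely depends on $n$ as well as $d$, which the paper's notation suppresses but your write-up should keep explicit.
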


The effective {\L}ojasiewiecz's inequality allow us to approximate $\LieM$ by a subalgebra $\mathfrak{w}$ which is also a $\LieH$-module. We record this consequence in the following lemma. It holds in the following more general setting. 

Suppose $\mathbf{G} \subseteq \SL_N$ is an algebraic group over $\Q$ and $\mathbf{H} \leq \mathbf{G}$ is a semisimple $\R$-subgroup. Let $\LieG $ and $\LieH$ be their Lie algebras respectively. Suppose $\mathbf{H}$ has no compact simple factor. Let $\mathbf{a}$ be a non-zero semisimple element whose projection to each factor is nonzero and $\ad(\mathbf{a})$ is diagonalizable over $\R$. Let $a_t = \exp(t\mathbf{a})$ and let
\begin{align*}
    U = \{u \in \mathbf{H}(\R):a_{-t} ua_t \to e \text{ as } t \to +\infty\}.
\end{align*}
\begin{lemma}\label{lem:coro of Loj in linear algebra}
    Suppose $(\LieG, \LieH)$ satisfy the normalizer $\mathfrak{n}_{\LieG}(\LieH) = \LieH$. 
    
    There exists an absolute constant $\constc\label{c:linear algebra Loj} > 1$ so that the following holds for all $\tilde{\eta} \in (0, \frac{1}{100})$, $\tilde{R} \gg \tilde{\eta}^{-2}$, and $\tilde{t} \geq \ref{c:linear algebra Loj}(\ref{c:linear algebra Loj} + 1)\log R$. 
    
    Suppose there exist a non-trivial connected proper $\R$-subgroup $\mathbf{M}$ of $\mathbf{G}$. Let $M = \mathbf{M}(\R)$ and let $v_M$ be a non-zero vector in the line corresponding to $\LieM = \mathrm{Lie}(M)$ in $\wedge^{\dim M} \LieG$. Suppose
    \begin{align}
        \|v_M\| \geq \tilde{\eta},
    \end{align}
    \begin{align}
        \sup_{u \in \mathsf{B}_1^{U}}\|a_{\tilde{t}} u v_M\| \leq \tilde{R}.
    \end{align}
    Then we have
    \begin{align*}
        \|v_M\| \ll \tilde{R}.
    \end{align*}
    Moreover, there exists a subalgebra $\mathfrak{w} < \LieG$ with $\dim \mathfrak{w} = \dim \LieM$ satisfies the following. 
    \begin{enumerate}
        \item The subalgebra $\mathfrak{w}$ is a $\LieH$-module. 
        \item Let $v_{\mathfrak{w}}$ be a vector in the line $\wedge^{\dim M} \mathfrak{w}$. We have
        \begin{align*}
            \|x \wedge v_{\mathfrak{w}}\| \ll e^{-\frac{\tilde{t}}{\ref{c:linear algebra Loj}} } \tilde{R}^{\ref{c:linear algebra Loj}}\|x\|\|v_{\mathfrak{w}}\|, \quad \forall x \in \LieM.
        \end{align*}
    \end{enumerate}
\end{lemma}

To apply Theorem~\ref{thm:Lojasiewiecz}, we need a $\Q$-structure on $\LieH$. When $\LieH$ is $\R$-split, this is the classical result by Chevalley and Serre. In general, we recall that from \cite{Kam14} (recorded in Proposition~\ref{pro:General Chevalley basis}), there exists a basis of $\{x_i\}_{i = 1}^h$ of $\LieH$ with structure constants $c_{i, j}^k \in \Q$ where $[x_i, x_j] = \sum_{k = 1}^h c_{ij}^k x_k$. 

\begin{proof}[Proof of Lemma~\ref{lem:coro of Loj in linear algebra}]
    Similar to Section~\ref{sec:Global prelim}, we set $\lambda_{\min}$ be the least positive expanding rate of $\Ad(a_t)$ on $\LieH$. Let $m = \dim M$. Let $V = \wedge^m \LieG$. This is a representation of $H$ with the following decomposition
    \begin{align*}
        V = V^H \oplus V^{\mathrm{non}}
    \end{align*}
    where $V^H$ consists of fixed vectors of $H$ and $V^{\mathrm{non}}$ is the direct sum of non-trivial sub-representations. This decomposition is unique. We write $v_M = v_0 + v^{\mathrm{non}}$ according to this decomposition. 
    Since
    \begin{align*}
        \sup_{u \in \mathsf{B}_1^U}\|a_{\tilde{t}} u v_M\| \asymp \|v_0\| + \sup_{u \in \mathsf{B}_1^U}\|a_{\tilde{t}} u.v^{\mathrm{non}}\| &{}\leq \tilde{R},
    \end{align*}
    we have 
    \begin{align*}
        \|v^{\mathrm{non}}\| \ll \tilde{R}e^{-\lambda_{\min}\tilde{t}}, \qquad \|v_0\| \leq \tilde{R}.
    \end{align*}
    where the implied constants depend only on the representation $V$, cf. \cite[Section 5]{Sh96}. Therefore, we have $\|v_M\| \ll \tilde{R}$, which proves the first assertion. 

    Let $\mathcal{V}$ be the variety in $\LieG^{h + m}$ consisting of tuples $(x_1, \ldots, x_h, w_1, \ldots, w_m)$ satisfying the following polynomial equations:
    \begin{align}
    \begin{aligned}
        &[x_i, x_j] = \sum_{k = 1}^h c_{ij}^k x_k, \quad i, j = 1, \ldots, h,\\
        &\sum_{j = 1}^m w_1 \wedge [x_i, w_j] \wedge \cdots \wedge w_m = 0, \quad i = 1, \ldots, h, \\
        &[w_k, w_l] \wedge w_1 \wedge \cdots \wedge w_m = 0, \quad k, l = 1, \ldots, m.
    \end{aligned}
    \end{align}
    Note that since $\LieG$ is a $\Q$-Lie algebra in $\mathfrak{sl}_N$ and $c_{ij}^k \in \Q$, the above polynomials have rational coefficients with height bounded by constant depending explicitly on $\LieG$ and $\LieH$. Let $C$ be the constant from Theorem~\ref{thm:Lojasiewiecz} applying to the above polynomials. 

    Since $\|v_{M}\| \ll \tilde{R}$, there exists basis $\{w_j^{(0)}\}_{j = 1}^m$ of $\LieM$ with size $\|w_j^{(0)}\| \ll \tilde{R}$ for all $j = 1, \ldots, m$. Let $x_i^{(0)}$ be a basis of $\LieH$ from Proposition~\ref{pro:General Chevalley basis}. We have
    \begin{align*}
        d(((x_i^{(0)})_{i = 1}^h, (w_j^{(0)})_{j = 1}^m), \mathcal{V}) \ll \tilde{R}^{C} e^{-\frac{\lambda_{\min}}{C} \tilde{t}}.
    \end{align*}

    Let $((x_i^{(1)})_{i = 1}^h, (w_j^{(1)})_{j = 1}^m) \in \mathcal{V}$ so that 
    \begin{align*}
        \|x_i^{(0)} - x_i^{(1)}\| \ll \tilde{R}^{C} e^{-\frac{\lambda_{\min}}{C} \tilde{t}}, \|w_j^{(0)} - w_j^{(1)}\| \ll \tilde{R}^{C} e^{-\frac{\lambda_{\min}}{C} \tilde{t}}.
    \end{align*}
    If $\tilde{t} \gg_{\LieG} \log \eta$ and $\tilde{R} \gg \tilde{\eta}^{-2}$, we have 
    \begin{align*}
        \|x_1^{(1)} \wedge \cdots \wedge x_h^{(1)}\| \gg \|x_1^{(0)} \wedge \cdots \wedge x_h^{(0)}\| > 0, \|w_1^{(1)} \wedge \cdots \wedge w_m^{(1)}\| \gg \tilde{\eta} > 0
    \end{align*}
    and hence 
    \begin{align*}
        \dim \span\{x_1^{(1)} , \ldots, x_h^{(1)}\} = h, \quad \dim \span\{w_1^{(1)}, \ldots,  w_m^{(1)}\} = m.
    \end{align*}

    Let $\LieH^{(1)} = \Span \{x_1^{(1)}, \ldots, x_h^{(1)}\}$. It is a subalgebra of $\LieG$ isomorphic to $\LieH$. By Proposition~\ref{pro:conjugate of ss group}, there exists $\|g^{(1)} - \Id\| \ll \tilde{R}^{C} e^{-\frac{\lambda_{\min}}{C} \tilde{t}}$ so that $\LieH^{(1)} \subseteq \Ad(g^{(1)}) \mathfrak{n}_{\LieG}(\LieH)$. Since $\mathfrak{n}_{\LieG}(\LieH) = \LieH$, we have $\LieH^{(1)} = \Ad(g^{(1)})\LieH$ by dimension counting. 

    Let $\mathfrak{w}^{(1)} = \Span \{w_1^{(1)}, \ldots, w_m^{(1)}\}$. It is both a subalgebra of $\LieG$ with $\dim \mathfrak{w} = \dim \LieM$ and a $\LieH^{(1)}$-module. Let $\mathfrak{w} = \Ad(g^{(1)})^{-1} \mathfrak{w}^{(1)}$. It is both a subalgebra of $\LieG$ and a $\LieH$-module. Property~(2) follows directly from the construction if we set $\ref{c:linear algebra Loj} = 2C \max\{\frac{1}{\lambda_{\min}}, 1\}$.     
\end{proof}

\subsection{Almost invariant subalgebras of semisimple Lie algebra}
We recall the following lemma from \cite{LMMSW24}. For any subspace (not necessarily $\Q$-subspace) $\mathfrak{s} \subseteq \LieG$, we define $\hat{v}_{\mathfrak{s}}$ to be the corresponding point in $\mathbb{P}(\wedge^{\dim \mathfrak{s}} \LieG)$. For any $0 < r \leq \dim \LieG$, we equip $\mathbb{P}(\wedge^{r} \LieG)$ with the Fubini-Study metric $\mathrm{d}$ where $\mathrm{d}(\hat{v}, \hat{w})$ is the angle between the corresponding lines in $\mathbb{P}(\wedge^{r} \LieG)$. 

\begin{lemma}[\text{\cite[Lemma 3.3]{LMMSW24}}]\label{lem:isolation to ideals with same dimension}
    Let $\mathfrak{s}$ be a semisimple Lie algebra and let $\|\cdot\|$ be a norm on it. There exists $c_2$ depending only on $\mathfrak{s}$ and $\|\cdot\|$ so that the following holds. Suppose $\LieH_1$ is a Lie subalgebra of $\mathfrak{s}$ for which there exists a Lie ideal $\LieH_2 \triangleleft \mathfrak{s}$ with $\dim \LieH_1 = \dim \LieH_2$ and 
    \begin{align*}
        \mathrm{d}(\hat{\mathpzc{v}}_{\LieH_1}, \hat{\mathpzc{v}}_{\LieH_2}) \leq c_2.
    \end{align*}
    Then $\LieH_1 = \LieH_2$. In particular, $\LieH_1$ is a Lie ideal. 
\end{lemma}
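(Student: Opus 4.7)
The plan is to realise $\LieH_1$ as the graph of a small linear map $\phi$ over $\LieH_2$, use the Lie-subalgebra hypothesis together with the fact that $\LieH_2$ is an ideal to force $\phi$ to be a Lie algebra homomorphism, and then exploit semisimplicity of $\LieH_2$ to conclude $\phi = 0$.

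Since $\mathfrak{s}$ is semisimple, it decomposes into simple ideals $\mathfrak{s} = \bigoplus_{i=1}^k \mathfrak{s}_i$, and every Lie ideal of $\mathfrak{s}$ is the sum of a subcollection of the $\mathfrak{s}_i$. Hence $\LieH_2$ admits a complementary ideal $\LieH_2^c$ with $\mathfrak{s} = \LieH_2 \oplus \LieH_2^c$ and, crucially, $[\LieH_2, \LieH_2^c] = 0$. The cases $\LieH_2 \in \{0, \mathfrak{s}\}$ are trivial by dimension count, so we assume $\LieH_2^c \neq 0$. For $c_2$ sufficiently small, standard Grassmannian geometry (translating the Fubini--Study distance of Plücker vectors into principal angles) shows that the projection $\LieH_1 \to \LieH_2$ along $\LieH_2^c$ is a linear isomorphism, so
\begin{align*}
\LieH_1 = \{x + \phi(x) : x \in \LieH_2\}
\end{align*}
for a linear map $\phi : \LieH_2 \to \LieH_2^c$ with $\|\phi\| \leq C_1 \mathrm{d}(\hat{\mathpzc{v}}_{\LieH_1}, \hat{\mathpzc{v}}_{\LieH_2})$, where $C_1 > 0$ depends only on $\mathfrak{s}$ and $\|\cdot\|$.

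Now impose that $\LieH_1$ is a Lie subalgebra. For $x, y \in \LieH_2$, the cross terms $[x, \phi(y)]$ and $[\phi(x), y]$ vanish since $[\LieH_2, \LieH_2^c] = 0$, so
\begin{align*}
[x + \phi(x),\, y + \phi(y)] = [x, y] + [\phi(x), \phi(y)].
\end{align*}
The left-hand side lies in $\LieH_1$, and since $[x, y] \in \LieH_2$ and $[\phi(x), \phi(y)] \in \LieH_2^c$, comparing with the graph form forces $\phi([x, y]) = [\phi(x), \phi(y)]$. Thus $\phi$ is a Lie algebra homomorphism from the semisimple algebra $\LieH_2$. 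The key rigidity input is: any nonzero Lie algebra homomorphism out of $\LieH_2$ into $\mathfrak{s}$ has operator norm bounded below by a constant depending only on $(\mathfrak{s}, \|\cdot\|)$. Indeed, $[\LieH_2, \LieH_2] = \LieH_2$ by semisimplicity, and a compactness argument on the unit sphere gives a constant $C_0 > 0$ such that every unit $z \in \LieH_2$ admits a decomposition $z = \sum_l [x_l, y_l]$ with $\sum_l \|x_l\|\|y_l\| \leq C_0$. Applying $\phi$ and using the homomorphism property,
\begin{align*}
\|\phi(z)\| = \Bigl\| \sum_l [\phi(x_l), \phi(y_l)] \Bigr\| \leq C_{\mathfrak{s}} \|\phi\|^2 C_0,
\end{align*}
where $C_{\mathfrak{s}}$ bounds the Lie bracket on $\mathfrak{s}$. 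Taking the supremum over unit $z$ gives $\|\phi\| \leq C_{\mathfrak{s}} C_0 \|\phi\|^2$, forcing either $\phi = 0$ or $\|\phi\| \geq 1/(C_{\mathfrak{s}} C_0)$. Choosing $c_2 < 1/(C_1 C_{\mathfrak{s}} C_0)$ rules out the second alternative, so $\phi = 0$ and $\LieH_1 = \LieH_2$.

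The main conceptual input is the splitting $\mathfrak{s} = \LieH_2 \oplus \LieH_2^c$ as commuting ideals (available precisely because $\mathfrak{s}$ is semisimple and $\LieH_2$ is an ideal), which is what upgrades $\phi$ from a generic linear graph map into a Lie algebra homomorphism. The one step requiring a little care is the rigidity of homomorphisms out of a perfect Lie algebra; morally this is just the mismatch between the linear scaling of $\phi([x,y])$ and the quadratic scaling of $[\phi(x), \phi(y)]$, but making this quantitative needs the uniform perfectness constant $C_0$, which is where the dependence on $(\mathfrak{s}, \|\cdot\|)$ enters.
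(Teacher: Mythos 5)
Your proof is correct. Note that the paper does not actually prove this lemma: it is imported verbatim from \cite[Lemma 3.3]{LMMSW24}, so there is no internal argument to compare against, and what you have written is a legitimate self-contained proof of the statement as quoted. Your route — write $\mathfrak{s}=\LieH_2\oplus\LieH_2^c$ as a sum of complementary ideals with $[\LieH_2,\LieH_2^c]=0$ (possible precisely because ideals of a semisimple algebra are sums of simple factors), realize $\LieH_1$ as the graph of a small $\phi:\LieH_2\to\LieH_2^c$, observe that the vanishing of the cross brackets upgrades $\phi$ to a Lie algebra homomorphism, and then kill $\phi$ by the linear-versus-quadratic scaling mismatch $\|\phi\|\leq C_{\mathfrak{s}}C_0\|\phi\|^2$ coming from uniform perfectness of $\LieH_2$ — is clean and all steps check out: the cross terms do vanish, the component comparison does force $\phi([x,y])=[\phi(x),\phi(y)]$, and the perfectness constant $C_0$ exists (e.g.\ pick brackets $[a_j,b_j]$ forming a basis of $\LieH_2$). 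Two points worth making explicit if you write this up: first, the constants $C_0,C_1$ a priori depend on $\LieH_2$, but since $\LieH_2$ ranges over the finitely many ideals of $\mathfrak{s}$ they can be taken uniform, which is what the statement requires; second, the step ``Fubini--Study closeness of the Pl\"ucker points implies the graph description with $\|\phi\|\leq C_1\,\mathrm{d}(\hat{\mathpzc{v}}_{\LieH_1},\hat{\mathpzc{v}}_{\LieH_2})$'' needs the Pl\"ucker map to be a smooth embedding of the compact Grassmannian (injectivity plus compactness to get into a neighborhood of $\LieH_2$, then a local bi-Lipschitz estimate for the linear bound). Finally, be aware that your constants are obtained by compactness and are therefore non-effective, whereas the original \cite{LMMSW24} version is quantitative in the height of a $\Q$-structure; for the statement as reproduced in this paper (dependence only on $\mathfrak{s}$ and $\|\cdot\|$, applied to the fixed $\LieG$ and $\LieH$) this is immaterial.
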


\begin{remark}
    We remark that in \cite{LMMSW24} they assume $\mathfrak{s}$ to be a $\Q$-Lie algebra so that $c_2$ depends only on the height of $\mathfrak{s}$. In this paper, we will apply this lemma only to $\LieG$ (when $\rad(\LieG) = \{0\}$), $\LieG/\rad(\LieG)$, and $\LieH$. Both the Lie algebra $\LieG$ and $\LieG/\rad(\LieG)$ has a natural $\Q$-structure from the setting. We remark that by \cite[Proposition 3.1]{MSGT23}, there exists a Levi factor of $\LieG$ over $\Q$ with height bounded by power of height of $\LieG$. For the Lie algebra $\LieH$, we utilize the $\Q$-structure from the pre-set Chevalley basis from Proposition~\ref{pro:General Chevalley basis}. 
\end{remark}

\subsection{A linear algebra lemma}
The following lemma is the main result of this section. 
\begin{lemma}\label{lem:Closing lemma Linear Algebra}
    Suppose the data $(G, \Gamma, H, \mathfrak{r}, \mathbf{a}, U)$ satisfy {}\Irrep{}. Then there exists a constant $\constc\label{c:linear algebra} > 0$ so that the following holds for all $\tilde{\eta} \in (0, 1)$, $\tilde{R} \gg \tilde{\eta}^{-2}$, and $\tilde{t} \geq \ref{c:linear algebra}(\ref{c:linear algebra} + 1)\log R$. 
    
    Suppose there exist a non-trivial connected proper $\R$-subgroup $\mathbf{M}$ of $\mathbf{G}$ with the following properties. Suppose the radical of $\mathbf{M}$ is unipotent. Let $M = \mathbf{M}(\R)$ and let $v_M$ be a non-zero vector in the line corresponding to $\LieM = \mathrm{Lie}(M)$ in $\wedge^{\dim M} \LieG$. Suppose
    \begin{align}
        \|v_M\| \geq \tilde{\eta},
    \end{align}
    \begin{align}
        \sup_{u \in \mathsf{B}_1^{U}}\|a_{\tilde{t}} u v_M\| \leq \tilde{R}.
    \end{align}
    Then we have
    \begin{align*}
        \|v_M\| \ll \tilde{R}.
    \end{align*}
    
    Moreover, for all $A \geq 1$, if $\tilde{t} \geq A\ref{c:linear algebra}(\ref{c:linear algebra} + 1) \log \tilde{R}$ and 
    \begin{align}\label{eqn:linear algebra lemma closed to unipotent}
        \sup_{\mathpzc{z} \in B_1^{\LieU}, u \in \mathsf{B}_1^U} \|\mathpzc{z} \wedge (a_{\tilde{t}} u v_M)\| \leq e^{-\frac{\tilde{t}}{A}}\tilde{R},
    \end{align}
    then at least one of the following holds. 
    \begin{enumerate}
        \item We have $\{0\} \neq \rad(\LieG) \not\subseteq \LieM$ and 
        \begin{align*}
            \sup_{\mathpzc{r} \in B_1^{\rad(\LieG)}}\|\mathpzc{r} \wedge v_M\| \leq \tilde{R}^{\ref{c:linear algebra}} e^{-\frac{\tilde{t}}{\ref{c:linear algebra}} } \|v_M\|.
        \end{align*}
        \item There exists $g' \in G$ with $\|g' - I\| \leq \tilde{R}^{\ref{c:linear algebra}} e^{-\frac{\tilde{t}}{\ref{c:linear algebra}}}$ so that 
        \begin{align*}
             g' N_G(M) (g')^{-1} = H.
        \end{align*}
    \end{enumerate}
\end{lemma}

\begin{proof}[Proof of Lemma~\ref{lem:Closing lemma Linear Algebra}]
    Let $\ref{c:linear algebra} = \ref{c:linear algebra Loj} + 1$. Applying Lemma~\ref{lem:coro of Loj in linear algebra} to $(\LieG, \LieH)$, we have that $\|v_M\| \ll \tilde{R}$. This proves the first assertion. From now on we assume \cref{eqn:linear algebra lemma closed to unipotent} holds for the given $\tilde{t}, \tilde{R}$ and $A$. 
    
    We start by showing that $\LieM$ is not a proper ideal of $\LieG$ containing $\rad(\LieG)$ nor an ideal of $\LieH$. Indeed, if so, by \cref{eqn:linear algebra lemma closed to unipotent}, we have
    \begin{align*}
        \sup_{\mathpzc{z} \in B_1^{\LieU}} \|\mathpzc{z} \wedge v_M\| = \sup_{\mathpzc{z} \in B_1^{\LieU}, u \in \mathsf{B}_1^U} \|\mathpzc{z} \wedge (a_{\tilde{t}} u v_M)\| \leq e^{-\frac{\tilde{t}}{A}}\tilde{R}\leq \tilde{R}^{-1}.
    \end{align*}
    We get a contradiction if $\tilde{R} \gg \tilde{\eta}^{-2}$ and the implied constant is large enough depending only on $c_1$ in Lemma~\ref{lem:U isolate from proper ideal}. 

    By Lemma~\ref{lem:coro of Loj in linear algebra} (applying to $(\LieG, \LieH)$), there exists a subalgebra $\mathfrak{w} < \LieG$ with $\dim \mathfrak{w} = \dim \LieM$ so that it is also a $\LieH$-module and
    \begin{align*}
            \|x \wedge v_{\mathfrak{w}}\| \leq e^{-\frac{\tilde{t}}{\ref{c:linear algebra}} } \tilde{R}^{\ref{c:linear algebra}}\|x\|\|v_{\mathfrak{w}}\|, \quad \forall x \in \LieM
    \end{align*}
    where $v_{\mathfrak{w}}$ is any vector in the line $\wedge^m \mathfrak{w}$. Here we take $\tilde{R}$ be large enough to take care of implied constant in Lemma~\ref{lem:coro of Loj in linear algebra}. 

    \medskip
    \noindent\textit{Case 1.} The subalgebra $\mathfrak{w} \not\subseteq \LieH$, we will show property~(1) holds in this case. We have $\LieH + \mathfrak{w} = \LieG$ due to irreducibility of $\mathfrak{r}$. Since $\mathfrak{w}$ is both a subalgebra and a $\LieH$-module, it is an ideal of $\LieG$. Note that $\LieH$ surjects to $\LieG/\mathfrak{w}$, $\LieG/\mathfrak{w}$ is semisimple and $\rad(\LieG) \subseteq \mathfrak{w}$. In particular, if $\rad(\LieG) \neq \{0\}$, this implies that 
    \begin{align*}
        \sup_{\mathpzc{r} \in B_1^{\rad(\LieG)}} \|\mathpzc{r} \wedge v_M\| \leq e^{-\frac{\tilde{t}}{\ref{c:linear algebra}} } \tilde{R}^{\ref{c:linear algebra}}\|v_{M}\|.
    \end{align*}
    It suffices to show that in this case we have $\{0\} \neq \rad(\LieG) \not\subseteq \LieM$. 

    Suppose $\rad(\LieG) = \{0\}$, then $\LieG$ is semisimple. By Lemma~\ref{lem:isolation to ideals with same dimension} applying to $\LieG$, if $R$ is large enough, we have $\LieM = \mathfrak{w} \triangleleft \LieG$ and we get a contradiction since $\LieM$ is not a proper ideal of $\LieG$. 
    
    Suppose $\rad(\LieG) \subseteq \LieM$, then let $\overline{\LieM}$ and $\overline{\mathfrak{w}}$ be the image of $\LieM$ and $\mathfrak{w}$ in $\LieG /\rad(\LieG)$. Note that $\dim\overline{\LieM} = \dim\overline{\mathfrak{w}}$. By Lemma~\ref{lem:isolation to ideals with same dimension} applying to $\LieG/\rad(\LieG)$, if $R$ is large enough, we have $\LieM = \mathfrak{w} \triangleleft \LieG$. Similar argument shows this also contradict to Lemma~\ref{lem:U isolate from proper ideal}. 

    \medskip
    \noindent\textit{Case 2.} The subalgebra $\mathfrak{w} \subseteq \LieH$, we will show property~(2) holds in this case. Since $\mathfrak{w}$ is $\LieH$-module, it is an ideal of $\LieH$. Let $\pi_{\LieH}$ be the projection from $\LieG$ to $\LieH$ according to the decomposition $\LieG = \LieH \oplus \mathfrak{r}$. 
    
    We claim that $\pi_{\LieH}|_{\LieM}$ is injective if $\tilde{R}$ is large enough. Indeed, for all $x \in \LieM$ with $\|x\| = 1$, there exists $x' \in \mathfrak{w}$ so that $\|x - x'\| \ll \tilde{R}^{-1}$. Therefore, $\|\pi_{\LieH}(x) - \pi_{\LieH}(x')\| = \|\pi_{\LieH}(x) - x'\|\ll \tilde{R}^{-1}$. This shows the injectivity if $\tilde{R}$ is large enough. Moreover, by dimension counting, $\pi_{\LieH}(\LieM) = \mathfrak{w}$. 

    We now show that $\mathfrak{w}$ is not an ideal of $\LieG$. Suppose not, $\mathfrak{w} \triangleleft \LieG$. Since $\mathfrak{r}$ is a non-trivial representation of $\LieH$, $\mathfrak{w} \subsetneq \LieH$. If $\rad(\LieG) = \{0\}$, $\LieG$ is semisimple. Lemma~\ref{lem:isolation to ideals with same dimension} applying to $\LieG$ directly implies that $\LieM = \mathfrak{w} \triangleleft \LieG$ if $R$ is large enough. If $\rad(\LieG) \neq \{0\}$, we have $[\mathfrak{w}, \rad(\LieG)] = 0$ and therefore $\mathfrak{w} = [\mathfrak{w}, \mathfrak{w}] = [\pi_{\LieH}(\LieM), \pi_{\LieH}(\LieM)] \subseteq \LieM$. The last inclusion follows from Lemma~\ref{lem:structure under irrep condition} that $\rad(\LieG)$ is abelian. A dimension comparison shows that $\mathfrak{w} = \LieM \triangleleft \LieG$. In both situation, $\mathfrak{w} = \LieM \triangleleft \LieG$ and we get a contradiction since $\LieM$ is not a proper ideal of $\LieG$. 
    
    By irreducibility of $\mathfrak{r}$, we have $\mathfrak{n}_{\LieG}(\mathfrak{w}) = \LieH$. Applying Lemma~\ref{pro:conjugate of ss group} to $\mathfrak{s} = \mathfrak{w}$ and $\mathfrak{s}' = \LieM$, there exists $g'$ with $\|g' - \Id\| \leq e^{-\frac{\tilde{t}}{\ref{c:linear algebra}} } \tilde{R}^{\ref{c:linear algebra}}$ so that 
    \begin{align*}
        \LieM \subseteq \Ad(g')^{-1} \mathfrak{n}_{\LieG}(\mathfrak{w}) = \Ad(g')^{-1} \LieH.
    \end{align*}
    Applying Lemma~\ref{lem:isolation to ideals with same dimension} to $\Ad(g') \LieM$ and $\mathfrak{w}$ in $\LieH$, we have $\Ad(g')^{-1} \LieM = \mathfrak{w}$ and 
    \begin{align*}
        g' N_G(M) (g')^{-1} = H.
    \end{align*}
\end{proof}

\section{Applying closing lemma for large balls in unipotent orbits}\label{sec:apply large ball unipotent}
This section is devoted to prove the following single scale version of Theorem~\ref{thm:Closing lemma many scale}. 
\begin{lemma}\label{lem:Closing lemma one scale}
    Suppose the data $(G, \Gamma, H, \mathfrak{r}, \mathbf{a}, U)$ satisfy {}\Irrep{}. Then there exist constants $\consta\label{a:closing lemma frostman one scale}, \constc\label{c:closing lemma frostman one scale}, \constm\label{m:closing lemma frostman one scale} > 1$, $\constE\label{e:closing lemma frostman one scale}> \ref{a:closing lemma frostman one scale} > 1$ and $\epsilon_2 > 0$ so that the following holds. For all $D > 0$, parameters $R \gg 1 $ and $\eta \ll \eta_0$ satisfying $R \gg \eta^{-\ref{e:closing lemma frostman one scale}}$, and $x_1 \in X_\eta$, let $M = \ref{m:closing lemma frostman one scale} + \ref{c:closing lemma frostman one scale} D$, $t = M\log R$, $\mu_t = \nu_{t} \ast \delta_{x_1}$ and $\delta = R^{-\frac{1}{\ref{a:closing lemma frostman one scale}}}$. 

    Suppose that for all periodic orbit $H.x'$ with $\vol(H.x') \leq R$, we have
    \begin{align*}
        d(x_1, x') > R^{-D}.
    \end{align*}
    Then for all $y \in X_{3\eta}$ and all $r_H \leq 10^5C_0\eta$, we have
    \begin{align*}
        \mu_t((\mathsf{B}_{r_H}^H)^{\pm 1}\exp(B_{\delta}^{\mathfrak{r}}).y) \leq \delta^{\epsilon_2}.
    \end{align*}
\end{lemma}
As indicated in the introduction, the proof of the above lemma relies on the effective closing lemma for orbit of large balls in unipotent groups proved by Lindenstrauss--Margulis--Mohammadi--Shah--Wieser \cite{LMMSW24}. Recall that For any $\Q$-subgroup $\mathbf{M}$ of $\mathbf{G}$ with Lie algebra $\LieM$, we define $\mathpzc{v}_M \in \wedge^{\dim \LieM} \LieG$ to be one of the primitive integral vector in the line $\wedge^{\dim \LieM} \LieM$. For any $0 < r \leq \dim \LieG$, we set $\mathrm{d}$ to be the Fubini-Study metric on $\mathbb{P}(\wedge^{r} \LieG)$. 

\begin{theorem}[Lindenstrauss--Margulis--Mohammadi--Shah--Wieser]\label{thm:closing lemma long unipotent}
    
    There exist constants $\consta\label{a:long unipotent1}, \consta\label{a:long unipotent2} > 1$ and $\constE\label{e:long unipotent} > 1$ depending on $\mathbf{G}$ and $\mathbf{a}$ so that the following holds. Let $\tau \in (0, 1)$ and $e^t > S \geq \ref{e:long unipotent} \tau^{-\ref{a:long unipotent1}}$. Let $x = g\Gamma \in X_\tau$ be a point. 

    Suppose there exists $\mathcal{E} \subseteq \mathsf{B}_1^U$ with the following properties.
    \begin{enumerate}
        \item $m_U(\mathcal{E}) > S^{-\frac{1}{\ref{a:long unipotent1}}}$.
        \item For any $u, u' \in \mathcal{E}$, there exists $\gamma \in \Gamma$ with \begin{align*}
            \|a_t u a_{-t} g \gamma g^{-1} a_t (u')^{-1} a_{-t} \| \leq S^{\frac{1}{\ref{a:long unipotent1}}},\\
            \mathrm{d}(a_t u a_{-t} g \gamma g^{-1} a_t (u')^{-1} a_{-t}.\hat{v}_{\mathfrak{h}}, \hat{v}_{\mathfrak{h}}) \leq S^{-1}.
        \end{align*}
    \end{enumerate}
    Then one of the following is true. 
    
    \begin{enumerate}
        \item There exists a non-trivial proper $\Q$-subgroup $\mathbf{M}$ with radical to be unipotent so that 
        \begin{align*}
        \sup_{u \in \mathsf{B}_1^U} \|a_t u a_{-t} g.\mathpzc{v}_M\| {}&\leq S^{\ref{a:long unipotent2}}, \\
        \sup_{\mathpzc{z} \in B_1^{\LieU}, u \in \mathsf{B}_1^U} \|\mathpzc{z} \wedge (a_t u a_{-t} g.\mathpzc{v}_M)\| {}&\leq e^{-\frac{t}{\ref{a:long unipotent2}}}S^{\ref{a:long unipotent2}}.
        \end{align*}
        \item There exists a non-trivial proper normal $\Q$-subgroup $\mathbf{M}$ with 
        \begin{align*}
            \sup_{\mathpzc{z}\in B_1^{\LieU}} \|\mathpzc{z} \wedge \mathpzc{v}_{M}\| \leq S^{-\frac{1}{\ref{a:long unipotent2}}}.
        \end{align*}
    \end{enumerate}
\end{theorem}

\begin{proof}[Proof of Lemma~\ref{lem:Closing lemma one scale}]
We prove the lemma for $\mathsf{B}_{r_H}^H$. The proof for $(\mathsf{B}_{r_H}^H)^{-1}$ is similar. In this proof we write $|g| = \max\{\|g\|, \|g\|^{-1}\}$ for simplicity. 

Let $\ref{c:linear algebra}$ be the constant coming from Lemma~\ref{lem:Closing lemma Linear Algebra} and $c_0$ be the constant coming from the comparison between volume and height in Proposition~\ref{pro:Volume and Height and normalizer}. Let $\ref{a:long unipotent1} > 1$, $\ref{a:long unipotent2} > 1$ and $\ref{e:long unipotent}$ be as in Theorem~\ref{thm:closing lemma long unipotent}. Let $\ref{m:closing lemma frostman one scale} = \ref{a:long unipotent2}(\ref{c:linear algebra}(\ref{c:linear algebra} + 1) + 1)/c_0$, $\ref{a:closing lemma frostman one scale} = c_0\ref{a:long unipotent2}$, $\ref{c:closing lemma frostman one scale} = 2\ref{c:linear algebra}$, and $\epsilon_2 = \frac{1}{4\ref{a:long unipotent1}}$. 

For initial point $x_1 \in X_{\eta}$, by reduction theory, we can write $x_1 = g_1 \Gamma$ where $|g_1| \leq \eta^{-A}$. The constant $A$ depends only on the height of $\mathbf{G}$. 
Let $R$ be large enough depending polynomially on $\eta$ which will be explicated later. Let $\delta = R^{-\frac{1}{\ref{a:closing lemma frostman one scale}}}$ and let $D > 0$, $M = \ref{m:closing lemma frostman one scale} + \ref{c:closing lemma frostman one scale}D$, and $t = M \log R$. Let $\tilde{R} = \eta^{A} R^{\frac{1}{2c_0}}$ and $S = \tilde{R}^{\frac{1}{\ref{a:long unipotent2}}}$. Here we take $R$ large enough depending polynomially on $\eta$ so that $\delta < \eta$, $S > \ref{e:long unipotent}\eta^{-\ref{a:long unipotent1}}$, and $\delta^{\epsilon_2} > S^{-\frac{1}{\ref{a:long unipotent1}}}$. 

Suppose there exists $y \in X_{3\eta}$ and $r_H \leq 10^5C_0\eta$ so that 
\begin{align*}
\mu_t(\mathsf{B}_{r_H}^H\exp(B_{\delta}^{\mathfrak{r}}). y) = \nu_t \ast \delta_{x_1} (\mathsf{B}_{r_H}^H\exp(B_{\delta}^{\mathfrak{r}}). y) > \delta^{\epsilon_2}.
\end{align*}
Let
\begin{align*}
    \mathcal{E}_y = \{u \in \mathsf{B}_1^U: a_t u.x_1 \in \mathsf{B}_{r_H}^H \exp(B_{\delta}^{\mathfrak{r}}). y\},
\end{align*}
we have 
\begin{align*}
    m_U(\mathcal{E}_y) > \delta^{\epsilon_2} > S^{\frac{1}{\ref{a:long unipotent1}}}.
\end{align*}
Fix a $u_1 \in \mathcal{E}$ and let $\mathcal{E} = \mathcal{E}_y u_1^{-1} \subseteq \mathsf{B}_{O(1)}^U$. For all $u, u' \in \mathcal{E}$, we have
\begin{align*}
    a_t uu_1.x_1 = h\exp(w).a_{t} u'u_1.x_1
\end{align*}
where $h \in \mathsf{B}_{2r_H}^H$ and $w \in B_{2\delta}^{\mathfrak{r}}$. 
Re-centering at $x_2 = a_{t}u_1.x_1$, we have
\begin{align*}
    a_{t} ua_{-t} x_2 = h\exp(w).a_{t} u'a_{-t} x_2.
\end{align*}
Note that $x_2 \in \mathsf{B}^H_{r_H}\exp(B_{\delta}^{\mathfrak{r}}).y \subseteq X_{2\eta}$. Write $x_2 = g_2 \Gamma$ where $|g_2| \leq \eta^{-A}$. There exists $\gamma_{u, u'} \in \Gamma$ so that 
\begin{align*}
    a_{t} ua_{-t} g_2 \gamma_{u, u'} = h\exp(w)a_{t} u'a_{-t} g_2
\end{align*}
with $h \in \mathsf{B}_{O(r_H)}^H$ and $w \in B_{O(\delta)}^{\mathfrak{r}}$. 
Therefore, 
\begin{align*}
    a_{t} ua_{-t} g_2 \gamma_{u, u'} g_2^{-1}a_{t} (u')^{-1}a_{-t} = h\exp(w).
\end{align*}

In summary, we have
\begin{enumerate}
    \item $m_U(\mathcal{E}) > \delta^{\epsilon_2} \geq S^{\frac{1}{\ref{a:long unipotent1}}}$.
    \item For all $u, u' \in \mathcal{E}$, there exists $\gamma \in \Gamma$ so that 
    \begin{align*}
        \|a_{t} ua_{-t} g_2 \gamma_{u, u'} g_2^{-1}a_{t} (u')^{-1}a_{-t}\| = \|h\exp(w)\| \ll 1,\\
        \mathrm{d}(a_{t} ua_{-t} g_2 \gamma_{u, u'} g_2^{-1}a_{t} (u')^{-1}a_{-t}.\hat{\mathpzc{v}}_{\mathfrak{h}}, \hat{\mathpzc{v}}_\mathfrak{h}) \ll \delta = R^{-\frac{1}{c_0\ref{a:long unipotent2}}} \leq S^{-1}.
    \end{align*}
\end{enumerate}

We apply Theorem~\ref{thm:closing lemma long unipotent} with $e^t > S > \ref{e:long unipotent}\eta^{-\ref{a:long unipotent1}}$ and the base point $x_2 = g_2\Gamma \in X_{2\eta}$. Note that by Lemma~\ref{lem:U isolate from proper ideal}, case~(2) in Theorem~\ref{thm:closing lemma long unipotent} cannot hold if $R$ is large enough so that $S^{-\frac{1}{\ref{a:long unipotent2}}} \leq c_1$ where $c_1$ is from Lemma~\ref{lem:U isolate from proper ideal}. Therefore, there exists a non-trivial proper $\Q$-subgroup $\mathbf{M}$ so that 
\begin{align*}
    \sup_{u \in \mathsf{B}_{O(1)}^U} \|a_{t} u a_{-t} g_2.\mathpzc{v}_M\| {}&\leq S^{\ref{a:long unipotent2}} = \eta^{A} R^{\frac{1}{2c_0}},\\
    \sup_{\mathpzc{z} \in B_1^{\LieU}, u \in \mathsf{B}_{O(1)}^U} \|\mathpzc{z} \wedge (a_{t} u a_{-t} g_2.\mathpzc{v}_M)\| {}&\leq e^{-\frac{t}{\ref{a:long unipotent2}}}S^{\ref{a:long unipotent2}} = e^{-\frac{t}{\ref{a:long unipotent2}}}\eta^{A} R^{\frac{1}{2c_0}}.
\end{align*}

Since $x_2 = g_2\Gamma = a_tu_1g_1\Gamma$, there exists $\gamma \in \Gamma$ so that $g_2 \gamma = a_t u_1 g_1$. Note that $u_1 \in \mathsf{B}_1^U$, we have
\begin{align*}
    \sup_{u \in \mathsf{B}_1^U} \|a_{t} u g_1 \gamma.\mathpzc{v}_M\| \leq S^{\ref{a:long unipotent2}} = \tilde{R}.
\end{align*}
Similarly, we have
\begin{align*}
    \sup_{\mathpzc{z} \in B_1^{\LieU}, u \in \mathsf{B}_1^U} \|\mathpzc{z} \wedge (a_t u g_1 \gamma.\mathpzc{v}_M)\| \leq e^{-\frac{t}{\ref{a:long unipotent2}}} S^{\ref{a:long unipotent2}} = e^{-\frac{t}{\ref{a:long unipotent2}}} \tilde{R}.
\end{align*}

Note that $\mathbf{M}$ is a $\Q$-group and $\mathpzc{v}_M$ is a primitive non-zero integer vector in $\LieG_{\Z}$, we have $\|\mathpzc{v}_M\| \geq 1$. Since $|g_1| \leq \eta^{-A}$, we have
\begin{align*}
    \|g_1\gamma.\mathpzc{v}_M\| \geq \eta^{\star A}.
\end{align*}

We now apply Lemma~\ref{lem:Closing lemma Linear Algebra} with $\tilde{t} = t$, $\tilde{R} = \eta^{A}R^{\frac{1}{c_0}}$, $\ref{a:long unipotent2}$, and $g_1\gamma.\mathpzc{v}_{M} \in \wedge^{\dim \mathbf{M}} \LieG$. Note that the condition of the lemma is satisfied if $R$ is large enough depending polynomially on $\eta$. The first conclusion of the Lemma implies that
\begin{align*}
    \|g_1 \gamma.\mathpzc{v}_M\| \ll \tilde{R}
\end{align*}
and as a consequence $\|\gamma.\mathpzc{v}_M\| \ll \eta^A\tilde{R} = R^{\frac{1}{2c_0}}$. 

We claim that case~(1) in second conclusion of Lemma~\ref{lem:Closing lemma Linear Algebra} cannot happen. We utilize the fact that $\mathbf{M}$ is in fact a $\Q$-subgroup. Suppose not, since $\{0\} \neq \rad(\LieG) \not \subseteq \Ad(g_1 \gamma)\LieM$, $\rad(\LieG) \not \subseteq \Ad(\gamma)\LieM$ and moreover, there exists a primitive integer vector $\mathpzc{r} \in \rad(\LieG) \setminus \Ad(\gamma)\LieM$ with size $\|\mathpzc{r}\| \ll \height(\rad(\LieG)) \ll \height(\mathbf{G})^\star$. The last inequality follows from \cite[Proposition 3.1]{MSGT23}. We have
\begin{align*}
    \eta^{\star A} \ll \|g_1(\mathpzc{r} \wedge \gamma.\mathpzc{v}_M)\| \ll e^{-\frac{\tilde{t}}{\ref{c:linear algebra}}} \tilde{R}^{\ref{c:linear algebra}}\|g_1.\mathpzc{r}\| \ll \eta^{-A}  e^{-\frac{\tilde{t}}{\ref{c:linear algebra}}} \tilde{R}^{\ref{c:linear algebra}},
\end{align*}
which leads to a contradiction. 

Therefore, there exists $g' \in G$ with $\|g' - \Id\|\leq \tilde{R}^{\ref{c:linear algebra}}e^{-\frac{1}{\ref{c:linear algebra}}\tilde{t}}$ so that 
\begin{align*}
    \mathbf{H} = g'g_1\gamma N_{\mathbf{G}}(\mathbf{M}) \gamma^{-1} g_1^{-1}(g')^{-1}.
\end{align*}

Since $\mathbf{M}$ is a $\Q$-subgroup of $\mathbf{G}$, its normalizer $N_{\mathbf{G}}(\mathbf{M})$ is also a $\Q$-subgroup of $\mathbf{G}$. Therefore, the orbit $Hg'g_1\Gamma$ is periodic. Moreover, 
\begin{align*}
    \|g' - \Id\| \leq \tilde{R}^{\ref{c:linear algebra}}e^{-\frac{1}{\ref{c:linear algebra}}\tilde{t}} \leq R^{-D}.
\end{align*}
By Proposition~\ref{pro:Volume and Height and normalizer}, We have
\begin{align*}
    \vol(Hg'g_1 \Gamma) = \vol(Hg'g_1\gamma \Gamma) \ll \height(\gamma\mathbf{M}\gamma^{-1})^{c_0} \leq R^{\frac{1}{2}}.
\end{align*}
We get a contradiction to the initial Diophantine condition if $R$ is large enough. This proves the lemma. 
\end{proof}

\section{Dimension estimate in many scales}\label{sec: Improve many scales}
This section is devoted to prove Theorem~\ref{thm:Closing lemma many scale}. We improve Lemma~\ref{lem:Closing lemma one scale} to obtain information for larger scales. The key ingredient is the following avoidance principle, \cref{pro:avoidance}. 

\subsection{Avoiding periodic orbits}
The proof of Theorem~\ref{thm:Closing lemma many scale} relies on the following avoidance principle obtained in \cite{SS24}. It asserts that the trajectory $a_t \mathsf{B}_1^U.x_0$ is away from periodic orbits most of the time. 
\begin{proposition}\label{pro:avoidance}
    There exist $\mathsf{m}$, $s_0$, $\consta\label{a:avoidance}$, $\constc\label{c:avoidance}$, and $\constd\label{d:avoidance}$ depending only on $(G, H, \Gamma)$, so that the following holds. Let $R_1, R_2 \geq 1$. Suppose $x_0 \in X$ is so that
    \begin{align*}
        d_X(x_0, x) \geq(\log R_1)^{\ref{d:avoidance}} R_1^{-1}
    \end{align*}
    for all $x$ with $\vol(H.x) \leq R_2$. Then for all $s \geq \ref{a:avoidance}\max\{\log R_1, |\log \inj(x_0)|\} + s_0$ and all $\eta \in (0, 1]$, we have
    \begin{align*}
        m_U\Biggl(\Biggl\{u \in \mathsf{B}_1^U: \begin{aligned}{}&\inj(a_su.x_0) \leq \eta \text{ or }\exists x \text { with }\vol(H.x) \leq R_2\\ {}&\text { and } d_X(a_s u. x_0, x) \leq \ref{c:avoidance}^{-1} R_2^{-\ref{d:avoidance}}\end{aligned}\Biggr\}\Biggr) \leq \ref{c:avoidance} (R_2^{-1} + \eta^{\frac{1}{\mathsf{m}}}).
    \end{align*}
\end{proposition}
\begin{proof}
    See \cite[Proposition 4.2, 4.4]{LMWY25} and \cite[Theorem 2]{SS24}. See also \cite[Corollary 7.2]{LMMS}. 
\end{proof}

\subsection{\texorpdfstring{F{\o}lner property for $U$}{F{\o}lner property for U}}
The following lemma allow us to view $\mu_{t_2 + t_1}$ as a $2$-step random walk. It is a standard application of F{\o}lner property.   
\begin{lemma}\label{lem:Folner1}
There exists $c_3 > 0$ so that for all $A \subseteq X$, we have
\begin{align*}
|\mu_{t_2 + t_1}(A) - (\nu_{t_2} \ast \mu_{t_1})(A)| \ll e^{-c_3 t_1}.
\end{align*}
\end{lemma}

\subsection{Proof of Theorem~\ref{thm:Closing lemma many scale}}

\begin{proof}[Proof of Theorem~\ref{thm:Closing lemma many scale}]
    We will prove the theorem for $\mathsf{B}_{r_H}^H$. The proof for $(\mathsf{B}_{r_H}^H)^{-1}$ is exactly the same. 
    
    We write the condition $R \gg \eta^{-\ref{e:closing lemma frostman one scale}}$ in Lemma~\ref{lem:Closing lemma one scale} as $R \geq R_0\eta^{-\ref{e:closing lemma frostman one scale}}$. Let $\ref{a:closing lemma frostman one scale}$, $\ref{c:closing lemma frostman one scale}$, $\ref{e:closing lemma frostman one scale}$, $\ref{m:closing lemma frostman one scale}$, and $\epsilon_2$ be as in Lemma~\ref{lem:Closing lemma one scale}. Let $\mathsf{m}$, $\ref{a:avoidance}$, $\ref{d:avoidance}$, $\ref{c:avoidance}$, and $s_0$ be as in \cref{pro:avoidance}. Let $\epsilon_1 = \min\{\frac{1}{\ref{a:closing lemma frostman one scale}\mathsf{m} \ref{e:closing lemma frostman one scale}}, c_3, \epsilon_2\}$. Let $\ref{m:closinglemma frostman} = 2\ref{a:avoidance} + \ref{m:closing lemma frostman one scale} + 1$, $\ref{c:closinglemma frostman} = 2\ref{a:avoidance} + \ref{c:closing lemma frostman one scale}$, $\ref{d:closinglemma frostman} = \ref{d:avoidance} + 1$, $\ref{a:closinglemma frostman} = \ref{a:closing lemma frostman one scale}$. Let $\ref{e:closinglemma frostman} = 10^{10}\ref{e:closing lemma frostman one scale}^2\ref{a:closing lemma frostman one scale}^2$. Suppose $R \geq \ref{c:avoidance}e^{s_0}R_0\eta^{-\ref{e:closinglemma frostman}}$. We have $\log R \geq 2|\log \eta| + s_0 \geq 2|\log \inj(x_1) | + s_0$. Suppose $R \gg_{\ref{d:avoidance}} 1$ so that $R^D \geq \ref{d:avoidance} \log(2D + 1)$ and $R^{\ref{d:avoidance}} \geq (\log R)^{\ref{d:avoidance}}$. Let $d_0 = \frac{100\ref{e:closing lemma frostman one scale}}{\ref{a:closing lemma frostman one scale}}$, note that we have $R^{-\frac{1}{A_7}} \leq \eta^{100d_0^2}$ and $\eta^{d_0} \leq \eta^{\frac{\ref{e:closing lemma frostman one scale}}{\ref{a:closing lemma frostman one scale}}}$. 
    
    Let $\delta_0 := R^{-\frac{1}{\ref{a:closing lemma frostman one scale}}}$. For all $\delta_0 \leq r \leq \eta^{d_0}$, let $R_2 = r^{-\ref{a:closing lemma frostman one scale}}$ and $R_1 = R^{2D + 1}$. Then for all $x$ with $H.x$ periodic and $\vol(H.x) \leq R_2 \leq R$, we have
    \begin{align*}
        d(x_1, x) \geq R^{-D} \geq (\log R_1)^{\ref{d:avoidance}} R_1^{-1}.
    \end{align*}

    For all $D \geq \ref{d:closinglemma frostman} + 1$, let $M = \ref{m:closinglemma frostman} + \ref{c:closinglemma frostman}D$ and $t \geq M \log R$. Let $t_2 = (\ref{m:closing lemma frostman one scale} + \ref{c:closing lemma frostman one scale}D)\log R_2$ and $t_1 = t - t_2$. We have
    \begin{align*}
        t_1 = t - t_2 \geq\ref{a:avoidance}\max\{\log R_1, \log |\inj(x_1)|\} + s_0.
    \end{align*}
    
    Let $\tilde{\eta} = R_0^{-\frac{1}{\ref{e:closing lemma frostman one scale}}}R_2^{-\frac{1}{\ref{e:closing lemma frostman one scale}}} = R_0^{-\frac{1}{\ref{e:closing lemma frostman one scale}}} r^{\frac{\ref{a:closing lemma frostman one scale}}{\ref{e:closing lemma frostman one scale}}} \leq \eta$ and let
    \begin{align*}
        X_1 = \{x \in X_{\tilde{\eta}}: \forall x' \text{ with } H.x \text{ periodic and }\vol(H.x) \leq R_2, d(x, x') > R_2^{-D}\}
    \end{align*}
    and $X_2 = X \setminus X_1$. Recall $\epsilon_1 = \min\{\frac{1}{\ref{a:closing lemma frostman one scale}\mathsf{m} \ref{e:closing lemma frostman one scale}}, c_3, \epsilon_2\}$. Apply \cref{pro:avoidance} to $x_1$, $R_1$, $R_2$, $\tilde{\eta}$ and $t_1$, we have
    \begin{align}\label{eqn:manyscale bad point}
        \mu_{t_1}(X_2) \leq \ref{c:avoidance}(R_2^{-1} + \tilde{\eta}^{\frac{1}{\mathsf{m}}}) \ll r^{\epsilon_1}.
    \end{align}
    Apply Lemma~\ref{lem:Closing lemma one scale} to $x \in X_1$, $R_2$ and $t_2$ and note that $R_2^{-\frac{1}{\ref{a:closing lemma frostman one scale}}} = r$, for all $y \in X_{3\eta}$ and $r_H \leq 10^5C_0\eta$, we have
    \begin{align}\label{eqn:many scale good point}
        (\nu_{t_2} \ast \delta_{x})(\mathsf{B}_{r_H}^H \exp(B_{r}^{\mathfrak{r}}).y) \ll r^{\epsilon_2}.
    \end{align}
    
    Combine \cref{eqn:many scale good point,eqn:manyscale bad point}, for all $y \in X_{3\eta}$, $r_H \leq \eta$ we have
    \begin{align*}
        (\nu_{t_2} \ast \mu_{t_1})(\mathsf{B}_{r_H}^H\exp(B_{r}^{\mathfrak{r}}).y) 
        \ll r^{\epsilon_2} + \mu_{t_1}(X_2) \leq r^{\epsilon_2} + r^{\epsilon_1} \ll r^{\epsilon_1}.
    \end{align*}
    Since $t_1 \geq \log R$, we have
    \begin{align*}
        \mu_{t}(\mathsf{B}_{r_H}^H\exp(B_{r}^{\mathfrak{r}}).y) = \mu_{t_2 + t_1}(\mathsf{B}_{r_H}^H\exp(B_{r}^{\mathfrak{r}}).y) \ll r^{\epsilon_1} + e^{-c_3 t_1} \ll r^{\epsilon_1}.
    \end{align*}
    This proves the theorem. 
\end{proof}

\part{Dimension improvement in the transverse complement}\label{part:projection}

The main result of this part is Theorem~\ref{thm:energy Improvement}. It is a linear dimension improvement result in the representation $\mathfrak{r}$ of $H$. Let us first fix some notations. 

As in the introduction, $\mathbf{H}$ is a connected semisimple $\R$-group and $H = \mathbf{H}(\R)$ is the group of its $\R$-points. Let $\LieH$ be the Lie algebra of $H$ and let $\LieH = \oplus_{i = 1}^k \LieH_i$ be the unique (up to permutation) decomposition of $\LieH$ into simple ideals. In this part, we use $V$ to denote a regular representation of $\mathbf{H}$. Let $\mathbf{a} \in \LieH$ be a semisimple element so that its projection to each $\LieH_i$ is non-trivial. Suppose $\ad\mathbf{a}$ is diagonalizable over $\R$. Let $a_t = \exp(t\mathbf{a})$ and let 
\begin{align*}
    U = \{u \in H: a_{-t} u a_{t} \to e \text{ as } t \to +\infty\}.
\end{align*}
Recall that
\begin{align*}
    \mathsf{B}^U_1 = \exp(\mathsf{B}_1^{\LieU}(0))
\end{align*}
and $m_U$ is the Haar measure on $U$ so that $m_U(\mathsf{B}^U_1) = 1$. 

\medskip

Let $V$ be a finite dimensional non-trivial representation of $H$. We set $n = \dim V$. 

\newcommand{\GIrrep}{\nameref{hyp:GIrrep}\xspace}
\begin{namedhypothesis}[GIrrep]\label{hyp:GIrrep}
The representation $V$ is an irreducible representation of $H$. 
\end{namedhypothesis}

\newcommand{\GSchu}{\nameref{hyp:GProx}\xspace}
\begin{namedhypothesis}[GSchu]\label{hyp:GProx}
Let $\Fix(U)$ be the fixed point of $U$ on $V$. The orbit $H.\Fix(U)$ does not lie in any proper Schubert variety of $\Gr_{\dim \Fix (U)}(V)$. 
\end{namedhypothesis}

Note that if the data $(G, \Gamma, H, \mathfrak{r}, \mathbf{a}, U)$ satisfy {}\Irrep{} and {}\Schu{}, the complement $\mathfrak{r}$ satisfies the two conditions. 

Note that since $V$ is a regular representation of $\mathbf{H}$ and $\mathbf{a}$ is a semisimple element in $\LieH$, $V$ can be decomposed into eigenspaces of $\mathbf{a}$. Let 
\begin{align*}
    V = \bigoplus_\lambda V_{\lambda}
\end{align*}
 be such decomposition. Note that there is a \emph{total order} on $\{\lambda\}$ (since we only focus on the element $\mathbf{a}$) and we denote
\begin{align*}
    V^{(\mu)} = \bigoplus_{\lambda \geq \mu} V_{\lambda}.
\end{align*}
Let $\mu_1 < \cdots < \mu_m = \mu_{\max}$ be all the eigenvalues of $\mathbf{a}$ and let $k_1, \ldots, k_m$ be the dimension of the corresponding eigenspaces. Since $V$ is a nontrivial representation, $\mu_1 < 0 < \mu_m$. 

By Mostow's simultaneous Cartan decomposition theorem \cite{Mos55}, there exist an inner product on $V$ and an associated orthonormal basis so that matrix representations of elements in $A_H$ are diagonal. Moreover, the matrix transpose is the adjoint operation under this inner product. We identify $V \cong \R^n$ under this basis. 

We remark that the norm induced by this inner product is \emph{not} the norm from the max-norm on $\LieG \subseteq \mathfrak{sl}_N$. On the other hand, such two norm differs by a constant that can be computed explicitly. \emph{In this part, the balls in $V$ will be defined using the norm from Mostow's theorem.}

Let $|\cdot|_\delta$ denote the $\delta$-covering number according to the inner product fixed by Mostow's theorem. For a finite set $F$, let $\mu_F$ be the uniform probability measure on $F$. For all $\alpha \in (0, \dim V)$ and scale $\delta \in (0, 1)$, we defined the following (modified) $\alpha$-energy of the set $F$ in Subsection~\ref{subsec:Margulis function}:
\begin{align*}
    \mathcal{G}^{(\alpha)}_{F, \delta}(w) = \sum_{w' \in F, w' \neq w} \max\{\|w' - w\|, \delta\}^{-\alpha}.
\end{align*}

The following is the main result of this part. It is an analog of \cite[Theorem 6.1]{LMWY25}. 
\begin{theorem}\label{thm:energy Improvement}
Suppose the representation $V$ of $H$ satisfies {}\GIrrep{} and \GSchu{}. Let $\alpha \in (0, n)$. There exists $\epsilon_1 = \epsilon_1(\alpha) > 0$ and $\delta_1 = \delta_1(\alpha) > 0$ so that the following holds for all $\epsilon \in (0, \epsilon_1)$ and $\delta \in (0, \delta_1)$. Suppose there exists a finite set $F \subset B_1^{V}(0)$ with $\#F \gg_\epsilon 1$ satisfying
\begin{align*}
    \mathcal{G}_{F, \delta}^{(\alpha)}(w) \leq \Upsilon \quad \forall w \in F.
\end{align*}

Then for all $\ell \gg_\epsilon 1$, there exists $J \subset \mathsf{B}_1^U$ with $m_U(\mathsf{B}_1^U \setminus J) \ll_\epsilon |\log \delta|e^{-\epsilon(\mu_{m} - \mu_{m - 1}) \ell}$ so that the following holds. For all $u \in J$ there exists $F_{u} \subseteq F$ with $\#(F \setminus F_u) \ll_\epsilon |\log \delta|e^{-\epsilon(\mu_{m} - \mu_{m - 1}) \ell}\#F$ so that for all $w \in F_u$
\begin{align*}
    \mathcal{G}^{(\alpha)}_{F_{u}(w), \delta'}(a_\ell u.w) \ll_\epsilon e^{-\epsilon(\mu_{m} - \mu_{m - 1})\ell} \Upsilon
\end{align*}
where the new scale $\delta' = e^{\mu_{m}\ell}\max\{\delta, \#F^{-\frac{1}{\alpha}}\}$ and the set
\begin{align*}
    F_{u}(w) = \{a_{\ell} u.w': w' \in F_{u}, \|a_{\ell} u.w' - a_{\ell} u.w\|\leq e^{\mu_{1}\ell}\}.
\end{align*}
Moreover, the function $\epsilon_1$ and $\delta_1$ has a uniform lower bound when $\alpha$ varies in a compact subset of $(0, n)$. 
\end{theorem}

Theorem~\ref{thm:energy Improvement} follows from the following theorem providing Frostman-type estimate. 
\begin{theorem}\label{thm:ImprovementMain}
Suppose the representation $V$ of $H$ satisfies {}\GIrrep{} and \GSchu{}. Let $\alpha \in (0, n)$. There exists $\epsilon_1 = \epsilon_1(\alpha) > 0$ and $\delta_1 = \delta_1(\alpha) > 0$ so that the following holds for all $\epsilon \in (0, \epsilon_1)$ and $\delta_0 \in (0, \delta_1)$. 

Let $F \subset B^{V}_1(0)$ be a finite set satisfying
\begin{align*}
    \mu_F(B^{V}_\delta(x)) \leq C\delta^\alpha \quad \forall x \in V
\end{align*}
for some $C \geq 1$ and all $\delta \geq \delta_0$. 

For all $\ell \gg_{\epsilon} 1$ and $\delta \in [e^{\mu_{m}\ell}\delta_0, e^{\mu_{1}\ell}]$, there exists $J_{\ell, \delta} \subseteq \mathsf{B}_1^U$ with $m_U(\mathsf{B}_1^U \setminus J_{\ell, \delta}) \ll_\epsilon e^{-\epsilon(\mu_{m} - \mu_{m - 1})\ell}$ so that the following holds. Let $u \in J_{\ell, \delta}$, there exists $F_{\ell, \delta, u} \subseteq F$ with 
\begin{align*}
    \mu_F(F \setminus F_{\ell, \delta, u}) \ll_\epsilon e^{-\epsilon(\mu_{m} - \mu_{m - 1})\ell}
\end{align*}
such that for all $w \in F_{\ell, \delta, u}$ we have
\begin{align*}
    \mu_F(\{w' \in F_{\ell, \delta, u}: \|a_\ell u.w' - a_\ell u.w\| \leq \delta\}) \ll_\epsilon Ce^{-\epsilon(\mu_{m} - \mu_{m - 1})\ell}\delta^{\alpha}.
\end{align*}
\end{theorem}
\cref{thm:ImprovementMain} in turns follows from the following theorem on covering numbers. 

\begin{theorem}\label{thm:ImprovementSlabExpansion}
Suppose the representation $V$ of $H$ satisfies {}\GIrrep{} and \GSchu{}. Let $\alpha \in (0, n)$. There exists $\epsilon_1 = \epsilon_1(\alpha) > 0$ and $\delta_1 = \delta_1(\alpha) > 0$ so that the following holds for all $\epsilon \in (0, \epsilon_1)$ and $\delta_0 \in (0, \delta_1)$. 

Let $F \subset B^{V}_1(0)$ be a finite set satisfying
\begin{align*}
    \mu_F(B^{V}_\delta(x)) \leq C\delta^\alpha \quad \forall x \in V
\end{align*}
for some $C \geq 1$ and all $\delta \geq \delta_0$. 

There exists $C_{\epsilon} > 1$ so that the following holds. 
For all $\ell \gg_{\epsilon} 1$ and $\delta \in [e^{\mu_{m}\ell}\delta_0, e^{\mu_{1}\ell}]$, we define the exceptional set $\mathcal{E}(F)$ to be
\begin{align*}
    \mathcal{E}(F) = \{u \in \mathsf{B}_1^U: \exists F' \subseteq F \text{ with }& \mu_F(F') \geq e^{-\epsilon(\mu_m - \mu_{m - 1})\ell}\\
    \text{ and }&|a_\ell u.F'|_{\delta} < C_{\epsilon}^{-1}C^{-1} e^{\epsilon(\mu_m - \mu_{m - 1})\ell} \delta^{-\alpha}\}.
\end{align*}

We have
\begin{align*}
    m_U(\mathcal{E}(F)) \leq C_{\epsilon}e^{-\epsilon(\mu_m - \mu_{m - 1})\ell}.
\end{align*}
\end{theorem}

A key step in the proof of the above theorem is an estimate of covering number using certain anisotropic tubes explicated later, see Theorem~\ref{thm:Multislicing}. Before we state it, let us introduce some notations. 

For a subspace $W \subseteq V$, we always use $\pi_{W}$ to denote the orthogonal projection to $W$. For $V^{(\mu)}$, we simplify the notation by setting $\pi^{(\mu)} = \pi_{V^{(\mu)}}$. We use $\pi_{u}^{(\mu)}$ to denote the projections $\pi^{(\mu)} \circ u$. 

We adapt the notations in \cite{BH24} for partitions using anisotropic tubes associated to the flag $\{V^{(\mu)}\}_{\mu}$. Let $\mathcal{D}_\delta(\mu)$ be the partition of $V^{(\mu)}$ via $\delta$-cubes. For a $m$-tuple $\mathbf{r} = (\mathsf{r_1}, \ldots, \mathsf{r_m})$ satisfying $0 \leq \mathsf{r_1} \leq \cdots \leq \mathsf{r_m} = 1$, we define
\begin{align*}
    \mathcal{D}_{\delta}^{\mathbf{r}} = \bigvee_{i = 1}^m (\pi^{(\mu_i)})^{-1} \mathcal{D}_{\delta^{\mathsf{r_i}}}(\mu_i)
\end{align*}
to be the partition consisting of (possibly anisotropic) tubes. We will use $T$ to denote an atom in $\mathcal{D}_{\delta}^{\mathbf{r}}$. Roughly, $T$ is a tube of size
\begin{align*}
    \prod_{j = 1}^{k_1}\delta^{\mathsf{r_1}} \times \cdots \times 
    \prod_{j = 1}^{k_m}\delta^{\mathsf{r_m}} 
\end{align*}
with edges parallel to an orthogonal basis compatible with the weight space decomposition $V = \bigoplus_{i} V_{\mu_i}$. Its volume satisfies
\begin{align*}
    \vol(T) \sim \delta^{\sum_{i = 1}^m\mathsf{k_i} \mathsf{r_i}}.
\end{align*}

\begin{theorem}\label{thm:Multislicing}
Suppose the representation $V$ of $H$ satisfies {}\GIrrep{} and \GSchu{}. Let $\alpha \in (0, n)$. There exists $\epsilon_1 = \epsilon_1(\alpha) > 0$ and $\delta_1 = \delta_1(\alpha) > 0$ so that the following holds for all $\epsilon \in (0, \epsilon_1)$ and $\delta_0 \in (0, \delta_1)$. 

Let $F \subset B^{V}_1(0)$ be a finite set satisfying
\begin{align*}
    \mu_F(B^{V}_\delta(x)) \leq C\delta^\alpha \quad \forall x \in V
\end{align*}
for some $C \geq 1$ and all $\delta \geq \delta_0$. 

Fix a $m$-tuple $\mathbf{r}$. Then there exists $C_{\epsilon, \mathbf{r}}$ so that the following holds. For all $\delta_0 \leq \delta < \delta_1$, we define the exceptional set $\mathcal{E}(F)$ to be
    \begin{align*}
        \mathcal{E}(F) = \{u \in \mathsf{B}_1^U: &\exists F' \subseteq F \text{ with }\mu_F(F') \geq \delta^\epsilon \text{ and }\\
        &|u.F'|_{\mathcal{D}_{\delta}^{\mathbf{r}}} < C_{\epsilon, \mathbf{r}}^{-1}C^{-1}\vol(T)^{-\frac{1}{n}\alpha - (\mathsf{r_m} - \mathsf{r_{m - 1}}) \epsilon}\}.
    \end{align*}

    We have
    \begin{align*}
        m_U(\mathcal{E}(F)) \leq C_{\epsilon, \mathbf{r}}\delta^{\epsilon}.
    \end{align*}
\end{theorem}

The proof of Theorem~\ref{thm:ImprovementSlabExpansion} assuming Theorem~\ref{thm:Multislicing} is similar to \cite[Section 8]{Lin25}. We now discuss the proof of Theorem~\ref{thm:Multislicing}. As indicated in the introduction, the proof is based on Bourgain's discretized projection theorem to the highest weight space and subcritical bound for other $V^{(\mu)}$'s. This argument utilizes a sub-modularity inequality, similar arguments have appeared in special cases in an early draft by the author \cite[Section 14]{Lin25} and in \cite{BHZ25}. The arguments there works in general. 

The following theorem provides subcritical estimate for projections in irreducible representation of $H$. 

\begin{theorem}\label{thm:subcritical entropy general irrep}
    Suppose the representation $V$ of $H$ satisfies only {}\GIrrep{}. There exists $M > 1$ depending only on $U$ and $V$ so that for all $0 < \epsilon < \frac{1}{100}$ and $0 < \delta \ll_{\epsilon, V} 1$ the following holds for all eigenvalue $\mu$ of $\mathbf{a}$. 

    Let $A \subseteq B_1^V$, we set the exceptional set of projection to be 
    \begin{align*}
        \mathcal{E}(A) = \{u \in B_1^U: |\pi^{(\mu)}_u(A)|_{\delta} < \delta^{M\epsilon} |A|_{\delta}^{\frac{\dim V^{(\mu)}}{\dim V}}\}.
    \end{align*}
    Then we have
    \begin{align*}
        m_U(\mathcal{E}(A)) \leq \delta^\epsilon.
    \end{align*}
\end{theorem}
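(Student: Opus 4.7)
The plan is to deduce this discretized projection estimate from the linear-algebraic submodularity inequality (Theorem~\ref{thm:dimension bound one subspace}) via a Brascamp--Lieb transference. First, I would recast the statement in terms of $\delta$-entropy $H_\delta(A) := \log|A|_\delta$: setting $n := \dim V$ and $n^{(\mu)} := \dim V^{(\mu)}$, the conclusion becomes $H_\delta(\pi^{(\mu)}_u A) \geq \tfrac{n^{(\mu)}}{n} H_\delta(A) - O(\epsilon \log(1/\delta))$ for $u$ outside a set of $m_U$-measure at most $\delta^\epsilon$. The elementary entropic pigeonhole
\[
H_\delta(\pi^{(\mu)}_u A) \geq H_\delta(A) - \max_{y \in V^{(\mu)}} H_\delta\bigl(A \cap (\pi^{(\mu)}_u)^{-1}(B_\delta(y))\bigr)
\]
then reduces matters to a \emph{tube estimate}: for generic $u$ and every affine $\delta$-tube $T$ parallel to $u^{-1} (V^{(\mu)})^\perp$, one must show $H_\delta(A \cap T) \leq \tfrac{n - n^{(\mu)}}{n}\,H_\delta(A) + O(\epsilon \log(1/\delta))$.

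To establish the tube estimate, I would combine a Frostman-type regularization of $A$ (passing to a subset whose Frostman exponent matches its global $\delta$-dimension) with a Brascamp--Lieb inequality applied to the datum $(u^{-1}(V^{(\mu)})^\perp; W_1, \ldots, W_k)$, where the $W_i$ form a compatible collection of subspaces encoding the multi-scale tangent structure of $A$. The Brascamp--Lieb constant for this datum is finite precisely when the dimension inequality of Theorem~\ref{thm:dimension bound one subspace} applied with $W' = (V^{(\mu)})^\perp$ and $W = W_i$ holds, and the resulting exponent is exactly $\tfrac{n - n^{(\mu)}}{n}$. This is the step that converts the algebraic dimension bound into a quantitative fractal covering estimate.

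Finally, the exceptional set is controlled by a standard escape-from-subvarieties argument on $U$: the set of $u$ where the Brascamp--Lieb constant degenerates is contained in a polynomial sublevel set on $U$, and since Theorem~\ref{thm:dimension bound one subspace} ensures the corresponding algebraic subvariety of $H$ is proper while \GIrrep{} forces $U$ not to lie inside it, a $(C,\alpha)$-good polynomial nondivergence estimate bounds the bad measure by $O(\delta^\epsilon)$ upon taking $M$ large enough in terms of the polynomial degrees. The main obstacle is making the Brascamp--Lieb transference quantitative and uniform: the constant depends delicately on the relative position of $u^{-1}(V^{(\mu)})^\perp$ and the linearization of $A$, and one must preserve this control uniformly across scales and across the collection of tangent subspaces that appear when iterating the regularization — this is precisely why the acknowledgments single out the role of Brascamp--Lieb in this theorem.
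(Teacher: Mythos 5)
The paper does not proceed via the entropic pigeonhole / tube decomposition you describe, and that reduction is where the proposal goes wrong. You want, for generic $u$, a \emph{uniform fiber bound}
\[
\max_{y} H_\delta\bigl(A\cap (\pi^{(\mu)}_u)^{-1}(B_\delta(y))\bigr)\le \tfrac{n-n^{(\mu)}}{n}H_\delta(A)+O(\epsilon\log(1/\delta)),
\]
but this is strictly stronger than the conclusion of the theorem and can fail even for generic $u$ when $A$ contains thin pieces spread over many directions: a single tube can carry a fraction of $A$ far above $|A|_\delta^{(n-n^{(\mu)})/n}$ while $\pi^{(\mu)}_u(A)$ is nevertheless large because $A$ fans out transversally. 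The theorem only controls the \emph{projection}, not every fiber. The paper avoids the issue entirely: it applies the H\"older--Brascamp--Lieb inequality to the indicator $\mathds{1}_{A^{(\delta)}}$ with the $m$-tuple of maps $\{\pi^{(\mu)}_{u_j}\}_{j=1}^m$ simultaneously (Proposition~\ref{prop:BL constant for unipotent}), obtaining a lower bound on the \emph{geometric mean} of the covering numbers $|\pi^{(\mu)}_{u_j}(A)|_\delta$, and then derives the measure bound on $\mathcal{E}(A)$ by a Fubini/pigeonhole argument over $\mathcal{E}(A)^m$ minus the Brascamp--Lieb--degenerate locus.

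There are two further gaps. First, you cite Theorem~\ref{thm:dimension bound one subspace} as the input verifying the BCCT finiteness criterion, but that result only produces a generic $\tilde h$ for each \emph{fixed pair} $(W,W')$ — the Zariski-open set depends on $W'$. BCCT requires the inequality $\dim W''\le\sum_j p_j\dim\pi_j(W'')$ to hold for \emph{all} subspaces $W''$ with one fixed choice of maps. That uniformity is exactly the content of Theorem~\ref{thm:subcritical linear general irrep proj}/\ref{thm:subcritical linear general irrep intersection}, which the paper proves separately by an inductive submodularity argument and is a genuinely stronger statement; the acknowledgments explicitly credit a discussion distinguishing it from Theorem~\ref{thm:dimension bound one subspace}. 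Second, your Brascamp--Lieb datum is built from the ``multi-scale tangent structure of $A$,'' so the resulting degeneracy polynomial on $U$ would depend on $A$. To make the Remez/$(C,\alpha)$-good step uniform, the paper instead produces, via Gressman's invariant-theoretic bound (Lemma~\ref{lem:polynomial bound for BL constant} and Proposition~\ref{prop:BL constant for unipotent}), a single polynomial $\Phi$ on $(U^+)^m$ depending only on $(\mu,V)$ whose sublevel sets control the BL constant; this independence from $A$ is what lets Remez's inequality close the argument with a uniform $M$.
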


As a bi-product, we obtain the following result (Theorem~\ref{thm:dimension bound one subspace}) regarding intersection of orbit of a subspace in irreducible representations with certain Schubert variety. Let $F$ be a field and let $\tilde{H}$ be a connected linear algebraic group over $F$. Let $V$ be a regular irreducible representation of $\tilde{H}$ defined over $F$. 

\begin{theorem*}
    For all subspaces $W, W' \subseteq V$, there exists $\tilde{h} \in \tilde{H}$ so that 
    \begin{align*}
        \dim [(\tilde{h}.W) \cap W'] \leq \frac{\dim W}{\dim V} \dim W'.
    \end{align*}
    Note that the set of such $\tilde{h}$ forms a Zariski open subset of $\tilde{H}$.
\end{theorem*}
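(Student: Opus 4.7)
The plan is to prove the bound by induction on $\dim W'$, using submodularity of dimension as the linear-algebra analog of entropy submodularity. Define
\begin{align*}
d(W, W') := \min_{\tilde{h} \in \tilde{H}} \dim(\tilde{h}.W \cap W').
\end{align*}
Upper semicontinuity of $\tilde{h} \mapsto \dim(\tilde{h}.W \cap W')$ in the Zariski topology, together with Zariski connectedness of $\tilde{H}$, ensures that this minimum is attained on a nonempty Zariski open subset $U_{W, W'} \subseteq \tilde{H}$. The openness clause in the theorem is then immediate from the dimension bound, since the condition $\dim(\tilde{h}.W \cap W') \leq \tfrac{\dim W}{\dim V}\dim W'$ is equivalent to an inequality on integers and hence Zariski open.

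The key inequality is submodularity. For any subspaces $X, A, B \subseteq V$, the modular law applied to $X \cap A$ and $X \cap B$, together with the inclusion $(X \cap A) + (X \cap B) \subseteq X \cap (A + B)$, gives
\begin{align*}
\dim(X \cap A) + \dim(X \cap B) \leq \dim(X \cap (A + B)) + \dim(X \cap A \cap B).
\end{align*}
Substituting $X = \tilde{h}.W$ and taking $\tilde{h}$ in the intersection $U_{W, A} \cap U_{W, B} \cap U_{W, A+B} \cap U_{W, A \cap B}$ (nonempty by Zariski connectedness) yields submodularity of $d$ in its second argument. Combined with the trivial translation invariance $d(W, \tilde{h}.W') = d(W, W')$, iterated submodularity produces the telescoping bound, for any subspaces $A_1, \ldots, A_m \subseteq V$:
\begin{align*}
\sum_{i=1}^m d(W, A_i) \leq d(W, A_1 + \cdots + A_m) + \sum_{i=2}^m d\bigl(W, A_i \cap (A_1 + \cdots + A_{i-1})\bigr).
\end{align*}

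For the inductive step ($\dim W' \geq 1$), I would invoke irreducibility to construct $\tilde{h}_1 = e, \tilde{h}_2, \ldots, \tilde{h}_m \in \tilde{H}$ such that, setting $A_i := \tilde{h}_i.W'$ and $S_i := A_1 + \cdots + A_i$, one has $S_m = V$ and $A_i \not\subseteq S_{i-1}$ for all $i \geq 2$. Such a sequence exists: whenever $S_{i-1} \neq V$, the locus $\{\tilde{h} : \tilde{h}.W' \subseteq S_{i-1}\}$ is a proper Zariski closed subset of $\tilde{H}$, since otherwise $\mathrm{span}(\tilde{H}.W') \subseteq S_{i-1}$ would be a proper nonzero $\tilde{H}$-invariant subspace, contradicting irreducibility of $V$. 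The condition $A_i \not\subseteq S_{i-1}$ forces $\dim(A_i \cap S_{i-1}) < \dim W'$, so the inductive hypothesis bounds each $d(W, A_i \cap S_{i-1}) \leq \tfrac{\dim W}{\dim V}\dim(A_i \cap S_{i-1})$. Using the counting identity $\sum_{i=2}^m \dim(A_i \cap S_{i-1}) = m \dim W' - \dim V$ (forced by $\dim S_m = \dim V$), the telescoping inequality becomes
\begin{align*}
m \cdot d(W, W') \leq \dim W + \frac{\dim W}{\dim V}\bigl(m \dim W' - \dim V\bigr) = \frac{\dim W \cdot m \dim W'}{\dim V},
\end{align*}
and dividing by $m$ closes the induction.

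The main subtle point is not any individual computation but rather promoting submodularity, which a priori holds only for a single $\tilde{h}$, to a statement about the generic-dimension function $d$. This is precisely where Zariski connectedness of $\tilde{H}$ plays an essential role: it allows the intersection of finitely many Zariski open subsets of $\tilde{H}$ to be nonempty, so that a single $\tilde{h}$ can be chosen at which several dimensions simultaneously attain their generic values. Once this formalism is in place, the rest is combinatorial bookkeeping, and irreducibility enters only through the production of translates $\tilde{h}_i.W'$ that together span $V$.
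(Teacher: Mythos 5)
Your proof is correct, and it takes a genuinely different route from the paper's. The paper does not prove this statement directly: it first establishes the stronger averaged version (Theorem~\ref{thm:subcritical linear general irrep general field}, via Theorem~\ref{thm:linear subcritical irrep strong}) in which a \emph{single} generic tuple $(\tilde{h}_1,\dots,\tilde{h}_m)$ satisfies $\sum_j \dim[\tilde{h}_j.W \cap W'] \leq \frac{mk}{n}\dim W'$ simultaneously for \emph{all} $W'$, proceeding by induction on $k = \dim W$ through the machinery of $H$-stratified families (Lemma~\ref{lem:sum is whole space}): one spans $V$ by generic translates of $W$, and the intermediate intersections $\bigl(\sum_{i<q'}\Psi(\mathbf{h}_i)\bigr)\cap \Psi(\mathbf{h}_{q'})$ fall into lower-dimensional stratified families where the induction applies. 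The present theorem is then a pigeonhole corollary. Your argument is in several respects dual: you induct on $\dim W'$ rather than $\dim W$, you span $V$ by generic translates of $W'$ rather than of $W$, and you work directly with the pointwise ``generic intersection dimension'' $d(W,W') = \min_{\tilde{h}}\dim(\tilde{h}.W\cap W')$, promoting the linear submodularity inequality (Lemma~\ref{lem:submodularity linear} in the paper) to a submodularity of $d$ in its second argument via intersecting finitely many dense Zariski opens (valid since $\tilde{H}$ is irreducible). Translation invariance $d(W,\tilde{h}.W') = d(W,W')$ plus the telescoping identity $\sum_{i\geq 2}\dim(A_i\cap S_{i-1}) = m\dim W' - \dim V$ then closes the induction cleanly. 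What your approach buys is a more self-contained and conceptually transparent derivation of the corollary without the stratified-family bookkeeping or any quantitative control on $m$; what it does not buy is the averaged version with a $W'$-uniform generic tuple, which the paper needs to verify the Bennett--Carbery--Christ--Tao criterion for finiteness of the Brascamp--Lieb constant (Theorem~\ref{thm:BCCT}) -- there the choice of projections must be made once and work against all test subspaces. So your proof is a valid and more economical route to the stated corollary, but could not substitute for the paper's Theorem~\ref{thm:subcritical linear general irrep proj} in the main argument.
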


Part~\ref{part:projection} is organized as the following. In Section~\ref{sec:prepare rep and proj}, we collect some basic properties of projections in irreducible representation of semisimple Lie groups. As indicated in the introduction, dimension improvement for the action of $\{\Ad(a_\ell u)\}_{u \in \mathsf{B_1^U}}$ on $\mathfrak{r}$ comes from gain in $\mathfrak{r}^{(\mu_{\max})}$ and 'no-loss' in other in $\mathfrak{r}^{(\mu)}$. In section~\ref{sec:improvement}, we prove an $\epsilon$-improvement in $\mathfrak{r}^{(\mu_{\max})}$ from a version of Bourgain's discretized projection theorem. Then we deduce \cref{thm:energy Improvement,thm:ImprovementMain,thm:ImprovementSlabExpansion,thm:Multislicing} from Theorem~\ref{thm:subcritical entropy general irrep} and and Theorem~\ref{thm:supcritical} in Section~\ref{sec:anisotropic}. The rest of this part is devoted to the proof of the 'no-loss' part, i.e.,  Theorem~\ref{thm:subcritical entropy general irrep} and we refer to Section~\ref{sec:outline subcritical} for a more detailed outline. Section~\ref{sec:outline subcritical} to \ref{sec:translate to entropy} can be read independently to the rest of the paper. 

\section{Preparation II: representations of semisimple Lie groups}\label{sec:prepare rep and proj}

\subsection{\texorpdfstring{An inner product on $V$}{An inner product on V}}

For any representation $V$ of $H$ in this paper, we fix an inner product $\langle\cdot, \cdot \rangle$ from Mostow's theorem \cite{Mos55}. It satisfies the following properties. 
\begin{enumerate}
    \item The inner product $\langle\cdot, \cdot \rangle$ is $K_H$-invariant. 
    \item For all $a \in A_H$, the action of $a$ on $V$ is self-adjoint under this $\langle\cdot, \cdot \rangle$. As a consequence, different eigenspaces of $\mathbf{a}$ are orthogonal. 
\end{enumerate}
We fix an orthonormal basis of $V$ with repect to this inner product. This gives an isomorphism $V \cong \R^n$. For a subspace $W \subseteq V$, we set $\pi_W$ to be the orthogonal projection to $W$ with respect to this inner product. For a linear operator $A$ on $V$, we use $(A)^t$ to denote the adjoint operator of $A$ under this inner product. 

Recall that in the introduction in Part~\ref{part:projection}, we define the projections $\pi_{u}^{(\lambda)} = \pi^{(\lambda)} \circ u$. 
There are also the following closely related orthogonal projections $\pi_{u^t.V^{(\lambda)}}$. The latter is the orthogonal projection to the subspace $u^t.V^{(\lambda)}$. More generally, we have two similar projections, $\pi_W \circ h$ and $\pi_{h^t.W}$

The following linear algebra lemma allows us to roughly identify $\pi_{W} \circ h$ with $\pi_{h^t.W}$. 
\begin{lemma}\label{lem:equivalence of two type of proj}
    For all $h \in H$ and subspace $W \subseteq V$, there exists a linear isomorphism $\varphi_h: W \to h^t.W$ so that
    \begin{align*}
        \varphi_h \circ \pi_W \circ h = \pi_{h^t.W}
    \end{align*}
    and
    \begin{align*}
        \max\{\Lip(\varphi_h), \Lip(\varphi_h^{-1})\} \ll \|h\|^\star.
    \end{align*}
\end{lemma}
Note that when we pick $u \in B_1^U$, the Lipschitz constants depend only on the ambient representation. Therefore, the estimate on covering numbers for projections $\pi^{(\lambda)}_u$ and $\pi_{u^t.V^{(\lambda)}}$ are equivalent up to an absolute constant. 

\subsection{Non-degenerate measures on Grassmannians}\label{subsec:nondegeneracydef}
For two subspaces $U$ and $W$ of $V$, we define
\begin{align*}
    d_{\measuredangle}(U, W) = \|u_1 \wedge \cdots \wedge u_k \wedge w_1 \wedge \cdots \wedge w_l\|
\end{align*}
where $\{u_i\}_{i = 1}^k$ and $\{w_j\}_{j = 1}^l$ are orthonormal basis of $U$ and $W$ respectively. This is independent to the choice of $\{u_i\}_{i = 1}^k$ and $\{w_j\}_{j = 1}^l$. Similarly, for subspaces $V_1, \ldots, V_q$ of $V$, we define
\begin{align*}
    d_{\measuredangle}(V_1, \ldots, V_q) = \|\mathbf{v}_1 \wedge \cdots \wedge \mathbf{v}_q\|
\end{align*}
where $\mathbf{v}_i$'s are wedge of an orthonormal basis of $V_i$. This is independent to the choice of $\{\mathbf{v}_i\}_{i = 1}^q$. 

Let $W \in \Gr_{n - k}(\R^n)$, we define
\begin{align*}
    \mathcal{V}(W, \rho) = \{U \in \Gr_k(\R^n): d_{\measuredangle}(U, W) \leq \rho\}.
\end{align*}
If $\rho = 0$, $\mathcal{V}(W, 0)$ is the collection of $k$-dimensional subspaces intersecting $W$ non-trivially. It belongs
to the class of algebraic subvarieties of the grassmannian known as Schubert varieties. 

\begin{definition}[$(C, \kappa)$-non-degeneracy]
    
For a probability measure $\sigma$ on $\Gr_m(\R^n)$, we say it satisfies $(C, \kappa)$-non-degeneracy condition at scales larger than $\delta$ if the following holds. 

There exist constants $C \geq 1$, $\kappa > 0$ such that for all $\rho \geq \delta$ and all $W \in \Gr_{m}(\R^n)$, one has
\begin{align}\label{eqn:Non-degenerate}
    \sigma(\mathcal{V}(W, \rho)) \leq C\rho^{\kappa}.
\end{align}
\end{definition}
\begin{remark}
Most literature use the terminology non-concentration condition. We use the terminology non-degeneracy here to distinguish it from the non-concentration condition on the set or the measure \emph{in} the representation space $V$. Also, due to the polynomial nature of unipotent flow, this condition corresponds to non-degeneracy for some polynomials. 
\end{remark}

In practice, we always allow $C = O(\delta^{-O(\epsilon)})$. 
We will say a family of subspaces satisfies the non-degeneracy condition if the measure and scale are clear in the context. 

\section{Dimension improvement from highest weight space}\label{sec:improvement}
This section is devoted to the dimension gain from $\mu_{\max}$. The main ingredient is Bourgain's discretized projection theorem \cite{Bou10,He20,Shm23}. \emph{This is the only place in this paper where we need {}\GSchu{}. }

\begin{theorem}\label{thm:supcritical}
Suppose $V$ is a representation of $H$ satisfying {}\GIrrep{} and \GSchu{}. Let $\mu_{\max}$ be the largest eigenvalue of $\mathbf{a}$ on $V$. 

For all $0 < \alpha < n$ and $\kappa > 0$, there exists $\epsilon = \epsilon(\alpha, \kappa) > 0$ so that the following holds for all $0 < \delta \ll 1$. 

Suppose $A \subseteq B_1^V(0)$ be a set satisfying the following conditions:
\begin{gather*}
    |A|_{\delta} \geq \delta^{-\alpha + \epsilon}, \\
    |A \cap B_r^V(x)|_{\delta} \leq \delta^{-\epsilon} r^{\kappa} |A|_{\delta}, \quad\forall r \geq \delta, x \in V.
\end{gather*}
Then there exists $\mathcal{E} \subseteq \mathsf{B}_1^U$ with $m_{U}(\mathcal{E}) < \delta^{\epsilon}$ so that for all $u \notin \mathcal{E}$ and all $A' \subseteq A$ with $|A'|_{\delta} \geq \delta^{\epsilon}|A|_{\delta}$, we have
\begin{align*}
    |\pi^{(\mu_{\max})}_{u}(A')|_{\delta} \geq \delta^{-\frac{k_m\alpha}{n} - \epsilon}.
\end{align*}

\end{theorem}

The main ingredient of the proof is \cite[Theorem 1]{He20} recorded in the follwing theorem. 

\begin{theorem}[{\cite[Theorem 1]{He20}}]\label{thm:He supcritical}
For all $0 < \alpha < n$ and $\kappa > 0$, there exists $\epsilon = \epsilon(\alpha, \kappa) > 0$ so that the following holds for all $0 < \delta \ll 1$. 

Suppose $A \subseteq B_1^V(0)$ be a set satisfying the following conditions:
\begin{gather*}
    |A|_{\delta} \geq \delta^{-\alpha + \epsilon}, \\
    |A \cap B_r^V(x)|_{\delta} \leq \delta^{-\epsilon} r^{\kappa} |A|_{\delta}, \quad\forall r \geq \delta, x \in V.
\end{gather*}

Suppose $\sigma$ is a probability measure on $\Gr(k, \R^n)$ that is $(\delta^{-\epsilon}, \kappa)$-non-degenerate. 

Then there exists $\mathcal{E} \subseteq \Gr_k(\R^n)$ with $\sigma(\mathcal{E}) < \delta^{\epsilon}$ so that for all $V \notin \mathcal{E}$ and all $A' \subseteq A$ with $|A'|_{\delta} \geq \delta^{\epsilon}|A|_{\delta}$, we have
\begin{align*}
    |\pi_{V}(A')|_{\delta} \geq \delta^{-\frac{k\alpha}{n} - \epsilon}.
\end{align*}
\end{theorem}

We now prove of Theorem~\ref{thm:supcritical}. Fix an orthonormal basis $v_1, \ldots, v_{k_m}$ of $\Fix(U)$. Write $\mathbf{v} = v_1 \wedge \cdots \wedge v_{k_m}$. For any subspace $W$ with $\dim W = n - \dim \Fix(U)$, write $\mathbf{w} = w_1 \wedge \cdots \wedge w_{n - k_m}$ where $w_1, \ldots, w_{n - k_m}$ is an orthonormal basis of $W$. Note that
\begin{align*}
    d_{\measuredangle}(u^t.\Fix(U), W) = \frac{\|(u^t.\mathbf{v}) \wedge \mathbf{w}\|}{\|u^t.\mathbf{v}\|} \asymp \|(u^t.\mathbf{v}) \wedge \mathbf{w}\|
\end{align*}
when $u \in \mathsf{B}_1^U$. 

Due to the polynomial nature of actions of unipotent groups, we need an estimate on the size of the set where
the (vector-valued) polynomial function $\left(u \mapsto (u^t.\mathbf{v}) \wedge \mathbf{w}\right)$ is small. This is known as Remez's inequality and is used by Kleinbock and Margulis and later Kleinbock and Tomanov in \cite{KM98,KT07} to verify the '$(C, \alpha)$-good' property. We record the form we need in the following lemma. 

\begin{lemma}[\text{\cite[Lemma 3.4]{KT07}}]\label{lem:RemezKT}
    For all $d, k \in \N$, there exists a constant $C = C_{d, k} > 0$ so that the following holds. Let $P \in \R[x_1, \ldots, x_d]$ be a polynomial with degree at most $k$. For all ball $B \subset \R^d$ and $\epsilon > 0$, we have
    \begin{align*}
        \Leb\{x \in B:|P(x)| < \epsilon\} \leq C\biggl(\frac{\epsilon}{\|P\|_{L^\infty(B)}}\biggr)^{\frac{1}{dk}}\Leb(B).
    \end{align*}
\end{lemma}

By Remez's inequality, it suffices to estimate the supremum of the polynomial $\left(u \mapsto (u^t.\mathbf{v}) \wedge \mathbf{w}\right)$. By condition \GSchu{} and the fact that $\Fix(U)$ is $\LieH_0 \oplus \LieU$-invariant, the polynomials $\left(u \mapsto (u^t.\mathbf{v}) \wedge \mathbf{w}\right)$ are non-zero for all $W \in \Gr_{k_m}(\R^n)$. Also, their degree is bounded uniformly depending only on $V$ and $U$. By the compactness of $\Gr_{n - k_m}(\R^n)$, we have
\begin{align*}
    \sup_{u \in \mathsf{B}_1^U} \|(u^t.\mathbf{v}) \wedge \mathbf{w}\| \gg 1
\end{align*}
where the implied constant is independent to $W$. We remark that such constant can be computed explicitly in an explicit representation $V$. 

\begin{remark}
    We remark that for the example where $G = \SL_d(\R) \ltimes \R^d$, $H = \SL_d(\R)$ and \begin{align*}
    a_t = \begin{pmatrix}
        e^{nt} \Id_n & \\
         & e^{-mt}\Id_m
    \end{pmatrix},
    \end{align*}
    the orbit $U^t.\Fix(U)$ parametrizes an open set of $\Gr_{n}(\R^d)$. One can apply Marstrand's projection theorem instead.     
\end{remark}
\section{\texorpdfstring{Proof of \cref{thm:energy Improvement,thm:ImprovementMain,thm:ImprovementSlabExpansion} assuming Theorem~\ref{thm:subcritical entropy general irrep} and \ref{thm:supcritical}}{Proof of Theorem~\nameref{thm:energy Improvement,thm:ImprovementMain,thm:ImprovementSlabExpansion} assuming Theorem~\ref{thm:subcritical entropy general irrep}and \ref{thm:supcritical}}}\label{sec:anisotropic}

\subsection{\texorpdfstring{Proof of \cref{thm:energy Improvement,thm:ImprovementMain,thm:ImprovementSlabExpansion} assuming Theorem~\ref{thm:Multislicing}}{Proof of Theorem~\nameref{thm:energy Improvement,thm:ImprovementMain,thm:ImprovementSlabExpansion} assuming Theorem~\ref{thm:Multislicing}}}

\begin{proof}[Proof of Theorem~\ref{thm:energy Improvement} assuming Theorem~\ref{thm:ImprovementMain}]
This is a standard transition from Frostman-type estimate to energy estimate. The statement can be proved e.g. by following the proof of \cite[Theorem 6.1]{LMWY25} step-by-step and replacing \cite[Lemma 6.2]{LMWY25} by Theorem~\ref{thm:ImprovementMain}. 
\end{proof}

We now deduce \cref{thm:ImprovementMain} from \cref{thm:ImprovementSlabExpansion}. This procedure is well-known. We reproduce it here for completeness. 
\begin{proof}[Proof of \cref{thm:ImprovementMain} assuming \cref{thm:ImprovementSlabExpansion}]
Applying Theorem~\ref{thm:ImprovementSlabExpansion}, there exists $\mathcal{E} \subset \mathsf{B}^U_1$ with $m_U(\mathcal{E}) \ll_\epsilon e^{-\epsilon (\mu_{m} - \mu_{m - 1})\ell}$ such that for all $u \notin \mathcal{E}$ and all $F'$ with $\mu_F(F') \geq e^{-\epsilon(\mu_{m} - \mu_{m - 1})\ell}$, we have
\begin{align*}
    |a_{\ell}u.F'|_{\delta} \geq C_{\epsilon}^{-1} C^{-1} e^{\epsilon(\mu_{m} - \mu_{m - 1})\ell} \delta^{-\alpha}.
\end{align*}

We define
\begin{align*}
    \mathcal{D}_{\delta, \text{bad}}^{\ell, u} = \{Q \in \mathcal{D}_{\delta}: (a_{\ell}u)_{*} \mu_F (Q) > C_{\epsilon} C e^{-\epsilon(\mu_{m} - \mu_{m - 1})\ell} \delta^{\alpha}\}.
\end{align*}
Let
\begin{align*}
    F_{\ell, \delta, u}' = (a_{\ell} u)^{-1} \bigcup_{Q \in \mathcal{D}_{\delta, \text{bad}}^{\ell, u}} ((a_{\ell} u.F) \cap Q).
\end{align*}
Since $(a_{\ell} u)_{*} \mu_F$ is a probability measure, we have
\begin{align*}
    \# \mathcal{D}_{\delta, \text{bad}}^{\ell, u} < C_{\epsilon}^{-1} C^{-1} e^{\epsilon(\mu_{m} - \mu_{m - 1})\ell} \delta^{-\alpha}, 
\end{align*}
which is equivalent to
\begin{align*}
    |a_{\ell} u F_{\ell, \delta, u}'|_{\delta} < C_{\epsilon}^{-1} C^{-1} e^{\epsilon(\mu_{m} - \mu_{m - 1})\ell} \delta^{-\alpha}.
\end{align*}
Therefore, we have
\begin{align*}
    \mu_F(F_{\ell, \delta, u}') \leq e^{-\epsilon(\mu_{m} - \mu_{m - 1})\ell}.
\end{align*}

Let $F_{\ell, \delta, u} = F \backslash F_{\ell, \delta, u}'$. For all $\delta$-(dyadic) cube $Q$, we have
\begin{align*}
    (a_{\ell} u)_{*}(\mu_F|_{F_{\ell, \delta, u}}) (Q) \leq{}& C_{\epsilon} C e^{-\epsilon(\mu_{m} - \mu_{m - 1})\ell} \delta^{\alpha}
\end{align*}
which proves the theorem. 
\end{proof}

\begin{proof}[Proof of Theorem~\ref{thm:ImprovementSlabExpansion} assuming Theorem~\ref{thm:Multislicing}]
Recall that we have the following decomposition of $V$ into eigenspaces of $\mathbf{a}$:
\begin{align*}
    V = \bigoplus_{i} V_{\mu_i}
\end{align*}
where $\mu_1 < \cdots < \mu_m = \mu_{\max}$. Recall that $k_i = \dim V_{\mu_i}$. Since $V$ is a nontrivial representation, $\mu_1 < 0 < \mu_m$. 

In the rest of the proof, we set
\begin{align*}
    \mathbf{r} = (\mathsf{r_1}, \ldots, \mathsf{r_m}) \text{ with } \mathsf{r_i} = \frac{\mu_1 - \mu_i}{\mu_1 - \mu_m}.
\end{align*}
Also for simplicity, let $\tilde{\delta} = e^{-\mu_{1}\ell} \delta$ and $\rho = e^{(\mu_1-\mu_{m})\ell}$. For all $u \in \mathsf{B}^U_1$ all subset $F' \subseteq F$ with $\mu_F(F') \geq e^{-\epsilon(\mu_{m} - \mu_{m - 1})\ell}$, we have
\begin{align*}
    |a_{\ell}u. F'|_{\delta} = |u. F'|_{\tilde{\delta}\mathcal{D}_{\rho}^{\mathbf{r}}}
    \gg \sum_{Q \in \mathcal{D}_{\tilde{\delta}}} |u. F'_Q|_{\tilde{\delta}\mathcal{D}_{\rho}^{\mathbf{r}}}
    = \sum_{Q \in \mathcal{D}_{\tilde{\delta}}} |u. \Hom_Q F'_Q|_{\mathcal{D}_{\rho}^{\mathbf{r}}}.
    \end{align*}

    We use $F^Q$ to denote $\Hom_Q F_Q$. We note that $F^Q$ satisfies the following Frostman-type condition:
    \begin{align*}
        \mu_{F^Q} (B^{\mathfrak{r}}_{\rho'}(x)) = \frac{1}{\mu_F(Q)} \mu_F(B^{\mathfrak{r}}_{\tilde{\delta}\rho'}(x'))
        \leq \frac{C(\tilde{\delta})^\alpha}{\mu_F(Q)} (\rho')^{\alpha}
    \end{align*}
    for all $\rho' \geq (\tilde{\delta})^{-1} \delta_0$. 

    Note that by our restriction to $\delta$, $\rho \geq (\tilde{\delta})^{-1} \delta_0$. Therefore, for all $Q \in \mathcal{D}_{\tilde{\delta}}$ so that $\mu_{F_Q}(F'_Q) \geq \rho^{2\epsilon}$, applying \cref{thm:Multislicing} to $\mu_{F^Q}$, there exists $\mathcal{E}_Q \subset \mathsf{B}^U_1$ for all $Q$ with $m_U(\mathcal{E}_Q) \leq C_{\epsilon} \rho^{2\epsilon}$, and for all $u \notin \mathcal{E}_Q$, we have  \begin{align}\label{eqn:SlabExpansionLocal}
    \begin{aligned}
        |u. \Hom_Q F'_Q|_{\mathcal{D}_{s}^{\mathbf{r}}} \geq{}& C_\epsilon^{-1}\frac{\mu_F(Q)}{C(\tilde{\delta})^{\alpha}} \rho^{-\frac{\mu_1}{\mu_1 - \mu_m}\alpha - 2\frac{\mu_{m - 1} - \mu_m}{\mu_1 - \mu_m}\epsilon}\\
        ={}& C_\epsilon^{-1}\mu_F(Q)C^{-1}e^{2(\mu_m - \mu_{m - 1})\epsilon\ell}\delta^{-\alpha}.
    \end{aligned}
    \end{align}
    Let
    \begin{align*}
        \mathcal{D}_{\tilde{\delta}}(u) = \{Q \in \mathcal{D}_{\tilde{\delta}}(F): u \in \mathcal{E}_Q\}
    \end{align*}
    and let
    \begin{align*}
        \mathcal{D}_{\tilde{\delta}}^{\mathrm{large}}(F') = \{Q \in \mathcal{D}_{\tilde{\delta}}: \mu_{F_Q}(F_Q') \geq \rho^{2\epsilon}\}.
    \end{align*}
    Since $\mu_F(F') \geq e^{-\epsilon(\mu_{m} - \mu_{m - 1})\ell} = \rho^{\frac{\mu_{m - 1} - \mu_m}{\mu_1 - \mu_m}\epsilon}$, we have
    \begin{align*}
        \sum_{Q \in \mathcal{D}_{\tilde{\delta}}^{\mathrm{large}}(F')} \mu_F(Q) \geq \rho^{\frac{\mu_{m - 1} - \mu_m}{\mu_1 - \mu_m}\epsilon} - \rho^{2\epsilon} \geq \frac{1}{2}\rho^{\frac{\mu_{m - 1} - \mu_m}{\mu_1 - \mu_m}\epsilon}.
    \end{align*}
    By Fubini's theorem, there exists $\mathcal{E} \subseteq \mathsf{B}^U_1$ with $m_U(\mathcal{E}) \ll_\epsilon \rho^{\frac{\mu_{m - 1} - \mu_m}{\mu_1 - \mu_m}\epsilon}$ so that for all $u \notin \mathcal{E}$, we have
    \begin{align*}
        \sum_{Q \in \mathcal{D}_{\tilde{\delta}}(u)} \mu_F(Q) \leq \frac{1}{4}\rho^{(2 - \frac{\mu_{m - 1} - \mu_m}{\mu_1 - \mu_m})\epsilon}.
    \end{align*}
    Therefore, we have
    \begin{align*}
        |a_\ell u.F'|_{\delta} \gg{}& \Biggl(\sum_{Q \in \mathcal{D}_{\tilde{\delta}}^{\mathrm{large}}(F') \setminus \mathcal{D}_{\tilde{\delta}}(u)} \mu_F(Q)\Biggr) C_\epsilon^{-1}C^{-1} e^{2(\mu_m - \mu_{m - 1})\epsilon\ell}\delta^{-\alpha}\\
        \gg{}& (\frac{1}{2}\rho^{\frac{\mu_{m - 1} - \mu_m}{\mu_1 - \mu_m}\epsilon} - \frac{1}{4}\rho^{(2 - \frac{\mu_{m - 1} - \mu_m}{\mu_1 - \mu_m})\epsilon}) C_\epsilon^{-1}C^{-1} e^{2(\mu_m - \mu_{m - 1})\epsilon\ell}\delta^{-\alpha}\\
        \gg{}& C_\epsilon^{-1}C^{-1} e^{(\mu_m - \mu_{m - 1})\epsilon\ell}\delta^{-\alpha}.
    \end{align*}
    This completes the proof of the theorem. 
\end{proof}

\subsection{Proof of Theorem~\ref{thm:Multislicing}}
The process of combining projection theorems for different directions to an estimate for covering number using an-isotropic tubes is shared by many recent works (e.g. \cite{LMWY25,BH24,BHZ25,BH25b,Lin25,OL25}). In \cite{BH25b}, they provide a multi-slicing machinery that provides a general framework to obtain dimension improvement result from super-critical estimates and subcritical estimates. Theorem~\ref{thm:Multislicing} follows directly from Theorem~\ref{thm:supcritical}, \ref{thm:subcritical entropy general irrep} and \cite[Theorem 3.4]{BH25b} (or more precisely Lemma A.8 there). Note that since Theorem~\ref{thm:supcritical} is a directly supercritcal estimate (instead of an alternative), the condition~$(S^+A)$ there is automatically satisfied and we can take $(\mathbf{j}, \mathbf{r}) = (\mathbf{k}, \mathbf{s})$ and $(\mathcal{V}_{Q, \theta})_{\theta} = (\mathcal{W}_{Q, \theta})_{\theta}$ there. 

\section{Submodularity inequality in irreducible representations}\label{sec:outline subcritical}
The goal of the rest of the paper is to prove Theorem~\ref{thm:subcritical entropy general irrep} (recorded below) and it can be read independently to the other parts of this paper. 

\begin{theorem*}
    Suppose the representation $V$ of $H$ satisfies only {}\GIrrep{}. There exists $M > 1$ depending only on $U$ and $V$ so that for all $0 < \epsilon < \frac{1}{100}$ and $0 < \delta \ll_{\epsilon, V} 1$ the following holds for all eigenvalue $\mu$ of $\mathbf{a}$. 

    Let $A \subseteq B_1^V$, we set the exceptional set of projection to be 
    \begin{align*}
        \mathcal{E}(A) = \{u \in B_1^U: |\pi^{(\mu)}_u(A)|_{\delta} < \delta^{M\epsilon} |A|_{\delta}^{\frac{\dim V^{(\mu)}}{\dim V}}\}.
    \end{align*}
    Then we have
    \begin{align*}
        m_U(\mathcal{E}(A)) \leq \delta^\epsilon.
    \end{align*}
\end{theorem*}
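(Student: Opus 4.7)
The plan is to derive Theorem~\ref{thm:subcritical entropy general irrep} from the linear subcritical bound Theorem~\ref{thm:dimension bound one subspace} via a discretized Brascamp--Lieb (BL) inequality. The linear theorem itself is handled by the combinatorial submodular-entropy argument outlined in the introduction, using only irreducibility of $V$ under $H$; I will take it as input.

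The core step is assembling a BL datum from many translates of $\pi^{(\mu)}$. Applied with $W = K$ and $W' = (V^{(\mu)})^\perp$, Theorem~\ref{thm:dimension bound one subspace} gives, for Zariski-generic $h \in H$ and every subspace $K \subseteq V$,
\begin{equation*}
\dim \pi^{(\mu)}(h.K) = \dim K - \dim\bigl(h.K \cap (V^{(\mu)})^\perp\bigr) \;\geq\; \tfrac{\dim V^{(\mu)}}{\dim V}\dim K,
\end{equation*}
with uniformity in $K$ following from upper-semicontinuity and compactness of the Grassmannians. Fix a large $N = N(\epsilon)$, uniform weights $c_i := \dim V / (N \dim V^{(\mu)})$, and subspaces $W_i := h_i^{-1}.V^{(\mu)}$. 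For a generic tuple $(h_1, \ldots, h_N) \in H^N$, the data $(W_i, c_i)$ satisfies both the scaling identity $\sum_i c_i \dim W_i = \dim V$ and the BL nondegeneracy $\sum_i c_i \dim \pi_{W_i}(K) \geq \dim K$ for all $K \subseteq V$, so it forms a genuine Brascamp--Lieb datum.

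The main technical hurdle is transferring this BL-goodness from $H^N$ to $U^N$. Since $U$ is a proper Zariski-closed subgroup of $H$, full Zariski-genericity on $H^N$ does not automatically give full Haar measure on $U^N$. Here is where {}\GIrrep{} is essentially used: irreducibility forces the $U$-invariant subspaces of $V$ to be nested in the weight flag $\{V^{(\lambda)}\}_\lambda$, which combined with a stratification by Grassmannian cell shows that the BL-bad locus in $U^N$ is itself a proper Zariski-closed subvariety, and hence Haar-null. Once this is in place, a discretized Brascamp--Lieb inequality (Bennett--Carbery--Christ--Tao) yields
\begin{equation*}
|A|_\delta \;\leq\; C_{N,V}\,\delta^{-o_\epsilon(1)}\,\prod_{i=1}^N \bigl|\pi^{(\mu)}_{u_i}(A)\bigr|_\delta^{c_i}
\end{equation*}
for every BL-good $(u_1, \ldots, u_N) \in (B_1^U)^N$, with $o_\epsilon(1)$ made arbitrarily small by choosing $N$ large.

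The conclusion is a short pigeonhole. Suppose for contradiction that $m_U(\mathcal{E}(A)) > \delta^\epsilon$; then $\mathcal{E}(A)^N$ has $m_U^N$-measure $> \delta^{N\epsilon} > 0$, and since the BL-bad set is null, the product set $\mathcal{E}(A)^N$ contains a BL-good tuple $(u_1, \ldots, u_N)$. Substituting the bound $|\pi^{(\mu)}_{u_i}(A)|_\delta < \delta^{M\epsilon}|A|_\delta^{\dim V^{(\mu)}/\dim V}$ valid on $\mathcal{E}(A)$ into the BL inequality and using $\sum_i c_i = \dim V/\dim V^{(\mu)}$ gives
\begin{equation*}
|A|_\delta \;\leq\; C_{N,V}\,\delta^{-o_\epsilon(1) + M\epsilon\, \dim V/\dim V^{(\mu)}}\,|A|_\delta,
\end{equation*}
a contradiction for $M$ chosen large enough (in terms of $V$ and the BL discretization loss) and $\delta$ sufficiently small; this pins down the constant $M$ in Theorem~\ref{thm:subcritical entropy general irrep}. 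The hard part is the Zariski-to-Haar transfer onto $U^N$ described in the third paragraph; once that is secured, the BL machinery and the final pigeonhole are routine.
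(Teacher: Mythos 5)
Your skeleton — linear subcritical bound, Brascamp--Lieb, pigeonhole — is the right one and matches the paper's. But there are two genuine gaps, and the second one would break the final contradiction.

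\textbf{Gap 1: uniformity in $K$.} You invoke Theorem~\ref{thm:dimension bound one subspace} with $W = K$, $W' = (V^{(\mu)})^\perp$, and assert that "for Zariski-generic $h$ and every subspace $K$" the bound $\dim\pi^{(\mu)}(h.K) \geq \tfrac{\dim V^{(\mu)}}{n}\dim K$ holds, claiming uniformity "from upper-semicontinuity and compactness of the Grassmannians." This does not follow: Theorem~\ref{thm:dimension bound one subspace} supplies, for each fixed pair $(W,W')$, a Zariski open dense set of $h$; as $K$ varies over the Grassmannian you get infinitely many such open sets, and their intersection has no reason to be non-empty. (Semicontinuity gives openness of the good locus in $H \times \Gr_k$, but projecting to $H$ and extracting a dense open good set is exactly the content that needs a proof, and it can fail for a single $h$ — hence the average over $m$ translates.) What is actually needed, and what the paper proves, is Theorem~\ref{thm:subcritical linear general irrep proj}: there is an $m$ and a dense open $\mathcal{O}_W \subseteq H^m$ such that the averaged bound $\sum_j \dim\pi_{h_j.W}(W') \geq \tfrac{mk}{n}\dim W'$ holds \emph{simultaneously for all $W'$}. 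That uniformity is obtained by the inductive $H$-stratified-sets machinery of Section~\ref{sec:Linear case}, not by a compactness shortcut. Theorem~\ref{thm:dimension bound one subspace} is a corollary of this stronger statement, not a substitute for it.

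\textbf{Gap 2: the BL constant is not uniformly bounded on the BL-good locus.} You write that once the BL-bad locus in $U^N$ is null, the discretized Bennett--Carbery--Christ--Tao inequality yields $|A|_\delta \leq C_{N,V}\,\delta^{-o_\epsilon(1)}\prod_i |\pi^{(\mu)}_{u_i}(A)|_\delta^{c_i}$ \emph{for every BL-good tuple}. This is false: BCCT only gives finiteness of $\mathrm{BL}(\{\pi^{(\mu)}_{u_i}, c_i\})$ at each BL-good tuple, and this quantity blows up as the tuple approaches the bad locus. Merely knowing that $\mathcal{E}(A)^N$ has positive measure and meets the BL-good set does not let you pick a tuple with controlled BL constant, so the final pigeonhole does not produce a contradiction. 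The paper handles precisely this issue with a quantitative estimate: Gressman's Lemma~\ref{lem:polynomial bound for BL constant} bounds the BL constant below by an invariant polynomial $\Phi$ (constructed via GIT and Cayley's $\Omega$-process), and Remez's inequality (Lemma~\ref{lem:RemezKT}) shows that $\{|\Phi| < \|\Phi\|_\infty\delta^{M_1\epsilon}\}$ has measure $\ll \delta^{(m+1)\epsilon}$. Only then does the remaining positive-measure piece of $\mathcal{E}(A)^m$ carry a tuple with $\mathrm{BL} \ll \delta^{-M_1\epsilon}$, which is what the contradiction actually needs. Your version also has $N = N(\epsilon) \to \infty$ absorbing the error, but the BL constant of the datum does not improve as $N$ grows; in the paper $m \ll_n 1$ is fixed and all $\epsilon$-losses come from the Remez estimate, not from increasing the number of projections.

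A smaller point: the transfer from $H^m$ to $U^m$ in the paper is a clean Bruhat argument ($U^-P$ is dense in $H$ and $P$ stabilizes $V^{(\mu)}$, so $h.V^{(\mu)} = u^t.V^{(\mu)}$ for generic $h$), with \GIrrep{} already spent in the proof of the linear theorem; your "irreducibility forces $U$-invariant subspaces into the weight flag" sketch is gesturing at a related fact but would need to be fleshed out to show the bad locus in $U^m$ is proper.
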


Before we discuss the proof of the above theorem, we record two interesting consequences of the above theorem (and its proof). The proof of them are standard and are omitted in this paper since we do not need them. The notation $\dim_{\mathrm{H}}$ stands for Hausdorff dimension. 
\begin{theorem}
    Suppose $V$ is an irreducible representation of $H$. Then for all Borel set $A \subseteq B_1^V$, we have
    \begin{align*}
        \dim_{\mathrm{H}} \pi_{u}^{(\mu)}(A) \geq \frac{\dim V^{(\mu)}}{\dim V}\dim_{\mathrm{H}} A
    \end{align*}
    for $m_{\mathsf{B}_1^U}$-almost every $u$. 
\end{theorem}
Similarly, we have the following theorem for $H$-action. 
\begin{theorem}\label{thm:Hausdorff H action}
    Suppose $V$ is an irreducible representation of $H$. Then for all Borel set $A \subseteq B_1^V$ and all subspace $W \subseteq V$ we have
    \begin{align*}
        \dim_{\mathrm{H}} \pi_{h.W}(A) \geq \frac{\dim W}{\dim V}\dim_{\mathrm{H}} A
    \end{align*}
    for $m_{H}$-almost every $h$. 
\end{theorem}

We now discuss the proof of Theorem~\ref{thm:subcritical entropy general irrep}. An important ingredient and also an interesting bi-product is to establish the following estimate for dimension of projections of linear subspaces. 

\begin{theorem}\label{thm:subcritical linear general irrep proj}
    For any subspace $W \subseteq V$ of dimension $k$, there exists $m \ll_{n, k} 1$ and a Zariski open dense subset $\mathcal{O}_W \subseteq H^m$ so that for all $(h_1, \ldots, h_m) \in \mathcal{O}_W$ and all subspace $W' \subseteq V$
    \begin{align}\label{eqn:linear critical average proj}
        \dim \pi_{h_1.W} (W') + \cdots + \dim \pi_{h_m.W} (W') \geq \frac{mk}{n} \dim W'.
    \end{align}
    The integer $m$ and the subset $\mathcal{O}_W \subseteq H^m$ does \emph{not} depend on $W'$. 
\end{theorem}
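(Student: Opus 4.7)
The plan is to first convert the projection inequality into an intersection inequality via the identity $\dim \pi_L(W') = \dim L - \dim(L \cap (W')^\perp)$, valid for any subspaces $L, W' \subseteq V$ (this follows from the equality of rank of $\pi_L|_{W'}$ and of its adjoint $\pi_{W'}|_L$). Setting $W'' := (W')^\perp$, the desired inequality \eqref{eqn:linear critical average proj} becomes
\[
\sum_{i=1}^m \dim(h_i.W \cap W'') \leq \frac{mk}{n}\dim W''
\]
for all subspaces $W'' \subseteq V$, which matches the shape of Theorem~\ref{thm:dimension bound one subspace} but must now hold uniformly in $W''$ for a single tuple $(h_1,\ldots,h_m)$.

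Next I would set up, for each $d'' \in \{0, 1, \ldots, n\}$, the incidence variety
\[
\Xi_{d''} = \Bigl\{(h_1,\ldots,h_m, W'') \in H^m \times \Gr(d'', V) : \sum_{i=1}^m \dim(h_i.W \cap W'') > \tfrac{mkd''}{n}\Bigr\},
\]
which is Zariski closed by upper semicontinuity of intersection dimension. By Chevalley's theorem, the projection $\pi_1(\Xi_{d''}) \subseteq H^m$ is constructible. If for each $d''$ this projection is contained in a proper Zariski closed subset of $H^m$, then $\mathcal{O}_W := H^m \setminus \bigcup_{d''=0}^n \overline{\pi_1(\Xi_{d''})}$ is Zariski open dense and satisfies all required bounds, since a finite union of proper Zariski closed subvarieties of $H^m$ is still proper.

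To show $\pi_1(\Xi_{d''})$ is not Zariski dense I would combine Theorem~\ref{thm:dimension bound one subspace} with a dimension count. For each fixed $W''$ of dimension $d''$, Theorem~\ref{thm:dimension bound one subspace} says that the bad set $B_{W''} := \{h \in H : \dim(h.W \cap W'') > \tfrac{kd''}{n}\}$ is a proper Zariski closed subvariety of $H$; stratifying by the integer value of $\dim(h.W \cap W'')$ produces a Schubert-type filtration whose codimensions are governed by the geometry of the orbit closure $\overline{H.W} \subseteq \Gr(k,V)$. Exceeding the threshold $\tfrac{mkd''}{n}$ for the sum forces multiple $h_i$'s collectively to lie in higher Schubert strata, so the fiber $(\Xi_{d''})_{W''} \subseteq H^m$ acquires codimension growing at least linearly in $m$. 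Taking $m$ of order $n^2$ makes this codimension exceed $\dim \Gr(d'', V) = d''(n - d'') \leq n^2/4$ for every $d''$, whence $\dim \Xi_{d''} < m \dim H$ and its projection to $H^m$ is not surjective.

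The main obstacle will be quantifying the minimal codimension increment: one needs a uniform (in $W''$) lower bound on how the codimension of $\{h : \dim(h.W \cap W'') \geq j\}$ grows as $j$ increases past $\tfrac{kd''}{n}$, together with a bookkeeping of how these contributions aggregate across the $m$ factors. The natural route, matching the spirit of the proof of Theorem~\ref{thm:dimension bound one subspace}, is to exploit submodularity of $W'' \mapsto \sum_i \dim(h_i.W \cap W'')$ in the lattice of subspaces to reduce the uniform bound to a finite collection of extremal $W''$; with that reduction in hand, the required $m \ll_{n,k} 1$ is pinned down as an explicit function of $n$ and $k$ through the dimensions of the Grassmannians $\Gr(d'', V)$.
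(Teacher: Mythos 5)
Your reduction from projections to intersections via $\dim \pi_L(W') = \dim L - \dim(L \cap (W')^\perp)$ is correct and is exactly how the paper passes between Theorem~\ref{thm:subcritical linear general irrep proj} and Theorem~\ref{thm:subcritical linear general irrep intersection}. The constructibility of the bad locus and the observation that Zariski density of its complement follows from the projection $\pi_1(\Xi_{d''})$ not being dominant are also fine. The genuine gap is in the codimension count, and it is the heart of the matter. You claim that exceeding the threshold $\tfrac{mkd''}{n}$ forces the fiber $(\Xi_{d''})_{W''}$ to have codimension growing linearly in $m$, but this requires a uniform (in $W''$) lower bound on how fast the codimension of $\{h : \dim(h.W \cap W'') \geq j\}$ grows with $j$ above the generic value, and no such bound is available. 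The strata of $H$ are the preimages of Schubert varieties $\Sigma_j(W'') \subseteq \Gr_k(V)$ under $h \mapsto h.W$, and the intersection of $\Sigma_j(W'')$ with the (typically low-dimensional) orbit closure $\overline{H.W}$ can jump: the $j$-stratum and the $(j+2)$-stratum may coincide and each have codimension one in $H$, so a single $h_i$ in a codimension-one bad set can contribute an excess of $2$ (or more) to the sum. In that case the fiber codimension can stay bounded while $m$ grows, and the count $\dim \Xi_{d''} < m\dim H$ fails. Your proposed fix---submodularity to reduce to extremal $W''$---is not developed and does not obviously close this.

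There is also a circularity issue within the paper's logical structure: you invoke Theorem~\ref{thm:dimension bound one subspace} as an input, but in the paper that statement is Corollary~\ref{cor:dimension bound one subspace} and is \emph{deduced} from Theorem~\ref{thm:subcritical linear general irrep general field}, which is what Theorem~\ref{thm:subcritical linear general irrep proj} is derived from. (One could argue the single-$h$, single-$W'$ statement can be proved independently, but that would need to be supplied.)

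The paper's actual proof avoids the dimension count entirely. It proves the stronger Theorem~\ref{thm:linear subcritical irrep strong} for $H$-stratified families by induction on $k = \dim W$. Lemma~\ref{lem:sum is whole space} produces a minimal $q \leq n$ and a generic tuple $(\mathbf{h}_1,\ldots,\mathbf{h}_q)$ with $\sum_i \Psi(\mathbf{h}_i) = V$, and shows that each partial intersection $\Psi^{(q')} = (\sum_{i<q'}\Psi(\mathbf{h}_i)) \cap \Psi(\mathbf{h}_{q'})$ has generic dimension $k_{q'} < k$ and again forms an $H$-stratified family; irreducibility enters through Lemma~\ref{lem:move out orbit}, which guarantees one can always move a translate of $W$ so as to drop intersection dimension. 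The submodularity identity (Lemma~\ref{lem:submodularity linear}) for the lattice of subspaces, iterated along the filtration $\sum_{i \leq q'} \Psi(\mathbf{h}_i)$, then reduces the bound for $W$ to the inductive bounds for the lower-dimensional $\Psi^{(q')}$ families, with the generic dimensions $k_{q'}$ summing to exactly $qk - n$ and thereby producing the clean ratio $\tfrac{mk}{n}$. This is constructive: the good open set $\mathcal{O}_W$ is exhibited directly as an intersection of the open sets from the induction, rather than inferred from a dimension estimate on the exceptional locus. If you want to pursue your approach, the missing ingredient is precisely the kind of uniform codimension-increment estimate that the paper's inductive use of irreducibility and submodularity is designed to circumvent.
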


Theorem~\ref{thm:subcritical linear general irrep proj} is equivalent to the following. 

\begin{theorem}\label{thm:subcritical linear general irrep intersection}
    For all subspace $W \subseteq V$ of dimension $k$, there exist positive integer $m \ll_{n, k} 1$ and a Zariski open dense subset $\mathcal{O}_W \subseteq H^m$ so that for all $(h_1, \ldots, h_m) \in \mathcal{O}_W$ and all subspace $W' \subseteq V$
    \begin{align}\label{eqn:linear critical intersection}
        \dim [h_1.W \cap W'] + \cdots + \dim [h_m.W \cap W'] \leq \frac{mk}{n} \dim W'.
    \end{align}
    The integer $m$ and the subset $\mathcal{O}_W \subseteq H^m$ does \emph{not} depend on $W'$. 
\end{theorem}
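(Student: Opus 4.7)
I would first observe that Theorem~\ref{thm:subcritical linear general irrep intersection} and Theorem~\ref{thm:subcritical linear general irrep proj} are equivalent. For any algebraic complement $W''$ of $W'$ in $V$ and the associated projection $\pi \colon V \to W''$ with kernel $W'$, one has the elementary identity $\dim \pi(h.W) = k - \dim(h.W \cap W')$. Summing over $i$ converts the bound $\sum_i \dim(h_i.W \cap W') \leq mkj/n$ (where $j = \dim W'$) in \cref{eqn:linear critical intersection} into the bound $\sum_i \dim \pi(h_i.W) \geq mk(n-j)/n$ of \cref{eqn:linear critical average proj}, after identifying $\dim W'' = n-j$.

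For the proof of Theorem~\ref{thm:subcritical linear general irrep intersection} itself, I would bootstrap from Theorem~\ref{thm:dimension bound one subspace}. That theorem yields, for each fixed $W'$ of dimension $j$, an element $\tilde{h} \in H$ with $\dim(\tilde{h}.W \cap W') \leq kj/n$, and the set of such $\tilde{h}$ is Zariski open. By Zariski connectedness of $H$, this open set is dense; since intersections have integer dimension, one in fact has $\dim(h.W \cap W') \leq \lfloor kj/n \rfloor$ for $h$ in a Zariski open dense subset of $H$, for every individual $W'$.

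To pass to the averaged, uniform-in-$W'$ statement, I would set up a dimension count on an incidence variety. Since $r_i := \dim(h_i.W \cap W') \in \{0, 1, \ldots, k\}$ and $j \in \{1, \ldots, n-1\}$, there are only finitely many ``bad'' tuples $(j, \mathbf{r})$ with $\sum_i r_i > mkj/n$. For each such tuple, the incidence variety
\begin{align*}
Z_{j, \mathbf{r}} = \{(h_1, \ldots, h_m, W') \in H^m \times \Gr_j(V) : \dim(h_i.W \cap W') \geq r_i \text{ for all } i\}
\end{align*}
is Zariski closed, and by Chevalley's theorem its projection $\pi_1(Z_{j, \mathbf{r}}) \subseteq H^m$ is constructible. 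A fiberwise bound then gives $\dim Z_{j, \mathbf{r}} \leq \dim \Gr_j(V) + m \dim H - \sum_i c(r_i, j)$, where $c(r, j)$ is the codimension in $H$ of $\{h : \dim(h.W \cap W') \geq r\}$ for generic $W' \in \Gr_j(V)$. Provided $m$ can be chosen large enough (depending only on $n$ and $k$) so that the right-hand side is strictly less than $m \dim H$ for every bad $(j, \mathbf{r})$, the projection $\pi_1(Z_{j, \mathbf{r}})$ is contained in a proper Zariski closed subset. Taking $\mathcal{O}_W$ to be the complement of the Zariski closure of $\bigcup_{(j, \mathbf{r})} \pi_1(Z_{j, \mathbf{r}})$, a finite union, produces the required Zariski open dense set.

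The main obstacle is the quantitative codimension estimate $c(r, j) \geq \phi(r, j)$ with $\phi$ large enough to beat $\dim \Gr_j(V) = j(n-j)$. Theorem~\ref{thm:dimension bound one subspace} directly yields only $c(r, j) \geq 1$ whenever $r > \lfloor kj/n \rfloor$, which is too weak for the dimension count to close on its own. I anticipate this being remedied by an iterative application of Theorem~\ref{thm:dimension bound one subspace}: either along a refining flag inside $W'$, or by re-running the underlying submodularity argument used to prove Theorem~\ref{thm:dimension bound one subspace} in an appropriate exterior power of $V$, to obtain a codimension bound growing at least linearly in $r - \lfloor kj/n \rfloor$. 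A Schubert-theoretic computation for the defining representation of $\GL_n$ gives $c(r,j) = r(n-k-j+r)$, which would suffice; the challenge is to produce an analogue for a general irreducible $H$-representation using only irreducibility and Zariski connectedness.
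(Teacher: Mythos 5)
Your argument does not close at the step you yourself flag as the main obstacle, and that step is not a technicality but the entire content of the theorem. The incidence-variety count needs a codimension bound $c(r,j)$ that grows with the excess $r-\lfloor kj/n\rfloor$, and the only bound you can extract from Theorem~\ref{thm:dimension bound one subspace} is $c(r,j)\geq 1$ once $r>\lfloor kj/n\rfloor$. This is genuinely insufficient: when $kj/n$ is an integer, the tuple with $r_1=kj/n+1$ and $r_i=kj/n$ for $i\geq 2$ is bad, yet your fiber estimate gives only $\sum_i c(r_i,j)\geq 1$, which can never dominate $\dim\Gr_j(V)=j(n-j)$ no matter how large $m$ is chosen; increasing $m$ does not help, so the dimension count fails outright in this case rather than merely being lossy. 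The Schubert-theoretic formula you quote is special to the defining representation of $\GL_n$, and no analogue is proved (or obviously true from irreducibility and connectedness alone) for a general irreducible $H$-module. There are two further problems: first, Theorem~\ref{thm:dimension bound one subspace} is obtained in the paper as Corollary~\ref{cor:dimension bound one subspace} of the very statement you are proving (via Theorem~\ref{thm:subcritical linear general irrep general field}), so using it as input is circular unless you supply an independent proof of the single-translate bound (for instance the unitary-trick argument alluded to in the introduction, which you do not give); second, your fiberwise bound invokes the codimension of $\{h:\dim(h.W\cap W')\geq r\}$ ``for generic $W'$'', but the fibration runs over all of $\Gr_j(V)$, so one must stratify $\Gr_j(V)$ by the actual codimension and check the inequality on each stratum, where the codimension may drop.

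For comparison, the paper's proof needs none of this quantitative geometry. It proves the stronger Theorem~\ref{thm:linear subcritical irrep strong} by induction on $k$ for $H$-stratified families of subspaces, using only the linear submodularity inequality (Lemma~\ref{lem:submodularity linear}) together with the fact (Lemma~\ref{lem:sum is whole space}, resting on irreducibility and Zariski connectedness via Lemma~\ref{lem:move out family}) that generic translates $\Psi(\mathbf{h}_1),\ldots,\Psi(\mathbf{h}_q)$ can be added one at a time so that the partial sums strictly increase until they fill $V$, each successive intersection $\bigl(\sum_{i<q'}\Psi(\mathbf{h}_i)\bigr)\cap\Psi(\mathbf{h}_{q'})$ having generic dimension $k_{q'}<k$ and again forming an $H$-stratified family, to which the inductive hypothesis applies. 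The telescoping identity $\sum_{q'=2}^{q}k_{q'}=qk-n$ then yields exactly the factor $k/n$. If you wish to pursue your route, the missing uniform codimension estimate is where all the work lies, and it would have to be established with only irreducibility and connectedness as hypotheses.
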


In fact, we will prove Theorem~\ref{thm:subcritical linear general irrep intersection} in a more general setting. Let $F$ be a field and let $\tilde{H}$ be a connected linear algebraic group over $F$. Let $V$ be a regular irreducible representation of $\tilde{H}$ defined over $F$. By irreducible we mean there is no non-trivial sub-representation. For simplicity, we still use $n$ to denote the dimension of $V$. 

\begin{theorem}\label{thm:subcritical linear general irrep general field}
    For all subspace $W \subseteq V$ of dimension $k$, there exists $m \ll_{n, k} 1$ and a Zariski open dense subset $\mathcal{O}_W \subseteq \tilde{H}^m$ so that for all $(\tilde{h}_1, \ldots, \tilde{h}_m) \in \mathcal{O}_W$ and all subspace $W' \subseteq V$
    \begin{align}\label{eqn:linear critical intersection general}
        \dim [\tilde{h}_1.W \cap W'] + \cdots + \dim [\tilde{h}_m.W \cap W'] \leq \frac{mk}{n} \dim W'.
    \end{align}
    The integer $m$ and the subset $\mathcal{O}_W \subseteq \tilde{H}^m$ does \emph{not} depend on $W'$. 
\end{theorem}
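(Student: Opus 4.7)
The plan is to promote the single-translate bound of Theorem~\ref{thm:dimension bound one subspace} --- assumed proved in Section~\ref{sec:Linear case} by a combinatorial Shannon-submodularity argument --- to a uniform-in-$W'$ multi-translate statement via an incidence-variety dimension count. First I reduce to the reductive case: by Lie--Kolchin the subspace $V^{R_u}$ of unipotent-radical fixed vectors is a nonzero subrepresentation, hence equals $V$ by irreducibility, so $R_u$ acts trivially and may be quotiented out. After this reduction I may extend scalars to an algebraic closure and work with Zariski constructible sets, descending back to $F$-points at the end since the open set we construct is defined over $F$.

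For each $d \in \{0, 1, \ldots, n\}$ I form the incidence variety
\[
\mathcal{Z}_d := \Bigl\{(\tilde h_1, \ldots, \tilde h_m, W') \in \tilde H^m \times \mathrm{Gr}(d, V) : \sum_{i=1}^m \dim[\tilde h_i.W \cap W'] > \Bigl\lfloor \tfrac{mkd}{n} \Bigr\rfloor\Bigr\}.
\]
Because $(\tilde h, W') \mapsto \dim[\tilde h.W \cap W']$ is a Schubert-type upper semicontinuous function (a rank condition on the matrix whose rows are bases of $\tilde h.W$ and $W'$), $\mathcal{Z}_d$ is Zariski closed; and because $\mathrm{Gr}(d, V)$ is projective, the projection $p_1\colon \mathcal{Z}_d \to \tilde H^m$ is proper with Zariski closed image. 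The candidate open set is
\[
\mathcal{O}_W := \tilde H^m \setminus \bigcup_{d=0}^n p_1(\mathcal{Z}_d),
\]
so the task reduces to verifying each $p_1(\mathcal{Z}_d)$ is a proper subset of $\tilde H^m$ for some $m$ depending only on $n$ and $k$. For this I dimension-count via the other projection $p_2 \colon \mathcal{Z}_d \to \mathrm{Gr}(d, V)$: Theorem~\ref{thm:dimension bound one subspace} bounds the generic value $d_{\mathrm{gen}}(W') := \min_{\tilde h \in \tilde H} \dim[\tilde h.W \cap W']$ by $\lfloor kd/n \rfloor$, and the elementary inequality $m\lfloor kd/n\rfloor \leq \lfloor mkd/n\rfloor$ then guarantees that each fiber $p_2^{-1}(W')$ is a proper Zariski closed subset of $\tilde H^m$. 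For $p_1(\mathcal{Z}_d) \subsetneq \tilde H^m$ it now suffices that the codimension of this fiber strictly exceed $\dim \mathrm{Gr}(d, V) = d(n-d)$, uniformly in $W'$.

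The main obstacle is extracting this uniform codimension bound. Decomposing the fiber over $W'$ as the union, over integer tuples $(c_1, \ldots, c_m)$ with $c_i \leq k$ and $\sum c_i > \lfloor mkd/n\rfloor$, of products $\prod_{i=1}^m \{\tilde h_i : \dim[\tilde h_i.W \cap W'] \geq c_i\}$, each index with $c_i > d_{\mathrm{gen}}(W')$ contributes at least codimension $1$ in its factor, giving total codimension at least $\lfloor mkd/n\rfloor - m\, d_{\mathrm{gen}}(W') + 1$. This grows linearly in $m$ whenever the fractional part $\{kd/n\}$ is nonzero, so taking $m$ larger than a quadratic function of $n$ suffices in that regime. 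The delicate case is the arithmetic boundary $n \mid kd$, where the count collapses to $\geq 1$; to handle it I plan to strengthen Theorem~\ref{thm:dimension bound one subspace} into a quantitative form showing that each higher jump stratum $\{\tilde h : \dim[\tilde h.W \cap W'] \geq d_{\mathrm{gen}}(W') + j\}$ has codimension at least $j$ in $\tilde H$, using Zariski connectedness of $\tilde H$ together with the infinitesimal geometry of the $\tilde H$-orbit of $W$ in $\mathrm{Gr}(k, V)$. With this upgrade the per-factor codimension becomes $c_i - d_{\mathrm{gen}}(W')$, and a straightforward summation closes the dimension count with $m \ll_{n,k} 1$, yielding the desired Zariski open set $\mathcal{O}_W$.
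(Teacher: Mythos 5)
Your approach is genuinely different from the paper's, but it has serious gaps; let me explain both.

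The paper proves Theorem~\ref{thm:subcritical linear general irrep general field} \emph{directly} by an induction on $\dim W = k$, never passing through the single-translate bound. The engine is a linear submodularity inequality (Lemma~\ref{lem:submodularity linear}) applied to a chain of generic translates $\Psi(\mathbf h_1),\dots,\Psi(\mathbf h_q)$ chosen (Lemma~\ref{lem:sum is whole space}) so that their sum is all of $V$, while each ``overlap'' $\bigl(\sum_{i<q'}\Psi(\mathbf h_i)\bigr)\cap\Psi(\mathbf h_{q'})$ is a generically constant-dimensional, $H$-stratified family of \emph{strictly smaller} dimension; the inductive hypothesis, applied to these smaller $H$-stratified families, closes a telescoping identity $\sum k_{q'}=qk-n$ and yields the multi-translate bound. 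The single-translate statement Corollary~\ref{cor:dimension bound one subspace} (your Theorem~\ref{thm:dimension bound one subspace}) is derived \emph{afterwards}, as a corollary. Your plan runs the logic in the opposite direction: you assume the single-translate bound and try to bootstrap it to the multi-translate version by a dimension count on the incidence variety $\mathcal Z_d$. Since the paper's only proof of the single-translate bound is via the multi-translate theorem you are trying to establish, this is circular unless you supply an independent proof of Theorem~\ref{thm:dimension bound one subspace}.

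Even granting the single-translate bound as a black box, the dimension count does not close. You correctly identify that the fiber $p_2^{-1}(W')$ over a $d$-plane $W'$ must have codimension exceeding $\dim\Gr(d,V)=d(n-d)$ uniformly in $W'$. The strengthened hypothesis you propose --- that the jump stratum $\{\tilde h:\dim[\tilde h.W\cap W']\ge d_{\mathrm{gen}}(W')+j\}$ has codimension at least $j$ in $\tilde H$ --- is itself a new and nontrivial claim you have not proven, and more importantly, it is not strong enough. In the arithmetic boundary case $n\mid kd$ with $0<d<n$, the generic value satisfies $d_{\mathrm{gen}}(W')\le kd/n$, and if equality holds then $\sum c_i>\lfloor mkd/n\rfloor$ only forces $\sum(c_i-d_{\mathrm{gen}}(W'))\ge 1$; your proposed codimension bound then gives $\mathrm{codim}\,p_2^{-1}(W')\ge 1$, which is far short of $d(n-d)\ge n-1$. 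No choice of $m$ fixes this, because $\lfloor mkd/n\rfloor-m\lfloor kd/n\rfloor=0$ identically when $n\mid kd$. To repair this you would need a much finer quantitative statement about the Schubert-type degeneracy loci of $\tilde h\mapsto\tilde h.W$ inside $\Gr(k,V)$ (for example, that a jump by $j$ costs codimension roughly $j\cdot(\text{something growing with }n-d)$); such a bound may well be true, but it is not a ``strengthening'' of Theorem~\ref{thm:dimension bound one subspace} in any easy sense --- it is essentially the content one would need to prove from scratch. The paper sidesteps all of this: the submodularity/induction argument never isolates a single $W'$ and never needs any control on codimensions of degeneracy loci, which is precisely why it succeeds with only irreducibility and Zariski connectedness as inputs.

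One smaller remark: your reduction of the unipotent radical to the trivial action is correct and matches a remark in the paper, so that part is fine.
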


As a consequence, we have the following. This is Theorem~\ref{thm:dimension bound one subspace}. 

\begin{corollary}\label{cor:dimension bound one subspace}
    For all subspaces $W, W' \subseteq V$, there exists $\tilde{h} \in \tilde{H}$ so that 
    \begin{align}
        \dim [(\tilde{h}.W) \cap W'] \leq \frac{\dim W}{\dim V} \dim W'.
    \end{align}
    Note that the set of such $\tilde{h}$ therefore forms a Zariski open dense subset of $\tilde{H}$.
\end{corollary}

\begin{remark}\label{rem:history}
We give an overview of related works here. 

The case of Corollary~\ref{cor:dimension bound one subspace} where $\tilde{H}$ is a real reductive group defined over $\Q$ and the representation \emph{and the subspace $W$} are both defined over $\Q$ was proved by Eskin--Mozes--Shah \cite[Corollary 1.4]{EMS97}. Their proof is based on a non-divergence estimate for translates of algebraic measure on homogeneous spaces. Some explicit cases of Corollary~\ref{cor:dimension bound one subspace} were proved in \cite{Lin25} and \cite{BHZ25}. In an earlier version of the work by B{\'e}nard--He \cite{BH25b}, they proved Corollary~\ref{cor:dimension bound one subspace} for adjoint representation of complex simple Lie groups by a case-by-case study. The earlier version of this paper provided a general proof for Corollary~\ref{cor:dimension bound one subspace} with the condition where $F$ is of characteristic $0$, but the proof only uses $F$ having infinite cardinality. Note that when $F$ is finite the only connected linear algebraic group is the trivial group and the theorem holds automatically. Afterwards, an independent proof of Corollary~\ref{cor:dimension bound one subspace} is given in the second version of \cite{BH25b}. 

The proof we give here is a generalization of argument in \cite{Lin25} which dealt with special cases where $(G, H) = (\SL_4(\R), \SO(2,2))$ and $(G, H) = (\SL_4(\R), \SO(3,1))$. Our proof is purely combinatorial and is based on the idea from convexity property of entropy. It utilizes only irreducibility and Zariski connectedness. The upshot of our proof is that it automatically gives the stronger verion, Theorem~\ref{thm:subcritical linear general irrep general field} where the estimate is on average but the exceptional set does \emph{not} depend on $W'$. 
\end{remark}

We now sketch the idea of the proof of Theorem~\ref{thm:subcritical entropy general irrep} and simultaneously Theorem~\ref{thm:Hausdorff H action} and \ref{thm:subcritical linear general irrep general field}. Let us recall the following sub-modularity inequality for entropy. It follows directly from convexity of conditional entropy. 
\begin{lemma}
    For all probability measure $\mu$ and all partition $\mathcal{P}, \mathcal{Q}, \mathcal{R}$ and $\mathcal{S}$ satisfying
    \begin{align*}
        \mathcal{P} \vee \mathcal{Q} = \mathcal{R}
    \end{align*}
    and $\mathcal{P}, \mathcal{Q}$ being refinement of $\mathcal{S}$, 
    we have
    \begin{align*}
        H(\mu, \mathcal{P}) + H(\mu, \mathcal{Q}) \geq H(\mu, \mathcal{R}) + H(\mu, \mathcal{S}).
    \end{align*}
\end{lemma}

For simplicity, let us first get back to the setting where $H$ is a $\R$-group. 

We now outline the proof of Theorem~\ref{thm:subcritical entropy general irrep}. Without loss of generality, $\delta$ is a dyadic scale. Let $\mu$ be a probability measure supported on $B_1^V$. For Theorem~\ref{thm:subcritical entropy general irrep}, one can take $\mu = \mu_{A^{(\delta)}}$ to be the renormalized Lebesgue measure on the $\delta$-neighborhood $A^{(\delta)}$ of $A$. For Theorem~\ref{thm:Hausdorff H action}, one can apply Frostman's lemma to get a measure $\mu$ with certain dimension condition. 

For all subspace $W \subseteq V$, let $\mathcal{D}_{\delta}(W)$ be the partition of $W$ using $\delta$-dyadic cubes. For simplicity, we will denote it by $\mathcal{D}_{\delta}$ if the subspace $W$ is clear from the context. In the rest of this section, we always assume $W \neq \{0\}, V$. 

We will outline a proof of $H(\mu, \pi_{h.W}^{-1}\mathcal{D}_{\delta}) \geq \frac{\dim W}{\dim V}H(\mu, \mathcal{D}_{\delta}) - O(1)$ for generic $h$ in the following. This implies Theorem~\ref{thm:Hausdorff H action}. The proof can be adapted to covering number. The transition between $H$-action and $U$-action follows directly from the restricted root space decomposition. 

By irreducibility of $V$, there exists a minimal positive integer $q \leq n$ so that for generic $(h_1, \ldots, h_q)$, 
\begin{align*}
    h_1.W + \cdots + h_q.W = V.
\end{align*}

Note that 
\begin{align}\label{eqn:partitionfromprojectionandrefinement}
    \pi_{W_1 + W_2}^{-1} \mathcal{D}_{\delta} \sim \pi_{W_1}^{-1} \mathcal{D}_{\delta} \vee \pi_{W_2}^{-1} \mathcal{D}_{\delta}
\end{align}
where the constant depends on $W_1$ and $W_2$. 
Therefore, iterating the sub-modularity inequality for entropy, we have
\begin{align*}
    {}&H(\mu, \pi_{h_1.W}^{-1} \mathcal{D}_{\delta}) + \cdots + H(\mu, \pi_{h_q.W}^{-1} \mathcal{D}_{\delta})\\
    \geq{}& H(\mu, \pi_{\sum_{j = 1}^m h_j.W}^{-1} \mathcal{D}_{\delta}) + \sum_{k = 1}^{q - 1} H(\mu, \pi_{(\sum_{j = 1}^k h_j.W) \cap h_{k + 1}.W}^{-1} \mathcal{D}_{\delta}) - O(1)\\
    ={}& H(\mu, \mathcal{D}_{\delta}) + \sum_{k = 1}^{q - 1} H(\mu, \pi_{(\sum_{j = 1}^k h_j.W) \cap h_{k + 1}.W}^{-1} \mathcal{D}_{\delta}) - O(1)
\end{align*}
where the implied constant depends on $(h_1, \ldots, h_q)$ (in fact polynomially). Note that by minimality of $q$, for all $k = 1, \ldots, q - 1$ we have
\begin{align*}
    \dim \left(\sum_{j = 1}^k h_j.W\right) \cap h_{k + 1}.W < \dim W.
\end{align*}
Suppose we can run certain induction argument, then we have
\begin{align*}
    {}&H(\mu, \pi_{h_1.W}^{-1} \mathcal{D}_{\delta}) + \cdots + H(\mu, \pi_{h_q.W}^{-1} \mathcal{D}_{\delta})\\
    \geq{}& H(\mu, \mathcal{D}_{\delta}) + \sum_{k = 1}^{q - 1} \frac{\dim (\sum_{j = 1}^k h_j.W) \cap h_{k + 1}.W}{\dim V}H(\mu,  \mathcal{D}_{\delta}) - O(1)\\
    ={}& \frac{H(\mu,  \mathcal{D}_{\delta})}{\dim V} \left(\dim V + \sum_{k = 1}^{q - 1}\left[\dim \left(\sum_{j = 1}^k h_j.W \right) + \dim h_{k+1}.W - \dim \left(\sum_{j = 1}^{k + 1} h_j.W\right)\right]\right)\\
    {}&- O(1)\\
    ={}& \frac{\dim W}{\dim V} H(\mu, \mathcal{D}_{\delta})  - O(1).
\end{align*}
This shows the entropy estimate in Theorem~\ref{thm:subcritical entropy general irrep} in average. Moreover, the choice of $(h_j)_j$ does \emph{not} depend on the measure $\mu$.  

To run the above induction argument, we need to study the families of subspaces of the form $\left(\sum_{j = 1}^k h_j.W\right) \cap h_{k + 1}.W$. They are no longer in the same Grassmannian nor form a single $H$-orbit. However, for generic choice of $(h_1, \ldots, h_q)$, it lies in a same Grassmannian. This resolves the first problem. Note that the families of subspaces of the form $\left(\sum_{j = 1}^k h_j.W\right) \cap h_{k + 1}.W$ are stratified by $H$-orbit. A Fubini-type argument allows one to run an induction argument sketched as above. 

We remark that such argument utilizes only a sub-modularity type inequality and certain Fubini argument. The first has an analogue for dimension of linear space over any field, see Lemma~\ref{lem:submodularity linear}. The latter can be adapt to general field utilizing the fact that $H$ acts on the Grassmannian regularly with orbit being constructible sets. Hence the same idea can be used to prove Theorem~\ref{thm:linear subcritical irrep strong}. 

The above outline can be translated into a rigorous proof. However, the details are quite tedious, especially in keeping track on constants from \cref{eqn:partitionfromprojectionandrefinement}, the comparison between $\pi_{W_1 + W_2}^{-1} \mathcal{D}_{\delta}$ and $\pi_{W_1}^{-1} \mathcal{D}_{\delta} \vee \pi_{W_2}^{-1} \mathcal{D}_{\delta}$. 

The proof given here follows a slightly different outline and make use of recent developments of H{\"o}lder--Brascamp--Lieb inequality. Roughly speaking, such inequality provides a linear criterion to the entropy estimate in Theorem~\ref{thm:subcritical entropy general irrep}, see e.g. \cite[Theorem 2.1]{BCCT}. Therefore, we can prove the linear version of Theorem~\ref{thm:subcritical entropy general irrep}, Theorem~\ref{thm:subcritical linear general irrep proj} using the above outline without tedious tracking constant procedure and apply recent development of H{\"o}lder--Brascamp--Lieb inequality in \cite{Gre21} to prove Theorem~\ref{thm:subcritical entropy general irrep}. 

We remark that very recently, B{\'e}nard--He established an effective Brascamp--Lieb inequality \cite{BH25a}. It will be interesting to compare the estimate of Brascamp--Lieb constant there with the estimate in this paper which builds on Gressman's work \cite{Gre21}. The latter in turns, utilizes geometric invariant theory. 

The rest of the paper is organized as the following. Section~\ref{sec:Linear case} is devoted to the proof of Theorem~\ref{thm:subcritical linear general irrep general field}. Then we collect results from recent developments of H{\"o}lder--Brascamp--Lieb inequality from \cite{Gre21} in Section~\ref{sec:Brascamp Lieb}. In Section~\ref{sec:translate to entropy}, we combine all the ingredients to prove Theorem~\ref{thm:subcritical entropy general irrep}. 

\section{Linear submodularity inequality in irreducible representations}\label{sec:Linear case}
The goal of this section is to prove Theorem~\ref{thm:subcritical linear general irrep general field}. In fact, we will prove a more general version, Theorem~\ref{thm:linear subcritical irrep strong}. Before stating the result, let us introduce the following notion. 

Without loss of generality, in this section, $F$ is an infinite field\footnote{Note that over finite field the only connected algebraic group is the trivial group, the theorem holds automatically. } and $H$ is a connected algebraic group defined over $F$. Let $V$ be an irreducible representation of $H$ of dimension $n$. For all $0 \leq k \leq n$, we use $\Gr_k(V)$ to denote the Grassmannian of $k$-dimensional subspaces in $V$. The actions of $h \in H$ on $H^\ell$ and $\prod_{i = 1}^q \Gr_{k_i}(V)$ in this section are referred to the left actions by diagonally embedded $h$ in $H^\ell$ and $H^q$ respectively. 

\begin{definition}
    We say $\mathcal{W} \subseteq \Gr_k(V)$ is $H$-stratified with data $(\ell, \mathcal{O}, \Psi)$ if there exists $\ell \geq 1$, a $H$-left invariant Zariski open dense subset $\mathcal{O} \subseteq H^\ell$, and an left $H$-equivariant regular map $\Psi:\mathcal{O} \to \Gr_k(V)$ so that $\Psi(\mathcal{O}) = \mathcal{W}$.
\end{definition}

\begin{theorem}\label{thm:linear subcritical irrep strong}
    Suppose $\mathcal{W}$ is a $H$-stratified subset of $\Gr_k(V)$ with data $(\ell, \mathcal{O}, \Psi)$, then there exist a positive integer $m \ll_n 1$ and a Zariski open dense subset $\mathcal{O}_{\mathcal{W}} \subseteq \mathcal{O}^{m} \subseteq (H^{\ell})^m$ so that for all subspace $W' \subseteq V$ and all $(\mathbf{h}_1, \ldots, \mathbf{h}_m) \in \mathcal{O}$, we have
    \begin{align}
        \dim (\Psi(\mathbf{h}_1) \cap W') + \cdots + \dim (\Psi(\mathbf{h}_m) \cap W') \leq \frac{mk}{n} \dim W'.
    \end{align}
    The bound of integer $m$ does \emph{not} depend on $\mathcal{W}$ and its corresponding data $(\ell, \mathcal{O}, \Psi)$. The subset $\mathcal{O}_{\mathcal{W}}$ does \emph{not} depend on $W'$. 
\end{theorem}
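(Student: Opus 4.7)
The proof strategy is induction on $k$, the common dimension of subspaces in $\mathcal{W}$, with $n=\dim V$ fixed throughout. When $k=0$ or $k=n$ the statement is trivial (take $m=1$). So I assume $0<k<n$ and the theorem holds for all $H$-stratified subsets of lower-dimensional Grassmannians with a bound $m(n,k')\ll_n 1$. The key analytic input is the elementary submodular inequality
\[
  \dim(A\cap C)+\dim(B\cap C)\le \dim\bigl((A+B)\cap C\bigr)+\dim(A\cap B\cap C),
\]
which follows from the inclusion $(A\cap C)+(B\cap C)\subseteq (A+B)\cap C$ together with $\dim A+\dim B=\dim(A+B)+\dim(A\cap B)$. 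Iterating this $q-1$ times along the partial sums $U_j:=\Psi(\mathbf{h}_1)+\cdots+\Psi(\mathbf{h}_j)$ with $C=W'$ produces
\[
  \sum_{i=1}^{q}\dim\bigl(\Psi(\mathbf{h}_i)\cap W'\bigr)\le \dim(U_q\cap W')+\sum_{j=1}^{q-1}\dim\bigl(U_j\cap\Psi(\mathbf{h}_{j+1})\cap W'\bigr).
\]

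The use of \emph{irreducibility} is to choose $q$ economically. Since $\mathcal{W}=\Psi(\mathcal{O})$ is $H$-invariant and non-empty, its linear span is a non-zero $H$-stable subspace of $V$, hence equals $V$. Let $q\le n$ be the minimum positive integer for which the Zariski open set $S_q=\{(\mathbf{h}_1,\ldots,\mathbf{h}_q)\in\mathcal{O}^q:U_q=V\}$ is non-empty (hence Zariski dense). On a Zariski open dense subset of $S_q$ the generic dimensions $d_j:=\dim U_j$ are attained; by minimality of $q$ the sequence $(d_j)$ \emph{strictly} increases from $d_1=k$ to $d_q=n$ (otherwise dropping some $\mathbf{h}_{j+1}$ would produce a spanning $(q-1)$-tuple). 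Consequently $k_j:=\dim(U_j\cap\Psi(\mathbf{h}_{j+1}))=d_j+k-d_{j+1}$ satisfies $k_j<k$ for every $j$, and the telescoping identity
\[
  \sum_{j=1}^{q-1}k_j=\sum_{j=1}^{q-1}(d_j-d_{j+1})+(q-1)k=qk-n
\]
is the combinatorial mechanism producing the critical ratio $k/n$.

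For each $j$, the assignment $\Psi_j(\mathbf{h}_1,\ldots,\mathbf{h}_{j+1}):=U_j\cap\Psi(\mathbf{h}_{j+1})$ is a regular $H$-equivariant map from a Zariski open dense $H$-invariant subset $\mathcal{O}_j\subseteq\mathcal{O}^{j+1}$ (where both $d_j$ and $k_j$ attain their generic values) to $\Gr_{k_j}(V)$, so its image $\mathcal{W}_j$ is $H$-stratified with data $(\ell(j+1),\mathcal{O}_j,\Psi_j)$. Since $k_j<k$, the induction hypothesis provides $m_j\ll_n 1$ and a Zariski open dense, $W'$-independent subset of $\mathcal{O}_j^{m_j}$ on which the averaged bound for $\mathcal{W}_j$ with ratio $k_j/n$ holds. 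I then set $N:=m_1m_2\cdots m_{q-1}$, and pick $N$ generic $q$-tuples $(\mathbf{h}_1^{(r)},\ldots,\mathbf{h}_q^{(r)})_{r=1}^N$ from the intersection of finitely many Zariski open dense subsets of $\mathcal{O}^{Nq}$: one placing each $r$-th tuple in $S_q$ with the $d_j$'s and $k_j$'s generic, and, for each $j$, one ensuring that partitioning the $N$ indices into $N/m_j$ consecutive blocks produces generic elements of $\mathcal{O}_j^{m_j}$. Summing the submodular estimate over $r$, applying the induction hypothesis on each block for each $j$, and invoking $\sum_j k_j=qk-n$ yields
\[
  \sum_{r=1}^{N}\sum_{i=1}^{q}\dim\bigl(\Psi(\mathbf{h}_i^{(r)})\cap W'\bigr)\le N\dim W'+\sum_{j=1}^{q-1}\frac{Nk_j}{n}\dim W'=\frac{Nqk}{n}\dim W',
\]
which is the target bound with $m=Nq$. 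The recursion $m(n,k)\le n\cdot\prod_j m(n,k_j)$ with $k_j<k$ keeps $m$ bounded in terms of $n$ alone.

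The principal obstacle is the simultaneous genericity bookkeeping: the resulting locus $\mathcal{O}_{\mathcal{W}}\subseteq\mathcal{O}^m$ must be Zariski open dense, $H$-invariant, and crucially \emph{independent of $W'$}. Every condition imposed on the tuples $(\mathbf{h}_i^{(r)})$ --- the spanning property, the constancy of the $d_j$'s and $k_j$'s, and each blockwise induction condition (which is $W'$-independent by the inductive statement) --- is a polynomial condition in the $\mathbf{h}$-coordinates alone, so their finite intersection stays Zariski open dense and $W'$-independent. All other steps (regularity and $H$-equivariance of $\Psi_j$, the submodular inequality, and the telescoping identity) are routine verifications.
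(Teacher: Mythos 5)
Your proposal is correct and follows essentially the same route as the paper's proof: induction on $k$, the submodularity inequality for $\dim$ iterated along the partial sums, a minimal spanning length $q\le n$ obtained from irreducibility, the new $H$-stratified families $\bigl(\sum_{i\le j}\Psi(\mathbf{h}_i)\bigr)\cap\Psi(\mathbf{h}_{j+1})$ of dimensions $k_j<k$, the telescoping identity $\sum_j k_j=qk-n$, and a blockwise application of the inductive hypothesis with $m$ a suitable product ensuring divisibility and $W'$-independent genericity. The only cosmetic differences are that you derive $k_j<k$ from minimality of $q$ (strict growth of the partial sums) rather than via the paper's move-out lemma (Lemma~\ref{lem:move out family}), and you treat the generic-dimension and regularity bookkeeping for the maps $\Psi_j$ (the content of the paper's Lemma~\ref{lem:generic dimension}) as routine, which is fair.
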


\begin{proof}[Proof of Theorem~\ref{thm:subcritical linear general irrep general field} assuming Theorem~\ref{thm:linear subcritical irrep strong}]
    Let $\mathcal{W} = H.W$, then it follows directly from Theorem~\ref{thm:linear subcritical irrep strong}.
\end{proof}

To study families of subspaces of form $\left(\sum_{j = 1}^k h_j.W\right) \cap h_{k + 1}.W$, we introduce the following notation. 
\begin{definition}\label{def: tree operation}
    Fix $h \geq 1$ and a connected finite (rooted) tree $\mathcal{T}$ with height $h$ satisfying the following conditions.
    \begin{enumerate}
        \item Let $o$ be the root of $\mathcal{T}$. If $h \geq 2$, $\deg (o) \geq 2$.
        \item Any vertex either has $\geq 2$ descendants or has no descendant. The set of the vertices with no descendant is denoted by $\mathcal{L}(\mathcal{T})$, the leaves of $\mathcal{T}$. 
    \end{enumerate}
    Fix a function
    \begin{align*}
        \omega: \{v \in \mathcal{T}: v\text{ has $\geq 2$ descendants} \} \to \left\{\sum, \bigcap\right\},
    \end{align*}
    for all $k$, we define a map
    \begin{align*}
        \Phi^{(\mathcal{T})}_{\omega}: \Gr_k(V)^{\mathcal{L}(\mathcal{T})} \to \bigsqcup_{i = 0}^n \Gr_i(V)
    \end{align*}
    in the following inductive way. 
    \begin{enumerate}
        \item If $\mathcal{T}$ has height $1$, then the function $\omega$ is a function with domain being empty set, we define $\Phi^{(\mathcal{T})}_{\omega}$ to be the identity map on $\Gr_k(V)$.
        \item If $\mathcal{T}$ has height $h \geq 2$, its root $o$ has $\geq 2$ descendant. Let $\{\mathcal{T}_i\}_{i = 1}^{d}$ be the sub-trees of $\mathcal{T}$ rooted at descendants of $o$ and let $\omega_i$ be the restriction of $\omega$ to $\mathcal{T}_i$. We can identify
        \begin{align*}
            \Gr_k(V)^{\mathcal{L}(\mathcal{T})} \cong \prod_{i = 1}^d \Gr_k(V)^{\mathcal{L}(\mathcal{T}_i)}
        \end{align*}By inductive hypothesis, $\Phi^{(\mathcal{T}_i)}_{\omega_i}$ has been defined. We define
        \begin{align*}
            \Phi^{(\mathcal{T})}_{\omega} = \begin{cases}
                \sum_{i = 1}^d \Phi^{(\mathcal{T}_i)}_{\omega_i}, & \text{ if } \omega(o) = \sum,\\
                \bigcap_{i = 1}^d \Phi^{(\mathcal{T}_i)}_{\omega_i}, & \text{ if } \omega(o) = \bigcap.
            \end{cases}
        \end{align*}
    \end{enumerate}
\end{definition}

We have the following direct consequence of the $H$-equivariance of $\Phi^{(\mathcal{T})}_\omega$. 
\begin{lemma}
    For all $H$-invariant subset $\mathcal{W} \subseteq \Gr_k(V)$, all finite connected rooted tree $\mathcal{T}$, function $\omega$ and $i = 0, \ldots, n$, the set 
    \begin{align*}
        \Phi^{(\mathcal{T})}_\omega (\mathcal{W}) \cap \Gr_i(V)
    \end{align*}
    is $H$-invariant. 
\end{lemma}

The following lemma records that from the operation $\Phi^{(\mathcal{T})}_{\omega}$, we can form an $H$-stratified subset of certain Grassmannian. It is a direct consequence of the construction. 

\begin{lemma}\label{lem:generic dimension}
    Let $\mathcal{W}$ be a $H$-stratified set with data $(\ell, \mathcal{O}, \Psi)$. Then for all finite connected rooted tree $\mathcal{T}$ and function $\omega$ satisfying the conditions in Definition~\ref{def: tree operation}, there exist an integer $k_{\mathcal{T}, \omega}$ and an $H$-invariant Zariski open dense subset $\mathcal{O}_{\mathcal{T}, \omega} \subseteq \mathcal{O}^{\mathcal{L}(\mathcal{T})} \subseteq (H^{\ell})^{\mathcal{L}(\mathcal{T})}$ so that the following holds. 
    \begin{enumerate}
        \item The image $\Phi^{(\mathcal{T})}_{\omega}[(\Psi(\mathcal{O}_{\mathcal{T}, \omega}))^{\mathcal{L}(\mathcal{T})}]$ lies in $\Gr_{k_{\mathcal{T}, \omega}}(V)$.
        \item The map $\Phi^{(\mathcal{T})}_{\omega} \circ (\Psi^{\mathcal{L}(\mathcal{T})})$ is regular on $\mathcal{O}_{\mathcal{T}, \omega}$. 
    \end{enumerate}
\end{lemma}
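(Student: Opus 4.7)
The proof will go by induction on the height $h$ of $\mathcal{T}$. The base case $h=1$ is immediate: $\Phi^{(\mathcal{T})}_{\omega}$ is the identity on $\Gr_k(V)$, so we take $k_{\mathcal{T},\omega}=k$ and $\mathcal{O}_{\mathcal{T},\omega}=\mathcal{O}$.

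For the inductive step, let $\mathcal{T}_1,\ldots,\mathcal{T}_d$ be the sub-trees of $\mathcal{T}$ rooted at the descendants of the root $o$, and set $\omega_i=\omega|_{\mathcal{T}_i}$. By the inductive hypothesis, for each $i$ there is an integer $k_i$ and an $H$-invariant Zariski open dense subset $\mathcal{O}_i\subseteq \mathcal{O}^{\mathcal{L}(\mathcal{T}_i)}$ on which $\Psi_i := \Phi^{(\mathcal{T}_i)}_{\omega_i}\circ \Psi^{\mathcal{L}(\mathcal{T}_i)}$ is a regular map into $\Gr_{k_i}(V)$. Since $H$ is connected and therefore geometrically irreducible as a variety, the product $\mathcal{O}':=\prod_i\mathcal{O}_i$ is an $H$-invariant Zariski open dense subset of $\mathcal{O}^{\mathcal{L}(\mathcal{T})}$, and $(\Psi_1,\ldots,\Psi_d)$ is a regular map on $\mathcal{O}'$.

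It remains to analyze the operation at $o$. If $\omega(o)=\sum$, the function $\mathbf{h}\mapsto \dim\sum_{i=1}^{d}\Psi_i(\mathbf{h}_i)$ is the rank of the induced family of linear maps $\bigoplus_i\Psi_i(\mathbf{h}_i)\to V$, hence is lower semi-continuous in $\mathbf{h}$ with respect to the Zariski topology. Its maximum value is therefore attained on a Zariski open subset of $\mathcal{O}'$, which is nonempty and thus dense by irreducibility, and is manifestly $H$-invariant. Call this value $k_{\mathcal{T},\omega}$; on the corresponding open locus the rank is constant, and by the standard fact that a regular family of linear maps of constant rank gives rise to a regular map (of images) into the Grassmannian, we obtain a regular map into $\Gr_{k_{\mathcal{T},\omega}}(V)$. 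The case $\omega(o)=\bigcap$ is entirely analogous via the dual identification $\bigcap_i W_i=\ker\bigl(V\to\bigoplus_i V/W_i\bigr)$: the kernel dimension is upper semi-continuous in the parameters, so its minimum value $k_{\mathcal{T},\omega}$ is attained on a Zariski open dense $H$-invariant subset of $\mathcal{O}'$, on which the kernel varies regularly. In either case we set $\mathcal{O}_{\mathcal{T},\omega}$ to be this constant-dimension locus inside $\mathcal{O}'$.

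The only delicate point is the regularity claim, which is handled by working in local affine charts of the Grassmannian in which a subspace is represented as the graph of a linear map from a fixed complement; on the constant-rank locus these charts can be chosen compatibly. Everything else is a combination of semi-continuity of rank and the $H$-equivariance of sums and intersections of subspaces, so the main obstacle is really just bookkeeping the chart-level regularity argument once, after which the inductive step runs uniformly in the two cases $\omega(o)=\sum$ and $\omega(o)=\bigcap$.
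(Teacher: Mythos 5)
Your proposal is correct and takes essentially the same route as the paper: induction on the height of $\mathcal{T}$, splitting the root operation into the $\sum$ and $\bigcap$ cases, and identifying a Zariski open dense locus of constant dimension via semicontinuity of rank/kernel dimension (the paper phrases this concretely through nonvanishing minors of an explicit matrix built from regular spanning vectors, which is the same fact). The one cosmetic difference is that you keep $H$-invariance throughout the induction, whereas the paper first constructs a not-necessarily-invariant open dense set and then saturates by the diagonal $H$-action using $H$-equivariance of $\Psi$ and of $\Phi^{(\mathcal{T})}_\omega$; both are fine.
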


The following lemma utilizes the irreducibility of $V$. 

\begin{lemma}\label{lem:move out orbit}
    For all non-trivial subspaces $W, W' \subseteq V$, there exists a Zariski open dense subset $\mathcal{O}_{W, U} \subseteq H$ so that for all $h \in \mathcal{O}_{W, W'}$, we have
    \begin{align*}
        \dim [(h.W) \cap W'] < \min\{\dim W, \dim W'\}.
    \end{align*}
\end{lemma}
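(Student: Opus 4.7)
The plan is to show that the set
\begin{align*}
\mathcal{O}_{W, W'} := \{h \in H : \dim[(h.W) \cap W'] < \min\{\dim W, \dim W'\}\}
\end{align*}
is (i) Zariski open in $H$ and (ii) non-empty; since $H$ is a connected algebraic group, hence irreducible as a variety, any non-empty Zariski open subset is automatically dense, which gives the conclusion.

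First I would establish (i) via upper semicontinuity. The function $h \mapsto \dim[(h.W) \cap W']$ is upper semicontinuous in the Zariski topology: indeed, $\dim[(h.W) \cap W'] = \dim(h.W) + \dim W' - \dim(h.W + W')$, and $h \mapsto \dim(h.W + W')$ is lower semicontinuous because the rank of the linear map $W \oplus W' \to V$, $(w, w') \mapsto h.w + w'$, is a lower semicontinuous function of the matrix entries of $h$. Thus $\{h : \dim[(h.W) \cap W'] \geq k\}$ is Zariski closed for each $k$, so $\mathcal{O}_{W, W'}$ is Zariski open.

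Next I would establish (ii) by contradiction, using irreducibility of $V$. Here we use that $W, W'$ are proper non-trivial subspaces (otherwise the claim fails trivially). Suppose $\mathcal{O}_{W, W'} = \emptyset$. Without loss of generality $\dim W \leq \dim W'$: the symmetric case follows from the equivalence $W' \subseteq h.W \iff h^{-1}.W' \subseteq W$. Then for every $h \in H$ we have $(h.W) \cap W' = h.W$, i.e.\ $h.W \subseteq W'$. Consequently the subspace
\begin{align*}
W'' := \sum_{h \in H} h.W \subseteq V
\end{align*}
is an $H$-invariant subspace of $V$ containing $W$, hence non-zero. By the irreducibility hypothesis on $V$ we must have $W'' = V$, contradicting $W'' \subseteq W' \subsetneq V$.

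There is no real obstacle here; the argument is a direct application of irreducibility together with the semicontinuity of intersection dimension. The only point to watch is to check the case $\dim W > \dim W'$ via the symmetric formulation (replacing $h$ by $h^{-1}$), which uses the group structure of $H$ in an essential way.
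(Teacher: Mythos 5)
Your proposal is correct and follows essentially the same route as the paper: observe that the strict-inequality locus is Zariski open (hence dense once non-empty, by connectedness of $H$) and derive a contradiction with irreducibility of $V$ if it were empty, since then every $H$-translate of one subspace would sit inside the other, producing a non-trivial invariant subspace. Your write-up merely adds details the paper leaves implicit, namely the semicontinuity justification of openness and the symmetric case $\dim W > \dim W'$ handled via $h \mapsto h^{-1}$.
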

\begin{proof}
    Note that the condition $\dim [(h.W) \cap W'] < \min\{\dim W, \dim W'\}$ is an open condition. Suppose this does not hold, then for all $h \in H$, we have
    \begin{align*}
        \dim W \geq \dim [(h.W) \cap W'] \geq \dim W.
    \end{align*}
    This implies that $h.W \subseteq W'$ for all $h \in H$ and hence $W'$ contains a nontrivial sub-representation, contradicting to the fact that $V$ is irreducible. 
\end{proof}

\begin{lemma}\label{lem:move out family}
    Let $\mathcal{W}$ be a $H$-stratified set in $\Gr_k(V)$ with data $(\ell, \mathcal{O}, \Psi)$ where $0 < k < n$. 
    For all non-trivial subspace $W' \subseteq V$, there exists a Zariski open dense subset $\mathcal{O}_{\mathcal{W}, W'} \subseteq \mathcal{O}$ so that for all $\mathbf{h} \in \mathcal{O}_{\mathcal{W}, W'}$, we have
    \begin{align*}
        \dim [\Psi(\mathbf{h}) \cap W'] < \min\{k, \dim W'\}.
    \end{align*}
\end{lemma}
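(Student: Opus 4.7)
The plan is to generalize Lemma~\ref{lem:move out orbit} to the stratified setting by combining upper semi-continuity of intersection dimension with the irreducibility of $V$. My first observation is that the function $\mathbf{h} \mapsto \dim[\Psi(\mathbf{h}) \cap W']$ is upper semi-continuous on $\mathcal{O}$: on the Grassmannian $\Gr_k(V)$ the locus $\{W_0 : \dim(W_0 \cap W') \geq c\}$ is Zariski closed for every $c$ (it is the rank-drop locus of the sum map $W_0 \oplus W' \to V$), and $\Psi$ is regular. Consequently
\begin{align*}
\mathcal{O}_{\mathcal{W}, W'} := \{\mathbf{h} \in \mathcal{O} : \dim[\Psi(\mathbf{h}) \cap W'] < \min\{k, \dim W'\}\}
\end{align*}
is Zariski open in $\mathcal{O}$. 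Since $H$ is connected, $H^\ell$ is irreducible, so $\mathcal{O}$ is irreducible and any non-empty open subset of $\mathcal{O}$ is automatically dense. It therefore suffices to show $\mathcal{O}_{\mathcal{W}, W'}$ is non-empty, which I would prove by contradiction, splitting into the cases $k \leq \dim W'$ and $k > \dim W'$. Throughout, I interpret \emph{non-trivial} to mean proper and non-zero, as both extremes $W' = 0$ and $W' = V$ violate the conclusion trivially.

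Suppose first $k \leq \dim W'$ and $\mathcal{O}_{\mathcal{W}, W'} = \emptyset$. Then $\Psi(\mathbf{h}) \subseteq W'$ for every $\mathbf{h} \in \mathcal{O}$. Pick any $\mathbf{h}_0 \in \mathcal{O}$ and set $W_0 := \Psi(\mathbf{h}_0)$, which is non-zero because $k > 0$. The $H$-invariance of $\mathcal{O}$ together with the $H$-equivariance of $\Psi$ yields $h.W_0 = \Psi(h.\mathbf{h}_0) \subseteq W'$ for every $h \in H$. The subspace $\sum_{h \in H} h.W_0$ is thus $H$-invariant, non-zero, and contained in the proper subspace $W' \subsetneq V$, contradicting the irreducibility of $V$.

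Now suppose $k > \dim W'$ and $\mathcal{O}_{\mathcal{W}, W'} = \emptyset$, so $W' \subseteq \Psi(\mathbf{h})$ for every $\mathbf{h} \in \mathcal{O}$. Fix $\mathbf{h}_0 \in \mathcal{O}$; $H$-equivariance gives $W' \subseteq \Psi(h.\mathbf{h}_0) = h.\Psi(\mathbf{h}_0)$ for every $h \in H$, equivalently $h^{-1}.W' \subseteq \Psi(\mathbf{h}_0)$. Hence the $H$-invariant non-zero subspace $\sum_{h \in H} h.W'$ is contained in $\Psi(\mathbf{h}_0)$; by irreducibility of $V$ this sum equals $V$, contradicting $\dim \Psi(\mathbf{h}_0) = k < n$. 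No serious obstacle is anticipated: beyond the upper semi-continuity step, this is just the family version of the argument already given for Lemma~\ref{lem:move out orbit}.
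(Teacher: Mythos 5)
Your proof is correct and follows essentially the same route as the paper: the strict-inequality condition is Zariski open (via regularity of $\Psi$), nonempty open implies dense since $H^\ell$ is irreducible, and non-emptiness comes from $H$-invariance of $\mathcal{O}$, $H$-equivariance of $\Psi$, and irreducibility of $V$. The only cosmetic difference is that you re-derive the irreducibility argument inline (in two cases) instead of simply applying Lemma~\ref{lem:move out orbit} to $W=\Psi(\mathbf{h}_0)$ and translating by the diagonal $H$-action.
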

\begin{proof}
    Note that by regularity of $\Psi$, the condition $\dim [\Psi(\mathbf{h}) \cap W'] < \min\{k, \dim W'\}$ is an open condition. It suffices to show that there exists $\mathbf{h} \in \mathcal{O}$ so that the inequality holds. The existence of such $\mathbf{h}$ follows from Lemma~\ref{lem:move out orbit}, $H$-invariance of $\mathcal{O}$ and the condition that $\Psi$ is $H$-equivariant. 
\end{proof}

The following lemma is a direct consequence of Lemma~\ref{lem:generic dimension} and Lemma~\ref{lem:move out family}. 
\begin{lemma}\label{lem:sum is whole space}
    Let $\mathcal{W}$ be a $H$-stratified set in $\Gr_k(V)$ with data $(\ell, \mathcal{O}, \Psi)$ where $0 < k < n$. There exist a minimal positive integer $1 \leq q \leq n$ and a $H$-left invariant Zariski open dense subset $\tilde{\mathcal{O}} \subseteq \mathcal{O}^q \subseteq (H^\ell)^q$ so that for all $(\mathbf{h}_i)_{i = 1, \ldots, q} \in \tilde{\mathcal{O}}$, we have
    \begin{align*}
        \Psi(\mathbf{h}_1) + \cdots + \Psi(\mathbf{h}_q) = V.
    \end{align*}
    Moreover, for all $2 \leq q' \leq q$, there exists $k_{q'} < k$ so that the following hold. For all $(\mathbf{h}_i)_{i = 1}^{q}$, we have $\dim \left(\sum_{i = 1}^{q' - 1}\Psi(\mathbf{h}_i)\right) \cap \Psi(\mathbf{h}_{q'}) = k_{q'}$ and the map
              \begin{align*}
               \Psi^{(q')}: \tilde{\mathcal{O}} {}&\to \Gr_{k_{q'}}(V)\\
               (\mathbf{h}_i)_{i = 1}^{q} {}&\mapsto \left(\sum_{i = 1}^{q' - 1}\Psi(\mathbf{h}_i)\right) \cap \Psi(\mathbf{h}_{q'}).
         \end{align*}
          is regular. The $(\ell q, \tilde{\mathcal{O}}, \Psi^{(q')})$ forms a data of a $H$-stratified set in $\Gr_{k_{q'}}(V)$. In particular, for all $(\mathbf{h}_i)_{i = 1}^{q} \in \tilde{\mathcal{O}}$ and all $1 < q' \leq q$, 
    \begin{align*}
        \dim \left(\sum_{i = 1}^{q' - 1}\Psi(\mathbf{h}_i)\right) \cap \Psi(\mathbf{h}_{q'}) < k.
    \end{align*}
\end{lemma}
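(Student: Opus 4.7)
The plan is to produce $q$ as the stabilization index of the generic dimension $d(p) := \dim\bigl(\Psi(\mathbf{h}_1) + \cdots + \Psi(\mathbf{h}_p)\bigr)$, to build $\tilde{\mathcal{O}}$ by intersecting the Zariski open dense sets produced by Lemma~\ref{lem:generic dimension} for appropriate trees, and to deduce $k_{q'} < k$ from the inclusion--exclusion relation $d(q') = d(q'-1) + k - k_{q'}$.

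First I would apply Lemma~\ref{lem:generic dimension} to the height-$2$ tree with root-operation $\sum$ and $p$ leaves; this gives, for each $p$, an integer $d(p)$ and an $H$-invariant Zariski open dense subset $\mathcal{O}_p \subseteq \mathcal{O}^p$ on which $\dim \sum_i \Psi(\mathbf{h}_i) \equiv d(p)$. The sequence $d(p)$ is non-decreasing and bounded by $n$, so I define $q$ to be the minimal index with $d(q) = d(q+1)$. The crucial step is to show $d(q) = n$. Assume for contradiction $d(q) < n$ and fix a generic $(\mathbf{h}_i^0)_{i=1}^q \in \mathcal{O}_q$ with $W_0 := \sum_i \Psi(\mathbf{h}_i^0)$. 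From $d(q+1) = d(q)$ the set $\{\mathbf{h} \in \mathcal{O} : \Psi(\mathbf{h}) \subseteq W_0\}$ contains a Zariski open dense subset of $\mathcal{O}$, and by the regularity of $\Psi$ this set is already Zariski closed in $\mathcal{O}$, hence equals $\mathcal{O}$. Then $W' := \sum_{\mathbf{h} \in \mathcal{O}} \Psi(\mathbf{h}) \subseteq W_0$ is a nonzero subspace of $V$ that is $H$-invariant, since $\mathcal{O}$ is $H$-left invariant and $\Psi$ is $H$-equivariant. By irreducibility of $V$, $W' = V$, forcing $W_0 = V$, a contradiction. The minimality of $q$ forces $d$ to be strictly increasing on $\{0,1,\ldots,q\}$, so $q \leq n$.

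For the second assertion, for each $2 \leq q' \leq q$ I would apply Lemma~\ref{lem:generic dimension} to the tree $\mathcal{T}'_{q'}$ whose root carries the operation $\cap$ with two children: a leaf and a height-$2$ subtree with root $\sum$ and $q'-1$ leaves. This yields an integer $k_{q'}$, regularity of the resulting map $\Psi^{(q')}$, and a Zariski open dense $H$-invariant subset of $(H^\ell)^q = H^{\ell q}$. I then set $\tilde{\mathcal{O}}$ to be the intersection of $\mathcal{O}_q$ with all these opens (pulled back along the obvious projections). The $H$-equivariance of $\Psi^{(q')}$ is automatic since the left $H$-action commutes with linear sums and intersections, giving the claimed data $(\ell q, \tilde{\mathcal{O}}, \Psi^{(q')})$ of an $H$-stratified subfamily of $\Gr_{k_{q'}}(V)$. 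The bound $k_{q'} < k$ then follows from evaluating inclusion--exclusion at a generic point, $d(q') = d(q'-1) + k - k_{q'}$, combined with the strict increase $d(q') > d(q'-1)$ guaranteed by the minimality of $q$.

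The main obstacle will be the passage from the generic containment $\Psi(\mathbf{h}_{q+1}) \subseteq W_0$ to a contradiction with the irreducibility of $V$; this step simultaneously uses the Zariski closedness of the containment condition, the $H$-invariance of $\mathcal{O}$, and the hypothesis that $V$ is an irreducible $H$-representation. Everything else reduces to bookkeeping via Lemma~\ref{lem:generic dimension} and the numerical inclusion--exclusion identity.
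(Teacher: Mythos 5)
Your proof is correct and rests on the same two pillars as the paper's: Lemma~\ref{lem:generic dimension} for the constancy-and-regularity bookkeeping, and irreducibility of $V$ as the engine that forces the sums to grow. The organization differs in a pleasant way. The paper runs an induction on $q'$, applying Lemma~\ref{lem:move out family} at each step to show $k_{q'} < k$ and thereby accumulating $\dim\sum \Psi(\mathbf{h}_i) \geq \min\{k+q'-1, n\}$; you instead apply Lemma~\ref{lem:generic dimension} only to the pure-sum trees to get the sequence $d(p)$, invoke irreducibility \emph{once} (the closed-and-dense argument plus the $H$-invariance of $W' = \sum_{\mathbf{h}\in\mathcal{O}}\Psi(\mathbf{h})$, which is the same reasoning packaged as Lemma~\ref{lem:move out orbit} in the paper) to rule out stabilization below $n$, and then recover $k_{q'} < k$ afterwards from inclusion--exclusion together with strict monotonicity of $d$ on $\{1,\ldots,q\}$. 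This is slightly cleaner, in that the use of irreducibility is isolated to a single step and the dimension counts follow formally. One small point worth making explicit: when you fix the ``generic'' $(\mathbf{h}_i^0)_{i=1}^q$, you are implicitly choosing it in the (Zariski open dense) locus where the fiber of $\mathcal{O}_{q+1}$ over it is nonempty, so that the set $\{\mathbf{h} : (\mathbf{h}_1^0,\ldots,\mathbf{h}_q^0,\mathbf{h}) \in \mathcal{O}_{q+1}\}$ is indeed dense in $\mathcal{O}$; this is standard but deserves a word.
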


The following standard linear algebra lemma can be viewed as a (linear) dual version of sub-modularity inequality for entropy. 
\begin{lemma}\label{lem:submodularity linear}
    For all subspaces $W', W_1, W_2 \subseteq V$, we have
    \begin{align*}
        \dim W' \cap W_1 \cap W_2 + \dim W'\cap (W_1 + W_2) \geq \dim W' \cap W_1 + \dim W' \cap W_2.
    \end{align*}
\end{lemma}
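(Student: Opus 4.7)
The plan is to reduce the claimed inequality to the classical dimension formula $\dim(A+B)=\dim A+\dim B-\dim(A\cap B)$ applied to the two subspaces $A=W'\cap W_1$ and $B=W'\cap W_2$. Two observations make this immediate: first, $A\cap B=(W'\cap W_1)\cap(W'\cap W_2)=W'\cap W_1\cap W_2$; second, $A+B=(W'\cap W_1)+(W'\cap W_2)\subseteq W'\cap(W_1+W_2)$, since $W'$ is closed under sums and $(W'\cap W_i)\subseteq W_1+W_2$.

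Combining these, I would write
\begin{align*}
\dim(W'\cap W_1)+\dim(W'\cap W_2)
&=\dim\bigl((W'\cap W_1)+(W'\cap W_2)\bigr)+\dim(W'\cap W_1\cap W_2)\\
&\leq\dim\bigl(W'\cap(W_1+W_2)\bigr)+\dim(W'\cap W_1\cap W_2),
\end{align*}
which is exactly the claimed inequality after rearrangement.

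There is no real obstacle here; this is just the submodularity of the dimension function of subspaces restricted to the lattice of subspaces of $W'$. I would simply present it as a short one-paragraph proof. The only thing worth flagging is the containment $(W'\cap W_1)+(W'\cap W_2)\subseteq W'\cap(W_1+W_2)$, which can fail to be an equality (for instance when $W'$ is a line not contained in $W_1$ or $W_2$ but contained in $W_1+W_2$), and this failure is precisely what turns the dimension identity into a one-sided inequality.
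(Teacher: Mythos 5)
Your proof is correct and is essentially the same argument as the paper's: both apply the dimension formula to $(W'\cap W_1)+(W'\cap W_2)$, noting that $(W'\cap W_1)\cap(W'\cap W_2)=W'\cap W_1\cap W_2$ and that $(W'\cap W_1)+(W'\cap W_2)\subseteq W'\cap(W_1+W_2)$. No further changes are needed.
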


\begin{proof}[Proof of Theorem~\ref{thm:linear subcritical irrep strong}]
    The idea of the proof is recorded in the previous section. We recommend the reader to read the proof accompanied with previous section to avoid the confusion from the heavy notations. Without loss of generality, we assume $0 < k < n$ and $W' \neq \{0\}$ and $W' \neq V'$. 

    We prove the theorem by induction on $k$. For $k = 1$, let
    \begin{align*}
        \mathcal{U} = \{(\mathbf{h}_1, \ldots, \mathbf{h}_n) \in \mathcal{O}^n: \Psi(\mathbf{h}_1) + \cdots + \Psi(\mathbf{h}_n) = V\}.
    \end{align*}
    By Lemma~\ref{lem:move out family}, the set $\mathcal{U} \neq \emptyset$. Moreover, $k = 1$ implies that for all $(\mathbf{h}_1, \ldots, \mathbf{h}_n) \in \mathcal{U}$ the sum above is a direct sum. 
    Since $\dim\left(\sum_{i = 1}^n\Psi(\mathbf{h}_i)\right) = n$ is an open condition, the set $\mathcal{U}$ is a Zariski open dense subset. Let $m = n$ and $\mathcal{O}_{\mathcal{W}} = \mathcal{U}$, for all $(\mathbf{h}_i)_{i = 1}^n \in \mathcal{O}_{\mathcal{W}} = \mathcal{U}$ and subspace $W' \subseteq V$, we have
    \begin{align*}
        \sum_{i = 1}^n \dim \Psi(\mathbf{h}_i) \cap W' \leq \dim \left(\sum_{i = 1}^n\Psi(\mathbf{h}_i)\right) \cap W' = \dim W' = n\frac{1}{n}\dim W'.
    \end{align*}
    The proof of the base case is complete. 

    We now prove the inductive step. Suppose the theorem holds for all $H$-stratified subsets in $\Gr_{k'}(V)$ for all $k' \leq k - 1$. By Lemma~\ref{lem:sum is whole space}, there exist integer $1 \leq q \leq n$ and $H$-left invariant Zariski open dense subset $\tilde{\mathcal{O}} \subseteq \mathcal{O}^q \subseteq (H^\ell)^q$ so that for all $(\mathbf{h}_i)_{i = 1, \ldots, q} \in \tilde{\mathcal{O}}$, we have
    \begin{align*}
        \Psi(\mathbf{h}_1) + \cdots + \Psi(\mathbf{h}_q) = V.
    \end{align*}
    Moreover, for all $2 \leq q' \leq q$ there exists $k_{q'} < k$ so that the map
        \begin{align*}
            \Psi^{(q')}: \tilde{\mathcal{O}} {}&\to \Gr_{k_{q'}}(V)\\
            (\mathbf{h}_i)_{i = 1}^{q} {}&\mapsto \left(\sum_{i = 1}^{q' - 1}\Psi(\mathbf{h}_i)\right) \cap \Psi(\mathbf{h}_{q'}).
         \end{align*}
    is well-defined and regular. 

    Applying the inductive hypothesis to the $H$-stratified data $(\ell q, \tilde{\mathcal{O}}, \Psi^{(q')})$ in $\Gr_{k_{q'}}(V)$ for all $2 \leq q' \leq q$, there exists $m_{q'} \ll_n 1$ and $\tilde{\mathcal{O}}^{(q')} \subseteq \tilde{\mathcal{O}}^{m_{q'}}$ so that for all $[(\mathbf{h}_{i}^{(j)})_{i = 1}^{q}]_{j = 1}^{m_{q'}} \in \tilde{\mathcal{O}}^{(q')}$ and all subspace $W' \subseteq V$, we have
    \begin{align}\label{eqn:inductive hypothesis}
    \begin{aligned}
        {}&\sum_{j = 1}^{m_{q'}} \dim \left(\left(\sum_{i = 1}^{q' - 1} \Psi(\mathbf{h}_{i}^{(j)})\right) \cap \Psi(\mathbf{h}_{q'}^{(j)}) \cap W'\right)\\
        ={}& \sum_{j = 1}^{m_{q'}} \dim \left(\Psi^{(q')}((\mathbf{h}_{i}^{(j)})_{i = 1}^{q}) \cap W'\right)
        \leq m_{q'}\frac{k_{q'}}{n} \dim W'.
    \end{aligned}
    \end{align}
    
    Let $m = n! \prod_{i = 1}^{q - 1} m_i$. Since $q \leq n$ and by inductive hypothesis $m_i \ll_n 1$, we have $m \ll_n 1$. Let
    \begin{align*}
        \mathcal{O}_{\mathcal{W}} = \tilde{\mathcal{O}}^{\frac{m}{q}} \cap \bigcap_{2 \leq q' \leq q} \left(\tilde{\mathcal{O}}^{(q')}\right)^{\frac{m}{m_{q'}}} \subseteq \mathcal{O}^m.
    \end{align*}
    Since $H$ is Zariski connected, $\mathcal{O}_{\mathcal{W}}$ is a Zariski open dense subset. For all $[(\mathbf{h}_{i}^{(j)})_{i = 1}^{q}]_{j = 1}^{\frac{m}{q}} \in \tilde{\mathcal{O}}_{\mathcal{W}}$, by Lemma~\ref{lem:submodularity linear} we have
    \begin{align*}
        {}&\sum_{j = 1}^{\frac{m}{q}} \sum_{i = 1}^q \dim \left(W'\cap \Psi(\mathbf{h}_{i}^{(j)})\right)\\\leq{}& \sum_{j = 1}^{\frac{m}{q}} \left[\dim W' \cap\left(\sum_{i = 1}^q \Psi(\mathbf{h}_{i}^{(j)})\right)  + \sum_{q' = 2}^q \dim W' \cap\left(\sum_{i = 1}^{q' - 1} \Psi(\mathbf{h}_{i}^{(j)})\right) \cap \Psi(\mathbf{h}_{q'}^{(j)})\right]\\
        ={}& \frac{m}{q} \dim W' + \sum_{q' = 2}^q \sum_{p = 1}^{\frac{m}{qm_{q'}}} \sum_{j = 1}^{m_{q'}}\dim W' \cap \Psi^{(q')} ((\mathbf{h}_{i}^{(j + (p-1)m_{q'})})_{i = 1}^q).
    \end{align*}
    Applying \cref{eqn:inductive hypothesis} (from inductive hypothesis), we have
    \begin{align}\label{eqn:apply inductive hypothesis}
    \begin{aligned}
        {}&\sum_{j = 1}^{\frac{m}{q}} \sum_{i = 1}^q \dim \left(W'\cap \Psi(\mathbf{h}_{i}^{(j)})\right)\\
        \leq{}& \frac{m}{q} \dim W' + \sum_{q' = 2}^q \sum_{p = 1}^{\frac{m}{qm_{q'}}} m_{q'}\frac{k_{q'}}{n} \dim W'
        = \frac{m}{q} \left(1 + \sum_{q' = 2}^q \frac{k_{q'}}{n}\right) \dim W'.
    \end{aligned}
    \end{align}
    We now calculate $\sum_{q' = 2}^q k_{q'}$. Note that for all $(\mathbf{h}_i)_{i = 1}^{q} \in \tilde{\mathcal{O}}$, we have
    \begin{align*}
        k_{q'} = \dim \left(\sum_{i = 1}^{q' - 1}\Psi(\mathbf{h}_i)\right) \cap \Psi(\mathbf{h}_{q'}) = \dim \left(\sum_{i = 1}^{q' - 1}\Psi(\mathbf{h}_i)\right) + k - \dim \left(\sum_{i = 1}^{q'}\Psi(\mathbf{h}_i)\right).
    \end{align*}
    Therefore, 
    \begin{align*}
        \sum_{q' = 2}^q k_{q'} = \dim \Psi(\mathbf{h}_1) + (q - 1)k - \dim \sum_{i = 1}^q\Psi(\mathbf{h}_i) = qk - n.
    \end{align*}
    Combining this with \cref{eqn:apply inductive hypothesis}, we have
    \begin{align*}
        \sum_{j = 1}^{\frac{m}{q}} \sum_{i = 1}^q \dim \left(W'\cap \Psi(\mathbf{h}_{i}^{(j)})\right) \leq m \frac{k}{n} \dim W'
    \end{align*}
    for all $[(\mathbf{h}_{i}^{(j)})_{i = 1}^{q}]_{j = 1}^{\frac{m}{q}} \in \tilde{\mathcal{O}}_{\mathcal{W}}$. This completes the proof of the inductive step and hence the theorem. 
\end{proof}

\section{The H{\"o}lder--Brascamp--Lieb inequality}\label{sec:Brascamp Lieb}

The H{\"o}lder--Brascamp--Lieb inequality allow us to translate the linear dimension estimate in Theorem~\ref{thm:subcritical linear general irrep proj} to the entropy estimate in Theorem~\ref{thm:subcritical entropy general irrep}. For reader's convenience, we recall related notions and results in this section. The main result is Proposition~\ref{prop:BL constant for unipotent}. 

In this section, we assume $H$ is a semisimple real algebraic group with Lie algebra $\LieH$ and $V$ is a faithful irreducible representation of $H$ for simplicity. Recall that we fixed an inner product and an orthonormal basis from Mostow's theorem \cite{Mos55} so that the matrix transpose is the adjoint operation under this inner product and $h^t \in H$ for all $h \in H$. We identify $V \cong \R^n$ using this basis. 

For all $j = 1, \ldots, m$, $\{n_j\}_{j = 1}^m \subseteq \N$, linear maps $\pi_j: \R^n \to \R^{n_j}$ and $p_j \in [0, 1]$, we define the Brascamp--Lieb constant $\mathrm{BL}(\{\pi_j, p_j\}_{j = 1}^m)$ to be the smallest non-negative real number so that
\begin{align}
    \int_{\R^n} \prod_{j = 1}^m f_j(\pi_j(x))^{p_j} \,\mathrm{d}x \leq \mathrm{BL}(\{\pi_j, p_j\}_{j = 1}^m) \prod_{j = 1}^m \left(\int_{\R^{n_j}} f_j\right)^{p_j}
\end{align}
for all non-negative measurable functions $f_j \in L^1(\R^{n_j})$. If such real number does not exist, we say $\mathrm{BL}(\{\pi_j, p_j\}_{j = 1}^m) = \infty$ or equivalently $[\mathrm{BL}(\{\pi_j, p_j\}_{j = 1}^m)]^{-1} = 0$. 

For simplicity, we identify $\pi_j$ with its $n_j \times n$ matrix representation under the standard basis of $\R^n$ and $\R^{n_j}$ in this section. Following \cite{Gre21}, we use $\{\pi_j\}_{j = 1}^m$ to denote the \emph{$m$-tuple} $(\pi_1, \ldots, \pi_m)$ for simplicity. 

The following theorem is proved by Bennett--Carbery--Christ--Tao. It provides a linear criterion for finiteness of the Brascamp--Lieb constant $\mathrm{BL}(\{\pi_j, p_j\}_{j = 1}^m)$. 

\begin{theorem}[\text{\cite[Theorem 2.1]{BCCT}}]\label{thm:BCCT}
    Suppose $\pi_j$'s are surjective. Then the Brascamp--Lieb constant $\mathrm{BL}(\{\pi_j, p_j\}_{j = 1}^m)$ is finite if and only if
    \begin{align}\label{eqn:BCCT exponent}
        n = \sum_{j = 1}^m p_j n_j
    \end{align}
    and 
    \begin{align}\label{eqn:BCCT linear}
        \dim (U) \leq \sum_{j = 1}^m p_j \dim (\pi_j(U)) \text{ for all subspace $U \subseteq V$.}
    \end{align}
\end{theorem}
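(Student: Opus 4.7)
The plan is to prove both directions separately. For necessity, both conditions come from testing the inequality against well-chosen inputs. The scaling identity $n = \sum_j p_j n_j$ is forced by the substitution $f_j(\cdot) \mapsto f_j(\lambda \cdot)$: the left-hand side scales as $\lambda^{-n}$ after changing variables $x \mapsto x/\lambda$ (using surjectivity of the $\pi_j$), while the right-hand side scales as $\lambda^{-\sum_j p_j n_j}$, so agreement of exponents is needed to keep $\mathrm{BL}$ finite and strictly positive. For the dimension inequality, given $U \subseteq \R^n$, I would test with Gaussians compressed along $\pi_j(U)$: take $f_j(y) = \exp(-\epsilon|y|^2 - M|(\Id - P_{\pi_j(U)}) y|^2)$ and let $M \to \infty$. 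The left-hand side picks up a factor of order $M^{-(n - \dim U)/2}$, and the right-hand side of order $\prod_j M^{-p_j(n_j - \dim \pi_j(U))/2}$; comparing polynomial orders yields $\dim U \leq \sum_j p_j \dim \pi_j(U)$.

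For sufficiency -- the hard direction -- I would follow the heat-flow strategy of Bennett--Carbery--Christ--Tao. The first step is Lieb's reduction: one shows $\mathrm{BL}(\{\pi_j, p_j\})$ equals the analogous constant computed only on centred Gaussian inputs $f_j = \exp(-\langle \cdot, A_j \cdot \rangle)$, for which the inequality becomes a finite-dimensional statement about a ratio of determinants in $A_j \succ 0$. Lieb's reduction is in turn derived from heat-flow monotonicity: setting $f_j(t, \cdot) = e^{t\Delta} f_j$ and
\[
Q(t) = \int_{\R^n} \prod_{j=1}^m f_j(t, \pi_j x)^{p_j}\, dx,
\]
a direct differentiation, using $\partial_t f_j = \Delta f_j$ and integration by parts, expresses $Q'(t)$ (after suitable Gaussian normalisation) as the integral of a non-negative quadratic form in the pull-back vector fields $v_j = (\nabla \log f_j(t, \cdot)) \circ \pi_j$. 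Monotonicity lets us pass $t \to \infty$, at which point $f_j$ is asymptotically Gaussian and the problem reduces to bounding a finite-dimensional determinantal optimum, which can be handled by simultaneously diagonalising the $A_j$ along a filtration built from subspaces saturating the dimension inequality.

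The main obstacle is establishing the positivity of the quadratic form that drives the heat-flow monotonicity. This positivity is precisely the dual reformulation of the dimension hypothesis $\dim U \leq \sum_j p_j \dim \pi_j(U)$, applied with $U$ ranging over subspaces produced by the spectral decompositions of the $A_j$; the scaling condition $n = \sum_j p_j n_j$ then appears as the homogeneity constraint making the resulting determinantal identity well-posed. Carrying out this duality cleanly, and handling the degenerate configurations in which some $\pi_j(U)$ fails to have its generic dimension, is the technical heart of the argument. An alternative route would be induction on $n$: factoring through a subspace $U \subseteq \R^n$ that saturates the dimension inequality, splitting the data into Brascamp--Lieb problems on $U$ and on $\R^n/U$, and invoking the inductive hypothesis; but identifying the right $U$ and verifying that the split constants remain finite relies on essentially the same determinantal analysis as the heat-flow argument.
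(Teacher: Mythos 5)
The paper does not prove this statement at all: its ``proof'' is the one-line citation to \cite[Theorem 2.1]{BCCT}, together with the remark that the $p_j$ here are the reciprocals of the exponents used in that reference. So your attempt is, by construction, a different route --- you are re-deriving the cited theorem rather than invoking it. Your outline does track the actual Bennett--Carbery--Christ--Tao argument correctly: necessity of \eqref{eqn:BCCT exponent} by the dilation $f_j\mapsto f_j(\lambda\,\cdot)$ (note linearity of $\pi_j$ is all that is used in the change of variables; one also needs a test configuration with both sides finite and nonzero, e.g.\ standard Gaussians, which is available once $\bigcap_j\ker\pi_j=\{0\}$, the case $\bigcap_j\ker\pi_j\neq\{0\}$ being trivially degenerate), and necessity of \eqref{eqn:BCCT linear} by Gaussians compressed transversally to $\pi_j(U)$, whose $M\to\infty$ exponent count, combined with \eqref{eqn:BCCT exponent}, gives exactly the stated inequality. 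For sufficiency you correctly name the two standard mechanisms: Lieb's reduction to centred Gaussian inputs (via heat-flow monotonicity) and the finite-dimensional determinantal analysis, organized by an induction on critical/saturating subspaces.

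The caveat is that, as written, the sufficiency half is a plan rather than a proof. The positivity of the quadratic form driving the heat-flow monotonicity, the passage to the Gaussian-extremal formulation, and the induction on critical subspaces (including the ``simple'' case with no proper critical subspace, where a separate compactness/direct argument is needed) constitute essentially the entire content of the BCCT paper; you acknowledge this as ``the technical heart'' but do not carry it out, and the two routes you mention (heat flow versus factoring through a critical subspace) are sketched interchangeably without committing to either. That is perfectly reasonable for a result this paper treats as a black box --- indeed the paper's own treatment is strictly less informative --- but if the intent were a self-contained proof, the sufficiency direction would still need to be written out. One small point worth keeping in mind when you do quote or reconstruct the result: the normalization here ($p_j\in[0,1]$ as weights, not exponents) differs from \cite{BCCT}, which is exactly the translation the paper's remark records.
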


\begin{proof}
    This is exactly \cite[Theorem 2.1]{BCCT}. We remark that $p_j$'s here are the reciprocals of the exponents in \cite{BCCT}. 
\end{proof}

The following propositions are direct consequences. 

\begin{proposition}
    Let $W$ be a subspace of $V$ of dimension $k$ and let $m$ and $\mathcal{O}_W$ be as in Theorem~\ref{thm:subcritical linear general irrep proj}. Then for all $(h_1, \ldots, h_m) \in \mathcal{O}_W$, the Brascamp--Lieb constant 
    \begin{align*}
        \mathrm{BL}\left(\left\{\pi_{h_j. W}, \frac{n}{km}\right\}_{j = 1}^m\right) < +\infty.
    \end{align*}
\end{proposition}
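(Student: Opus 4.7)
The plan is a direct application of Theorem~\ref{thm:BCCT} to the data $\pi_j = \pi_{h_j.W}: V \to h_j.W$ with exponents $p_j = \frac{n}{km}$. First, I would note that each $\pi_{h_j.W}$ is an orthogonal projection onto a subspace of dimension $k$, hence surjective with $n_j = k$, so the BCCT surjectivity hypothesis holds. The scaling identity is then a trivial arithmetic check:
\begin{align*}
\sum_{j=1}^m p_j n_j = \sum_{j=1}^m \frac{n}{km}\cdot k = n,
\end{align*}
which verifies \cref{eqn:BCCT exponent}.

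The nontrivial condition \cref{eqn:BCCT linear} is exactly what Theorem~\ref{thm:subcritical linear general irrep proj} provides. Given any subspace $U \subseteq V$ and any $(h_1,\ldots,h_m) \in \mathcal{O}_W$, that theorem yields
\begin{align*}
\dim \pi_{h_1.W}(U) + \cdots + \dim \pi_{h_m.W}(U) \geq \frac{mk}{n}\dim U,
\end{align*}
which, after multiplying by $\frac{n}{km} = p_j$, is precisely $\dim U \leq \sum_{j=1}^m p_j \dim \pi_{h_j.W}(U)$.

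Both hypotheses of Theorem~\ref{thm:BCCT} being satisfied, the Brascamp--Lieb constant $\mathrm{BL}(\{\pi_{h_j.W}, n/(km)\}_{j=1}^m)$ is finite, concluding the proof. Since this reduction is essentially a translation of the linear dimension inequality into the BCCT framework, there is no substantive obstacle; the only point that required work was exactly Theorem~\ref{thm:subcritical linear general irrep proj}, which has already been established.
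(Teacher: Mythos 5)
Your proposal is correct and follows essentially the same route as the paper: verify the exponent identity $\sum_j p_j n_j = n$ and deduce the subspace condition of Theorem~\ref{thm:BCCT} directly from Theorem~\ref{thm:subcritical linear general irrep proj}, noting that each $\pi_{h_j.W}$ is a surjection onto a $k$-dimensional space. No gaps.
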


\begin{proof}
   Let $p_j = \frac{n}{km}$ and $n_j = k$ for all $j = 1, \ldots, m$. The theorem now follows from Theorem~\ref{thm:subcritical linear general irrep proj} and Theorem~\ref{thm:BCCT}. 
\end{proof}

\begin{proposition}\label{prop:BL finite for H action}
    Let $W$ be a subspace of $V$ of dimension $k$ and let $m$ and $\mathcal{O}_W$ be as in Theorem~\ref{thm:subcritical linear general irrep proj}. Then for all $(h_1, \ldots, h_m) \in (\mathcal{O}_W)^t$, the Brascamp--Lieb constant 
    \begin{align*}
        \mathrm{BL}\left(\left\{\pi_{W} \circ h_j, \frac{n}{km}\right\}_{j = 1}^m\right) < +\infty.
    \end{align*}
\end{proposition}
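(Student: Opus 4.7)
The approach is to reduce Proposition~\ref{prop:BL finite for H action} directly to the immediately preceding proposition, using Lemma~\ref{lem:equivalence of two type of proj} to convert projections of the form $\pi_W \circ h_j$ into orthogonal projections onto the transposed subspaces $h_j^t.W$, and then exploiting the invariance of Brascamp--Lieb finiteness under invertible linear post-composition on the targets.

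Concretely, I would first invoke Lemma~\ref{lem:equivalence of two type of proj} for each $j$: it produces a linear isomorphism $\varphi_{h_j}: W \to h_j^t.W$ of $k$-dimensional spaces satisfying the identity $\pi_W \circ h_j = \varphi_{h_j}^{-1} \circ \pi_{h_j^t.W}$. The hypothesis $(h_1, \ldots, h_m) \in (\mathcal{O}_W)^t$ unpacks to $(h_1^t, \ldots, h_m^t) \in \mathcal{O}_W$, so the preceding proposition applied to that transposed tuple yields
\begin{align*}
    \mathrm{BL}\left(\left\{\pi_{h_j^t.W}, \tfrac{n}{km}\right\}_{j = 1}^m\right) < +\infty.
\end{align*}

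Finally, a routine change of variables $g_j := f_j \circ \varphi_{h_j}^{-1}$ in the defining inequality of the Brascamp--Lieb constant shows that replacing each $\pi_{h_j^t.W}$ by $\varphi_{h_j}^{-1} \circ \pi_{h_j^t.W}$ only multiplies the BL constant by the finite factor $\prod_{j=1}^m |\det \varphi_{h_j}|^{n/(km)}$ (with determinants taken with respect to any fixed orthonormal bases on the two $k$-dimensional spaces), so finiteness of the BL constant is preserved and the proposition follows. I do not anticipate any real obstacle: the algebraic identity and the change-of-variables computation are elementary, and all substantive content has already been packaged into Theorem~\ref{thm:subcritical linear general irrep proj}, Theorem~\ref{thm:BCCT}, and Lemma~\ref{lem:equivalence of two type of proj}. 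The only point that warrants a line of care is checking that the determinants $|\det \varphi_{h_j}|$ are bounded away from $0$ and $\infty$, which follows from the bi-Lipschitz bound $\max\{\Lip(\varphi_{h_j}), \Lip(\varphi_{h_j}^{-1})\} \ll \|h_j\|^\star$ recorded in Lemma~\ref{lem:equivalence of two type of proj}.
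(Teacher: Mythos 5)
Your proposal is correct, and it uses the same two essential ingredients as the paper --- Theorem~\ref{thm:subcritical linear general irrep proj} (via the preceding proposition) and Lemma~\ref{lem:equivalence of two type of proj} --- but packages the reduction slightly differently. The paper re-runs the BCCT verification: it checks the exponent condition \cref{eqn:BCCT exponent} by the same computation as before, and checks the linear condition \cref{eqn:BCCT linear} by applying Theorem~\ref{thm:subcritical linear general irrep proj} to the subspaces $h_j^t.W$ and then invoking Lemma~\ref{lem:equivalence of two type of proj} to pass from $\dim\pi_{h_j^t.W}(W')$ to $\dim\pi_W(h_j.W')$. You instead take the finiteness conclusion of the preceding proposition as given for the tuple $(h_1^t,\dots,h_m^t)$, and then transport it by the change of variables $g_j = f_j \circ \varphi_{h_j}^{-1}$, observing that post-composing each $\pi_{h_j^t.W}$ with the invertible map $\varphi_{h_j}^{-1}$ only scales the BL constant by $\prod_j |\det\varphi_{h_j}|^{n/(km)} \in (0,\infty)$. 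Both routes are sound. Your change-of-variables route is arguably more conceptual (it makes explicit that BL finiteness is invariant under post-composition with isomorphisms on the targets), while the paper's re-verification keeps everything inside the BCCT framework. One small remark: for a pure finiteness statement you only need $\det\varphi_{h_j}\neq 0$, which is automatic since $\varphi_{h_j}$ is an isomorphism; the bi-Lipschitz bounds from Lemma~\ref{lem:equivalence of two type of proj} that you mention at the end are not needed here, although they do become relevant for the quantitative estimate in the subsequent Proposition~\ref{prop:BL constant for H action}.
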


\begin{proof}
   Let $p_j = \frac{n}{km}$ and $n_j = k$ for all $j = 1, \ldots, m$. The theorem now follows from Lemma~\ref{lem:equivalence of two type of proj}, Theorem~\ref{thm:subcritical linear general irrep proj} and Theorem~\ref{thm:BCCT}.  
\end{proof}

Using the Bruhat decomposition, we have the following proposition. 

\begin{proposition}
    For all $\mu$, there exist positive integer $m \ll_{n} 1$ and a Zariski open dense subset $\mathcal{O}_{\mu}' \subseteq (U^+)^{m}$ so that for all $(u_1, \ldots, u_m) \in \mathcal{O}_{\mu}'$, the Brascamp--Lieb constant
    \begin{align*}
        \mathrm{BL}\left(\left\{\pi_{u_j^t.V^{(\mu)}}, \frac{n}{m \dim V^{(\mu)}}\right\}_{j = 1}^m\right) < +\infty.
    \end{align*}
\end{proposition}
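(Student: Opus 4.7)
The plan is to deduce this from the preceding proposition (BL finiteness on generic $H$-translates of $V^{(\mu)}$) by passing through the Bruhat decomposition to generic tuples in $(U^-)^m$, and then transporting back to $(U^+)^m$ by matrix transpose. Concretely, I would apply the first of the two preceding propositions with $W = V^{(\mu)}$ and $k = \dim V^{(\mu)}$, obtaining an integer $m \ll_n 1$ and a Zariski open dense subset $\mathcal{O}_{V^{(\mu)}} \subseteq H^m$ such that $\mathrm{BL}(\{\pi_{h_j . V^{(\mu)}}, n/(m \dim V^{(\mu)})\}_{j=1}^m) < +\infty$ for every $(h_1, \dots, h_m) \in \mathcal{O}_{V^{(\mu)}}$.

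Next, let $P = \mathrm{Stab}_H(V^{(\mu)})$. Because $V^{(\mu)} = \bigoplus_{\lambda \geq \mu} V_\lambda$ is preserved by $A_H$ (acts by scalars on each weight space), by $M_0$ (commutes with $A_H$, so preserves each weight space), and by $U^+$ (raises weights, hence sends $V^{(\mu)}$ into itself), the group $P$ contains the minimal parabolic $M_0 A_H U^+$, so $P$ is a standard parabolic of $H$. The Brascamp--Lieb constant depends only on the subspaces $h_j . V^{(\mu)}$, and hence only on the cosets $h_j P$; after replacing $\mathcal{O}_{V^{(\mu)}}$ by its right $P^m$-saturation (still Zariski open dense), it descends to a Zariski open dense subset $\widetilde{\mathcal{O}} \subseteq (H/P)^m$. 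By the Bruhat decomposition, the big cell $U^-_P \cdot P/P$ is Zariski open dense in $H/P$, and since $U^-_P \subseteq U^-$, the morphism $U^- \to H/P$ has Zariski dense image. Therefore the preimage of $\widetilde{\mathcal{O}}$ in $(U^-)^m$ is a nonempty Zariski open subset of the irreducible variety $(U^-)^m$, hence Zariski dense. Finally, since matrix transpose $u \mapsto u^t$ (under Mostow's orthonormal basis) is a regular isomorphism of varieties $U^+ \to U^-$, pulling back gives the desired Zariski open dense $\mathcal{O}'_\mu \subseteq (U^+)^m$.

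The only non-routine point is verifying that $P = \mathrm{Stab}_H(V^{(\mu)})$ is a standard parabolic, which reduces to the weight-space argument sketched above; everything else is formal (finiteness of the Brascamp--Lieb constant cutting out a Zariski open set via the Bennett--Carbery--Christ--Tao criterion combined with Theorem~\ref{thm:subcritical linear general irrep proj}, right $P$-invariance of the BL condition, and the basic fact that a nonempty Zariski open subset of an irreducible variety is dense). I expect no serious obstacle, though one should confirm along the way that the integer $m$ stays $\ll_n 1$ uniformly in $\mu$, which is automatic since Theorem~\ref{thm:subcritical linear general irrep proj} gives $m \ll_{n, k} 1$ and $k = \dim V^{(\mu)} \leq n$.
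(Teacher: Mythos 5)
Your proof is correct and follows essentially the same route as the paper: apply Theorem~\ref{thm:subcritical linear general irrep proj} to $W = V^{(\mu)}$, descend modulo a parabolic stabilizing $V^{(\mu)}$ via the Bruhat big cell, pull back to $(U^-)^m$, and transport to $(U^+)^m$ by transpose under Mostow's basis. The only cosmetic difference is your choice of parabolic $P = \mathrm{Stab}_H(V^{(\mu)})$, whereas the paper works directly with $P(a_t) = M_0 A_H U^+$ (which also stabilizes $V^{(\mu)}$ and for which $U^-P$ is literally the big cell); either choice works, and both handle the descent by the same right $P$-invariance of the BL condition.
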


\begin{proof}
    Let $P = P(a_t) = \{h \in H: a_{-t} h a_t \text{ is bounded when } t \to +\infty\}$. This is a parabolic subgroup of $H$. There exists a Zariski open dense subset of $H$ in $U^- P$.
    
    Applying Theorem~\ref{thm:subcritical linear general irrep proj} to subspace $V^{(\mu)}$, there exists a positive integer $m \ll_n 1$ and a Zariski open dense subset $\tilde{\mathcal{O}}_{\mu} \subseteq H^m$ so that for all $(h_1, \ldots, h_m) \in \tilde{\mathcal{O}}_{\mu}$ and all subspaces $W' \subseteq V$ we have
    \begin{align*}
        \dim W' \leq \sum_{j = 1}^m \frac{n}{m\dim V^{(\mu)}} \dim \pi_{h_j.V^{(\mu)}}(W').
    \end{align*}
    Therefore, there exists a Zariski open dense subset $\tilde{\mathcal{O}}_{\mu}' \subseteq (U^-P)^m$ so that for all $(u_j^t p_j)_{j = 1}^m \in \tilde{\mathcal{O}}_{\mu}'$ and all subspaces $W' \subseteq V$ we have
    \begin{align*}
        \dim W' \leq \sum_{j = 1}^m \frac{n}{m\dim V^{(\mu)}} \dim \pi_{u_j^t.V^{(\mu)}}(W').
    \end{align*}
    Pullback to $U \times P$ and project to the first factor, we obtain the Zariski open dense subset $\mathcal{O}_{\mu}' \subseteq (U^+)^{m}$ satisfying the conditions in the proposition. 
\end{proof}

By Lemma~\ref{lem:equivalence of two type of proj}, we have the following proposition. 

\begin{proposition}\label{prop:BL finite for U}
    For all $\mu$, there exist positive integer $m \ll_{n} 1$ and a Zariski open dense subset $\mathcal{O}_{\mu}' \subseteq (U^+)^{m}$ so that for all $(u_1, \ldots, u_m) \in \mathcal{O}_{W}'$, the Brascamp--Lieb constant
    \begin{align*}
        \mathrm{BL}\left(\left\{\pi^{(\mu)}_{u_j}, \frac{n}{m\dim V^{(\mu)}}\right\}_{j = 1}^m\right) < +\infty.
    \end{align*}
\end{proposition}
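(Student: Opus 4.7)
The plan is to deduce this proposition from the preceding one (finiteness of the Brascamp--Lieb constant for the family $\{\pi_{u_j^t.V^{(\mu)}}\}_{j=1}^m$) by invoking Lemma~\ref{lem:equivalence of two type of proj} to perform a linear change of variables on the target spaces. Concretely, I would take the same integer $m \ll_n 1$ and the same Zariski open dense subset $\mathcal{O}_\mu' \subseteq (U^+)^m$ furnished by the preceding proposition, and then verify finiteness of the Brascamp--Lieb constant for the modified family $\{\pi^{(\mu)}_{u_j}\}_{j=1}^m$ on the same set.

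Fix $(u_1,\ldots,u_m) \in \mathcal{O}_\mu'$. Applying Lemma~\ref{lem:equivalence of two type of proj} with $W = V^{(\mu)}$ and $h = u_j$ produces linear isomorphisms $\varphi_{u_j}: V^{(\mu)} \to u_j^t.V^{(\mu)}$ satisfying
\[
\varphi_{u_j} \circ \pi^{(\mu)}_{u_j} = \pi_{u_j^t.V^{(\mu)}}.
\]
Given non-negative measurable functions $g_j \in L^1(V^{(\mu)})$, set $f_j := g_j \circ \varphi_{u_j}^{-1}$ on $u_j^t.V^{(\mu)}$. Then one has the pointwise identity $g_j(\pi^{(\mu)}_{u_j}(x)) = f_j(\pi_{u_j^t.V^{(\mu)}}(x))$ together with the linear change-of-variables relation $\int f_j = |\det \varphi_{u_j}|^{-1} \int g_j$, since $\varphi_{u_j}$ is an isomorphism between Euclidean subspaces of equal dimension $\dim V^{(\mu)}$ (with respect to the Lebesgue measures induced by the Mostow basis).

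Substituting into the Brascamp--Lieb integral for $\{\pi^{(\mu)}_{u_j}\}$ with exponents $p_j = n/(m\dim V^{(\mu)})$ and invoking the preceding proposition for the family $\{f_j\}$ on $\{u_j^t.V^{(\mu)}\}$ yields
\[
\int_V \prod_{j=1}^m g_j(\pi^{(\mu)}_{u_j}(x))^{p_j}\,\mathrm{d}x \ \leq\ \mathrm{BL}_0 \prod_{j=1}^m |\det \varphi_{u_j}|^{p_j} \prod_{j=1}^m \left(\int g_j\right)^{p_j},
\]
where $\mathrm{BL}_0 < +\infty$ is the Brascamp--Lieb constant from the preceding proposition. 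This establishes the desired finiteness. There is no genuine obstacle here: the argument is a routine linear change of variables once Lemma~\ref{lem:equivalence of two type of proj} is in hand, and the only point requiring attention is that each Jacobian $|\det \varphi_{u_j}|$ is a finite positive constant, so preservation of finiteness is automatic.
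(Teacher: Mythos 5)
Your argument is correct and uses exactly the two ingredients the paper invokes (the preceding proposition and Lemma~\ref{lem:equivalence of two type of proj}), but the mechanism is slightly different from the paper's. The paper proves this the same way it proves Proposition~\ref{prop:BL finite for H action}: it goes back to the Bennett--Carbery--Christ--Tao criterion (Theorem~\ref{thm:BCCT}) and verifies the exponent condition together with the linear condition $\dim W' \leq \sum_j p_j \dim \pi^{(\mu)}_{u_j}(W')$, where the equivariance lemma is used only to identify $\dim \pi^{(\mu)}_{u_j}(W')$ with $\dim \pi_{u_j^t.V^{(\mu)}}(W')$. You instead transfer the Brascamp--Lieb inequality itself by the substitution $f_j = g_j\circ\varphi_{u_j}^{-1}$, which is the standard fact that the BL constant changes only by Jacobian factors when each projection is post-composed with a linear isomorphism of its target; this needs only the \emph{conclusion} of the preceding proposition (finiteness of $\mathrm{BL}_0$) rather than re-running the BCCT criterion, and it even yields the quantitative bound $\mathrm{BL}\bigl(\{\pi^{(\mu)}_{u_j},p_j\}\bigr) \leq \mathrm{BL}_0\prod_j|\det\varphi_{u_j}|^{p_j}$. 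Both routes give the same finiteness on the same set $\mathcal{O}_\mu'$ with the same $m$, so the difference is one of bookkeeping rather than substance. One small slip: with $f_j = g_j\circ\varphi_{u_j}^{-1}$ and $\varphi_{u_j}\colon V^{(\mu)}\to u_j^t.V^{(\mu)}$, the change of variables gives $\int f_j = |\det\varphi_{u_j}|\int g_j$, not $|\det\varphi_{u_j}|^{-1}\int g_j$ as you wrote; your final display is consistent with the corrected relation, and since each $|\det\varphi_{u_j}|$ is a finite positive number (controlled by $\|u_j\|^{\star}$ via the Lipschitz bounds in the lemma), finiteness is unaffected either way.
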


\begin{proof}
    The proof is exactly the same as the proof of Proposition~\ref{prop:BL finite for H action} using  Lemma~\ref{lem:equivalence of two type of proj} and the previous proposition.  
\end{proof}

We also need an explicit estimate on the the Brascamp--Lieb constant $\mathrm{BL}(\{\pi_j, p_j\}_{j = 1}^m)$. It was shown by Lieb \cite{Lie90} (see also \cite{BL76}) that any Brascamp--Lieb inequality has an extremizing sequence of Gaussian. Using this, Gressman obtained the following explicit expression of Brascamp--Lieb constant in \cite{Gre21}. We remark that this expression is naturally related to geometric invariant theory. 

\begin{lemma}[\text{\cite[Lemma 1]{Gre21}}]\label{lem:Lieb constant}
    Suppose the exponents $\{p_j\}_{j = 1}^m$ and dimensions $\{n_j\}_{j = 1}^m$ satisfies
    \begin{align}
        \sum_{j = 1}^m \frac{p_j n_j}{n} = 1.
    \end{align}
    Then the Brascamp--Lieb constant $\mathrm{BL}(\{\pi_j, p_j\}_{j = 1}^m)$ satisfies
    \begin{align}
        [\mathrm{BL}(\{\pi_j, p_j\}_{j = 1}^m)]^{-1} = \inf_{\substack{A \in \SL_n(\R), \mathstrut \\A_j \in \SL_{n_j}(\R), j = 1, \ldots, m}} \prod_{j = 1}^m n_j^{-\frac{p_j n_j}{2}} \| A_j \pi_j A^t\|_{\mathrm{HS}}^{p_j n_j},
    \end{align}
    where $\| \cdot \|_{\mathrm{HS}}$ denotes the Hilbert--Schmidt norm computed with respect to the standard basis of $\R^n$ and $\R^{n_j}$'s and $A^t$ is the transpose of $A$. 
\end{lemma}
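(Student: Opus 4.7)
The plan is to deduce the identity from Lieb's Gaussian extremizer theorem combined with two applications of the arithmetic--geometric mean inequality. First, by Lieb's theorem \cite{Lie90}, the Brascamp--Lieb inequality is saturated by centered Gaussian inputs $f_j(y) = \exp(-\pi \langle B_j y, y\rangle)$ with $B_j > 0$ on $\R^{n_j}$. A direct Gaussian computation then yields the preliminary variational formula
\[
[\mathrm{BL}(\{\pi_j, p_j\}_{j = 1}^m)]^{-1} = \inf_{B_j > 0}\frac{\det\bigl(\sum_j p_j \pi_j^t B_j \pi_j\bigr)^{1/2}}{\prod_j (\det B_j)^{p_j/2}}.
\]
It remains to rewrite this infimum in the form stated in the lemma.

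To do so, I would parametrize each $B_j$ as $\lambda_j^2 A_j^t A_j$ with $A_j \in \SL_{n_j}(\R)$ and $\lambda_j = (\det B_j)^{1/(2n_j)} > 0$, so that $(\det B_j)^{p_j/2} = \lambda_j^{p_j n_j}$. Setting $P = \sum_j p_j \pi_j^t B_j \pi_j$ and using cyclicity of the trace gives the identity $\tr(A^t A P) = \sum_j p_j \lambda_j^2 \|A_j \pi_j A^t\|_{\mathrm{HS}}^2$ for any $A \in \SL_n(\R)$. Two AM--GM inequalities now enter. The trace--determinant form
\[
(\det P)^{1/n} = \inf_{A \in \SL_n(\R)} \tr(A^t A P)/n
\]
is a sharp equality, attained at $A^t A = (\det P)^{1/n} P^{-1}$. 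Meanwhile the scaling hypothesis $\sum_j p_j n_j = n$ makes $w_j := p_j n_j / n$ a probability vector, so weighted AM--GM applies to the decomposition above, giving
\[
\tr(A^t A P)/n = \sum_j w_j \cdot \frac{\lambda_j^2 \|A_j \pi_j A^t\|_{\mathrm{HS}}^2}{n_j} \geq \prod_j \left(\frac{\lambda_j^2 \|A_j \pi_j A^t\|_{\mathrm{HS}}^2}{n_j}\right)^{w_j}.
\]
Raising to the $n/2$ power and dividing by $\prod_j \lambda_j^{p_j n_j} = \prod_j (\det B_j)^{p_j/2}$ converts the right-hand side into $\prod_j n_j^{-p_j n_j/2}\|A_j \pi_j A^t\|_{\mathrm{HS}}^{p_j n_j}$.

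Both directions of the lemma then follow by saturating one AM--GM at a time. For $[\mathrm{BL}]^{-1} \geq \inf_{A, A_j}$, I would fix $B$ and take $\inf_A$ in the preceding display: the left-hand side becomes the Gaussian Lieb ratio, after which $\inf_B$ matches $(\lambda_j, A_j)$ ranging over $(0,\infty) \times \SL_{n_j}(\R)$. For $[\mathrm{BL}]^{-1} \leq \inf_{A, A_j}$, given any $(A, A_j)$ I would choose $\lambda_j = \sqrt{c\, n_j}/\|A_j \pi_j A^t\|_{\mathrm{HS}}$ for an arbitrary $c > 0$; this equalizes the quantities $\lambda_j^2 \|A_j \pi_j A^t\|_{\mathrm{HS}}^2/n_j$ and thereby saturates the weighted AM--GM, making the product expression an exact upper bound for the Lieb ratio at $B_j = \lambda_j^2 A_j^t A_j$.

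The argument is essentially bookkeeping once Lieb's theorem is in hand, and I do not anticipate a serious obstacle. The single point requiring attention is that the scaling hypothesis $\sum_j p_j n_j = n$ is exactly what renders $(w_j)$ a probability vector, which in turn is what balances the two AM--GMs against each other so that the resulting infimum matches the Gaussian one from Lieb's theorem.
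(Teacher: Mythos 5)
The paper does not prove this lemma---it is cited directly from \cite[Lemma 1]{Gre21}---so the relevant comparison is with Gressman's argument, and your proof is essentially his: reduce to Gaussians via Lieb's theorem, factor each $B_j = \lambda_j^2 A_j^t A_j$ with $A_j \in \SL_{n_j}$, and trade the determinant for a trace over $\SL_n$. Your bookkeeping is correct, including the role of the hypothesis $\sum_j p_j n_j = n$ in making $w_j = p_j n_j/n$ a probability vector so that the second AM--GM can be saturated by the choice $\lambda_j \propto n_j^{1/2}/\|A_j\pi_j A^t\|_{\mathrm{HS}}$; the one point worth stating explicitly is that you need each $\pi_j \neq 0$ (indeed surjective, as is assumed where the lemma is used) so that $\|A_j \pi_j A^t\|_{\mathrm{HS}} > 0$ and the $\lambda_j$'s are well-defined.
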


Using the above lemma, Gressman obtained a polynomial bound (on entries of $\{\pi_j\}_{j = 1}^m$) for Brascamp--Lieb constant $\mathrm{BL}(\{\pi_j\}_{j = 1}^m, \{p_j\}_{j = 1}^m)$ using ingredients from geometric invariant theory. As a consequence of the results and arguments in \cite[Section 3]{Gre21}, we have the following propositions. 

\begin{proposition}\label{prop:BL constant for H action}
    For all subspace $W \subseteq V$ of dimension $k$, there exist a positive integer $m \ll_{n} 1$, a Zariski open dense subset $\mathcal{O}_{W}' \subseteq (H)^{m}$, a non-zero polynomial $\Phi$ on $H^{m}$ with degree $\ll_{V} 1$ and a positive integer $s_{\Phi} \ll_{V} 1$ so that for all $(h_1, \ldots, h_m) \in \mathcal{O}_{W}'$, the Brascamp--Lieb constant
    \begin{align*}
        \mathrm{BL}\left(\left\{\pi_{W} \circ h_j, \frac{n}{km}\right\}_{j = 1}^m\right) \ll_{n} \|\Phi\|_{\infty}^{\frac{1}{s_{\Phi}}} |\Phi(h_1, \ldots, h_m)|^{-\frac{1}{s_{\Phi}}}.
    \end{align*}
    The norm $\|\Phi\|_{\infty}$ is the maximum of absolute value of coefficient in $\Phi$. 
\end{proposition}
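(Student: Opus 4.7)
The plan is to combine Proposition~\ref{prop:BL finite for H action} with Lemma~\ref{lem:polynomial bound for BL constant}, applied to the exponents $p_j = n/(km)$ and dimensions $n_j = k$ (so that $\sum_j p_j n_j = n$). First I would invoke Theorem~\ref{thm:subcritical linear general irrep proj} to fix the integer $m \ll_n 1$ and a Zariski open dense subset $\mathcal{O}_W \subseteq H^m$ witnessing the linear inequality there. Proposition~\ref{prop:BL finite for H action} then gives $\mathrm{BL}(\{\pi_W \circ h_j, n/(km)\}_{j=1}^m) < +\infty$ at any $(h_1, \ldots, h_m)$ in the Zariski open dense subset $(\mathcal{O}_W)^t$. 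Existence of a single such tuple is exactly what Lemma~\ref{lem:polynomial bound for BL constant} requires to produce a non-empty finite family $\mathrm{IP}_0$ of invariant polynomials $\Phi_0$ on matrix-tuples $\{\tilde{\pi}_j\}$, each of degree $\ll_V 1$ and with associated integer $s_{\Phi_0} \ll_V 1$.

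Next I would select a specific $\Phi_0 \in \mathrm{IP}_0$: property~(3) of the Lemma, evaluated at a tuple in $(\mathcal{O}_W)^t$, guarantees that at least one $\Phi_0 \in \mathrm{IP}_0$ satisfies $\Phi_0(\{\pi_W \circ h_j^{(0)}\}) \neq 0$ at some tuple $(h_j^{(0)})$. I then define $\Phi(h_1, \ldots, h_m) := \Phi_0(\{\pi_W \circ h_j\}_{j=1}^m)$; since $h_j \mapsto \pi_W \circ h_j$ is a linear map from $H \subseteq \mathfrak{gl}(V)$ into $k\times n$ matrices, this $\Phi$ is a polynomial on $H^m$ of degree $\ll_V 1$, and it is nontrivial by the previous sentence. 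Using Zariski irreducibility of $H^m$ (which holds because $H$ is Zariski connected), the set
\begin{align*}
\mathcal{O}_W' := (\mathcal{O}_W)^t \cap \{h \in H^m : \Phi(h) \neq 0\}
\end{align*}
is Zariski open and dense in $H^m$; we may further rescale $\Phi$ so that $\|\Phi\|_\infty = 1$, which does not affect either side of the target inequality since both scale identically in $\Phi$.

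Finally, for $(h_1, \ldots, h_m) \in \mathcal{O}_W'$, the inequality~\eqref{eqn:estimate BL constant poly} applied to $\pi_j = \pi_W \circ h_j$ reads
\begin{align*}
\mathrm{BL}\bigl(\{\pi_W \circ h_j, n/(km)\}_{j=1}^m\bigr) \leq \left(\prod_{j=1}^m k^{n/(2m)}\right) \|\Phi_0\|_{\mathrm{HS}}^{1/s_{\Phi_0}} |\Phi(h)|^{-1/s_{\Phi_0}}.
\end{align*}
Setting $s_\Phi := s_{\Phi_0}$ and absorbing both the prefactor $k^{n/2}$ and the absolute constant $\|\Phi_0\|_{\mathrm{HS}}^{1/s_\Phi}$ (numerical, depending only on the fixed polynomial $\Phi_0$) into the implicit constant yields $\mathrm{BL} \ll_V \|\Phi\|_\infty^{1/s_\Phi} |\Phi(h)|^{-1/s_\Phi}$. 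The only point requiring care is the uniformity in $W$: but the exponents and dimensions feeding Lemma~\ref{lem:polynomial bound for BL constant} depend only on $n, k, m$ and not on the particular subspace $W$, so $\mathrm{IP}_0$ (and hence the admissible bounds $\deg\Phi, s_\Phi \ll_V 1$) can be chosen once and for all. I do not anticipate any substantive obstacle beyond this bookkeeping; all the heavy lifting has been done in Theorem~\ref{thm:subcritical linear general irrep proj} and in Gressman's Lemma~\ref{lem:polynomial bound for BL constant}.
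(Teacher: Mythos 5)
Your proposal follows exactly the paper's route: fix $m$ and $\mathcal{O}_W$ via Theorem~\ref{thm:subcritical linear general irrep proj}, invoke Proposition~\ref{prop:BL finite for H action} for finiteness on $(\mathcal{O}_W)^t$, then extract an invariant polynomial and bound from Lemma~\ref{lem:polynomial bound for BL constant} with $n_j = k$, $p_j = n/(km)$, and compose with $h_j \mapsto \pi_W\circ h_j$ to get $\Phi$. The two small places where you deviate are cosmetic: you intersect with $\{\Phi\neq 0\}$ and rescale so $\|\Phi\|_\infty = 1$, whereas the paper leaves $\mathcal{O}_W' = (\mathcal{O}_W)^t$ (which is fine since the target bound is vacuous where $\Phi$ vanishes) and keeps the raw $\Phi$. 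One minor inaccuracy worth flagging: $h_j \mapsto \pi_W\circ\rho(h_j)$ is not \emph{linear} unless $V$ is the defining representation — it is a polynomial map whose degree depends on the representation — but since you only claim $\deg\Phi \ll_V 1$, the conclusion is unaffected; just don't call it linear.
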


\begin{proposition}\label{prop:BL constant for unipotent}
    For all $\mu$, there exist positive integer $m \ll_{n} 1$, a Zariski open dense subset $\mathcal{O}_{\mu}' \subseteq (U^+)^{m}$, a non-zero polynomial $\Phi$ on $(U^+)^{m}$ with degree $\ll_{V} 1$ and a positive integer $s_{\Phi} \ll_{n} 1$ so that for all $(u_1, \ldots, u_m) \in \mathcal{O}_{\mu}'$, the Brascamp--Lieb constant
    \begin{align*}
        \mathrm{BL}\left(\left\{\pi^{(\mu)}_{u_j}, \frac{n}{m\dim V^{(\mu)}}\right\}_{j = 1}^m\right) \ll_{n} \|\Phi\|_{\infty}^{\frac{1}{s_{\Phi}}} |\Phi(u_1, \ldots, u_m)|^{-\frac{1}{s_{\Phi}}}.
    \end{align*}
    The norm $\|\Phi\|_{\infty}$ is the maximum of absolute value of coefficient in $\Phi$. 
\end{proposition}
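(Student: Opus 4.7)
The proof will follow the template of Proposition~\ref{prop:BL constant for H action} almost verbatim, substituting the unipotent group $U^+$ for $H$ and the family $\{\pi^{(\mu)}_{u_j} = \pi^{(\mu)} \circ u_j\}_{j=1}^m$ for $\{\pi_W \circ h_j\}_{j=1}^m$. First I would invoke Proposition~\ref{prop:BL finite for U} to obtain a positive integer $m \ll_n 1$ together with a Zariski open dense subset $\tilde{\mathcal{O}}_\mu \subseteq (U^+)^m$ on which the Brascamp--Lieb constant
\begin{align*}
\mathrm{BL}\!\left(\left\{\pi^{(\mu)}_{u_j}, \tfrac{n}{m \dim V^{(\mu)}}\right\}_{j=1}^m\right)
\end{align*}
is finite. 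In particular this supplies a tuple $\{\pi_j^{(0)}\}_{j=1}^m = \{\pi^{(\mu)} \circ u_j^{(0)}\}_{j=1}^m$ at which the Brascamp--Lieb constant is finite, so the hypothesis of Lemma~\ref{lem:polynomial bound for BL constant} is met with exponents $p_j = \tfrac{n}{m \dim V^{(\mu)}}$ and dimensions $n_j = \dim V^{(\mu)}$. Consequently $\mathrm{IP} \neq \emptyset$ and the lemma yields a nonempty finite set $\mathrm{IP}_0$ with $\sup_{\tilde{\Phi} \in \mathrm{IP}_0} \deg \tilde{\Phi} \ll_{n,\dim V^{(\mu)}, p_j} 1$ and positive integers $s_{\tilde{\Phi}} \ll_n 1$.

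Next I would select a nonzero polynomial $\tilde{\Phi} \in \mathrm{IP}_0$ and define
\begin{align*}
\Phi(u_1, \ldots, u_m) := \tilde{\Phi}\bigl(\{\pi^{(\mu)} \circ u_j\}_{j=1}^m\bigr),
\end{align*}
which is a polynomial in the coordinates of $(U^+)^m$ because $U^+$ is unipotent, hence its action on $V$ (given by matrix exponential of a nilpotent endomorphism) has entries that are polynomial in the coordinates of $\LieU^+$, and then $\pi^{(\mu)}$ is a fixed linear projection. Set $s_\Phi := s_{\tilde{\Phi}}$. Since $p_j n_j = \tfrac{n}{m}$, one may take $q = m$, $q_j = n$ in the construction inside Lemma~\ref{lem:polynomial bound for BL constant}, so that $\tilde{\Phi}$ comes from a $G$-invariant homogeneous polynomial of degree $d \ll_{n,\dim V^{(\mu)}} 1$ on $\bigotimes_{j=1}^m \bigotimes_{i=1}^m (\R^{\dim V^{(\mu)}} \otimes (\R^n)^*)$, giving $s_\Phi = md \ll_n 1$ and $\deg \Phi \ll_V 1$ after composing with the polynomial map $u_j \mapsto \pi^{(\mu)}_{u_j}$.

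Finally, inequality \cref{eqn:estimate BL constant poly} from Lemma~\ref{lem:polynomial bound for BL constant} gives
\begin{align*}
\bigl[\mathrm{BL}(\{\pi^{(\mu)}_{u_j}, \tfrac{n}{m \dim V^{(\mu)}}\}_{j=1}^m)\bigr]^{-1}
\geq \bigl(\dim V^{(\mu)}\bigr)^{-n/2}\,\|\tilde{\Phi}\|_{\mathrm{HS}}^{-1/s_\Phi}\,|\Phi(u_1,\ldots,u_m)|^{1/s_\Phi},
\end{align*}
and $\|\tilde{\Phi}\|_{\mathrm{HS}}$ is bounded in terms of $\|\Phi\|_\infty$ by a constant depending only on $n$ and $V$, since $\Phi$ is obtained from $\tilde{\Phi}$ by composition with a fixed polynomial map of bounded degree. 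Defining $\mathcal{O}_\mu'$ to be the intersection of $\tilde{\mathcal{O}}_\mu$ with the Zariski open set $\{\Phi \neq 0\}$ (which is dense, because the original tuple $(u_j^{(0)})$ lies in both by \cref{eqn:estimate BL constant poly positive}) yields the claimed bound and completes the proof.

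The only real subtlety, and the point I would verify most carefully, is that $\Phi$ is genuinely a nonzero polynomial on $(U^+)^m$ rather than identically vanishing under the substitution $h_j \mapsto u_j$. This is not automatic from nonvanishing of $\tilde{\Phi}$ on the space of all matrix tuples, but is secured by combining finiteness at $(u_j^{(0)})$ from Proposition~\ref{prop:BL finite for U} with the strict positivity statement \cref{eqn:estimate BL constant poly positive}, which forces some member of $\mathrm{IP}_0$ to remain nonzero after pullback; one then selects that particular $\tilde{\Phi}$ to construct $\Phi$.
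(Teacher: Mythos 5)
Your proposal is correct and is essentially the paper's own argument: the paper proves this proposition by citing that it is identical to the proof of Proposition~\ref{prop:BL constant for H action} with Proposition~\ref{prop:BL finite for H action} replaced by Proposition~\ref{prop:BL finite for U}, which is exactly the substitution you carry out. Your extra care about why the pulled-back polynomial $\Phi$ is not identically zero (via \cref{eqn:estimate BL constant poly positive} at a tuple where the Brascamp--Lieb constant is finite) matches what is implicit in the paper's proof of the $H$-action case.
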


Proposition~\ref{prop:BL constant for H action} and Proposition~\ref{prop:BL constant for unipotent} follows from \cite[Section 3]{Gre21} (more precisely  \cite[Lemma 2, Proposition 3, Lemma 3]{Gre21}) and Remez's inequality. The only implicit part in \cite{Gre21} is the bound for $\deg\Phi$ and $s_{\Phi}$. For reader's convenience, we provide detailed proofs in the Appendix~\ref{app:BL}. 

\section{Proof of Theorem~\ref{thm:subcritical entropy general irrep}}\label{sec:translate to entropy}
With all the ingredients developed in previous sections, we now proceed the detail of proof of Theorem~\ref{thm:subcritical entropy general irrep} in this section. Again, due to the polynomial nature of actions of unipotent groups, we need an estimate on the size of the set where
certain polynomial function is small. This is known as Remez's inequality and is used by Kleinbock and Margulis and later Kleinbock and Tomanov in \cite{KM98,KT07} to verify the '$(C, \alpha)$-good' property. The precise version we need has been recorded in Lemma~\ref{lem:RemezKT}. 

\begin{proof}[Proof of Theorem~\ref{thm:subcritical entropy general irrep}]
   For simplicity, let $k = \dim V^{(\mu)}$. Recall that we use $A^{(\delta)}$ to denote the $\delta$-neighborhood of $A$. 
   
   Let $m \ll_n 1$, $\mathcal{O}_{\lambda}' \subseteq (U^+)^{m}$, $\Phi$ and positive integer $s_{\Phi} \ll_n 1$ be as in Proposition~\ref{prop:BL constant for unipotent}. Recall that we have $\deg \Phi \ll_V 1$. Let $p_j = \frac{n}{mk}$ for all $j = 1, \ldots, m$. 
   
   Let $M_1$ be a large positive integer so that $M_1 > m(m + 2) \deg \Phi$. Since $m \ll_n 1$ and $\deg \Phi \ll_V 1$, the integer $M_1$ can be chosen to depend only on $V$. Let $M = 2M_1$. 
   Let $\epsilon \in (0, \frac{1}{100})$ and $0 < \delta \ll_{\epsilon, V} 1$ be a small enough number depending only on $\epsilon$ and the representation $V$. We will explicate this dependence later in the proof. 
   
   Suppose the theorem does not hold for $M$, $\epsilon$ and $\delta$ as above. Then there exists $A \subseteq B_1^V$ so that $m_U(\mathcal{E}(A)) > \delta^\epsilon$. 
   
   Let 
   \begin{align*}
       \mathcal{E}_{\mathrm{BL}} = \{(u_1, \ldots, u_m) \in (B_1^U)^m: |\Phi(u_1, \ldots, u_m)| < \|\Phi\|_{\infty} \delta^{M_1\epsilon}\}.
   \end{align*}
   Since $m \ll_n 1$ and $\deg \Phi \ll_{n} 1$, by Remez's inequality (Lemma~\ref{lem:RemezKT}), there exists constant $C$ depending only on $\dim U$ and $n$ so that
   \begin{align}
       m_U^{\otimes m}(\mathcal{E}_{\mathrm{BL}}) \leq C\delta^{\frac{M_1\epsilon}{m\deg \Phi}} \leq C\delta^{(m + 2)\epsilon} \leq \delta^{(m + 1)\epsilon}
   \end{align}
   if $\delta$ is small enough depending on $\epsilon$ with $\delta^{-\epsilon} \geq C$. 
   
   Let $\tilde{\mathcal{E}} = \mathcal{E}(A)^m \setminus \mathcal{E}_{\mathrm{BL}}$, it has measure $\geq \delta^{m\epsilon} -  \delta^{(m + 1)\epsilon}$. If $\delta$ is small enough depending only on $\epsilon$, we have $\tilde{\mathcal{E}} \neq \emptyset$. For all $(u_1, \ldots, u_m) \in \tilde{\mathcal{E}}$, we have
    \begin{align*}
        \mathrm{BL}(\{\pi_{u_j}^{(\lambda)}, p_j\}_{j = 1}^m) \ll_n \left(\|\Phi\|_{\infty} |\Phi(u_1, \ldots, u_m)|^{-1}\right)^{\frac{1}{s_{\Phi}}} \leq \delta^{-\frac{M_1}{s_{\Phi}}\epsilon} \leq \delta^{-M_1\epsilon}.
    \end{align*}
    
    Let $f_j = \mathds{1}_{\pi^{(\lambda)}_{u_j}(A^{(\delta)})} \in L^1(V^{(\mu)})$, note that
    \begin{align*}
        \int_{\R^n} \prod_{j = 1}^m f_j(\pi_{u_j}^{(\lambda)}(x))^{p_j} \,\mathrm{d}x \geq \Leb(A^{(\delta)}) \gg_n \delta^n |A|_{\delta}.
    \end{align*}
    By the definition of the Brascamp--Lieb constant, we have
    \begin{align*}
        \delta^n |A|_\delta \ll \int_{\R^n} \prod_{j = 1}^m f_j(\pi_{u_j}^{(\lambda)}(x))^{p_j} \,\mathrm{d}x \leq{}& \mathrm{BL}(\{\pi_j, p_j\}_{j = 1}^m) \prod_{j = 1}^m \left(\int_{V^{(\mu)}} f_j\right)^{p_j}\\
        \ll_n{}& \delta^{-M_1 \epsilon} \prod_{j = 1}^m \left(\delta^{k}|\pi_{u_j}^{(\lambda)}(A)|_{\delta}\right)^{\frac{n}{km}}.
    \end{align*}
    Therefore, 
    \begin{align}\label{eqn:average from BL}
        \left(\prod_{j = 1}^m|\pi_{u_j}^{(\lambda)}(A)|_{\delta}^{\frac{n}{k}}\right)^{\frac{1}{m}} \gg_n \delta^{M_1\epsilon} |A|_\delta.
    \end{align}
    For all $(u_1, \ldots, u_m) \in \tilde{\mathcal{E}} = \mathcal{E}(A)^m \setminus \mathcal{E}_{\mathrm{BL}}$, we have from definition of $\mathcal{E}(A)$ that
    \begin{align}\label{eqn:in exceptional set}
        |\pi_{u_j}^{(\lambda)}(A)|_{\delta} < \delta^{M\epsilon}|A|_{\delta}^{\frac{k}{n}}
    \end{align}
    for all $j = 1, \ldots, m$. 
    Combining \cref{eqn:average from BL} and \cref{eqn:in exceptional set}, we have
    \begin{align*}
        \delta^{M_1\epsilon} \ll_n \delta^{M m\epsilon} = \delta^{2M_1\epsilon},
    \end{align*}
    which leads to a contradiction if $\delta$ is small enough depending on $\epsilon$ and $n$. The proof of the theorem is complete. 
\end{proof}

\part{Proof of polynomially effective equidistribution theorem}\label{part:phase three plus proof of main}
As indicated in the introduction, this part is devoted to phase~(3) (from large dimension to polynomially effective equidistribution) and a sketch of an adaption for the framework of \cite{LMWY25}. Some details are left to Appendix~\ref{app:decomposition initial}. We collect needed spectral input and discuss phase~(3) in Section~\ref{sec:Venkatesh}. A preparation on sheeted set, admissible measure and Margulis function is done in Section~\ref{sec:boxes}. In Section~\ref{sec:proof of main}, we combine results in Part~\ref{part:closinglemma} and \ref{part:projection} to prove the main theorem. 

\section{Mixing and effective equidistribution}\label{sec:Venkatesh}
This section is devoted to phase~(3) as indicated in the introduction. We utilize spectral gap property of the space $X = G/\Gamma$. The main result is Lemma~\ref{lem:Venkatesh}. 

\subsection{Decay of matrix coefficients}
We show the following results on decay of matrix coefficient on $X$ in this subsection. 

Let $d_i$ be the Riemannian metric on each simple factor of $H$ defined using the Killing form and the Cartan involution defined in Section~\ref{sec:Global prelim}. There exists $\kappa_0 \in (0, 1)$ so that for all $h \in H$
\begin{align}\label{eqn:decay of matrix coefficient}
    \Biggl|\int_X \varphi(h.x)\psi(x) \,\mathrm{d}\mu_X(x) - \int_X \varphi \,\mathrm{d}\mu_X \int_X \psi \,\mathrm{d}\mu_X \Biggr| \ll \mathcal{S}(\varphi) \mathcal{S}(\psi) e^{-\kappa_0 \min_{i} d_i(e, h)}
\end{align}
for all $\varphi, \psi \in \mathrm{C}^\infty_c(X) + \C \mathds{1}_X$. Here $\mathcal{S}(\cdot)$ is certain Sobolev norm on $\mathrm{C}_c^\infty(X) + \C \mathds{1}_X$ so that it dominates $\|\cdot\|_{\infty}$ and the Lipschitz norm $\|\cdot\|_{\Lip}$.

\subsubsection{The ambient group $\mathbf{G}$ is semisimple}
In the case where $\mathbf{G}$ is semisimple, such result is well-known, see e.g. \cite[Section 2.4]{KM96}. Using Mostow's theorem \cite{Mos55}, it suffices to show the following result. 

\begin{lemma}
    Suppose $\mathbf{G}$ is semisimple and the complement $\mathfrak{r}$ of $\LieH$ is a nontrivial irreducible representation of $\LieH$. Suppose $\mathbf{x} \in \LieH$ is not contained in any proper ideal of $\LieH$, then it is not contained in any proper ideal of $\LieG$
\end{lemma}

\begin{proof}
    Let $\mathfrak{I} \triangleleft \LieG$ be a proper ideal of $\LieG$. If $\mathfrak{I} \subseteq \LieH$, it is a proper ideal of $\LieH$ and the lemma follows directly. If not, note that $\mathfrak{I}$ is also an $\LieH$-module and $\LieG = \LieH + \mathfrak{I}$ due to irreducibility of $\mathfrak{r}$. 
    Since $\mathbf{x}$ acts non-trivially on $\mathfrak{r}$, $[\mathbf{x}, \mathfrak{I}] \neq \{0\}$ and this proves the lemma. 
\end{proof}

\subsubsection{The ambient group $\mathbf{G}$ is isogenous to semi-direct product}
In this case we reduce the estimate on decay of matrix coefficients to its semisimple base and corresponding orthogonal complement. 

We first separate the part of $H$ acting trivially on $\mathfrak{r} = \rad(\LieG)$. Note that there exists a Levi factor $\mathbf{H}'$ over $\Q$ so that $\mathbf{G}$ is isogenous to $\mathbf{H}' \ltimes \mathfrak{r}$ over $\Q$. Moreover, there exists $g \in \mathbf{G}(\R)$ so that $\mathbf{H}(\R) = g \mathbf{H}'(\R) g^{-1}$. Let $\mathbf{H}_1'$ be the normal $\Q$-subgroup of $\mathbf{H}'$ consisting of elements acting trivially on $\mathfrak{r}$. There exists a normal $\Q$-subgroup $\mathbf{H}_2' \triangleleft \mathbf{H}'$ so that $\mathbf{H}' = \mathbf{H}_1' \mathbf{H}_2'$ and $\mathbf{H}_1' \cap \mathbf{H}_2'$ is finite. The estimate of correlation in the $\mathbf{H}_1'$ factor follows from \cite{KM96}. Let $H_2 = g^{-1} \mathbf{H}_2'(\R) g$, we have $H_2 \ltimes \mathfrak{r} = \mathbf{H}_2'(\R) \ltimes \mathfrak{r}$ and $H_2$ acting non-trivially on $\mathfrak{r}$. It suffices to estimate the decay of matrix coefficients for $L_0^2(H_2 \ltimes \mathfrak{r}/(H_2 \ltimes \mathfrak{r} \cap \Gamma))$ (the orthogonal complement of constant functions) as representation of $H_2$. 

We decompose $L_0^2(H_2 \ltimes \mathfrak{r}/((H_2 \ltimes \mathfrak{r}) \cap \Gamma))$ as $\mathfrak{r}$-fixed vectors and its orthogonal complement. The first representation is
isomorphic to the regular representation of $\mathbf{H}_2'(\R)$ on $L^2_0(\mathbf{H}_2'(\R)/(\mathbf{H}_2'(\R) \cap \SL_N(\Z)))$, which is isolated
from the trivial representation. The estimate of the matrix coefficient in the latter follows from \cite[Theorem 1.4]{Wang14}, see also \cite[Theorem 4.5]{KW19} and the references there for certain cases. \cref{eqn:decay of matrix coefficient} now follows from combining the above estimates. 

\subsection{From large dimension to effective equidistribution}
\begin{lemma}\label{lem:Venkatesh}
Suppose the data $(G, \Gamma, H, \mathfrak{r}, \mathbf{a}, U)$ satisfy {}\Irrep{}. Then there exists absolute $\varrho_0 \in (0, 1)$ so that the following holds. Let $\delta_0 \in (0, 1)$. Let $\ell_1, \ell_2 > 0$ with $\kappa_1\ell_2 \geq \max\{\ell_1, |\log \eta|\}$ and $\ell_2 \ll |\log \delta_0|$, and let $\varrho \in (0, \varrho_0]$. Let $\mu$ be a probability measure on $B^{\mathfrak{r}}_{\varrho}(0)$ satisfying
    \begin{align*}
        \mu(B_\delta^{\mathfrak{r}}(w)) \leq \Upsilon \delta^{\dim\mathfrak{r}} \quad \forall w \in \mathfrak{r}, \delta \geq \delta_0.
    \end{align*}
    Then for all $\phi \in \mathrm{C}^\infty_c(X) + \C\mathds{1}_X$ and all $x \in X_\eta$, we have
    \begin{align*}
        &\int_{\mathsf{B}_1^U} \int_{\mathsf{B}_1^U}\int_{\mathfrak{r}} \phi(a_{\ell_1}u_1 a_{\ell_2} u_2 \exp(w).x)\,\mathrm{d}\mu(w)\,\mathrm{d}m_U(u_2)\,\mathrm{d}m_U(u_1)\\
        ={}& \int_X \phi \,\mathrm{d}\mu_X + O(\mathcal{S}(\phi)(\varrho^\star + \eta + \Upsilon^{\star}\varrho^{-\star}e^{-\kappa_1\ell_1})).
    \end{align*}
\end{lemma}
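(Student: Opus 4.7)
The plan is a Venkatesh--thickening argument: first smooth $\mu$ using the Frostman condition, then rearrange the triple integral via commutation, and finally invoke effective exponential mixing for the $a_t$-action on $X$. The hypothesis $\mu(B_\delta^\mathfrak{r}(w)) \leq \Upsilon\,\delta^{\dim\mathfrak{r}}$ for $\delta \geq \delta_0$, together with $\supp(\mu) \subseteq B_\varrho^{\mathfrak{r}}(0)$, says that $\mu$ is effectively an absolutely continuous measure on $\mathfrak{r}$ with density bounded by $O(\Upsilon)$ down to scale $\delta_0$. Let $\bar\mu = \mu \ast \psi_{\delta_0}$ where $\psi_{\delta_0}$ is a smooth bump on $\mathfrak{r}$ at scale $\delta_0$; then $\|\bar\mu\|_\infty \ll \Upsilon$, and the Sobolev norm $\mathcal{S}(\bar\mu) \ll \Upsilon^\star \varrho^{-\star}$ since $\bar\mu$ is supported in a ball of radius $\ll \varrho$. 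Replacing $\mu$ by $\bar\mu$ in the triple integral incurs an error $\ll \mathcal{S}(\phi)\,\delta_0^\star \ll \mathcal{S}(\phi)\,\varrho^\star$ using $\ell_2 \ll |\log \delta_0|$.

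Using \cref{lem:BCH}, for $w \in B_\varrho^\mathfrak{r}$ one commutes
\begin{align*}
a_{\ell_2} u_2 \exp(w) = \exp\!\bigl(\Ad(a_{\ell_2} u_2) w\bigr) \cdot a_{\ell_2} u_2.
\end{align*}
After the change of variable $w \mapsto \tilde{w} := \Ad(a_{\ell_2} u_2) w \in \mathfrak{r}$ (Jacobian $1$ since $H$ is semisimple, so $\Ad(H) \subseteq \SL(\mathfrak{r})$), combined with the $u_1$-average on $\mathsf{B}_1^U$, one obtains a smooth density $\Omega_{u_2}$ on an unstable-type neighborhood of $a_{\ell_2} u_2 x$ with Sobolev norm $\ll \Upsilon^\star \varrho^{-\star}$. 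The $\Ad(a_{\ell_2})$-expanding eigenspaces in $\mathfrak{r}$ acquire thickness $\sim e^{\ell_2\lambda}\varrho$, while the remaining eigenspaces stay at scale $\lesssim\varrho$. Effective exponential mixing of $a_t$ on $X$, a consequence of the spectral gap of $L_0^2(X)$, then gives
\begin{align*}
\int \phi(a_{\ell_1} u_1 y)\,\Omega_{u_2}(y)\, dy = \int \phi\, d\mu_X + O\!\bigl(\mathcal{S}(\phi)\,\Upsilon^\star \varrho^{-\star}\, e^{-\kappa_1 \ell_1}\bigr),
\end{align*}
and averaging over $(u_1, u_2)$ yields the claim. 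The $\eta$-error is the contribution from near the cusp, controlled by $x \in X_\eta$ together with $\kappa_1 \ell_2 \geq |\log \eta|$, which keeps the relevant intermediate points $a_{\ell_2} u_2 x$ inside $X_{\eta^\star}$ so that the BCH steps apply.

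\textbf{Main obstacle.} The technical crux is the precise Sobolev estimate $\mathcal{S}(\Omega_{u_2}) \ll \Upsilon^\star \varrho^{-\star}$. In directions where $\Ad(a_{\ell_2})|_\mathfrak{r}$ has zero eigenvalue, $a_{\ell_2}$ provides no expansion and the density remains concentrated at scale $\varrho$, which is what forces the $\varrho^{-\star}$ factor; irreducibility (\Irrep) of $\mathfrak{r}$ is what ensures the $u_2$-average via $\Ad(U)$ genuinely mixes between eigenspaces and prevents the Sobolev norm from blowing up further. Balancing the expansion against the Frostman cutoff and the mixing/cusp thresholds via $\kappa_1 \ell_2 \geq \max\{\ell_1, |\log\eta|\}$ and $\ell_2 \ll |\log \delta_0|$ produces the stated error.
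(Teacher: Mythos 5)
Your proposal follows the same high-level route as the paper's intended reference, \cite[Lemma 5.2]{LMWY25}: smooth the Frostman measure, commute $\exp(w)$ past $a_{\ell_2}u_2$, interpret the result as a thickened density, and finish with effective exponential mixing of $a_{\ell_1}$ combined with a Venkatesh $u_1$-average. That much is faithful. However, the proposal misses what the paper explicitly flags as the key new ingredient, and its own use of \Irrep{} is in the wrong place.

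The crucial gap is the mixing step. You write ``Effective exponential mixing of $a_t$ on $X$, a consequence of the spectral gap of $L_0^2(X)$'' as if it were automatic. It is not, because $\mathbf{G}$ is only semisimple and $\mathbf{a} \in \LieH$ is an element of a \emph{subgroup}. For a semisimple $G = \prod_i G_i$, matrix coefficients of the one-parameter subgroup $a_t$ decay only if the projection of $\mathbf{a}$ to each simple ideal $\LieG_i$ is non-zero; if $\mathbf{a}$ were concentrated in a proper normal factor, $a_t$ would fail to mix on $X$ and the entire Venkatesh argument would collapse. Verifying this is exactly the lemma the paper proves immediately after: under \Irrep{}, for every simple ideal $\LieG_i$ either $\LieG_i \subseteq \LieH$ (so the hypothesis on $\mathbf{a}$ applies directly) or $\LieG = \LieH + \LieG_i$, forcing $\mathfrak{r}$ to embed in $\LieG_i$ as a non-trivial $\LieH$-module and hence $[\mathbf{a},\LieG_i]\neq\{0\}$. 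This is the actual role of \Irrep{} in the lemma.

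By contrast, your stated use of \Irrep{} — ``irreducibility of $\mathfrak{r}$ is what ensures the $u_2$-average via $\Ad(U)$ genuinely mixes between eigenspaces'' — is not correct as stated: $\Ad(U)$ preserves the filtration by $\mathfrak{r}^{(\mu)}$ and does not mix the $\mathbf{a}$-eigenspaces; the $\varrho^{-\star}$ factor simply comes from the thin non-expanded directions regardless of irreducibility. Secondarily, the Sobolev bound $\mathcal{S}(\bar\mu)\ll\Upsilon^\star\varrho^{-\star}$ for $\bar\mu=\mu\ast\psi_{\delta_0}$ is suspect: derivatives of $\bar\mu$ live at scale $\delta_0$, not $\varrho$, and the replacement error $\mathcal{S}(\phi)\delta_0^\star$ ignores the $e^{\lambda_{\max}(\ell_1+\ell_2)}$ stretching of the $w$-coordinate before $\phi$ is applied; the constraints $\kappa_1\ell_2\geq\ell_1$ and $\ell_2\ll|\log\delta_0|$ are meant to absorb this but you would need to check the exponents rather than asserting $\delta_0^\star\ll\varrho^\star$. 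These technical points are fixable by following \cite{LMWY25} carefully, but the missing mixing verification is a genuine conceptual gap.
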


\begin{proof}
    The statement can be proved by following the proof of \cite[Lemma 5.2]{LMWY25} step-by-step, see also \cite[Theorem 11.1]{OS25} for a variant when $G$ is simple. 
\end{proof}

\section{Preparation III: Sheeted set and Margulis function}\label{sec:boxes}
We recall notions and results on sheeted set, admissible measures and Margulis function in \cite{LMW22,LMWY25}. 

\subsection{Sheeted set and admissible measure}\label{subsec:sheeted set admissible measure}
Suppose $\eta \leq \frac{1}{10^5C_0}\eta_0$ is a small parameter where $\eta_0$ and $C_0$ are from Lemma~\ref{lem:BCH}. Let $\beta$ be a another small parameter satisfying $\beta \leq \eta^{d_0}$ where $d_0$ is from Theorem~\ref{thm:Closing lemma many scale}. We enlarge $d_0$ if needed so that $d_0 \geq \dim G$. We set
\begin{align*}
    \mathsf{E}_{\beta, \eta} = \mathsf{B}_{\beta}^{U^-} \mathsf{B}_\beta^{0} \mathsf{B}_\eta^{U^+}.
\end{align*}

A subset $\mathcal{E} \subseteq X$ is called a $(\beta, \eta)$-\emph{sheeted set} if there exists a base point $y \in X_\eta$ and a finite set of transverse cross-section $F \subset B_{\eta}^{\mathfrak{r}}$ so that the map $(h, w) \mapsto h\exp(w).y$ is injective on $\mathsf{E}_{\beta, \eta} \times B_{\eta}^{\mathfrak{r}}$ and 
\begin{align*}
    \mathcal{E} = \bigsqcup_{w \in F} \mathsf{E}_{\beta, \eta}\exp(w).y.
\end{align*}

We now define admissible measure. Informally, an admissible measure $\mu_{\mathcal{E}}$ associated to a sheeted set $\mathcal{E}$ is a probability measure on $\mathcal{E}$ that is equivalent to Haar measure of $H$ on each sheet. Moreover, each sheet is assigned with roughly equal weight. 

A probability measure $\mu_{\mathcal{E}}$ on a $(\beta, \eta)$-sheeted set $\mathcal{E}$ is called $(\mathsf{K}, \Lambda)$-admissible if 
\begin{align*}
    \mu_\mathcal{E} = \frac{1}{\sum_{w \in F}\mu_w(X)} \sum_{w \in F} \mu_w
\end{align*}
where $\mu_w$ are measures on $\mathsf{E}_{\beta, \eta}\exp(w).y$ with the following properties. For all $w \in F$, there exists a function $\varrho_w$ defined on $\mathsf{E}$ with $\frac{1}{\mathsf{K}} \leq \varrho_w \leq \mathsf{K}$ so that for all $\mathsf{E}' \subseteq \mathsf{E}_{\beta, \eta}$, we have
\begin{align*}
    \mu_w(\mathsf{E}'\exp(w).y) = \int_{\mathsf{E}'} \varrho_w(\mathsf{h}) \,\mathrm{d}m_H(\mathsf{h}).
\end{align*}
Moreover, there exists $\mathsf{E}_w = \cup_{i = 1}^{\mathsf{K}}\mathsf{E}_{w, i} \subseteq \mathsf{E}$ so that 
\begin{enumerate}
    \item $\mu_w\bigl((\mathsf{E}\setminus\mathsf{E}_{w})\exp(w).y\bigr) \leq \mathsf{K}\eta \mu_w(X)$,
    \item the complexity (see \ref{app:decomp boxes complexity} or \cite[Section 7.2]{LMW22}) of $\mathsf{E}_{w, i}$ is bounded by $\mathsf{K}$ for all $i$, and
    \item $\Lip(\varrho_w|_{\mathsf{E}_{w, i}}) \leq \Lambda$.
\end{enumerate}
This is a slightly more refined version of the notion of admissible measure in \cite{LMWY25}, for which we separate the bounds. In later application, $\mathsf{K}$ will always be an absolute constant. 

We record the following theorem. 
\begin{theorem}\label{thm:offspring admissible}
    Suppose $\mathcal{E}_{\mathrm{ini}}$ is a $(\beta_{\mathrm{ini}}, \eta_{\mathrm{ini}})$-sheeted set equipped with an $(\mathsf{K}, \Lambda)$-admissible measure $\mu_{\mathcal{E}_{\mathrm{old}}}$. Suppose $e^{-\ell} < \beta_{\mathrm{nw}} \leq   \beta_{\mathrm{ini}}$ and $\eta_{\mathrm{nw}} \leq \eta_{\mathrm{ini}}$ with $\beta_{\boldsymbol{\cdot}} \leq \eta_{\boldsymbol{\cdot}}^{d_0}$ for $\boldsymbol{\cdot} = \mathrm{ini}, \mathrm{nw}$. 

    Then for all $u \in \mathsf{B}_1^U$, there exists a family $\mathcal{F}$ of $(\beta_{\mathrm{nw}}, \eta_{\mathrm{nw}})$-sheeted set equipped with $(O(\mathsf{K}), O(\Lambda))$-admissible measures $\{\mu_{\mathcal{E}}: \mathcal{E} \in \mathcal{F}\}$ and $c_{\mathcal{E}} > 0$ with $\sum_{\mathcal{E}} c_{\mathcal{E}} = 1$ so that for all $u' \in \mathsf{B}_1^U$ and $d' \geq 0$, we have
    \begin{align*}
        \int_{\mathcal{E}_{\mathrm{ini}}} \phi(a_{d'}u' a_{\ell}u.z) \,\mathrm{d}\mu_{\mathcal{E}_{\mathrm{ini}}}(z) = \sum_{\mathcal{E} \in \mathcal{F}} c_{\mathcal{E}} \int_{\mathcal{E}} \phi(a_{d'}u' a_{\ell}u.z) \,\mathrm{d}\mu_{\mathcal{E}}(z)\\
        + O(\mathsf{K}^\star( \Lambda^\star\eta_{\mathrm{nw}}^\star + \eta_{\mathrm{ini}}^\star) \|\phi\|_{\Lip}).
    \end{align*}
\end{theorem}

\begin{proof}
    This is \cite[Lemma D.4]{LMWY25}. The only difference is the initial sheeted set can be taken as a larger set and we distinguish the bounds in admissible measure. However, the arguments are mutatis mutandis. For the estimate on constants for the new admissible measures, we refer to \cite[Lemma 8.8]{LMW22}. 
\end{proof}
\subsection{Dimension, energy and Margulis function}\label{subsec:Margulis function}
For a finite set $F \subset \mathfrak{r}$, we set $\mu_F$ be the normalized counting measure on $F$. We say that the set $F$ has dimension $\geq \alpha$ for scales larger than $\delta$ if there exists $C > 1$ so that 
\begin{align*}
    \mu_F(B(x, r)) \leq Cr^\alpha \quad \forall x \in \mathfrak{r} \text{ and }r \geq \delta.
\end{align*}
In literatures, this is always denoted by $(C, \alpha)$-Frostman-type condition or $(C, \alpha)$-nonconcentration condition. Recall the (modified) $\alpha$-energy of $F$ as follows. 
\begin{align*}
    \mathcal{G}^{(\alpha)}_{F, \delta}(w) = \sum_{w' \in F: w' \neq w} \max\{\|w' - w\|, \delta\}^{-\alpha}.
\end{align*}

We recall the notion of (modified) Margulis function in \cite[Section 7]{LMWY25}. Suppose $\mathcal{E}$ is a sheeted set. For all $z \in \mathcal{E}$, let
\begin{align*}
    I_{\mathcal{E}}(z) = \{w \in \mathfrak{r}: \|w\| < \inj(z), \exp(w).z \in \mathcal{E}\}.
\end{align*}
For every $0 < \delta < 1$ and $0 < \alpha < \dim \mathfrak{r}$, we define the (modified) Margulis function as follows. 
\begin{align*}
    f^{(\alpha)}_{\mathcal{E}, \delta}(z) = \sum_{w \in I_\mathcal{E}(z) \setminus \{0\}} \max\{\|w\|, \delta\}^{-\alpha}.
\end{align*}

We have the following standard connection between those notions, see \cite[Lemma 9.2]{LMW22}. 
\begin{proposition}\label{pro:frostman energy Margulis function}
    Suppose $F \subset B^{\mathfrak{r}}_1$ is a finite set and suppose $\mathcal{E} = \mathsf{E} \exp(F).y$ is a sheeted set. We have the following properties. 
    \begin{enumerate}
        \item Suppose $F$ is a set of dimension $\geq \alpha$ for scales larger than $\delta$, then for all $w \in F$ and $0 < \beta < \alpha$, 
        \begin{align*}
            \mathcal{G}_{F, \delta}^{(\beta)}(w) \leq 2^{\dim\mathfrak{r}}C\Bigl(1 + \frac{1}{1 - 2^{\beta - \alpha}}\Bigr) \#F.
        \end{align*}
        \item Suppose for all $w \in F$ we have $\mathcal{G}_{F, \delta}^{(\alpha)}(w) \leq C \#F$, then for all $z \in \mathcal{E}$, we have
        \begin{align*}
            f_{\mathcal{E}, \delta}^{(\alpha)}(z) \ll C \#F.
        \end{align*}
    \end{enumerate}
\end{proposition}

\section{Proof of the main theorem}\label{sec:proof of main}
As indicated in Subsection~\ref{subsection:different form on balls}, Theorem~\ref{thm:main equidistribution} is equivalent to Theorem~\ref{thm:main equidistribution different form}. We prove the latter in this section. 

\subsection{Outline of the adaption}
Let us first give a brief outline of the adaption. The reader can compare Theorem~\ref{thm:Closing lemma many scale} and \cite[Proposition 4.6]{LMWY25} (also the 'Closing lemma and initial separation' paragraph in proof of \cite[Proposition 8.1]{LMWY25}). For the projection theorem, the reader can compare Theorem~\ref{thm:energy Improvement} with \cite[Theorem 6.1]{LMWY25}. The latter can be applied into the framework in \cite{LMWY25} directly. The former requires extra argument to produce sheeted sets and admissible measures. 

We remark that one can also compare Theorem~\ref{thm:Closing lemma many scale} with \cite[Proposition 6.2]{Yan24} and then adapt the argument in \cite{Yan24} for further dimension improvement using Theorem~\ref{thm:ImprovementMain}. 

\subsection{Initial dimension}\label{subsec:initial dim}
The following consequence provides us sheeted sets and admissible measures from Theorem~\ref{thm:Closing lemma many scale}. 

Let $\beta_{\mathrm{ini}} = \eta_{\mathrm{ini}}^{d_0}$ be two small parameter satisfying $10^{10}C_0\eta_{\mathrm{ini}} \leq \eta_0$. Let $\lambda_{\mathrm{ini}}$ be the normalized Haar measure on
\begin{align*}
    \mathsf{B}^{s, H}_{\beta_{\mathrm{ini}} + 100 \beta_{\mathrm{ini}}^{d_0}} = \mathsf{B}_{\beta_{\mathrm{ini}} + 100 \beta_{\mathrm{ini}}^{d_0}}^{U^-} \mathsf{B}_{\beta_{\mathrm{ini}} + 100 \beta_{\mathrm{ini}}^{d_0}}^{0}.
\end{align*}

\begin{theorem}\label{thm:closing lemma initial dim}
Suppose the data $(G, \Gamma, H, \mathfrak{r}, \mathbf{a}, U)$ satisfy {}\Irrep{}. Then there exist absolute constants $\consta\label{a:closing lemma main} > 1$, $\constc\label{c:closing lemma main} > 1$, $\constd\label{d:closing lemma main} > 1$, $\constE\label{e:closing lemma main1}, \constE\label{e:closing lemma main2}> 1$, $\constm\label{m:closing lemma main} > 1$, $\epsilon_0 > 0$, and $\mathsf{L}$ so that the following holds. For parameters $R \gg 1 $ and $\eta_{\mathrm{ini}} \ll \eta_0$ satisfying $R \gg \eta_{\mathrm{ini}}^{-\ref{e:closing lemma main1}}$ and for all $x_1 \in X_\eta$ and all $D \geq \ref{d:closing lemma main} + 1$, let $\delta_0 = R^{-\frac{1}{\ref{a:closing lemma main}}}$, $t \geq M\log R$ where $M = \ref{m:closing lemma main} + \ref{c:closing lemma main}D$ and $\mu_t = \nu_{t} \ast \delta_{x_1}$. 
    
Suppose that for all periodic orbit $H.x'$ with $\vol(H.x') \leq R$, we have
\begin{align*}
    d_X(x_1, x') > R^{-D}.
\end{align*}
Then there exists a family $\mathcal{F}$ of $(\beta_{\mathrm{ini}}, \eta_{\mathrm{ini}})$-sheeted sets with associated $(\mathsf{L}, \Lambda_{\mathrm{ini}})$-admissible measures $\{\mu_{\mathcal{E}}: \mathcal{E} \in \mathcal{F}\}$ so that the following holds. 
\begin{enumerate}
    \item The constant $\mathsf{L}$ is absolute, $\Lambda_{\mathrm{ini}} \ll \eta_{\mathrm{ini}}^{-\star}$. 
    \item There exists $\{c_\mathcal{E}\}$ with $c_\mathcal{E} > 0$ and $\sum_{\mathcal{E}} c_\mathcal{E} = 1$ so that for all $u' \in \mathsf{B}_1^U$, $d' \geq 0$ and all $\phi \in \mathrm{C}^{\infty}_c(X)$
    \begin{align}
        \int_{X} \phi(a_{d'} u' x) \,\mathrm{d}(\lambda \ast \mu_t)(x) = \sum_{\mathcal{E}} c_\mathcal{E}\int_{X} \phi(a_{d'}u'.x) \,\mathrm{d}\mu_{\mathcal{E}}(x) + O(\mathcal{S}(\phi)\beta_{\mathrm{ini}}^\star).
    \end{align}
    \item For all $(\beta_{\mathrm{ini}}, \eta_{\mathrm{ini}})$-sheeted set $\mathcal{E} \in \mathcal{F}$ with cross-section $F \subset B_{\eta_{\mathrm{ini}}}^{\mathfrak{r}}$. 
    The number of sheets satisfies 
    \begin{align}
        \beta_{\mathrm{ini}}^{\star}\delta_0^{-2\epsilon_0} \leq \#F \leq \beta_{\mathrm{ini}}^{-\star}e^{\lambda_{\max}t}.
    \end{align}
    Moreover, we have the Margulis function estimate
    \begin{align}
        f^{(\epsilon_0)}_{\mathcal{E}, \delta_0}(z) \leq  \beta_{\mathrm{ini}}^{-\ref{e:closing lemma main2}}\#F \quad \forall z \in \mathcal{E}.
    \end{align}
\end{enumerate}
\end{theorem}

\begin{proof}[Proof of Theorem~\ref{thm:closing lemma initial dim}]\label{sec:decompose sketch}
We provide outline here and refer to Appendix~\ref{app:decomposition initial} for the details. For simplicity, we drop the subscript $\mathrm{ini}$ in this sketch. 

Recall $\lambda$ is the normalized Haar measure on $\mathsf{B}^{s, H}_{\beta + 100\beta^{d_0}}$. Theorem~\ref{thm:Closing lemma many scale} implies that for all $y \in X_{3\eta}$, $r_H \leq 10^3C_0\eta$, $r \in [\delta_0, \eta^{d_0}]$
\begin{align}\label{eqn:frostman for admissible}
    (\lambda \ast \mu_t)(\mathsf{B}_{r_H}^H \exp(B_{r}^{\mathfrak{r}}).y) \ll r^{\epsilon_1}.
\end{align}

We decompose the measure $\lambda \ast \mu_t$ into local pieces. It is standard to translate \cref{eqn:frostman for admissible} into energy estimate and Proposition~\ref{pro:frostman energy Margulis function} translates the latter into Margulis function estimate. The remaining issue is for the admissibility. In \cite[Proposition 4.8]{LMW22}, it is proved that the map $B_\beta^H a_t \mathsf{B}_1^U \to B_\beta^H a_t \mathsf{B}_1^U a_{8t}u_rx_1$ is injective for most $r \in [0,1]$. This ensures that each $H$-sheet contributes roughly the same amount of measure and hence admissibility. This is not a priori guaranteed by Theorem~\ref{thm:Closing lemma many scale}. Locally $\lambda \ast \mu_t$ might not looks like a renormalized Haar measure. Moreover, locally $\lambda \ast \mu_t$ might assign different weights for different $H$-sheets. 

We resolve the problems in two steps. First, we decompose $\mu_t$ into local pieces of size $\beta^{d_0}$ in stable and central direction and then smear it using $\lambda$ which is roughly of size $\beta$. This ensures that in size $\beta$ ball near the origin, it looks roughly like Haar measure and the boundary contributes only small error. Next, we decompose the measure once again according to the weight on each $H$-sheets. This provides admissible measures at the end. 
\end{proof}

\subsection{Margulis function estimate}
The following proposition provides a general iterative process for improving the dimension. It is the analog of \cite[Lemma 7.2]{LMWY25} in this setting. 
\begin{proposition}\label{prop: integrated inductive step}
Let $\delta > 0$, $\theta > 0$, and $\alpha\in [\epsilon_0,\dim(\mathfrak{r}) - \theta]$. There exists $\epsilon$ depending only on $\epsilon_0$ and $\theta$ so that the following holds. 

Suppose $\ell > 0, \beta > 0$ and $\Upsilon > 0$ satisfies $e^{-\epsilon^2\ell} < \beta^{100}$ and $0 < \Upsilon \leq e^{1/\beta}$. Suppose that $\mathcal{E}$ is a $(\beta, \eta)$-sheeted set with cross-section $F$ so that
\begin{align*}
f_{\mathcal{E},\delta}^{(\alpha)}(x) \leq \Upsilon \qquad \text{for all $x \in \mathcal{E}$}.
\end{align*}
Assume further $\mathcal{E}$ is assigned with an $(\mathsf{K}, \Lambda)$-admissible measure $\mu_{\mathcal{E}}$, see \cref{sec:boxes}.
Then there exists a family of $(\beta, \eta)$-sheeted sets $\{\mathcal{E}_i\}_i$ with cross-sections $\{F_i\}_i$ and associated $(O(\mathsf{K}), O(\Lambda))$-admissible measures $\{\mu_{\mathcal{E}_i}\}_i$ satisfying the following properties.
\begin{enumerate}
    \item For all $\phi \in \mathrm{C}_c^{\infty}(X) + \C \mathds{1}_X$, $\ell' \geq 0$, and $u' \in \mathsf{B}_1^U$, we have
    \begin{align*}
        \int_{\mathsf{B}_1^U} \int_{\mathcal{E}} \phi(a_{\ell'}u' a_{\ell} u.x) \,\mathrm{d}\mu_{\mathcal{E}}(x) \,\mathrm{d}u
        = \sum_i c_{i}\int_{\mathcal{E}_i} \phi( a_{\ell'}u'.z) \,\mathrm{d}\mu_{\mathcal{E}_i}(z) + O_{\mathsf{K}, \mathsf{\Lambda}}(\mathcal{S}(\phi)\beta^\star)
    \end{align*}
    for some $c_i > 0$ with $\sum_i c_i=1$. The dependence on $\mathsf{K}$ and $\Lambda$ are polynomial. 
    \item For all $i$, we have
    \begin{align*}
    \beta^{\star} \#F 
    \leq \#F_i
    \leq e^{\lambda_{\max}\ell}\#F.
    \end{align*}
    \item For all $i$, we have
    \begin{align*}
        f_{\mathcal{E}_i, \delta'}^{(\alpha)}(x) \leq e^{-\epsilon\ell}\Upsilon +  e^{\alpha\mu_{\max}\ell}\beta^{-\alpha}\#F_i \qquad \text{for all $x \in \mathcal{E}_i$}
    \end{align*}
    where $\delta' = e^{\mu_{\max}\ell}\max\{\delta, \#F^{-\frac{1}{\alpha}}\}$. 
\end{enumerate}
\end{proposition}

\begin{proof}
    The statement can be proved following the proof of \cite[Lemma 7.2]{LMWY25} step-by-step and replacing \cite[Theorem 6.1]{LMWY25} by Theorem~\ref{thm:energy Improvement}. 
\end{proof}

\subsection{Proof of Theorem~\ref{thm:main equidistribution different form}}

\begin{proof}[Proof of Theorem~\ref{thm:main equidistribution different form}]
    The statement can be proved by following \cite[Section 8--9]{LMWY25} step-by-step. In particular, replace the base case of \cite[Lemma 8.2]{LMWY25} by Theorem~\ref{thm:closing lemma initial dim}, \cite[Theorem 6.1]{LMWY25} by Theorem~\ref{thm:energy Improvement} and \cite[Lemma 5.2]{LMWY25} by Lemma~\ref{lem:Venkatesh}, the rest of the arguments are mutatis mutandis. We remark the choice of $\beta, \beta_{\mathrm{ini}}, \eta, \eta_{\mathrm{ini}}$ here. The parameters $\beta, \eta$ in the inductive step are more refined than $\beta_{\mathrm{ini}}, \eta_{\mathrm{ini}}$. Arguing as \cite[Proposition 8.1]{LMWY25} using Theorem~\ref{thm:offspring admissible}, \ref{thm:closing lemma initial dim} and Proposition~\ref{prop: integrated inductive step}, the error at the end is $O(\mathcal{S}(\phi)\Lambda_{\mathrm{ini}} \beta^{\star} + \beta_{\mathrm{ini}}^\star)$ where $\beta$ is of size $R^{-\star}$ and $\Lambda_{\mathrm{ini}} \ll \beta_{\mathrm{ini}}^{-\star}$. It suffices to take $\beta_{\mathrm{ini}} = R^{-\frac{1}{M}}$ for some $M$ large enough. 

    We remark that one can also compare Theorem~\ref{thm:Closing lemma many scale} with \cite[Proposition 6.2]{Yan24} and then adapt the argument in \cite{Yan24} for the dimension improvement step using Theorem~\ref{thm:ImprovementMain}. 
\end{proof}

\part{Appendices}
\appendix
\section{Decomposition of the measure with initial dimension}\label{app:decomposition initial}
This section is devoted to the proof of Theorem~\ref{thm:closing lemma initial dim}. For simplicity, we write $d_S = \dim S$ for all closed subgroup $S < G$. Recall from Subsection~\ref{subsec:sheeted set admissible measure}, we fix $d_0 \geq d_G$. 
\subsection{Boxes}\label{sec:boxes more}
We collect needed results in \cite[Section 7]{LMW22} in this subsection. They will be used in the next section for the proof of Theorem~\ref{thm:closing lemma initial dim}. The results there are stated for 
$H = \SL_2(\R)$, but their proof work in far more generality. We will indicate the needed change in the proof. 

\subsubsection{Covering lemma}
Let
\begin{align*}
    \mathsf{Q}^H_{\eta, \beta, m} = a_{m}\mathsf{B}_{\beta}^{s, H} a_{-m} \mathsf{B}_{\eta}^{U^+}, \quad 
    \mathsf{Q}^G_{\eta, \beta, m} = \mathsf{Q}^H_{\eta, \beta, m} \cdot \exp(B_{2\beta}^{\mathfrak{r}}).
\end{align*}
For simplicity in this section we set $\mathsf{Q}^H_m := \mathsf{Q}^H_{\eta_{\mathrm{ini}}, \beta_{\mathrm{ini}}^{d_0}, m}$ and $\mathsf{Q}^G_m := \mathsf{Q}^G_{\eta_{\mathrm{ini}}, \beta_{\mathrm{ini}}^{d_0}, m}$. 

We also introduce the notion
\begin{align*}
    \Check{\mathsf{Q}}^H_m = (\mathsf{Q}^H_m)^{-1}, \quad
    \Check{\mathsf{Q}}^G_m = (\mathsf{Q}^H_m)^{-1} \exp(B_{2\beta_{\mathrm{ini}}^d}^{\mathfrak{r}}).
\end{align*}

\begin{lemma}[\text{\cite[Lemma 7.1]{LMW22}}]\label{lem: covering multiplicity}
    There exists $K \geq 1$ depends only on $X$ so that for all $m \geq 0$, there is a covering 
    \begin{align*}
        \{\mathsf{Q}^G_{\eta, \beta, m}.y_j: j \in \mathcal{J}_m, y_j \in X_{\frac{3}{2}\eta}\}
    \end{align*}
    of $X_{2\eta}$ with multiplicity $\leq K$. In particular, $\#\mathcal{J}_m \ll \eta^{-d_U}\beta^{-d_0(d_0 - d_U)}e^{m \lambda_{\vol}}$. 
\end{lemma}

Similarly, we have the following lemma. 
\begin{lemma}\label{lem: inverse covering initial}
    There exists $K \geq 1$ depends only on $X$ so that for all $m \geq 0$, there is a covering 
    \begin{align*}
        \{\Check{\mathsf{Q}}^G_{\eta, \beta, m}.y_j: j \in \Check{\mathcal{J}}_m, y_j \in X_{\frac{3}{2}\eta}\}
    \end{align*}
    of $X_{2\eta}$ with multiplicity $\leq K$. In particular, $\#\Check{\mathcal{J}}_m \ll \eta^{-d_U}\beta^{-d_0(d_0 - d_U)}e^{m \lambda_{\vol}}$. 
\end{lemma}

From now in this paper, we fix such cover $\{\Check{\mathsf{Q}}^G_{0}.y_j\}_{j \in \Check{J}_0}$ as in Lemma~\ref{lem: inverse covering initial}. Let $\Check{\mathsf{k}}_0(z) := \#\{j \in \Check{\mathcal{J}}_0: z \in \Check{\mathsf{Q}}^G_{0}.y_j\}$, then $1 \leq \Check{\mathsf{k}}_0(z) \leq K$. Define $\Check{\rho}_0(z) := \frac{1}{\Check{\mathsf{k}}_0(z)}$ and $\Check{\rho}_{0, j} := \Check{\rho}_0 \cdot \mathds{1}_{\Check{\mathsf{Q}}^G_{0}.y_j}$. We have
\begin{align*}
    1/K \leq \Check{\rho}_{0, j} \leq 1, \quad \sum_{j \in \Check{\mathcal{J}}_0} \Check{\rho}_{0, j}(z) = 1 \quad \forall z \in X_{2\eta_{\mathrm{ini}}}.
\end{align*}

\subsubsection{Boxes and complexity}\label{app:decomp boxes complexity}
Let $\mathsf{prd}: \LieH \to H$ be the map defined by
\begin{align*}
    \mathsf{prd}: \LieH = \LieU^- \oplus \LieH_0 \oplus \LieU^+ {}&\to H\\
    (X_{\LieU^-}, X_{\LieH_0}, X_{\LieU^+}) {}&\mapsto \exp(X_{\LieU^-})\exp(X_{\LieH_0})\exp(X_{\LieU^+}).
\end{align*}
A subset $\mathsf{D} \subseteq H$ is called a \textit{box} if there exist boxes $\mathsf{B}_{\LieU^-} \subset \LieU^-$, $\mathsf{B}_{\LieH_0} \subset \LieH_0$, and $\mathsf{B}_{\LieU^+} \subset \LieU^+$ so that 
\begin{align*}
    \mathsf{D} = \mathsf{prd}(\mathsf{B}_{\LieU^-} \times \mathsf{B}_{\LieH_0} \times \mathsf{B}_{\LieU^+}).
\end{align*}

We say that a subset $\Xi \subset H$ has complexity bounded by $L$ (or at most $L$) if $\Xi$ can be written as union of at most $L$ boxes. 
We make the convention that the empty set is a box so that all sets of complexity at most $L$ can be written as $\Xi = \cup_{i = 1}^L \Xi_i$ where $\Xi_i$'s are boxes. 

Let $\Check{\mathsf{prd}}: \LieH \to H$ be the map defined by
\begin{align*}
    \Check{\mathsf{prd}}: \LieH = \LieU^+ \oplus \LieH_0 \oplus \LieU^- {}&\to H\\
    (X_{\LieU^+}, X_{\LieH_0}, X_{\LieU^-}) {}&\mapsto \exp(X_{\LieU^+})\exp(X_{\LieH_0})\exp(X_{\LieU^-}).
\end{align*}

\subsection{Construction of sheeted sets}\label{sec:decompose initial detail}
We proceed the detail of the proof of Theorem~\ref{thm:closing lemma initial dim} according to the outline in Section~\ref{sec:decompose sketch}. We take $\ref{m:closing lemma main}$ large enough so that $e^{-\lambda_{\min}t/10^5} \leq \eta_{\mathrm{ini}}$. 

\subsubsection{Non-divergence result}
The following result assert that the trajectory is away from cusp most of the time. 

\begin{proposition}\label{pro:Non divergence}
    There exists $\mathsf{m} > 0$, $\kappa > 0$ and $C \geq 1$ absolute with the following property. Let $0 < \delta, \eta < 1$ and let $B \subseteq \mathsf{B}_{10}^{U}$ be an open ball with radius $\geq \delta$. For all $x \in X$ and $t \geq \mathsf{m} |\log (\delta\inj(x))| + C$, we have
    \begin{align*}
        m_U(\{u \in B: a_t u.x \notin X_{\eta} \}) \leq C\eta^{\frac{1}{\mathsf{m}}} m_U(B).
    \end{align*}
\end{proposition}
\begin{proof}
    It follows from \cite[Proposition 26, Theorem 16]{SS24} and Chebyshev inequality. See also \cite[Proposition 4.2]{LMWY25}. 
\end{proof}

\subsubsection{Proof of Theorem~\ref{thm:closing lemma initial dim}}We now proceed the proof. As in the sketch in Subsection~\ref{subsec:initial dim}, we drop the subscript $\mathrm{ini}$ avoid complicated notations. \emph{We emphasize that $\beta, \eta$ in the rest of this appendix are different scale from the one used in inductive step. }Let all parameter be as in Theorem~\ref{thm:Closing lemma many scale}. By Theorem~\ref{thm:Closing lemma many scale}, for all $y \in X_{3\eta}$, $r_H \leq \frac{1}{4}\eta$, $r \in [\delta_0, \eta^{d_0}]$, we have
\begin{align*}
    (\lambda \ast \mu_t)((\mathsf{B}_{r_H}^H)^{\pm 1}\exp(B_{r}^{\mathfrak{r}}).y) \ll r^{\epsilon_1}.
\end{align*}

\subsubsection{\text{Boundary effect for $\nu_t$ and $\lambda$}}Due to the boundary effect of balls in $H$, we consider the (coarse) \textit{interior} of $\nu_t$ and $\lambda$. Recall that $\lambda$ is the normalized Haar measure on $\mathsf{B}^{s, H}_{\beta + 100 \beta^{d_0}}$. Let 
\begin{align*}
    \lambda_1 = \lambda|_{\mathsf{B}^{s, H}_{\beta - 100 \beta^{d_0}}}, \quad \mathring{\lambda} = \lambda|_{\mathsf{B}^{s, H}_{\beta}}
\end{align*}
and write
\begin{align*}
    \lambda = \lambda_1 + \lambda_2, \quad\lambda = \mathring{\lambda} + \partial\lambda.
\end{align*}

Recall that $\nu_t = a_t.m_{\mathsf{B}_1^U}$. Let $\nu_{t, 1}'$ be the restriction of $\nu_t$ to $a_t \mathsf{B}_{1 - e^{-t}}^U$. Note that for every $h \in \supp (\nu_{t, 1}')$, we have $\mathsf{B}_1^U h \subseteq \supp (\nu_t)$. 

By Proposition~\ref{pro:Non divergence} applying to $10\eta$ and $x_1 \in X_\eta$ and $B = \mathsf{B}_{1 - e^{-\lambda_{\min}t}}^U$, we can decompose
\begin{align*}
    \nu_t = \nu_{t, 1} + \nu_{t, 2}
\end{align*}
where $\supp(\nu_{t, 1} \ast \delta_{x_1}) \subset X_{10\eta}$, for all $h \in \supp(\nu_{t, 1})$, we have $\mathsf{B}_1^U.h \subseteq \supp(\nu_t)$ and $\nu_{t, 2}(H) \ll \eta^\star$. 

Similarly, write $\nu_t = \mathring{\nu}_{t} + \partial\nu_{t}$ where $\supp(\mathring{\nu}_t \ast \delta_{x_1}) \subset X_{10\eta}$, for all $h \in \supp(\mathring{\nu}_{t})$, we have $\mathsf{B}_{1 - 100\eta}^U.h \subseteq \supp(\nu_t)$ and $\partial\nu_{t}(H) \ll \eta^\star$. 
Note that 
\begin{align*}
    \supp(\nu_{t, 1}) \subset \supp(\mathring{\nu}_t) \quad \supp(\lambda_{1}) \subset \supp(\mathring{\lambda}).
\end{align*}
Let $\sigma$ be the normalized probability Haar measure on $\mathsf{B}_{\beta}^U$. Using the F{\o}lner property of $U$ (see Lemma~\ref{lem:Folner1}), we have that
\begin{align*}
    \lambda \ast \mathring{\nu}_t \ast \delta_{x_1} = \lambda \ast \sigma \ast \mathring{\nu}_t \ast \delta_{x_1} + O(e^{-\star t}).
\end{align*}
It now suffices to decompose $\lambda \ast \sigma \ast \mathring{\nu}_t \ast \delta_{x_1}$ into admissible measures. 

\subsubsection{Decomposition of the space}
For every $j \in \Check{\mathcal{J}}_0$ and every $z \in \supp(\nu_{t, 1} \ast \delta_{x_1}) \cap \Check{\mathsf{Q}}^G_{0}.y_j$, we have that 
\begin{align*}
    z = \mathsf{u}\mathsf{ma} \mathsf{u^-} \exp(w).y_j
\end{align*}
for $\mathsf{u} \in \mathsf{B}_\eta^U$, $\mathsf{mau^-} \in \mathsf{B}_{\beta}^{s, H}$ and $w \in B_{2\beta^{d_0}}^{\mathfrak{r}}$. Note that 
\begin{align*}
    \mathsf{B}_{2\eta}^U.z \subset \supp(\mathring{\nu}_{t} \ast \delta_{x_1}),
\end{align*}
which implies
\begin{align*}
    \mathsf{B}_{\eta}^{U}\mathsf{ma} \mathsf{u^-} \exp(w).y_j \subseteq \supp(\mathring{\nu}_{t} \ast \delta_{x_1}) \cap \Check{\mathsf{Q}}^G_{0}.y_j.
\end{align*}
Therefore, for all $j \in \Check{\mathcal{J}}_0$, we have a decomposition
\begin{align*}
    (\mu_t)|_{\Check{\mathsf{Q}}^G_0.y_j} = \mu_j' + \sum_{i = 1}^{N_j} \sum_{k = 1}^{M_{j, i}} \bar{\mu}_{j, i, k}
\end{align*}
where for all $i, k$ there exist $w_i \in B_{2\beta^{d_0}}^{\mathfrak{r}}$ and $\mathsf{h}_{j, i, k}  \in \mathsf{B}^{s, H}_{\beta}$ so that 
\begin{align*}
    \bar{\mu}_{j, i, k} = (\mathring{\nu}_t \ast \delta_{x_1})|_{\mathsf{B}_{\eta}^U\mathsf{h}_{j, i, k} \exp(w_i).y_j}.
\end{align*}
We also have
\begin{align*}
    \mu_j'(X) \leq (\partial \nu_t \ast \delta_{x_1})(X) \leq \partial \nu_t(H) \ll \eta^{\star}.
\end{align*}

For all $j \in \Check{\mathcal{J}}_0$, consider the set 
\begin{align*}
    \mathfrak{F}_j = \{(w_i, \mathsf{h}_{j, i, k} ): \bar{\mu}_{j, i, k} = (\mathring{\nu}_t \ast \delta_{x_1})|_{\mathsf{B}_{\eta}^U\mathsf{h}_{j, i, k} \exp(w_i).y_j}\}.
\end{align*}
The following lemma follows directly from volume computation. 
\begin{lemma}\label{lem:number of sheet 1}
    We have 
    \begin{align*}
        \# \mathfrak{F}_j \ll \eta^{-d_U}e^{\lambda_{\vol}t}.
    \end{align*}
\end{lemma}

For all $j \in \Check{\mathcal{J}}_0$, $1 \leq i \leq N_j$ and $1 \leq k \leq M_{j,i}$, define $d\mu_{j, i, k}(z) = \Check{\rho}_{0, j}(z)d\bar{\mu}_{j, i, k}(z)$. We have
\begin{align*}
    \mu_t = \mu' + \sum_{j \in \mathcal{J}_0} \sum_{i = 1}^{N_j} \sum_{k = 1}^{M_{i, k}} \mu_{j, i, k}
\end{align*}
where $\mu'(X) \ll \eta^\star$. Let 
\begin{align}
    \hat{c}_j = \sum_{i = 1}^{N_j} \sum_{k = 1}^{M_{i, k}} \mu_{j, i, k}(X).
\end{align}
The following lemma is a direct consequence of pigeonhole principle. 
\begin{lemma}\label{lem:throw away box with small weight}
    If $\hat{c}_j \geq \beta^{2d_0(d_0 - d_U)}$, then $\#\mathfrak{F}_j \gg e^{\lambda_{\vol}t} \beta^{d_0(d_0-d_U)}$. Moreover, 
    \begin{align*}
        1 - \sum_{\hat{c}_j \geq \beta^{2d_0(d_0 - d_U)}} \hat{c}_j = O(\beta^\star).
    \end{align*}
\end{lemma}
\subsubsection{\text{Smearing along the $H$-direction}}
We now smear along the $H$-direction. Let 
\begin{align*}
    \bar{\mu}_{j, i} = \sum_{k = 1}^{M_{j, i}} \bar{\mu}_{j, i, k}, \quad \bar{\mu}_{j} = \sum_{i = 1}^{N_j} \bar{\mu}_{j, i},
\end{align*}
and 
\begin{align*}
    \mu_{j, i} = \sum_{k = 1}^{M_{j, i}} \mu_{j, i, k}, \quad \mu_{j} = \sum_{i = 1}^{N_j} \mu_{j, i}.
\end{align*}
Recall that by definition, $\bar{\mu}_{j, i, k}$ is proportional to the push-forward of the Haar measure on $\mathsf{B}_\eta^{U}$ under $\mathsf{B}_\eta^{U} \to \mathsf{B}_\eta^{U}\mathsf{h}_{j, i, k} \exp(w_i).y_j$. Moreover, the factor is independent to $i$ and $k$. In fact, we have $\bar{\mu}_{j, i, k}(X) \asymp e^{-\lambda_{\vol}t}\eta^{d_U}$. 

\begin{lemma}\label{lem:shear and smear}
    Let $\mu_{j, i, k}^U$ be the Haar measure on $\mathsf{B}_\eta^{U}\mathsf{h}_{j, i, k}$ which is the pull back of $\mu_{j, i, k}$. In particular, it satisfies 
    \begin{align*}
        \mu_{j, i, k}^U(H) = \mu_{j, i, k}(X).
    \end{align*}
    \begin{enumerate}
        \item For all $j, i$, there exists functions $\Check{\varrho}^{U}_{j, i}$ and $\sigma_{j, i}$ so that
        \begin{align*}
            \mathrm{d}\Biggl(\lambda \ast \sigma \ast \Biggl(\sum_{k = 1}^{M_{j, i}} \mu_{j, i, k}^U\Biggr)\Biggr)(h) = \sigma_{j, i}(h)\Check{\varrho}^{U}_{j, i}(h) \,\mathrm{d} m_H(h).
        \end{align*}
        The function $\Check{\varrho}^{U}_{j, i}$ is a Lipschitz function so that $1 \ll \Check{\varrho}^{U}_{j, i} \ll \eta^{-\star}$. The function $\sigma_{j, i}(h)$ satisfies 
        \begin{align*}
            0 \leq \sigma_{j, i}(h) \ll \frac{\mu_{j, i}(X)}{m_{H}(\mathsf{B}^{s, H}_{\beta + 100\beta^{d_0}}\mathsf{B}^U_\eta)}.
        \end{align*}
        Moreover, there exists $C > 0$ so that we have
        \begin{align*}
            \sigma_{j, i}|_{\mathsf{B}^{s, H}_{\beta}\mathsf{B}^U_{\eta - C\beta^{d_0}\eta}} = \frac{\mu_{j, i}(X)}{m_{H}(\mathsf{B}^{s, H}_{\beta + 100\beta^{d_0}}\mathsf{B}^U_\eta)}, \quad \Lip(\Check{\varrho}^{U}_{j, i}|_{\mathsf{B}^{s, H}_{\beta}\mathsf{B}^U_{\eta - C\beta^{d_0}\eta}}) \ll 1.
        \end{align*}
        \item We have
        \begin{align*}
            \Biggl(\lambda \ast \sigma \ast \Biggl(\sum_{k = 1}^{M_{j, i}} \mu_{j, i, k}^U\Biggr)\Biggr)\Bigl(H \setminus \mathsf{B}^{s, H}_{\beta} B_{\eta - C\beta^{d_0}\eta}^U\Bigr) \ll \eta \mu_{j, i}(X).
        \end{align*}
    \end{enumerate}
\end{lemma}

\begin{proof}
    Let $\bar{\mu}_{j, i, k}^U$ be the Haar measure on $\mathsf{B}_\eta^{U}\mathsf{h}_{j, i, k} $ with $\bar{\mu}_{j, i, k}^U(H) = \bar{\mu}_{j, i, k}(X)$. Let $\bar{\varrho}^{U}_{j, i, k}$ be the pull back of the multiplicity function $\Check{\rho}_{0, j}$ on $\mathsf{B}_\eta^{U}\mathsf{h}_{j, i, k}$. We have $\frac{1}{\mathsf{k}} \leq \bar{\varrho}^{U}_{j, i, k} \leq 1$. We have $\mathrm{d}\mu_{j, i, k}^U = \bar{\varrho}^{U}_{j, i, k} \mathrm{d}\bar{\mu}_{j, i, k}^U$. Write the convolution with $\sigma$ as $\mathrm{d}(\sigma \ast \mathrm{d}\mu_{j, i, k}^U) = \varrho^U_{j, i, k} \mathrm{d}\bar{\mu}_{j, i, k}^U$ where $\varrho^U_{j, i, k}$ is a Lipschitz function with $1 \ll \varrho^U_{j, i, k} \leq 1$. Using the F{\o}lner property of $U$, $\Lip(\varrho^U_{j, i, k}) \ll \eta^{-\star}$. 
    
    We now convolve with $\lambda$. We have \begin{align}\label{eqn:shear alon H calculation 1}
    \begin{aligned}
        \mathrm{d}\Biggl(\lambda \ast \sigma \ast \Biggl(\sum_{k = 1}^{M_{j, i}} \mu_{j, i, k}^U\Biggr)\Biggr) =   \Biggl(\sum_{k = 1}^{M_{j, i}} \frac{\mu_{j, i, k}(X)}{V_k} \varrho^U_{j, i, k}\mathds{1}_{\mathsf{B}^{s, H}_{\beta + 100\beta^{d_0}}\mathsf{B}_\eta^U \mathsf{h}_{j, i, k} }\Biggr) \,\mathrm{d}m_H.
    \end{aligned}
    \end{align}
    The numbers $V_k \asymp m_H(\mathsf{B}^{s, H}_{\beta + 100\beta^{d_0}} \mathsf{B}_\eta^U)$. 

    Recall that both $\mathsf{prd}$ and $\Check{\mathsf{prd}}$ are bi-analytic in $\eta_0$-neighborhood of $0$. Since $\mathsf{h}_{j, i, k}  \in \mathsf{B}^{s, H}_{\beta^{d_0}}$, there exists $C > 0$ so that
    \begin{align}\label{eqn:shear along H 2 support}
        \mathsf{B}^{s, H}_{\beta} \mathsf{B}_{\eta - C\beta^{d_0}\eta}^U \subseteq \mathsf{B}^{s, H}_{\beta + 100\beta^{d_0}} \mathsf{B}_\eta^U \mathsf{h}_{j, i, k} .
    \end{align}
    Therefore, on the box $\mathsf{B}^{s, H}_{\beta} \mathsf{B}_{\eta - C\beta^{d_0}\eta}^U$ the function in \cref{eqn:shear alon H calculation 1} can be written as 
    \begin{align*}
        \frac{\mu_{j, i}(X)}{m_H(\mathsf{B}^{s, H}_{\beta + 100\beta^{d_0}} \mathsf{B}_\eta^U)}  \Check{\varrho}^{U}_{j, i}
    \end{align*}
    where $\Check{\varrho}^{U}_{j, i}$ has Lipschitz constant $\ll \eta^{-\star}$ and $1 \ll \Check{\varrho}^{U}_{j, i}\ll 1$. 
    This proves property~(1). Property~(2) follows from a direct calculation. 
\end{proof}

\subsubsection{Decomposition of the local measure according to the weight on $H$-sheets}\label{sec:Decomposition of the local measure according to the weight}
Note that by the dimension estimate in Theorem~\ref{thm:Closing lemma many scale}, for all $j \in \mathcal{J}_0$ and $1 \leq i \leq M_{j, i}$,
\begin{align*}
    \eta^{d_U} e^{-\lambda_{\vol}t} \ll \bar{\mu}_{j, i}(X) \ll \delta_0^{\epsilon_1}.
\end{align*}
Since $\frac{1}{K} \leq \Check{\rho}_{0, j} \leq 1$, there exists an absolute large integer $L$ so that 
\begin{align}\label{eqn:weight on each sheet estimate 1}
    L^{-1}\eta^{d_U} e^{-\lambda_{\vol}t} \leq \mu_{j, i}(X) \leq L \delta_0^{\epsilon_1}.
\end{align}

For all $j \in \mathcal{J}_0$, let
\begin{align*}
    F_j = \{w_i: \bar{\mu}_{j, i, k} = (\mathring{\nu}_t \ast \delta_{x_1})|_{\mathsf{B}_{\eta}^U\mathsf{h}_{j, i, k} \exp(w_i).y_j} \forall k\}.
\end{align*}
By Lemma~\ref{lem:number of sheet 1}, we have
\begin{align*}
    \#F_j \leq \mathfrak{F}_j \ll \eta^{-d_U}e^{\lambda_{\vol}t}.
\end{align*}

Let $\mathsf{L}$ be an integer so that $\mathsf{L} > L$ and also takes care of all constants in Lemma~\ref{lem:shear and smear}. Note that $\mathsf{L}$ is absolute. We now decompose the measure according to its weight on each sheet. 
For all integer $m \geq 0$, let
\begin{align*}
    F_{j, m} = \{w_i \in F_j: \mathsf{L}^{-m}\delta_0^{\epsilon_1} \leq \mu_{j, i}(X) < \mathsf{L}^{-m + 1}\delta_0^{\epsilon_1}\}.
\end{align*}
Since $\mu_{j, i}(X) \geq \mathsf{L}^{-1}\eta^{d_U}e^{-\lambda_{\vol}t}$, the set $F_{j, m} = \emptyset$ for all $m > \lceil\lambda_{\vol}t/\log(\mathsf{L})\rceil$. From now on we only consider $F_{j, m}$ for $1 \leq m \leq \lceil\lambda_{\vol}t/\log(\mathsf{L})\rceil$ and $j \in \mathcal{J}_0$ with 
\begin{align}\label{eqn:condition on weight of box}
    \hat{c}_j = \sum_{i = 1}^{N_j} \sum_{k = 1}^{M_{i, k}} \mu_{j, i, k}(X) \geq \beta^{2d_0(d_0 - d_U)}.
\end{align}
Denote the set consists of such index $j$ by $\mathcal{J}_0'$

For all $1 \leq m \leq \lceil2t/\log(\mathsf{L})\rceil$ so that 
\begin{align}\label{eqn:condition on weight of sheets}
    \sum_{i:w_i \in F_{j, m}} \mu_{j, i}(X) \geq \beta\hat{c}_j \geq \beta^{\star}, 
\end{align}
we have
\begin{align}\label{eqn:number of sheet lower bound}
    \#F_{j, m} \geq \beta^{\star}\delta_0^{-\epsilon_1}.
\end{align}
Denote the set consists of index $m$ satisfying \cref{eqn:condition on weight of sheets} by $\mathcal{M}_j'$

Let 
\begin{align*}
    \hat{c}_{j, m} = \sum_{i:w_i \in F_{j, m}} \mu_{j, i}(X), \quad c_{j, m} = \Biggl(\sum_{j \in \mathcal{J}_0'} \sum_{m \in \mathcal{M}_j'} \hat{c}_{j, m}\Biggr)^{-1} \hat{c}_{j, m}.
\end{align*}

From Lemma~\ref{lem:throw away box with small weight}, we have
    \begin{align*}
        \sum_{j \in \mathcal{J}_0'} \sum_{m \in \mathcal{M}_j'} \hat{c}_{j, m} \geq 1 - O(\beta^\star).
    \end{align*}

For $j \in \mathcal{J}_0'$ and $m \in \mathcal{M}_j'$, we set
\begin{align*}
    \mathcal{E}_{j, m} = \mathsf{E}\exp(F_{j, m}).y_j.
\end{align*}

\begin{lemma}
    For all $j \in \mathcal{J}_0'$ and $m \in \mathcal{M}_j'$, there exists a $(\mathsf{L}^2, \eta, \eta^{-\star})$-admissible measure $\mu_{\mathcal{E}_{j, m}}$ so that for all $\phi \in \mathrm{C}_c(X)$, 
    \begin{align*}
        \Biggl|\int_X \phi \,\mathrm{d}\mu_{\mathcal{E}_{j, m}} - \int_X \phi \,\mathrm{d}\Biggl(\lambda \ast \Biggl(\sum_{i: w_i \in F_{j, m}}\mu_{j, i}\Biggr)\Biggr)\Biggr| \ll \hat{c}_{j, m}\|\phi\|_\infty\eta^\star.
    \end{align*}
\end{lemma}

\begin{proof}
    Let $\mathsf{B}^{s, H}_{\beta}\mathsf{B}^U_{\eta - C\beta^{d_0}\eta}$ be as in the property~(1) of Lemma~\ref{lem:shear and smear}. Let $\mu_{\mathcal{E}_{j, m}}$ be the restriction of 
    \begin{align*}
        \lambda \ast \Biggl(\sum_{i: w_i \in F_{j, m}}\mu_{j, i}\Biggr)
    \end{align*}
    to $\mathsf{B}^{s, H}_{\beta}\mathsf{B}^U_{\eta - C\beta^{d_0}\eta}\exp(F_{j, m}).y_j$ and normalized to probability measure. The inequality follows from Lemma~\ref{lem:shear and smear}. It suffices to show that $\mu_{\mathcal{E}_{j, m}}$ is $\mathsf{L}$-admissible. 
    
    For all $w = w_i \in F_{j, m}$, let 
    \begin{align*}
        \mu_{w} := \frac{m_H(\mathsf{B}^{s, H}_{\beta + 100 \beta^{d_0}}\mathsf{B}^U_\eta)}{\mathsf{L}^{-m}\delta_0^{\epsilon_1}} \lambda \ast \mu_{j, i}|_{\mathsf{B}^{s, H}_{\beta}\mathsf{B}^U_{\eta - C\beta^{d_0}\eta}\exp(w).y_j},
    \end{align*}
    we have
    \begin{align*}
        \mu_{\mathcal{E}_{j, m}} = \frac{1}{\sum_{w \in F_{j, m}}\mu_w(X)}\sum_{w \in F_{j, m}} \mu_w.
    \end{align*}
    By Lemma~\ref{lem:shear and smear}~(1), we have
    \begin{align*}
        \mathrm{d}\mu_{w_i}(z) = \frac{m_H(\mathsf{B}^{s, H}_{\beta + 100 \beta^{d_0}}\mathsf{B}^U_\eta)}{\mathsf{L}^{-m}\delta_0^{\epsilon_1}}\Check{\varrho}_{j, i}(\mathsf{h})\sigma_{j, i}(\mathsf{h}) \, \mathrm{d}m_H(\mathsf{h})
    \end{align*}
    where $z = \mathsf{h}\exp(w_i).y_j$. Moreover, we have
    \begin{align*}
        \mathsf{L}^{-1} \leq \frac{m_H(\mathsf{B}^{s, H}_{\beta + 100 \beta^{d_0}}\mathsf{B}^U_\eta)}{\mathsf{L}^{-m}\delta_0^{\epsilon_1}}\Check{\varrho}_{j, i}(\mathsf{h})\sigma_{j, i}(\mathsf{h}) = \Check{\varrho}_{j, i}(\mathsf{h}) \frac{\mu_{j, i}(X)}{\mathsf{L}^{-m}\delta_0^{\epsilon_1}} \leq \mathsf{L}
    \end{align*}
    for all $\mathsf{h} \in \mathsf{B}^{s, H}_{\beta}\mathsf{B}^U_{\eta - C\beta^{d_0}\eta}$. The Lipschitz property for the function $\Check{\varrho}_{j, i}\sigma_{j, i}$ follows directly from Lemma~\ref{lem:shear and smear}. 
\end{proof}

For $j \in \mathcal{J}_0'$ and $m \in \mathcal{M}_j'$, let
\begin{align*}
    c_{\mathcal{E}_{j, m}} = c_{j, m}.
\end{align*}
From now on, to reduce complicated subscript, we will drop $j,m$ in the subscript. The sum $\sum_{\mathcal{E}}$ will be the same as $\sum_{j \in \mathcal{J}_0'} \sum_{m \in \mathcal{M}_j'}$. 

The above lemma provides a decomposition
\begin{align*}
    \lambda \ast \mu_t = \mu'' + \sum_{\mathcal{E}} c_{\mathcal{E}} \mu_{\mathcal{E}}
\end{align*}
with $\mu''(X) \ll \eta^\star$ and $\mu_{\mathcal{E}}$'s being admissible. 

Therefore, similar to \cite[Theorem 8.4, 8.9]{LMW22} for all $d' \geq 0$ and $u' \in \mathsf{B}_1^U$, we have
\begin{align*}
    \int_{X} \phi(a_{d'} u' x) \,\mathrm{d}(\lambda \ast \mu_t) = \sum_{\mathcal{E}} c_{\mathcal{E}} \int_{X} \phi(a_{d'} u' x) \,\mathrm{d}\mu_{\mathcal{E}} + O(\|\phi\|_\infty \eta^\star).
\end{align*}
This proves property~(1) in Theorem~\ref{thm:closing lemma initial dim}. 

Let $\epsilon_0 = \epsilon_1/2$. We show property~(2) in Theorem~\ref{thm:closing lemma initial dim} holds for this $\epsilon_0$. 
\begin{lemma}
    For all $j$ and $m$ satisfying \cref{eqn:condition on weight of box,eqn:condition on weight of sheets}, Write $\mathcal{E} = \mathcal{E}_{j, m} = \mathsf{E}\exp(F_{j, m}).y_j$ and $F = F_{j, m}$. It satisfies the following conditions. 
    \begin{enumerate}
        \item The number of sheets satisfies
        \begin{align*}
            \beta^{\star}\delta_0^{-2\epsilon_0} \leq \#F \leq \eta^{-d_U}e^{\lambda_{\vol}t}.
        \end{align*}
        \item We have the Margulis function estimate
        \begin{align*}
            f^{(\epsilon_0)}_{\mathcal{E}, \delta_0}(x) \ll \beta^{-\star} \#F \quad \forall x \in \mathcal{E}.
        \end{align*}
    \end{enumerate}
\end{lemma}

\begin{proof}
    Property~(1) follows from Lemma~\ref{lem:number of sheet 1} and \cref{eqn:number of sheet lower bound}. 

    For property~(2), by \cref{pro:frostman energy Margulis function} and $F \subset B_\eta^{\mathfrak{r}}$, it suffices to show that $\mu_{F}$ satisfies
    \begin{align}\label{eqn:final set frostman}
        \mu_F(B_r^{\mathfrak{r}}(w)) \ll \beta^{-\star}r^{\epsilon_1} \quad \forall w \in F \text{ and } \delta_0 \leq r \leq \eta.
    \end{align}    
    For all $w_i \in F = F_{j, m}$, we have
    \begin{align*}
        \mu_{F_{j, m}}(B_r^{\mathfrak{r}}(w_i)) ={}& \frac{\#\{w_{i'} \in F_{j, m}: \|w_{i'} - w_i\| \leq r\}}{\#F_{j, m}}\\
        \leq{}& \frac{\mathsf{L}^{m}\delta_0^{-\epsilon_1}\sum_{i':w_{i'} \in F_{j, m}, \|w_{i'} - w_i\| \leq r} \mu_{j, i'}(X)}{\mathsf{L}^{m - 1}\delta_0^{-\epsilon_1}\sum_{i':w_{i'} \in F_{j, m}}\mu_{j, i'}(X)}\\
        \leq{}& \mathsf{L}\beta^{-\star}\sum_{i':w_{i'} \in F_{j, m}, \|w_{i'} - w_i\| \leq r} \mu_{j, i'}(X).
    \end{align*}
    The last inequality follows from \cref{eqn:condition on weight of sheets}. 

    Recall that $d\mu_{j, i'}(z) = \Check{\rho}_{0, j}(z)d\bar{\mu}_{j, i'}(z)$ where $\Check{\rho}_{0, j} \leq 1$, we have
    \begin{align*}
        \mu_{F_{j, m}}(B_r^{\mathfrak{r}}(w_i)) \leq \mathsf{L} \beta^{-\star} \sum_{i':w_{i'} \in F_{j, m}, \|w_{i'} - w_i\| \leq r} \bar{\mu}_{j, i'}(X).
    \end{align*}
    Since $\bar{\mu}_{j, i'} = \mu_t|_{\Check{\mathsf{Q}}^H_0\exp(w_i).y_j}$, we have
    \begin{align*}
        \mu_{F_{j, m}}(B_r^{\mathfrak{r}}(w_i)) \leq \mathsf{L} \beta^{-\star} \mu_t(\Check{\mathsf{Q}}^H_0 \exp(B_{r}^{\mathfrak{r}}(w_i)).y_j).
    \end{align*}

    Using Lemma~\ref{lem:BCH}, for all $w \in B_{r}^{\mathfrak{r}}(w_i)$, we have
    \begin{align*}
        \exp(w).y_j = \exp(w)\exp(-w_i)\exp(w_i).y_j = \mathsf{h}\exp(\bar{w})\exp(w_i).y_j
    \end{align*}
    where $\|\bar{w}\| \leq 2 \|w - w_i\| \leq 2r$, $\|\mathsf{h} - \Id\| \leq C_0 \eta$. Therefore, 
    \begin{align*}
        \mu_{F_{j, m}}(B_r^{\mathfrak{r}}(w_i)) \leq \mathsf{L} \beta^{-\star} \mu_t(\mathsf{B}_{C_0\eta}^H \exp(B_{2r}^{\mathfrak{r}}(0))\exp(w_i).y_j) \ll \beta^{-\star} r^{\epsilon_1}.
    \end{align*}
    The last inequality follows from Theorem~\ref{thm:Closing lemma many scale} and $100C_0\eta \leq \eta_0$.  
\end{proof}

\section{Estimates on the Brascamp--Lieb constant by Gressman}\label{app:BL}
This appendix is devoted to prove Proposition~\ref{prop:BL constant for H action} and Proposition~\ref{prop:BL constant for unipotent}. The arguments are essentially the arguments in \cite[Section 3]{Gre21}. Let us introduce the following notions. 

From now on in this appendix, we use $\{\tilde{\pi}_j\}_{j = 1}^m$ to denote matrices of sizes $n_j \times n$ with entries to be distinct indeterminants. Since the ring of polynomials over an infinite field is canonically isomorphic to the ring of polynomial functions, we will also use $\{\tilde{\pi}_j\}_{j = 1}^m$ to denote matrices of sizes $n_j \times n$ with arbitrary entries. We say that a polynomial $\Phi$ is polynomial on $\{\tilde{\pi}_j\}_{j = 1}^m$ if it is a polynomials on those indeterminants with coefficients in $\C$. For such polynomials, we define its Hilbert--Schmidt norm $\|\Phi\|_{\mathrm{HS}}$ to be the maximum of $|\Phi|$ on all $m$-tuples of linear maps $\{\pi_j\}_{j = 1}^m$ so that $\|\pi_j\|_{\mathrm{HS}} \leq 1$ for all $j = 1, \ldots, m$. It is comparable (up to absolute constant) to the maximum of absolute value of coefficient of $\Phi$. 

\begin{definition}
Let $\widetilde{\mathrm{IP}}$ be the collection of polynomials $\Phi$ on $\{\tilde{\pi}_j\}_{j = 1}^m$ satisfying the following. 
\begin{enumerate}
\item The polynomials $\Phi$ is homogeneous with degree $d_j > 0$ in each $\tilde{\pi}_j$, i.e., 
\begin{align}\label{eqn:IP homogeneous condition}
\Phi(\{t_j\pi_j\}_{j = 1}^m) = t_1^{d_1} \cdots t_m^{d_m} \Phi(\{\pi_j\}_{j = 1}^m) \text{ for all } t_1, \ldots, t_m \in \R
\end{align}
\item The polynomials $\Phi$ is $\SL_{n_1} \times \cdots \times \SL_{n_m} \times \SL_n$-invariant, i.e., 
\begin{align}\label{eqn:IP invariance condition}
    \Phi(\{A_j\pi_jA^t\}_{j = 1}^m) = \Phi(\{\pi_j\}_{j = 1}^m)
\end{align}
for all  $A \in \SL_n(\R), A_j \in \SL_{n_j}(\R), j = 1, \ldots, m$. 
\item There exists real number $s_{\Phi}$ so that the degrees of $\Phi$ satisfies
\begin{align}\label{eqn:IP degree condition}
    \frac{p_1 n_1}{d_1} = \cdots = \frac{p_m n_m}{d_m} = \frac{1}{s_{\Phi}}.
\end{align}
\end{enumerate}
We set $\mathrm{IP}$ to be the subset of non-zero polynomials in $\widetilde{\mathrm{IP}}$.  We remark that \emph{a priori, $\mathrm{IP}$ might be empty. }
\end{definition}

Elements in $\mathrm{IP}$ (if they exists) provides a bound for the BL constant. 
\begin{lemma}\label{lem:estimate BL constant poly apriori}
    Suppose $\mathrm{IP} \neq \emptyset$. For all $\Phi \in \mathrm{IP}$, we have
    \begin{align}\label{eqn:estimate BL constant poly apriori}
        [\mathrm{BL}(\{\pi_j, p_j\}_{j = 1}^m)]^{-1} \geq \left(\prod_{j = 1}^m n_j^{-\frac{p_j n_j}{2}}\right) \|\Phi\|_{\mathrm{HS}}^{-\frac{1}{s_{\Phi}}}|\Phi(\{\pi_j\}_{j = 1}^m)|^{\frac{1}{s_{\Phi}}}
        \end{align}
        for all $\{\pi_j\}_{j = 1}^m$. 
\end{lemma}

\begin{proof}
    By the invariance of $\Phi$ under the action of $\SL_{n_1} \times \cdots \times \SL_{n_m} \times \SL_{n}$ and homogeneity of $\Phi$, we have
    \begin{align*}
        |\Phi(\{\pi_j\}_{j = 1}^m)| = |\Phi(\{A_j\pi_jA^t\}_{j = 1}^m)| \leq \|\Phi\|_{\mathrm{HS}} \prod_{j = 1}^m \|A_j \pi_j A^t\|_{\mathrm{HS}}^{d_j}.
    \end{align*}
    By the degree condition \cref{eqn:IP degree condition}, we have
    \begin{align*}
        \|\Phi\|_{\mathrm{HS}}^{-\frac{1}{s_{\Phi}}}|\Phi(\{\pi_j\}_{j = 1}^m)|^{\frac{1}{s_{\Phi}}} \leq \prod_{j = 1}^m \|A_j \pi_j A^t\|_{\mathrm{HS}}^{\frac{d_j}{s_{\Phi}}} = \prod_{j = 1}^m \|A_j \pi_j A^t\|_{\mathrm{HS}}^{p_j n_j}
    \end{align*}
    for all  $A \in \SL_n(\R), A_j \in \SL_{n_j}(\R), j = 1, \ldots, m$. The rest follows directly from Lemma~\ref{lem:Lieb constant}. 
\end{proof}

By the Lemma~\ref{lem:estimate BL constant poly apriori}, it suffices to (i) show $\mathrm{IP}$ is non-empty, (ii) provides a subset $\mathrm{IP}_0$ of $\mathrm{IP}$ consisting of polynomials with bounded degree. The following construction translate this into a question in invariant theory when $p_j$'s are non-zero rational numbers. 

\begin{construction}[\text{\cite[Section 3]{Gre21}}]\label{con:invariant theory}
Since $p_j$'s are non-zero rational numbers, there exist positive integers $q, q_1, \ldots, q_n$ so that $p_jn_j = \frac{q_j}{q}$. 
Let 
\begin{align*}
    V = \bigotimes_{j = 1}^m \bigotimes_{i = 1}^q \left(\R^{n_j} \otimes (\R^n)^*\right).
\end{align*}
It a $\R$-linear space with a natural linear action of the real reductive group $S = \SL_{n_1} \times \cdots \times \SL_{n_m} \times \SL_{n}$ as the following. 

For all matrices $A \in \SL_n$, $A_j \in \SL_{n_j}$ for $j = 1, \ldots, m$ and all element $\otimes_{j = 1}^m \otimes_{i = 1}^q (x_j^{(i)} \otimes \psi_j^{(i)}) \in V$, we define
\begin{align*}
    (A_1, \ldots, A_m, A).\left(\otimes_{j = 1}^m \otimes_{i = 1}^q (x_j^{(i)} \otimes \psi_j^{(i)})\right) = \otimes_{j = 1}^m \otimes_{i = 1}^q ((A_j.x_j^{(i)}) \otimes (A.\psi_j^{(i)}))
\end{align*}
where $A.\psi_j^{(i)}$'s are all the natural dual action of the standard action of $\SL_n$ on $\R^n$. 

Note that under the natural isomorphism $\R^{n_j} \otimes (\R^n)^* \cong \Hom(\R^n, \R^{n_j})$, we can identify any element of the form $\tilde{\pi}_1^{\otimes q_1} \otimes \cdots \otimes \tilde{\pi}_m^{\otimes q_m}$ as an element in $V$. The Euclidean inner products on $\R^n$ and $\R^{n_j}$'s naturally induce an inner product and norm on $V$. Under the above identification, we have
\begin{align*}
    \|\tilde{\pi}_1^{\otimes q_1} \otimes \cdots \otimes \tilde{\pi}_m^{\otimes q_m}\| = \prod_{j = 1}^m \prod_{i = 1}^q \|\tilde{\pi}_j\|_{\mathrm{HS}}.
\end{align*}
    
Let $\mathcal{O}(V)^S$ be the ring of invariant polynomials of the above representation. Suppose \emph{$\mathcal{O}(V)^S$ is nontrivial} and $P \in \mathcal{O}(V)^S$ is a homogeneous polynomial with degree $d$, we define $\Phi_P$ to be the polynomial so that its value at any $m$-tuple of matrices $\{\tilde{\pi}_j\}_{j = 1}^m$ is of the following:
    \begin{align}\label{eqn:def of PhiP}
        \Phi_P(\{\tilde{\pi}_j\}_{j = 1}^m) = P(\tilde{\pi}_1^{\otimes q_1} \otimes \cdots \otimes \tilde{\pi}_m^{\otimes q_m}),
    \end{align}
    where again we naturally identify $\tilde{\pi}_1^{\otimes q_1} \otimes \cdots \otimes \tilde{\pi}_m^{\otimes q_m}$ as an element in $V$ using the natural isomorphism $\R^{n_j} \otimes (\R^n)^* \cong \Hom(\R^n, \R^{n_j})$. It is straightforward to check those $\Phi_P$'s satisfy \cref{eqn:IP invariance condition,eqn:IP homogeneous condition,eqn:IP degree condition}. 

    At the end of this construction, we give a remark on the relation between the degrees of $\Phi_{P}$ and $P$. Note that we can choose $q_j$'s and $q$ bounded above by constants depending only on the size of numerator and denominator of $p_j$'s. Also, $\Phi_P$ is homogeneous with degree $d_j = q_j d$ for $j = 1, \ldots, m$. Moreover, 
    \begin{align*}
        \frac{p_j n_j}{d_j} = \frac{p_j n_j}{q_j d} = \frac{1}{qd}.
    \end{align*}
    This shows that $s_{\Phi_P} = qd$.  
\end{construction}

By the above construction, it suffices to (i) show $\mathcal{O}(V)^S$ is non-trivial, (ii) provides a nice generating set of $\mathcal{O}(V)^S$. This is a classical topic in invariant theory. 

\begin{lemma}[\text{\cite[Section 3]{Gre21}}]\label{lem:polynomial bound for BL constant}
    Suppose $p_j$'s are non-zero rational numbers and satisfies
    \begin{align}
        \sum_{j = 1}^m \frac{p_jn_j}{n} = 1.
    \end{align}
    If there exists $\{\pi_j^{(0)}\}_{j = 1}^m$ so that $\mathrm{BL}(\{\pi_j^{(0)}, p_j\}_{j = 1}^m) < +\infty$, then $\mathcal{O}(V)^S$ is non-trivial and as a consequence, $\mathrm{IP} \neq \emptyset$. 
    
    Moreover, there exists a non-empty finite set $\mathrm{IP}_0 \subseteq \mathrm{IP}$ of polynomials \emph{independent} to the previous $\{\pi_j^{(0)}\}_{j = 1}^m$  satisfying the following. 
    \begin{enumerate}
        \item The degree of polynomials in $\mathrm{IP}_0$ satisfy
    \begin{align*}
        \sup_{\Phi \in \mathrm{IP}_0}\deg \Phi \ll_{n, \{n_j\}_{j = 1}^m, \{p_j\}_{j = 1}^m} 1.
    \end{align*}
    Also, the numbers $s_{\Phi}$'s are positive integers and satifies $s_{\Phi} \ll_{n, \{n_j\}_{j = 1}^m, \{p_j\}_{j = 1}^m} 1$. The dependence on $\{p_j\}_{j = 1}^m$ is explicit dependence on the size of their numerators and denominators. 
    \item If $\mathrm{BL}(\{\pi_j, p_j\}_{j = 1}^m) < +\infty$, then
    \begin{align}\label{eqn:estimate BL constant poly positive}
       \sup_{\Phi \in \mathrm{IP}_0} \|\Phi\|_{\mathrm{HS}}^{-\frac{1}{s_{\Phi}}}|\Phi(\{\pi_j\}_{j = 1}^m)|^{\frac{1}{s_{\Phi}}} > 0.
        \end{align}
    \end{enumerate}
\end{lemma}

\begin{proof}
We first show that $\mathcal{O}(V)^S$ is non-trivial if there exists $\{\pi_j^{(0)}, p_j\}_{j = 1}^m$ so that $\mathrm{BL}(\{\pi_j^{(0)}, p_j\}_{j = 1}^m) < +\infty$. Let
\begin{align*}
    v^{(0)} := (\pi_1^{(0)})^{\otimes q_1} \otimes \cdots (\pi_m^{(0)})^{\otimes q_m}.
\end{align*}
By Lemma~\ref{lem:Lieb constant}, we have
\begin{align*}
    \inf_{g \in S} \|g. v^{(0)}\|
    = \inf_{\substack{A \in \SL_n(\R), \mathstrut \\A_j \in \SL_{n_j}(\R), j = 1, \ldots, m}} \prod_{j = 1}^m \| A_j \pi_j^{(0)} A^t\|_{\mathrm{HS}}^{q_j} > 0.
\end{align*}
This implies that the orbit $S.v^{(0)}$ does not contain $0$. 

If the orbit $S.v^{(0)}$ is closed in Hausdorff topology, then by \cite[Corollary 5.3]{Bir71}, it is also closed in Zariski topology and therefore $\mathcal{O}(V)^S$ is non-trivial. 

If not, then by \cite[Lemma 3.3]{RS90}, there exists a vector $v^{(1)} \in V$ so that $S.v^{(1)}$ is closed and a semisimple element $X \in \mathfrak{s}$ so that
\begin{align*}
    v^{(1)} = \lim_{t \to -\infty}\exp(tX).v^{(0)}.
\end{align*}
Note that for all $g \in S$, we have
\begin{align*}
    \|g.v^{(1)}\| = \lim_{t \to -\infty}\|g\exp(tX).v^{(0)}\| \geq \inf_{h \in S}\|h.v^{(0)}\| > 0.
\end{align*}
By \cite[Corollary 5.3]{Bir71}, $S.v^{(1)}$ is closed in Zariski topology and not containing $0$. Therefore, there exists $P \in \mathcal{O}(V)^S$ so that
\begin{align}\label{eqn:separation for apriori data}
    \Phi_P(\{\pi_j^{(0)}\}_{j = 1}^m) = P((\pi_1^{(0)})^{\otimes q_1} \otimes \cdots \otimes (\pi_m^{(0)})^{\otimes q_m}) = P(v^{(1)}) \neq 0.
\end{align}
The proof of the first claim in the lemma is complete. 

By Hilbert's theorem,  $\mathcal{O}(V)^S$ is a finitely generated $\C$-algebra. Let $\{P_i\}_{i \in \mathcal{I}}$ be a set of homogeneous polynomial so that $\{1, P_i\}_{i \in \mathcal{I}}$ generates $\mathcal{O}(V)^S$. By \cite[Section 1.4]{Der04}, there is an algorithm to construct such generators using Buchberger's algorithm and Reynold operator. In this case, the Reynold operator can be explicitly constructed via Cayley's $\Omega$-process (see e.g. \cite[Section 4.3]{Stu93} or \cite[Section 3.2]{Gre21}). Therefore, we can assume 
\begin{align}\label{eqn:degree bound from Cayley}
    \deg P_i \ll_{n, \{n_j\}_{j = 1}^m} 1
\end{align}
for all $i \in \mathcal{I}$ where the implied constants are computable and depend only on $n$ and $\{n_j\}_{j = 1}^m$. Let
\begin{align*}
    \mathrm{IP}_0 = \{\Phi_{P_i}: i \in \mathcal{I}\}.
\end{align*}
Note that $\mathrm{IP}_0$ does not depend on the data $\{\pi_j^{(0)}\}_{j = 1}^m$. The degree bound in property~(1) follows from \cref{eqn:degree bound from Cayley} and the last paragraph of Construction~\ref{con:invariant theory}. 

For property~(2), the argument is exactly the same as the proof of the first claim. Note that the finiteness of Brascamp--Lieb constant implies the corresponding orbit is bounded away from $0$ by Lemma~\ref{lem:Lieb constant}. Since $\{P_i\}_{i \in \mathcal{I}}$ consists of homogeneous generator of $\mathcal{O}(V)^S$, there is one of them separate the orbits from $0$. Similar to \cref{eqn:separation for apriori data}, this proves property~(2) and hence the lemma. 
\end{proof}

\begin{proof}[Proof of Proposition~\ref{prop:BL constant for H action}]
    Let $m$ be the positive integer and  $\mathcal{O}_W \subseteq H^m$ be the Zariski open dense subset given by Proposition~\ref{prop:BL finite for H action}. Let $\mathcal{O}_{W}' = (\mathcal{O}_{W})^t$. For all $(h_1, \ldots, h_m) \in \mathcal{O}_{W}'$, Proposition~\ref{prop:BL finite for H action} implies that $\mathrm{BL}\left(\left\{\pi_{W} \circ h_j, \frac{n}{km}\right\}_{j = 1}^m\right)$ is finite. By Lemma~\ref{lem:polynomial bound for BL constant}, there exists a nonzero homogeneous polynomial $\tilde{\Phi} \in \mathrm{IP}_0$, we have
    \begin{align}\label{eqn:polybound for bl constant in proof 1}
    \begin{aligned}
        {}&[\mathrm{BL}(\{\pi_W \circ h_j, \frac{n}{km}\}_{j = 1}^m)]^{-1}\\
        \geq{}& \left(\prod_{j = 1}^m k^{-\frac{n}{2m}}\right) \|\tilde{\Phi}\|_{\mathrm{HS}}^{-\frac{1}{s_{\tilde{\Phi}}}}|\tilde{\Phi}(\{\pi_W \circ h_j\}_{j = 1}^m)|^{\frac{1}{s_{\tilde{\Phi}}}} > 0.
    \end{aligned}
    \end{align}
    Let
    \begin{align*}
        \Phi(h_1, \ldots, h_m) := \tilde{\Phi}(\{\pi_W \circ h_j\}_{j = 1}^m)
    \end{align*}
    and $s_{\Phi} = s_{\tilde{\Phi}}$. We have $\deg \Phi \ll_V 1$ where the implied constant depends on $n$ and the degree of this polynomial representation. We also have $s_{\Phi} = s_{\tilde{\Phi}}$ is a positive integer and $s_{\Phi} = s_{\tilde{\Phi}} \ll_n 1$. In fact, in this case, we have
    \begin{align*}
        p_j n_j = \frac{n}{km} \cdot k = \frac{n}{m} = \frac{q_j}{q}.
    \end{align*}
    Therefore we can take $q = m \ll_n 1$ and $q_j = n$. 
    Since $\tilde{\Phi} = \tilde{\Phi}_P$ for some homogeneous polynomial $P \in \mathcal{O}(V)^S$ with degree $d \ll_{n, m} 1$, we have
    \begin{align*}
        s_{\Phi} = s_{\tilde{\Phi}} = q d = md \ll_{n} 1.
    \end{align*}
    The last inequality follows from $m \ll_n 1$. 
    By \cref{eqn:polybound for bl constant in proof 1}, we have
    \begin{align*}
        \mathrm{BL}\left(\left\{\pi_{W} \circ h_j, \frac{n}{km}\right\}_{j = 1}^m\right) \ll_{n} \|\Phi\|_{\infty}^{\frac{1}{s_{\Phi}}} |\Phi(h_1, \ldots, h_m)|^{-\frac{1}{s_{\Phi}}},
    \end{align*}
    which completes the proof of the proposition. 
\end{proof}

\begin{proof}[Proof of Proposition~\ref{prop:BL constant for unipotent}]
The proof is identical to the proof of Proposition~\ref{prop:BL constant for H action} if we replace Proposition~\ref{prop:BL finite for H action} by Proposition~\ref{prop:BL finite for U}. 
\end{proof}

\nocite{*}
\bibliographystyle{alpha_name-year-title}
\bibliography{References}
\end{document}